\newcommand{\sign}{\operatorname{sign}}
\newcommand{\R}{\mathbb R}
\newcommand{\N}{\mathbb N}
\newcommand{\J}{{\mathcal J}}
\newcommand{\V}{{\mathcal V}}
\def\H{{\mathcal H}}
\def\1{\mathbf{1}}
\newcommand{\argmin}{\arg\min}
\newcommand{\Div}{\operatorname{div}}
\newcommand{\ST}{\operatorname{ST}}
\newcommand{\Dom}{\operatorname{Dom}}
\newcommand{\TV}{\mathcal{R}}
\newcommand{\tv}{R}
\newcommand{\mcN}{\mathcal{N}}
\newcommand{\mcD}{\mathcal{H}}
\newcommand{\mcR}{\mathcal{R}}
\newcommand{\mcS}{\mathcal{S}}
\newcommand{\mcE}{\mathcal{E}}
\newcommand{\Js}{\mathcal{J}}
\newcommand{\B}{\mathcal{B}}
\newcommand{\Bt}{\B_\tau}
\newcommand{\diff}{\textnormal{d}}
\newcommand{\G}{{\mathcal G}}
\begin{document}

\title{Automated Parameter Selection for Total Variation Minimization in Image Restoration
}


\author{Andreas Langer        
}


\institute{A. Langer \at
              Institute for Applied Analysis and Numerical Simulation, University of Stuttgart, Pfaffenwaldring 57, 70569 Stuttgart, Germany \\
              \email{andreas.langer@mathematik.uni-stuttgart.de}           
}

\date{}

\maketitle

\begin{abstract}
Algorithms for automatically selecting a scalar or locally varying regularization parameter for total variation models with an $L^{\tau}$-data fidelity term, $\tau\in \{1,2\}$, are presented. The automated selection of the regularization parameter is based on the discrepancy principle, whereby in each iteration a total variation model has to be minimized. In the case of a locally varying parameter this amounts to solve a multi-scale total variation minimization problem. For solving the constituted multi-scale total variation model convergent first and second order methods are introduced and analyzed. Numerical experiments for image denoising and image deblurring show the efficiency, the competitiveness, and the performance of the proposed fully automated scalar and locally varying parameter selection algorithms.



\keywords{Total variation minimization \and Locally dependent regularization parameter \and Automated parameter selection \and $L^2$-data fidelity \and $L^1$-data fidelity \and Discrepancy principle \and Constrained/unconstrained problem \and Gaussian noise \and Impulse noise}
\end{abstract}

\section{Introduction}
Observed images are often contaminated by noise and may be additionally distorted by some measurement device. Then the obtained data $g$ can be described as 
$$
g=\mcN(T\hat{u}),
$$ 
where $\hat{u}$ is the unknown original image, $T$ is a linear bounded operator modeling the image-formation device, and $\mcN$ represents noise. In this paper, we consider images which are contaminated either by white Gaussian noise or impulse noise. While for white Gaussian noise the degraded image $g$ is obtained as
$$
g = T \hat{u} + \eta,
$$ 
where the noise $\eta$ is oscillatory with zero mean and standard deviation $\sigma$, there are two main models for impulse noise, that are widely used in a variety of applications, namely salt-and-pepper noise and random-valued impulse noise. We assume that $T\hat{u}$ is in the dynamic range $[0,1]$, i.e., $0 \leq T\hat{u} \leq 1$, then in the presence of salt-and-pepper noise the observation $g$ is given by
\begin{equation}\label{sap}
g(x)=
\begin{cases}
0 & \text{ with probability } r_1\in[0,1),\\
1 & \text{ with probability } r_2 \in[0,1), \\
T \hat{u} (x) & \text{ with probability } 1-r_1-r_2,\\
\end{cases}
\end{equation}
with $1-r_1-r_2>0$. If the image is contaminated by random-valued impulse noise, then $g$ is described as
\begin{equation}\label{rv}
g(x)=
\begin{cases}
\rho & \text{ with probability } r \in[0,1),\\
T \hat{u} (x) & \text{ with probability } 1-r,\\
\end{cases}
\end{equation}
where $\rho$ is a uniformly distributed random variable in the image intensity range $[0,1]$.

The recovery of $\hat{u}$ from the given degraded image $g$ is an ill-posed inverse problem and thus regularization techniques are required to restore the unknown image \cite{EngHanNeu}. A good approximation of $\hat{u}$ may be obtained by solving a minimization problem of the type
\begin{equation}\label{Eq:general:functional}
\min_u \mcD (u;g) + \alpha \mcR(u),
\end{equation}
where $\mcD (.;g) $ represents a data fidelity term, which enforces the consistency between the recovered and measured image, $\mcR$ is an appropriate filter or regularization term, which prevents over-fitting, and $\alpha>0$ is a regularization parameter weighting the importance of the two terms. We aim at reconstructions in which edges and discontinuities are preserved. For this purpose we use the total variation as a regularization term, first proposed in \cite{ROF} for image denoising. 
Hence, here and in the remaining of the paper we choose $\mcR(u)=\int_\Omega |Du|$, where $\int_\Omega |Du|$ denotes the total variation of $u$ in $\Omega$; see \cite{AmbFusPal,Giu} for more details. However, we note that other regularization terms, such as the total generalized variation \cite{BreKunPoc}, the non-local total variation \cite{KinOshJon}, the Mumford-Shah regularizer \cite{MumSha}, or higher order regularizers (see e.g. \cite{PapSch} and references therein) might be used as well. 

\subsection{Choice of the fidelity term}

The choice of $\mcD$ typically depends on the type of noise contamination. For images corrupted by Gaussian noise a quadratic $L^2$-data fidelity term is typically chosen and has been successfully used; see for example \cite{Cha,ChaDar,ChaLio,ChaGolMul,ComWaj,DarSig2005,DarSig2006,DauTesVes,DobVog,GolOsh,Nes,OshBurGolXuYin,WeiBlaAub,ZhuCha}. In this approach, which we refer to as the $L^2$-TV model, the image $\hat u$ is recovered from the observed data $g$ by solving 
\begin{equation}\label{L2TVmodel}
\min_{u\in BV(\Omega)} \frac{1}{2} \|Tu - g \|_{L^2(\Omega)}^2 + \alpha \int_\Omega |Du|,
\end{equation}
where $BV(\Omega)$ denotes the space of functions with bounded variation, i.e., $u\in BV(\Omega)$ if and only if $u\in L^1(\Omega)$ and $\int_\Omega |Du| <\infty$. In the presence of impulse noise, e.g., salt-and-pepper noise or random-valued impulse noise, the above model usually does not yield a satisfactory restoration. In this context, a more successful approach, suggested in \cite{All,Nik2002,Nik2004}, uses a non-smooth $L^1$-data fidelity term instead of the $L^2$-data fidelity term in \eqref{L2TVmodel}, i.e., one considers
\begin{equation}\label{L1TVmodel}
\min_{u\in BV(\Omega)}  \|Tu - g \|_{L^1(\Omega)} + \alpha \int_\Omega |Du|,
\end{equation}
which we call the $L^1$-TV model. In this paper, we are interested in both models, i.e., the $L^2$-TV and the $L^1$-TV model, and condense them into 
\begin{equation}\label{LtauTVmodel}
\min_{u\in BV(\Omega)} \left\{\Js_\tau(u;g) := \mcD_\tau(u;g) + \alpha \int_\Omega |Du| \right\}
\end{equation}
to obtain a combined model for removing Gaussian or impulsive noise, where $ \mcD_\tau(u;g):=\frac{1}{\tau}\|Tu-g\|_{L^\tau(\Omega)}^\tau$ for $\tau=1,2$. Note, that instead of \eqref{LtauTVmodel} one can consider the equivalent problem
\begin{equation}\label{LtauTVmodel2}
\min_{u\in BV(\Omega)} \lambda \mcD_\tau(u;g) +  \int_\Omega |Du|,
\end{equation}
where $\lambda=\frac{1}{\alpha}>0$. Other and different fidelity terms have been considered in connection with other type of noise models, as Poisson noise \cite{LeChaAsa}, multiplicative noise \cite{AubAuj}, Rician noise \cite{GetTonVes}. 
 For images which are simultaneously contaminated by Gaussian and impulse noise \cite{CaiChaNik} a combined $L^1$-$L^2$-data fidelity term has been recently suggested and demonstrated to work satisfactory \cite{HinLan2013}. 
 However, in this paper, we concentrate on images degraded by only one type of noise, i.e., either Gaussian noise or one type of impulse noise, and perhaps additionally corrupted by some measurement device.

\subsection{Choice of the scalar regularization parameter}

For the reconstruction of such images the proper choice of $\alpha$ in \eqref{LtauTVmodel} and $\lambda$ in \eqref{LtauTVmodel2} is delicate; cf. Fig. \ref{fig_motivation}. In particular, large $\alpha$ and small $\lambda$, which lead to an over-smoothed reconstruction, not only remove noise but also eliminate details in images. On the other hand, small $\alpha$ and large $\lambda$ lead to solutions which fit the given data properly but therefore retain noise in homogeneous regions. Hence a good reconstruction can be obtained by choosing $\alpha$ and respectively $\lambda$ such that a good compromise of the aforementioned effects are made. There are several ways of how to select $\alpha$ in \eqref{LtauTVmodel} and equivalently $\lambda$ in \eqref{LtauTVmodel2}, such as manually by the trial-and-error method, the unbiased predictive risk estimator method (UPRE) \cite{Mal,LinWohGuo}, the Stein unbiased risk estimator method (SURE) \cite{Ste1981,DonJoh,BluLui} and its generalizations \cite{DelVaiFadPey,Eld,GirElaEld}, the generalized cross-validation method (GCV) \cite{GolHeaWah,LiaLiNg,LinWohGuo,RamLiuRosNieFes}, the L-curve method \cite{Han,HanOLe}, the discrepancy principle \cite{Mor}, and the variational Bayes' approach \cite{BabMolKat}. Further parameter selection methods for general inverse problems can be found for example in \cite{EngHanNeu,ForNauPer,TikArs,Vog}.

\graphicspath{{./graphics/}}
\begin{figure*}[htbp!]
\begin{center}
\hspace{0cm}
    \subfigure[noisy image]{\includegraphics[height=4.1cm]{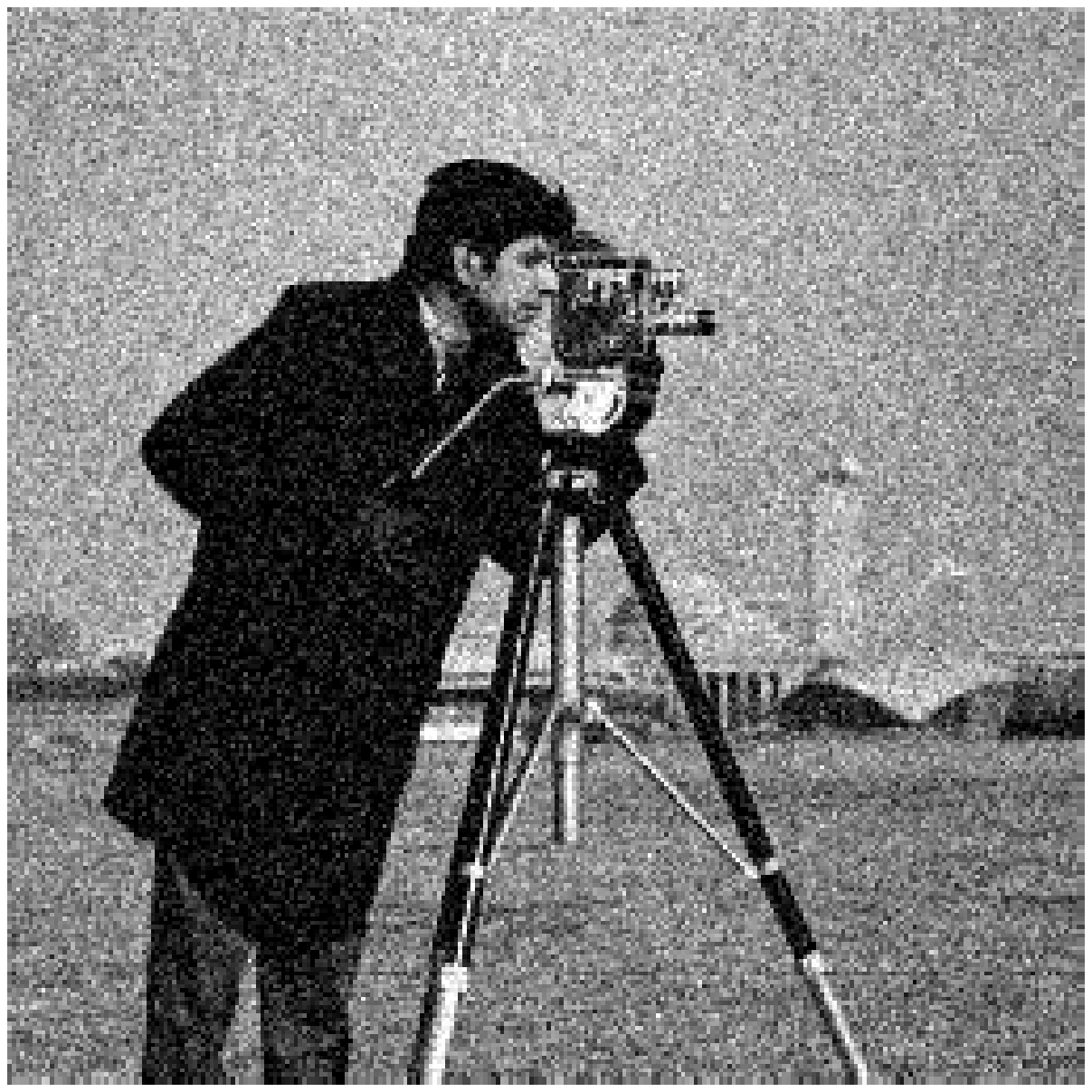}}
    \subfigure[over-smoothed reconstruction]{\includegraphics[height=4.1cm]{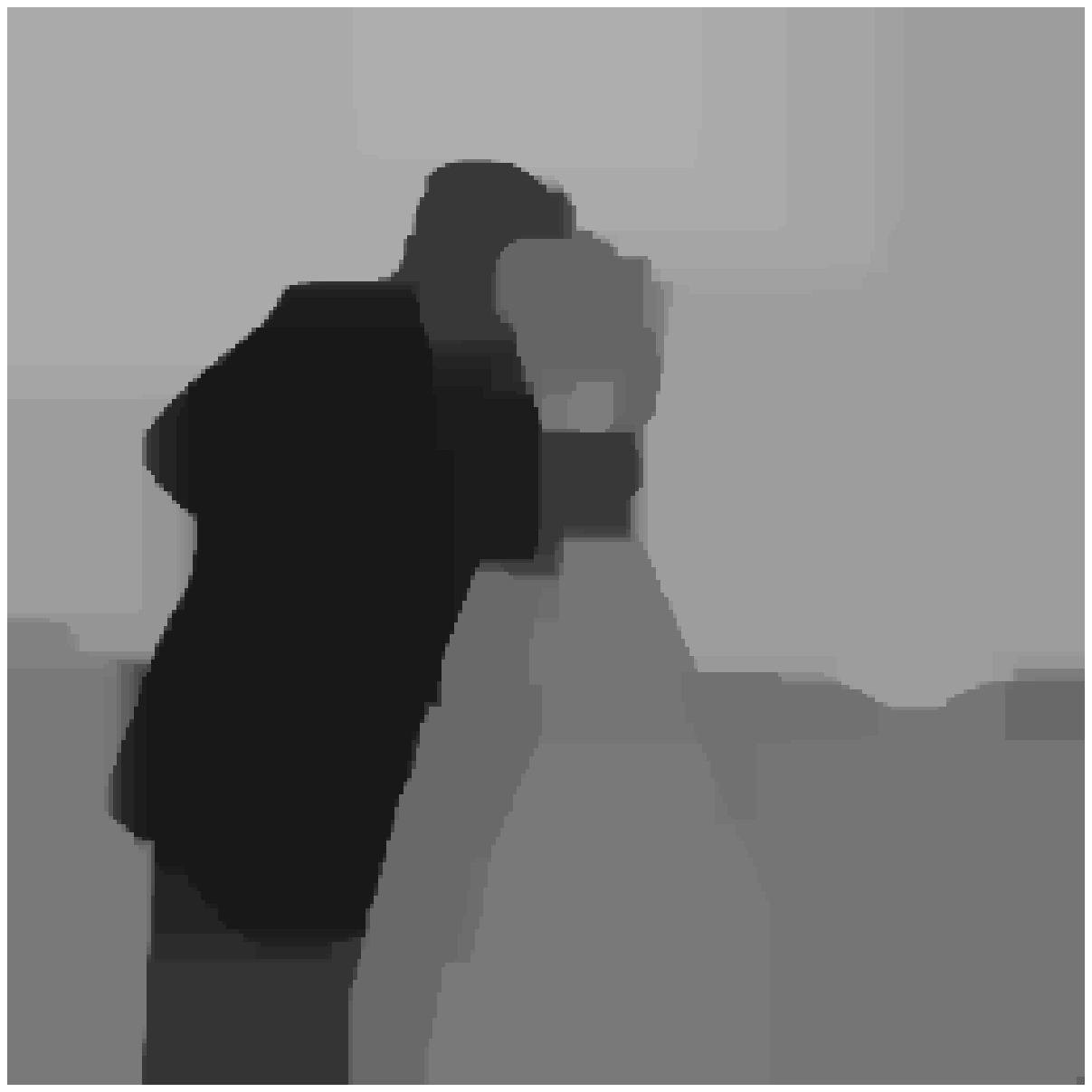}}
    \subfigure[over-fitted reconstruction]{\includegraphics[height=4.1cm]{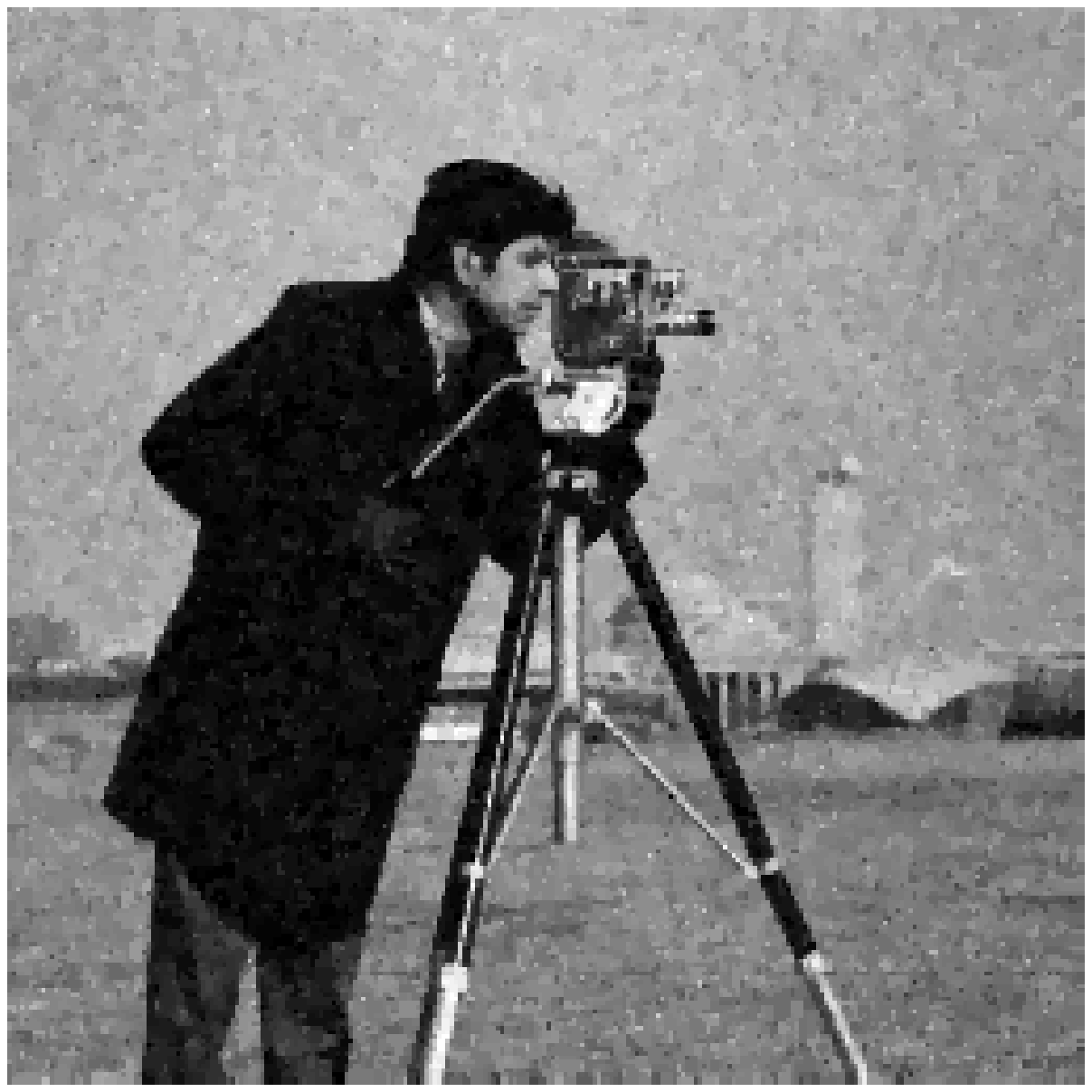}}
\end{center}    
\caption{\small \it Reconstruction of an image corrupted by Gaussian white noise with a ``relatively'' large parameter $\alpha$ in (b) and a ``relatively'' small parameter $\alpha$ in (c).} \label{fig_motivation}
\end{figure*}

Based on a training set of pairs $(g_k, \hat{u}_k)$, for $k=1, 2, \ldots , N \in \N$, where $g_k$ is the noisy observation and $\hat{u}_k$ represents the original image, for example in \cite{CalChuDeLSchVal,DeLSch,KunPoc} bilevel optimization approaches have been presented to compute suitable scalar regularization parameters of the corresponding image model. Since in our setting we do not have a training set given, these approaches are not applicable here.

Applying the discrepancy principle to estimate $\alpha$ in \eqref{LtauTVmodel} or $\lambda$ in \eqref{LtauTVmodel2}, the image restoration problem can be formulated as a constrained optimization problem of the form
\begin{equation}\label{conLtauTV}
\min_{u\in BV(\Omega)} \int_\Omega |Du| \quad \text{subject to (s.t.)} \quad \mcD_\tau(u;g)=\mathcal{B}_\tau
\end{equation}
where $\mathcal{B}_\tau:=\frac{\nu_\tau}{\tau} |\Omega|$ with $\nu_\tau>0$ being here a constant depending on the underlying noise, $\tau=1,2$, and $|\Omega|$ denoting the volume of $\Omega$; see Section \ref{Sec:APS} for more details. Note, that here we assume to know a-priori the noise level. In real applications this means that possibly in a first step a noise estimation has to be performed before the discrepancy principle may be used. However, in general it is easier to estimate the noise level than the regularization parameter \cite{Cha}. 

The constrained minimization problem \eqref{conLtauTV} is naturally linked to the unconstrained minimization problem \eqref{LtauTVmodel2} and accordingly to \eqref{LtauTVmodel}. In particular, there exists a constant $\lambda\geq 0$ such that the unconstrained problem \eqref{LtauTVmodel2} is equivalent to the constrained problem \eqref{conLtauTV} if $T$ does not annihilate constant functions, i.e., $T\in\mathcal{L}(L^2(\Omega))$ is such that $T\cdot 1 =1$; see Section \ref{AppendixA} for more details. Several methods based on the discrepancy principle and problem \eqref{conLtauTV} with $\tau=2$ have been proposed in the literature, see for example \cite{BloCha,Cha,HeChaZhaShi,WenCha} and references therein, while not so much attention has been given to the case $\tau=1$, see for example \cite{NgWeiYua,WeiBlaAub}. 

 
 \subsection{Spatially adaptive parameter}
Note, that a scalar regularization parameter might not be the best choice for every image restoration problem, since images usually have large homogeneous regions as well as parts with a lot of details. Actually it seems obvious that $\alpha$ should be small, or $\lambda$ should be large, in parts with small features in order to preserve the details. On the contrary $\alpha$ should be large, or $\lambda$ should be small, in homogeneous parts to remove noise considerable. With such a choice of a spatially varying weight we expect better reconstructions than with a globally constant parameter, as demonstrated for example in \cite{DonHinRin,HinRin}. 
 This motivated to consider multi-scale total variation models with spatially varying parameters initially suggested in \cite{RudOsh}. The multi-scale version of \eqref{LtauTVmodel} reads as
\begin{equation}\label{LtauTVmultimodel}
\min_{u\in BV(\Omega)} \mcD_\tau(u;g) + \int_\Omega \alpha(x) |Du|
\end{equation}
while for \eqref{LtauTVmodel2} one writes
\begin{equation}\label{LtauTVmultimodel2}
\min_{u\in BV(\Omega)} \frac{1}{\tau}\int_\Omega \lambda(x) |Tu-g|^\tau dx +  \int_\Omega |Du|,
\end{equation}
and in the sequel we refer to \eqref{LtauTVmultimodel} and \eqref{LtauTVmultimodel2} as the multi-scale $L^\tau$-TV model.

In \cite{StrCha1996} the influence of the scale of an image feature on the choice of $\alpha$ is studied and the obtained observations were later used in \cite{StrBloCha1997} to construct an updating scheme of $\alpha$. Based on \eqref{LtauTVmultimodel2} in \cite{BerCasRouSol} a piecewise constant function $\lambda$, where the pieces are defined by a partitioning of the image due to a pre-segmentation, is determined. In particular, for each segment a scalar $\lambda_i$, $i=1,\ldots,\#$pieces is computed by Uzawa's method \cite{Cia}.

Later it was noticed that stable choices of $\lambda$ respectively $\alpha$ should incorporate statistical properties of the noise. In this vein, in \cite{AlmBalCasHar,DonHinRin,GilSocZee} for the problem \eqref{LtauTVmultimodel2} automated update rules for $\lambda$ based on statistics of local constraints were proposed. In \cite{GilSocZee} a two level approach for variational denoising is considered, where in the first level noise and relevant texture are isolated in order to compute local constraints based on local variance estimation. In the second level a gradient descent method and an update formula for $\lambda(x)$ derived from the Euler-Lagrange equation is utilized. An adaptation of this approach to multiplicative noise can be found in \cite{LiNgShe}. For convolution type of problems in \cite{AlmBalCasHar} based on an estimate of the noise variance for each pixel an automatic updating scheme of $\lambda$ using Uzawa's method is created. This approach is improved in \cite{DonHinRin} by determining the fidelity weights due to the Gumbel statistic for the maximum of a finite number of random variables associated with localized image residuals and by incorporating hierarchical image decompositions, proposed in \cite{TadNezVes2004,TadNezVes2008}, to speed up the iterative parameter adjustment process. An adaptation of this approach to a total variation model with $L^1$ local constraints is studied in \cite{HinRin}. A different approach has been proposed in \cite{SutDelAuj} for image denoising only, where non-local means \cite{BuaColMor} are used to create a non-local data fidelity term. While in all these approaches the adjustment of $\lambda$ relies on the output of $T$ being a deteriorated image again, in \cite{HinLan2015} the method of \cite{DonHinRin} is adjusted to the situation where $T$ is an orthogonal wavelet transform or Fourier transform. Very recently also bilevel optimisation approaches are considered for computing spatially adaptive weights \cite{ChuDeLSch,HinRau,HinRauWuLan}.

\subsection{Contribution}

Our first contribution of this paper is to present a meth\-od which automatically computes the regularization parameter $\alpha$ in \eqref{LtauTVmodel} based on \eqref{conLtauTV} for $\tau=1$ as well as for $\tau=2$. Our approach is motivated by the parameter selection algorithm presented in \cite{Cha}, which was originally introduced for $L^2$-TV image denoising only, i.e., when $T=I$, where $I$ denotes the identity operator. In this setting the algorithm in \cite{Cha} is shown to converge to a parameter $\alpha^*$ such that the corresponding minimizer $u_{\alpha^*}$ of \eqref{LtauTVmodel} is also a solution of \eqref{conLtauTV}. The proof relies on the non-increase of the function $\alpha \mapsto \frac{\mcD_2(u_\alpha;g)}{\mathcal{B}_2}$. However, this important property does not hold for operators $T\not=I$ in general. Nevertheless, we generalize the algorithm from \cite{Cha} to problems of the type \eqref{LtauTVmodel} for $\tau=1,2$ and for general linear bounded operators $T$, e.g., $T$ might be a convolution type of operator. 
 Utilizing an appropriate update of $\alpha$, which is different than the one used in \cite{Cha}, we are able to show analytically and numerically that our approach indeed converges to the desired regularization parameter. Further, besides the general applicability of our proposed method it even possesses advantages for the case $\tau=2$ and $T=I$ over the algorithm from \cite{Cha} with respect to convergence. More precisely, in our numerics it turned out that our proposed method always needs less or at least the same number of iterations as the algorithm from \cite{Cha} till termination. 

Motivated by multi-scale total variation minimization, the second contribution of this paper is concerned with the automated selection of a suitable spatially varying $\alpha$ for the optimization problem in \eqref{LtauTVmultimodel} for $\tau=1,2$. Based on our considerations for an automatic scalar regularization parameter selection, we present algorithms where the adjustment of a locally varying $\alpha$ is fully automatic. Differently to the scalar case the adjustment of $\alpha$ is now based on local constraints, similarly as already considered for example in \cite{AlmBalCasHar,DonHinRin,HinRin}. However, our approach differs significantly from these previous works, where problem \eqref{LtauTVmultimodel2} is considered and Uzawa's method or an Uzawa-like method is utilized for the update of the spatially varying parameter. Note, that in Uzawa's method an additional parameter has to be introduced and chosen accordingly. We propose an update-scheme of $\alpha$ which does not need any additional parameter and hence is not similar to Uzawa's method. Moreover, differently to the approaches in \cite{DonHinRin,HinRin} where the initial regularization parameter $\lambda>0$ has to be set sufficiently small, in our approach any initial $\alpha>0$ is allowed. In this sense is our algorithm even more general than the ones presented in \cite{DonHinRin,HinRin}.

\subsection{Outline of the paper}

The remaining of the paper is organized as follows: In Section \ref{AppendixA} we revisit and discuss the connection between the constrained minimization problem \eqref{conLtauTV} and the unconstrained optimization problem \eqref{LtauTVmodel}.  
 Section \ref{Sec:APS} is devoted to the automated scalar parameter selection. In particular, we present our proposed method and analyze its convergence behavior. Based on local constraints we describe in Section \ref{Sec:locallyTV} our new locally adapted total variation algorithm in detail. Algorithms for performing total variation minimization for spatially varying $\alpha$ are presented in Section \ref{Sec:TVmin} where also their convergence properties are studied. To demonstrate the performance of the new algorithms we present in Section \ref{Sec:Numerics} numerical experiments for image denoising and image deblurring. Finally, in Section \ref{Sec:Conclusion} conclusions are drawn.

\section{Constrained versus unconstrained minimization problem}\label{AppendixA}
In this section we discuss the connection between the unconstrained minimization problem \eqref{LtauTVmodel} and the constrained optimization problem \eqref{conLtauTV}. For this purpose we introduce the following basic terminology. Let $\V$ be a locally convex space, $\V'$ its topological dual, and $\langle \cdot, \cdot \rangle$ the bilinear canonical pairing over $\V\times \V'$. The domain of a functional $\J : \V \to \R \cup\{+\infty\}$ is defined as the set
$$
\Dom(\J) := \{v\in \V \ : \ \J(v) < \infty\}.
$$
A functional $\J$ is called \emph{lower semicontinuous} (l.s.c) if for every weakly convergent subsequence $v^{(n)}\rightharpoonup \hat{v}$ we have 
$$
\liminf_{v^{(n)}\rightharpoonup \hat{v}} \J(v^{(n)}) \geq \J(\hat{v}).
$$
For a convex functional $\J : \V \to \R \cup\{+\infty\}$, we define the \emph{subdifferential} of $\J$ at $v\in \V$, as the set valued function $\partial \J(v) = \emptyset$ if $\J(v)=\infty$, and otherwise as
$$
\partial \J(v) = \{v^* \in \V' \ : \ \langle v^*, u-v\rangle + \J(v) \leq \J(u) \ \ \forall u\in \V \}.
$$
For any operator $T$ we denote by $T^*$ its adjoint and by $\mathcal{L}(L^2(\Omega))$ we denote the space of linear and continuous operators from $L^2(\Omega)$ to $L^2(\Omega)$. Moreover,  $g_\Omega$ describes the average value of the function $g\in L^1(\Omega)$ in $\Omega$ defined by $g_\Omega:=\frac{1}{|\Omega|} \int_\Omega g(x) \ \diff x$.

\begin{theorem}\label{Thm:ex}
Assume that $T\in \mathcal{L}(L^2(\Omega))$ does not annihilate constant functions, i.e., $T 1_\Omega\not=0$, where $1_\Omega(x) = 1$ for $x\in \Omega$. Then the problem 
\begin{equation}\label{cminPA}
\begin{split}
\min_{u\in BV(\Omega)}  \int_{\Omega} |D u|  \quad \text{ s.t. } \quad  \mcD_\tau(u;g) \leq \mathcal{B}_\tau 
\end{split}
\end{equation}
 has a solution for $\tau=1,2$.
\end{theorem}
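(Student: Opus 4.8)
The plan is to use the direct method of the calculus of variations. First I would record that the feasible set $C := \{u \in BV(\Omega) : \mcD_\tau(u;g) \leq \mathcal{B}_\tau\}$ is nonempty; this is where the discrepancy-principle calibration of $\mathcal{B}_\tau$ to the noise level enters, since it guarantees a feasible point (for instance a mollification of $g$ in the denoising case $T=I$, where $\inf_u \mcD_\tau(u;g)=0<\mathcal{B}_\tau$). Granting this, the infimum $m := \inf_{u \in C}\int_\Omega |Du|$ is finite and nonnegative, and I take a minimizing sequence $(u^{(n)}) \subset C$ with $\int_\Omega |Du^{(n)}| \to m$, so in particular $\sup_n \int_\Omega |Du^{(n)}| < \infty$.

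The heart of the argument, and the step I expect to be the main obstacle, is coercivity: upgrading the bound on $\int_\Omega |Du^{(n)}|$ to a uniform bound on the full norm $\|u^{(n)}\|_{BV(\Omega)}$. I would split each iterate as $u^{(n)} = u^{(n)}_\Omega\, 1_\Omega + w^{(n)}$, where $u^{(n)}_\Omega$ is the mean and $w^{(n)}$ has zero mean. The Poincar\'e--Sobolev inequality on $BV(\Omega)$ controls the oscillatory part, $\|w^{(n)}\|_{L^2(\Omega)} \leq C \int_\Omega |Du^{(n)}|$, which is bounded; here I use the continuous embedding $BV(\Omega) \hookrightarrow L^2(\Omega)$, valid in the image dimension $d\le 2$. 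It then remains only to bound the means $u^{(n)}_\Omega$, and this is exactly where the hypothesis $T 1_\Omega \neq 0$ is indispensable. From the constraint $\mcD_\tau(u^{(n)};g)\le\mathcal{B}_\tau$ the quantity $\|Tu^{(n)} - g\|_{L^\tau(\Omega)}$ is bounded, hence so is $\|Tu^{(n)}\|_{L^\tau(\Omega)}$; writing $Tu^{(n)} = u^{(n)}_\Omega\, T1_\Omega + Tw^{(n)}$ and using that $\|Tw^{(n)}\|_{L^\tau(\Omega)}$ is bounded (by continuity of $T$ together with the $L^2$-bound on $w^{(n)}$ and $L^2(\Omega)\hookrightarrow L^\tau(\Omega)$ on the bounded domain), I obtain that $|u^{(n)}_\Omega|\,\|T1_\Omega\|_{L^\tau(\Omega)}$ is bounded. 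Since $T1_\Omega\ne 0$ in $L^2(\Omega)$ and $\Omega$ is bounded, $\|T1_\Omega\|_{L^\tau(\Omega)} > 0$, so the means are bounded, and combining with the oscillation bound yields $\sup_n \|u^{(n)}\|_{BV(\Omega)} < \infty$. Note that without this hypothesis a sequence of constants $c_n 1_\Omega$ could have vanishing total variation while $T$ annihilates the constraint's control of $c_n$, destroying coercivity.

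With coercivity established the remainder is standard. By the compact embedding $BV(\Omega) \hookrightarrow\hookrightarrow L^1(\Omega)$ and weak-$*$ compactness in $BV$, I extract a subsequence (not relabeled) with $u^{(n)} \to \hat u$ strongly in $L^1(\Omega)$ and $Du^{(n)} \rightharpoonup^{*} D\hat u$; since $(u^{(n)})$ is also bounded in $L^2(\Omega)$, identifying the weak $L^2$-limit with the $L^1$-limit gives $u^{(n)} \rightharpoonup \hat u$ weakly in $L^2(\Omega)$. Lower semicontinuity of the total variation with respect to $L^1$-convergence yields $\int_\Omega |D\hat u| \leq \liminf_n \int_\Omega |Du^{(n)}| = m$. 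For feasibility of the limit, continuity of $T \in \mathcal{L}(L^2(\Omega))$ transfers weak convergence to $Tu^{(n)} \rightharpoonup T\hat u$ in $L^2(\Omega)$, and since $v \mapsto \mcD_\tau(v;g)$ is convex and continuous on $L^2(\Omega)$ it is weakly lower semicontinuous, so $\mcD_\tau(\hat u; g) \leq \liminf_n \mcD_\tau(u^{(n)};g) \leq \mathcal{B}_\tau$, i.e. $\hat u \in C$. Consequently $\int_\Omega |D\hat u| \geq m$ as well, so $\hat u$ attains the infimum and is the desired solution. The argument is uniform in $\tau \in \{1,2\}$, the only $\tau$-dependence being the weak lower semicontinuity of the respective fidelity, which holds in both cases.
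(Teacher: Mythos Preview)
Your argument is correct and is precisely the standard direct-method proof that the cited references carry out: the paper itself does not give a self-contained proof of this statement but simply refers the reader to \cite{ChaLio} and \cite{HinRin}. Your write-up supplies exactly the mechanism behind those citations --- Poincar\'e in $BV$ for the zero-mean part, the hypothesis $T1_\Omega\neq 0$ to control the means via the constraint, compactness of $BV\hookrightarrow L^1$, and weak lower semicontinuity of both the total variation and the convex fidelity --- so there is no methodological difference to discuss.

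One small point worth tightening: your treatment of nonemptiness of the feasible set is a little informal. In the general operator case you cannot simply mollify $g$; what one actually uses (and what is implicit in \cite{ChaLio,HinRin}) is that $g$ lies in the closure of $T(BV(\Omega)\cap L^2(\Omega))$, so $\inf_u\mcD_\tau(u;g)=0<\mathcal{B}_\tau$. Since the paper later strengthens the hypothesis to $T\cdot 1=1$ anyway, this is not a genuine gap, but you may want to state the assumption explicitly rather than gesture at the denoising case.
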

\begin{proof}
For a proof we refer the reader to \cite{ChaLio} and \cite{HinRin}.
\qed
\end{proof}
Moreover, we have the following statement.
\begin{proposition}\label{Thm:equi1}
Assume that $T\in \mathcal{L}(L^2(\Omega))$ is such that $T\cdot1=1$ and $\nu_\tau |\Omega| \leq \|g-g_\Omega\|^\tau_{L^\tau(\Omega)}$. Then problem \eqref{cminPA} is equivalent to the constrained minimization problem \eqref{conLtauTV} for $\tau=1,2$.
\end{proposition}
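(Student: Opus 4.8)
The plan is to exploit that the two problems differ only by carrying ``$\le \mathcal{B}_\tau$'' versus ``$= \mathcal{B}_\tau$'' as the constraint. Since the feasible set of \eqref{conLtauTV} is contained in that of \eqref{cminPA}, the whole claim reduces to showing that \emph{every} minimizer $u^*$ of the inequality-constrained problem \eqref{cminPA} already saturates the constraint, i.e.\ $\mcD_\tau(u^*;g)=\mathcal{B}_\tau$. Once this is established, the minimal value of \eqref{cminPA} is attained on the smaller feasible set of \eqref{conLtauTV}, the two minimal values coincide, and the solution sets are identical. Before starting I would rewrite the hypothesis in a more usable form: because $T$ fixes constants, $T(g_\Omega 1_\Omega)=g_\Omega 1_\Omega$, hence $\mcD_\tau(g_\Omega 1_\Omega;g)=\frac{1}{\tau}\|g-g_\Omega\|_{L^\tau(\Omega)}^\tau$, and the assumption $\nu_\tau|\Omega|\le\|g-g_\Omega\|_{L^\tau(\Omega)}^\tau$ says precisely that $\mcD_\tau(g_\Omega 1_\Omega;g)\ge\mathcal{B}_\tau$; that is, the constant image equal to the mean $g_\Omega$ is \emph{not} strictly feasible.

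For the key step I would argue by contradiction: suppose $u^*$ solves \eqref{cminPA} with $\mcD_\tau(u^*;g)<\mathcal{B}_\tau$, and consider the affine path $u_t:=(1-t)\,u^*+t\,g_\Omega 1_\Omega$ for $t\in[0,1]$ together with $\phi(t):=\mcD_\tau(u_t;g)$. The function $\phi$ is continuous (indeed convex, being the composition of the convex map $u\mapsto\frac{1}{\tau}\|Tu-g\|_{L^\tau}^\tau$ with an affine one), and it satisfies $\phi(0)<\mathcal{B}_\tau\le\phi(1)$ by the reformulated hypothesis. The intermediate value theorem then yields some $t^*\in(0,1]$ with $\phi(t^*)=\mathcal{B}_\tau$, so $u_{t^*}$ is feasible for \eqref{cminPA}. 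Since the constant $g_\Omega 1_\Omega$ has vanishing distributional gradient, $Du_t=(1-t)Du^*$ and therefore $\int_\Omega|Du_{t^*}|=(1-t^*)\int_\Omega|Du^*|$. Whenever $\int_\Omega|Du^*|>0$ this is strictly below $\int_\Omega|Du^*|$, contradicting the optimality of $u^*$; hence the constraint must be active.

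The main obstacle is the degenerate case $\int_\Omega|Du^*|=0$, in which $u^*$ is a constant $c\,1_\Omega$ and the shrinking trick produces no gain because a global total-variation minimizer is already at hand. Here I would instead invoke the hypothesis directly: a constant $c\,1_\Omega$ has $\mcD_\tau(c\,1_\Omega;g)=\frac{1}{\tau}\|c-g\|_{L^\tau(\Omega)}^\tau$, and for $\tau=2$ the mean $g_\Omega$ minimizes this over all constants, so $\mcD_2(c\,1_\Omega;g)\ge\frac{1}{2}\|g-g_\Omega\|_{L^2(\Omega)}^2\ge\mathcal{B}_2$; combined with feasibility $\mcD_2(u^*;g)\le\mathcal{B}_2$ this forces equality, again saturating the constraint. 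I expect this constant-image case to be the genuinely delicate point, especially for $\tau=1$, where the $L^1$ deviation is minimized by the median rather than by the mean; there I would either sharpen the estimate so that the hypothesis still excludes a strictly feasible constant, or argue directly that no constant lies strictly inside the feasible set. With the constraint shown to be active in all cases, the reduction from the first paragraph closes the proof for both $\tau=1$ and $\tau=2$.
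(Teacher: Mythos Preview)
Your core idea---interpolate from a minimizer of \eqref{cminPA} toward the constant $g_\Omega$ and use the intermediate value theorem to hit the level set $\mcD_\tau=\mathcal{B}_\tau$---is exactly the paper's approach. The paper writes $\tilde u=u+g_\Omega$ and considers $s\mapsto su+g_\Omega$, which is your $u_t$ with $s=1-t$.

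The difference is in what each argument concludes from the interpolant. You aim to show that \emph{every} minimizer $u^*$ of \eqref{cminPA} already saturates the constraint, which is a statement about the solution set. The paper instead takes the interpolated point $u'$ (which saturates by construction) and argues directly that \emph{it} solves \eqref{conLtauTV}: since $\int_\Omega|Du'|=s\int_\Omega|D\tilde u|\le\int_\Omega|D\tilde u|$, and since $\int_\Omega|D\tilde u|$ is already below the infimum of \eqref{conLtauTV} (the latter's feasible set being smaller), $u'$ attains that infimum. This framing never needs to decide whether $\tilde u$ itself sits on the boundary, and so the degenerate case $\int_\Omega|D\tilde u|=0$ causes no trouble at all: $u'$ is then just another constant with TV zero and the right residual.

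Your worry about the constant case for $\tau=1$ is therefore a real obstruction to \emph{your} formulation but not to the proposition. In fact the route you sketch (``no constant lies strictly inside the feasible set'') does not go through: the $L^1$ deviation $c\mapsto\|c-g\|_{L^1}$ is minimized at the median, not at $g_\Omega$, so the hypothesis $\|g-g_\Omega\|_{L^1}\ge\nu_1|\Omega|$ does not prevent a strictly feasible constant. Switching to the paper's conclusion---exhibit $u'$ as a solution of \eqref{conLtauTV} rather than forcing $u^*$ onto the boundary---removes the case distinction and gives a uniform proof for $\tau=1,2$.
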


\begin{proof}
For $\tau=2$ the statement is shown in \cite{ChaLio}. We state the proof for $\tau=1$ by noting it follows similar arguments as for $\tau=2$. Let $\tilde{u}$ be a solution of \eqref{cminPA}. Note, that there exists $u\in BV(\Omega)$ such that $\tilde{u}= u+g_\Omega$. We consider now the continuous function $f(s)=\|T(s u + g_\Omega)- g\|_{L^1(\Omega)}$ for $s\in[0,1]$. Note that $f(1)=\|T\tilde{u} - g\|_{L^1(\Omega)}\leq \nu_1|\Omega|$ and $f(0)=\|T g_\Omega - g\|_{L^1(\Omega)} = \|g - g_\Omega\|_{L^1(\Omega)}\geq \nu_1 |\Omega|$, since $T\cdot1=1$, and hence there exists some $s\in[0,1]$ such that $f(s)=\nu_1 |\Omega|$. Set $u'=su$ which satisfies $\|Tu' - g\|_{L^1(\Omega)}=\nu_1|\Omega|$ and 
$$
 \int_{\Omega} |D u'| = s  \int_{\Omega} |D u| \leq \liminf_{n\to \infty}  \int_{\Omega} |D u_{n}|,
$$
where $(u_n)_n$ is a minimizing sequence of \eqref{conLtauTV}. Hence $u'$ is a solution of \eqref{conLtauTV}. \qed
\end{proof}

Now we are able to argue the equivalence of the problems \eqref{LtauTVmodel} and \eqref{conLtauTV}.
\begin{theorem}\label{Thm:equi}
Let $T\in \mathcal{L}(L^2(\Omega))$ be such that $T\cdot 1 =1$ and $\nu_\tau |\Omega| \leq \|g-g_\Omega\|^\tau_{L^\tau(\Omega)}$. Then there exists $\alpha\geq 0$ such that the constrained minimization problem \eqref{conLtauTV} is equivalent to the unconstrained problem \eqref{LtauTVmodel}, i.e., $u$ is a solution of \eqref{conLtauTV} if and only if $u$ solves \eqref{LtauTVmodel}.
\end{theorem}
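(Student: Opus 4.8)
The plan is to treat \eqref{conLtauTV} as a convex program and to recover the regularization parameter $\alpha$ as the reciprocal of a Lagrange multiplier attached to the fidelity constraint. The first step is to pass from the equality constraint to an inequality: by Proposition \ref{Thm:equi1} the problem \eqref{conLtauTV} and the inequality-constrained problem \eqref{cminPA} have the same solution set, and \eqref{cminPA} is genuinely convex because both $u\mapsto \int_\Omega|Du|$ and $u\mapsto\mcD_\tau(u;g)$ are convex and the feasible set $\{\mcD_\tau(\cdot;g)\le \mathcal{B}_\tau\}$ is convex; existence of a minimizer $u^*$ is granted by Theorem \ref{Thm:ex}. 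I would then show the constraint is active at $u^*$, i.e. $\mcD_\tau(u^*;g)=\mathcal{B}_\tau$: if it were strictly feasible, $u^*$ would be a global minimizer of the total variation and hence constant, but every relevant constant $c$ satisfies $\mcD_\tau(c;g)=\frac1\tau\|c-g\|_{L^\tau(\Omega)}^\tau\ge\frac1\tau\|g-g_\Omega\|_{L^\tau(\Omega)}^\tau\ge\mathcal{B}_\tau$ by the hypothesis $\nu_\tau|\Omega|\le\|g-g_\Omega\|_{L^\tau(\Omega)}^\tau$, so strict feasibility is impossible.

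Next I would form the Lagrangian $L(u)=\int_\Omega|Du|+\mu\bigl(\mcD_\tau(u;g)-\mathcal{B}_\tau\bigr)$ and invoke convex duality to produce a multiplier $\mu>0$ with $u^*\in\argmin_u L(u)$ and complementary slackness. Setting $\alpha:=1/\mu$, the minimality of $u^*$ for $L$ gives, for every $w\in BV(\Omega)$,
$$
\int_\Omega|Du^*|+\mu\,\mcD_\tau(u^*;g)\ \le\ \int_\Omega|Dw|+\mu\,\mcD_\tau(w;g),
$$
where the terms $-\mu\mathcal{B}_\tau$ cancel. Multiplying by $\alpha=1/\mu>0$ yields $\Js_\tau(u^*;g)\le \Js_\tau(w;g)$ for all $w$, so $u^*$ solves \eqref{LtauTVmodel}; the identical computation applies to every solution of \eqref{conLtauTV}, establishing the inclusion from the constrained to the unconstrained problem.

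For the converse I would argue that every minimizer $w^*$ of \eqref{LtauTVmodel} for this $\alpha$ satisfies the constraint with equality and attains the optimal total variation. Since $L(u)=\mu\,\Js_\tau(u;g)-\mu\mathcal{B}_\tau$ with $\mu>0$, the minimizers of \eqref{LtauTVmodel} coincide with $\argmin_u L(u)$; this set is convex and $\Js_\tau$ is constant on it, so writing $L$ as a sum of two convex functionals forces both $\int_\Omega|D\cdot|$ and $\mcD_\tau(\cdot;g)$ to be affine along any segment of the set. For $\tau=2$ the strict convexity of $\mcD_2$ in $Tu$ then makes $\mcD_2$ constant on the whole set, hence $\mcD_2(w^*;g)=\mcD_2(u^*;g)=\mathcal{B}_2$ and $\int_\Omega|Dw^*|=\int_\Omega|Du^*|$, so $w^*$ is feasible and TV-optimal, i.e. a solution of \eqref{conLtauTV}.

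The main obstacle is the existence of the multiplier $\mu$ in the infinite-dimensional $BV$ setting: a constraint qualification (a strictly feasible point, Slater's condition) must be secured, and the degenerate case $\mu=0$ — in which $u^*$ is merely a constant and $\alpha=1/\mu$ fails to be finite — must be excluded, which is precisely where the hypothesis $\nu_\tau|\Omega|\le\|g-g_\Omega\|_{L^\tau(\Omega)}^\tau$ is used to keep the constraint binding. A secondary delicate point is the converse inclusion for $\tau=1$: here $\mcD_1$ is not strictly convex, so the claim that all unconstrained minimizers share the same residual cannot be obtained from strict convexity and must be handled by a separate argument on the minimizing set.
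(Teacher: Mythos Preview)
Your approach is essentially the paper's: both extract $\alpha$ as a Lagrange/Fenchel multiplier for the inequality-constrained problem \eqref{cminPA}. The paper makes your step ``invoke convex duality to produce a multiplier'' concrete by encoding the constraint as the indicator $\G$ of $\{\|\cdot-g\|_{L^1}\le\nu_1|\Omega|\}$ and applying the subdifferential sum and chain rules from \cite{EkeTem}, so that the multiplier appears via $\partial\G(Tu)=\{\alpha\,\partial\|Tu-g\|_{L^1(\Omega)}:\alpha\ge0\}$ at the active boundary; this simultaneously handles the constraint qualification you flag. For the converse direction the paper simply asserts (``obviously'') that any minimizer of \eqref{LtauTVmodel} with this $\alpha$ solves \eqref{conLtauTV}, so your worry about the $\tau=1$ converse is legitimate but is not treated more carefully in the paper either.

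One concrete error to fix: your inequality $\mcD_\tau(c;g)=\tfrac1\tau\|c-g\|_{L^\tau}^\tau\ge\tfrac1\tau\|g-g_\Omega\|_{L^\tau}^\tau$ fails for $\tau=1$, because the $L^1$-distance from $g$ to constants is minimized at the \emph{median} of $g$, not the mean $g_\Omega$, and the median residual can be strictly smaller than $\|g-g_\Omega\|_{L^1}$. This does not break your argument, since you do not actually need the independent constant computation: you already invoked Proposition~\ref{Thm:equi1}, and once $u^*$ solves \eqref{conLtauTV} the equality $\mcD_\tau(u^*;g)=\mathcal{B}_\tau$ holds by definition. Drop the faulty auxiliary step and work directly with a solution of \eqref{conLtauTV}.
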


\begin{proof}
For $\tau=2$ the proof can be found in \cite[Prop. 2.1]{ChaLio}. By similar arguments one can show the statement for $\tau=1$, which we state here.

Set $ \mathcal{R}(u):= \int_{\Omega} |D u|$ and 
$$
\G(u)=\
\begin{cases}
+\infty  & \text{if } \|u-g\|_{L^1(\Omega)} >\nu_1 |\Omega|,\\
0         & \text{if } \|u-g\|_{L^1(\Omega)} \leq\nu_1 |\Omega|.
\end{cases}
$$
Notice, that $\mathcal{R}$ and $\G$ are convex l.s.c functions and problem \eqref{cminPA} is equivalent to $\min_u \mathcal{R}(u)+\G(Tu)$. We have $\Dom(\TV) = BV(\Omega) \cap L^2(\Omega)$ and $\Dom (\G)=\{u\in L^2(\Omega) : \G(u)<+\infty\}$. Since $g\in \overline{T\Dom(\TV)}$, there exists $\tilde u\in \Dom(\TV)$ with $\|T\tilde u - g\|_{L^1(\Omega)} \leq \nu_1|\Omega|/2$. As $T\in \mathcal{L}(L^2(\Omega))$ is continuous, $\G\circ T$ is continuous at $\tilde{u}$. Hence, by \cite[Prop. 5.6, p. 26]{EkeTem} we obtain
$$
\partial (\TV +\G\circ T)(u) = \partial \TV(u) + \partial(\G\circ T)(u) 
$$
for all $u$. Further, $\G$ is continuous at $T\tilde{u}$, and hence by \cite[Prop. 5.7, p. 27]{EkeTem} we have for all $u$,
$$
\partial(\G\circ T)(u) = T^*\partial \G(Tu)
$$
where
$\partial \G(u)=\{0\}$ if $\|u-g\|_{L^1(\Omega)}<\nu_1|\Omega|$ and $\partial \G(u)=\{\alpha \partial (\|u-g\|_{L^1(\Omega)}), \alpha\geq 0\}$ if $\|u-g\|_{L^1(\Omega)}=\nu_1|\Omega|$.

If $u$ is a solution of \eqref{cminPA} and hence of \eqref{conLtauTV}, then 
$$
0\in \partial (\TV +\G\circ T)(u) =  \partial \TV(u) + T^*\partial \G(Tu).
$$
Since any solution of \eqref{conLtauTV} satisfies $\|Tu-g\|_{L^1(\Omega)}=\nu_1 |\Omega|$, this shows that there exists an $\alpha\geq 0$ such that 
$$
0\in \partial \TV(u) + \alpha T^*\partial (\|Tu-g\|_{L^1(\Omega)}).
$$
Hence for this $\alpha\geq 0$, $u$ is a minimizer of the problem in \eqref{LtauTVmodel}. 

Conversely, a minimizer $u$ of \eqref{LtauTVmodel} with the above $\alpha$ is obviously a solution of \eqref{conLtauTV} with $\|Tu-g\|_{L^1(\Omega)}=\nu_1 |\Omega|$. This concludes the proof.\qed
\end{proof}


Note, that $\|T u_\alpha -g\|_{L^1(\Omega)}$ is (only) convex with respect to $Tu_\alpha$, and hence a minimizer of \eqref{L1TVmodel} is in general not unique even in the simple case when $T=I$, i.e., for two minimizers $u_\alpha^1$ and $u_\alpha^2$ in general we have $T u_\alpha^1 \not=T u_\alpha^2$. On the contrary, $\|T u_\alpha -g\|_{L^2(\Omega)}^2$ is strictly convex with respect to $Tu_\alpha$, i.e., for two minimizers $u_\alpha^1$ and $u_\alpha^2$ of \eqref{L2TVmodel} we have $T u_\alpha^1 =T u_\alpha^2$. Moreover, the function $\alpha \mapsto \mcD_1(u_\alpha;g)$  is in general not continuous \cite{ChaEse}, while $\alpha \mapsto \mcD_2(u_\alpha;g)$ indeed is continuous \cite{ChaLio}, where $u_\alpha$ is a respective minimizer of \eqref{LtauTVmodel}. Hence we have the following further properties:

\begin{lemma}\label{Lemma:Fnondecreasing}
Let $u_\alpha$ be a minimizer of \eqref{LtauTVmodel} 
 then $\alpha \mapsto \mcD_\tau(u_\alpha;g)$ is non-decreasing for $\tau =1,2$. Moreover, $\alpha \mapsto \mcD_2(u_\alpha;g)$ maps $\R^+$ onto $[0,\|g- g_\Omega \|_{L^2(\Omega)}^2]$. 
\end{lemma}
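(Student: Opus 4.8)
The plan is to treat the monotonicity and the range statement separately, since only the former uses both values of $\tau$ while the latter relies on structure special to $\tau=2$.

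For the monotonicity I would use the standard comparison trick and dispatch both cases $\tau=1,2$ at once. Fix $0<\alpha_1<\alpha_2$, let $u_1,u_2$ be any corresponding minimizers of \eqref{LtauTVmodel}, and abbreviate $\mcR(u)=\int_\Omega|Du|$. Testing optimality of $u_1$ against $u_2$ and of $u_2$ against $u_1$ yields
$$\mcD_\tau(u_1;g)+\alpha_1\mcR(u_1)\le\mcD_\tau(u_2;g)+\alpha_1\mcR(u_2),$$
$$\mcD_\tau(u_2;g)+\alpha_2\mcR(u_2)\le\mcD_\tau(u_1;g)+\alpha_2\mcR(u_1).$$
Adding these cancels the fidelity terms and gives $(\alpha_1-\alpha_2)\mcR(u_1)\le(\alpha_1-\alpha_2)\mcR(u_2)$, hence $\mcR(u_1)\ge\mcR(u_2)$ because $\alpha_1<\alpha_2$. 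Feeding $\mcR(u_2)\le\mcR(u_1)$ back into the first inequality and using $\alpha_1>0$ gives $\mcD_\tau(u_1;g)\le\mcD_\tau(u_2;g)$, the claimed non-decrease. This is purely algebraic, independent of $\tau$, and valid for any selection of (possibly non-unique) minimizers, so it covers the non-continuous case $\tau=1$ as well.

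For the range of $\mcD_2$ I would use the continuity of $\alpha\mapsto\mcD_2(u_\alpha;g)$ from \cite{ChaLio} and identify the two endpoints. Comparing $u_\alpha$ with the constant $g_\Omega$ and using $T1=1$ (so $Tg_\Omega=g_\Omega$ and $\mcR(g_\Omega)=0$) gives, for every $\alpha$,
$$\mcD_2(u_\alpha;g)+\alpha\mcR(u_\alpha)\le\mcD_2(g_\Omega;g)+\alpha\mcR(g_\Omega)=\tfrac12\|g-g_\Omega\|_{L^2(\Omega)}^2,$$
so $\mcD_2(u_\alpha;g)\le\tfrac12\|g-g_\Omega\|_{L^2(\Omega)}^2$ and $\mcR(u_\alpha)\le\tfrac1{2\alpha}\|g-g_\Omega\|_{L^2(\Omega)}^2\to0$ as $\alpha\to\infty$. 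By the Poincar\'e--Wirtinger inequality in $BV$ this forces $u_\alpha-(u_\alpha)_\Omega\to0$ in $L^2$; then $T1=1$ and boundedness of $T$ give $Tu_\alpha-(u_\alpha)_\Omega\to0$, and since constants minimise $c\mapsto\tfrac12\|c-g\|^2$ at $c=g_\Omega$ we obtain $\mcD_2(u_\alpha;g)\to\tfrac12\|g-g_\Omega\|^2_{L^2(\Omega)}$. For the lower end, testing against a fixed $v\in BV(\Omega)\cap L^2(\Omega)$ gives $\mcD_2(u_\alpha;g)\le\mcD_2(v;g)+\alpha\mcR(v)$, whence $\limsup_{\alpha\to0^+}\mcD_2(u_\alpha;g)\le\inf_v\mcD_2(v;g)=0$ by reachability of $g$ (i.e. $g\in\overline{T(BV\cap L^2)}$, as already used in Theorem~\ref{Thm:equi}); together with $\mcD_2\ge0$ the limit is $0$.

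Continuity and these two limits give that the range is dense in $[0,\tfrac12\|g-g_\Omega\|^2_{L^2(\Omega)}]$; the one remaining point, which I expect to be the main obstacle, is upgrading ``approached'' to ``attained'' at the upper endpoint so that the image is the closed interval. For this I would use the optimality condition $0\in T^*(Tu_\alpha-g)+\alpha\,\partial\mcR(u_\alpha)$: the mean $g_\Omega$ is a minimizer precisely when $\tfrac1\alpha T^*(g-g_\Omega)\in\partial\mcR(g_\Omega)$, and since $\int_\Omega T^*(g-g_\Omega)=\langle g-g_\Omega,T1\rangle=0$ while $\partial\mcR(g_\Omega)$ is exactly the mean-zero ball of the dual total-variation seminorm $\|h\|_*:=\sup\{\langle h,w\rangle:\mcR(w)\le1\}$, this holds for all $\alpha\ge\alpha^*:=\|T^*(g-g_\Omega)\|_*$; the latter is finite because in two dimensions the embedding $BV(\Omega)\hookrightarrow L^2(\Omega)$ bounds $\|\cdot\|_*$ by the $L^2$-norm. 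Thus $\mcD_2(u_\alpha;g)=\tfrac12\|g-g_\Omega\|^2_{L^2(\Omega)}$ already for finite $\alpha\ge\alpha^*$, and with continuity and monotonicity the map is onto the whole interval. I would finally flag the normalisation: with $\mcD_2=\tfrac12\|Tu-g\|^2$ the maximal value is $\tfrac12\|g-g_\Omega\|^2_{L^2(\Omega)}$, which is consistent with the constraint level $\mathcal{B}_2=\tfrac{\nu_2}{2}|\Omega|$ lying below it under the standing hypothesis $\nu_2|\Omega|\le\|g-g_\Omega\|^2_{L^2(\Omega)}$.
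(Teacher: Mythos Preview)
Your proof is correct. The paper itself gives no argument here at all: its proof of Lemma~\ref{Lemma:Fnondecreasing} is a one-line citation to \cite{ChaLio,ChaEse}, so there is nothing to compare at the level of technique. Your monotonicity argument (the two optimality inequalities added together to force $\mcR(u_1)\ge\mcR(u_2)$, then fed back) is exactly the standard device used in those references, and your treatment of the range of $\mcD_2$---continuity from \cite{ChaLio}, comparison with the constant $g_\Omega$ for the upper bound, the optimality condition $\frac{1}{\alpha}T^*(g-g_\Omega)\in\partial\mcR(g_\Omega)$ together with finiteness of $\|T^*(g-g_\Omega)\|_*$ to obtain attainment at finite $\alpha$---is precisely how \cite{ChaLio} proceeds in the case $T=I$, adapted to bounded $T$ with $T1=1$.

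Your normalisation remark is also well taken: with $\mcD_2(u;g)=\tfrac12\|Tu-g\|_{L^2}^2$ the supremum of $\mcD_2(u_\alpha;g)$ is $\tfrac12\|g-g_\Omega\|_{L^2}^2$, not $\|g-g_\Omega\|_{L^2}^2$ as written in the lemma. That this is a typo in the statement (and not a different convention) is confirmed by the subsequent use in Proposition~\ref{Prop:alpha:estimate}, which relies on $\|u_\alpha-g\|_{L^2}^2\le\|g-g_\Omega\|_{L^2}^2$, i.e.\ on the bound $\mcD_2(u_\alpha;g)\le\tfrac12\|g-g_\Omega\|_{L^2}^2$.
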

\begin{proof} For a proof see \cite{ChaLio,ChaEse}.\qed
\end{proof}

\begin{proposition}\label{Prop:alpha:estimate}
If $u_{\alpha_i}$ is a minimizer of 
$$
\mcE(u,\alpha_i) := \|u-g\|_{L^2(\Omega)}^2 +\alpha_i  \int_{\Omega} |D u|
$$
 for $i=1,2$, then we have
$$
\| u_{\alpha_1} - u_{\alpha_2} \|_{L^2(\Omega)} \leq C \left\|g- g_\Omega \right\|_{L^2(\Omega)}.
$$
with $C:=\min\left\{2\left|\frac{\alpha_2 - \alpha_1}{\alpha_2 + \alpha_1}\right|, \left|\frac{\alpha_2 - \alpha_1}{\alpha_2 + \alpha_1}\right|^{\frac{1}{2}}\right\}$
\end{proposition}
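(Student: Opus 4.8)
The plan is to recognize each minimizer as the value at $g$ of a resolvent of one fixed maximal monotone operator, and then to invoke two complementary classical resolvent estimates—one linear, one of square-root type—that correspond precisely to the two expressions inside the minimum defining $C$. Since the fidelity $\|u-g\|_{L^2(\Omega)}^2$ is strictly convex, $\mcE(\cdot,\alpha_i)$ has a unique minimizer $u_{\alpha_i}$, characterized by the Euler--Lagrange inclusion $0\in 2(u_{\alpha_i}-g)+\alpha_i\,\partial\mcR(u_{\alpha_i})$ with $\mcR(u)=\int_\Omega|Du|$. Writing $\lambda_i:=\alpha_i/2$ and letting $A:=\partial\mcR$ (which is maximal monotone on $L^2(\Omega)$ because $\mcR$ is proper, convex and lower semicontinuous), this inclusion reads $g\in(I+\lambda_i A)u_{\alpha_i}$, i.e. $u_{\alpha_i}=(I+\lambda_i A)^{-1}g$. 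Thus the problem is reduced to controlling how the resolvent of the fixed operator $A$ at the fixed point $g$ varies with its parameter $\lambda$.

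The single ingredient that ties these estimates to $\|g-g_\Omega\|_{L^2(\Omega)}$ is the a priori residual bound $\|u_{\alpha_i}-g\|_{L^2(\Omega)}\le\|g-g_\Omega\|_{L^2(\Omega)}$, which I would obtain by using the constant $g_\Omega$ as a competitor: since $\mcR(g_\Omega)=0$, minimality gives $\|u_{\alpha_i}-g\|_{L^2(\Omega)}^2\le\mcE(u_{\alpha_i},\alpha_i)\le\mcE(g_\Omega,\alpha_i)=\|g-g_\Omega\|_{L^2(\Omega)}^2$. (The monotonicity of $\alpha\mapsto\mcD_2(u_\alpha;g)$ from Lemma~\ref{Lemma:Fnondecreasing} may be used in addition, but it is not strictly needed.)

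For the square-root bound I would test the monotonicity of $A$ against $p_i:=(g-u_{\alpha_i})/\lambda_i\in Au_{\alpha_i}$. Using $u_{\alpha_1}-u_{\alpha_2}=\lambda_2 p_2-\lambda_1 p_1$, a short computation turns $\langle p_1-p_2,\,u_{\alpha_1}-u_{\alpha_2}\rangle\ge0$ into
\[
\|u_{\alpha_1}-u_{\alpha_2}\|_{L^2(\Omega)}^2\le\frac{\alpha_1-\alpha_2}{\alpha_1+\alpha_2}\Big(\|u_{\alpha_1}-g\|_{L^2(\Omega)}^2-\|u_{\alpha_2}-g\|_{L^2(\Omega)}^2\Big),
\]
where the symmetric denominator $\alpha_1+\alpha_2$ emerges on its own (this is firm nonexpansiveness in disguise). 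Bounding the right-hand side by $\big|\tfrac{\alpha_2-\alpha_1}{\alpha_2+\alpha_1}\big|\,\|g-g_\Omega\|_{L^2(\Omega)}^2$ via the a priori bound and taking square roots yields the factor $\big|\tfrac{\alpha_2-\alpha_1}{\alpha_2+\alpha_1}\big|^{1/2}$. For the linear bound I would instead invoke the resolvent identity $(I+\lambda A)^{-1}g=(I+\mu A)^{-1}\big(\tfrac{\mu}{\lambda}g+(1-\tfrac{\mu}{\lambda})(I+\lambda A)^{-1}g\big)$ together with the nonexpansiveness of $(I+\mu A)^{-1}$, which, when the resolvent carrying the \emph{larger} parameter is expanded, gives $\|u_{\alpha_1}-u_{\alpha_2}\|_{L^2(\Omega)}\le\frac{|\alpha_1-\alpha_2|}{\max(\alpha_1,\alpha_2)}\,\|g-g_\Omega\|_{L^2(\Omega)}$ (the a priori bound controls the remaining residual). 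Finally the two estimates are combined: the linear one is symmetrized using $\max(\alpha_1,\alpha_2)\ge(\alpha_1+\alpha_2)/2$, which produces the factor $2$ and the denominator $\alpha_1+\alpha_2$, and the assertion follows by taking the minimum.

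The step I expect to be most delicate is matching the precise constants with the symmetric denominator $\alpha_1+\alpha_2$. The resolvent identity naturally produces a denominator equal to a single parameter, so one must expand the resolvent carrying the larger $\alpha$—so that the residual subsequently bounded is the larger-parameter one—and then pay a factor $2$ to symmetrize; conversely the monotonicity route delivers $\alpha_1+\alpha_2$ directly, but only after the residual difference $\|u_{\alpha_1}-g\|^2-\|u_{\alpha_2}-g\|^2$ is correctly controlled by $\|g-g_\Omega\|^2$. Keeping track of which inequality is sharp in which regime (small versus large parameter ratio) is exactly what justifies taking the minimum of the two bounds.
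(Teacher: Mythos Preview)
Your argument is correct, and for the square-root bound it is essentially the paper's argument in different language: the monotonicity inequality $\langle p_1-p_2,u_{\alpha_1}-u_{\alpha_2}\rangle\ge 0$ for $p_i\in\partial\mcR(u_{\alpha_i})$ is precisely what the paper obtains (via \cite[Lemma~10.2]{Bar}, a strong-convexity estimate) when it sums the two energy comparisons and simplifies; both routes land on the same master inequality
\[
\|u_{\alpha_1}-u_{\alpha_2}\|_{L^2(\Omega)}^2\le\frac{\alpha_2-\alpha_1}{\alpha_1+\alpha_2}\Big(\|u_{\alpha_2}-g\|_{L^2(\Omega)}^2-\|u_{\alpha_1}-g\|_{L^2(\Omega)}^2\Big),
\]
and then bound the bracket by $\|g-g_\Omega\|_{L^2(\Omega)}^2$.

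The genuine difference is in the linear bound. The paper does \emph{not} leave this master inequality: it factors $a^2-b^2=(a+b)(a-b)$ with $a=\|u_{\alpha_2}-g\|$, $b=\|u_{\alpha_1}-g\|$, uses the reverse triangle inequality $|a-b|\le\|u_{\alpha_1}-u_{\alpha_2}\|$, and cancels one power of $\|u_{\alpha_1}-u_{\alpha_2}\|$ to get the constant $2\bigl|\tfrac{\alpha_2-\alpha_1}{\alpha_2+\alpha_1}\bigr|$ directly. Your route via the resolvent identity and nonexpansiveness is an independent argument; it first gives the (sharper) intermediate estimate with denominator $\max(\alpha_1,\alpha_2)$, which you then relax to $(\alpha_1+\alpha_2)/2$ to match the statement. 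Both approaches are short; the paper's has the elegance of extracting everything from a single inequality, while yours makes the resolvent structure explicit and actually yields a slightly better constant before the final symmetrization.
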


\begin{proof}
By \cite[Lemma 10.2]{Bar} we have
\begin{equation*}
\begin{split}
&\frac{1}{\alpha_1} \| u_{\alpha_1} - u_{\alpha_2} \|_{L^2(\Omega)}^2 \leq\frac{1}{\alpha_1} \left( \mcE(u_{\alpha_2},\alpha_1) - \mcE(u_{\alpha_1},\alpha_1)\right) \\
&\frac{1}{\alpha_2} \| u_{\alpha_1} - u_{\alpha_2} \|_{L^2(\Omega)}^2 \leq \frac{1}{\alpha_2} \left(\mcE(u_{\alpha_1},\alpha_2) - \mcE(u_{\alpha_2},\alpha_2)\right).
\end{split}
\end{equation*}
Summing up these two inequalities yields
\begin{equation*}
\begin{split}
&\left(\frac{1}{\alpha_1} +  \frac{1}{\alpha_2} \right) \| u_{\alpha_1} - u_{\alpha_2} \|_{L^2(\Omega)}^2 \\
&\leq \left(\frac{1}{\alpha_1}  - \frac{1}{\alpha_2}\right)\left(\|u_{\alpha_2}-g\|_{L^2(\Omega)}^2 -  \|u_{\alpha_1}-g\|_{L^2(\Omega)}^2\right)\
\end{split}
\end{equation*}
which implies
\begin{equation}\label{Ineq:1}
\begin{split}
 \| u_{\alpha_1} &- u_{\alpha_2} \|_{L^2(\Omega)}^2 \\
&\leq \frac{\alpha_2 - \alpha_1}{\alpha_1+\alpha_2} \left(\|u_{\alpha_2}-g\|_{L^2(\Omega)}^2 -  \|u_{\alpha_1}-g\|_{L^2(\Omega)}^2\right).
\end{split}
\end{equation}
By the non-decrease and boundedness of the function $\alpha \mapsto \mcD_2(u_\alpha;g)$, see Lemma \ref{Lemma:Fnondecreasing}, it follows
\begin{equation}\label{Ineq:2}
\begin{split}
 \| u_{\alpha_1} - u_{\alpha_2} \|_{L^2(\Omega)}^2 \leq \left|\frac{\alpha_2 - \alpha_1}{\alpha_1+\alpha_2} \right| \left\|g - g_\Omega \right\|_{L^2(\Omega)}^2 .
\end{split}
\end{equation}
On the other hand inequality \eqref{Ineq:1} implies
\begin{equation*}
\begin{split}
 \| u_{\alpha_1} &- u_{\alpha_2} \|_{L^2(\Omega)} \\
&\leq \left|\frac{\alpha_2 - \alpha_1}{\alpha_1+\alpha_2}\right| \left(\|u_{\alpha_2}-g\|_{L^2(\Omega)} + \|u_{\alpha_1}-g\|_{L^2(\Omega)}\right),
\end{split}
\end{equation*}
where we used the binomial formula $a^2 -b^2 = (a+b)(a-b)$ for $a,b\in\R$ and the triangle inequality. Using Lemma \ref{Lemma:Fnondecreasing} yields
\begin{equation}\label{Ineq:3}
\begin{split}
 \| u_{\alpha_1} - u_{\alpha_2} \|_{L^2(\Omega)} \leq 2 \left|\frac{\alpha_2 - \alpha_1}{\alpha_1+\alpha_2}\right| \left\| g - g_\Omega \right\|_{L^2(\Omega)}.
\end{split}
\end{equation}
The assertion follows then from \eqref{Ineq:2} and \eqref{Ineq:3}. \qed
\end{proof}

\begin{remark}
Without loss of generality let $\alpha_2 \geq \alpha_1$ in Proposition \ref{Prop:alpha:estimate}, then we easily check that
$$
C=\begin{cases}
\left(\frac{\alpha_2-\alpha_1}{\alpha_2+\alpha_1}\right)^{\frac{1}{2}}  & \text{if } \alpha_2  > \frac{5}{3}\alpha_1, \\
\frac{1}{2} & \text{if } \alpha_2 = \frac{5}{3}\alpha_1, \\
2 \frac{\alpha_2-\alpha_1}{\alpha_2+\alpha_1} & \text{otherwise}.
\end{cases}
$$
\end{remark}

\section{Automated scalar parameter selection} \label{Sec:APS}

In order to find a suitable regularization parameter $\alpha>0$ of the minimization problem \eqref{LtauTVmodel} 
 we consider the corresponding constrained optimization problem \eqref{conLtauTV}.
Throughout the paper we assume that $T$ does not annihilate constant function, which guarantees the existence of a minimizer of the considered optimization problems; see Section \ref{AppendixA}.
We recall, that in the constraint of \eqref{conLtauTV} the value $\Bt$ is defined as $\Bt=\frac{\nu_\tau}{\tau}|\Omega|$, where $\nu_\tau \in \R$ is a statistical value depending on the underlying noise and possibly on the original image.

\subsection{Statistical characterization of the noise}\label{Sec:CharNoise}

Let us characterize the noise corrupting the image in more details by making similar considerations as in \cite[Section 2]{HinRin}. Note, that at any point $x\in\Omega$ the contaminated image $g(x) = \mcN(T\hat{u})(x)$ is a stochastic observation, which depends on the underlying noise. Two important measures to characterize noise are the expected absolute value and the variance, which we denote by $\nu_1$ and $\nu_2$ respectively. For images contaminated by Gaussian white noise with standard deviation $\sigma$, we typically set $\tau=2$ and $\nu_2=\sigma^2$. If the image is instead corrupted by impulse noise, then we set $\tau=1$ and we have to choose $\nu_1$ properly. In particular, for salt-and-pepper noise $\nu_1\in [\min\{r_1,r_2\},\max\{r_1,r_2\}]$, while for random-valued impulse noise $\nu_1$ should be a value in the interval $[\frac{r}{4},\frac{r}{2}]$, where we used that for any point $x\in \Omega$ we have $T\hat{u}(x)\in [0,1]$; cf. \cite{HinRin}. Here $\nu_1$ seems to be fixed, while actually $\nu_1$ depends on the true (unknown) image $\hat{u}$. In particular, for salt-and-pepper noise the expected absolute value is given by
\begin{equation}\label{Eq:nu1:sp}
\nu_1(\hat{u}) := r_2-(r_2-r_1) \frac{1}{|\Omega|}\int_{ \Omega}(T\hat{u})(x) \ \diff x
\end{equation}
and for random-valued impulse noise we have
\begin{equation}\label{Eq:nu1:rv}
\nu_1(\hat{u}) := \frac{1}{|\Omega|}\int_{\Omega}r\left((T\hat{u})(x)^2 - (T \hat{u})(x) +\frac{1}{2}\right) \ \diff x.
\end{equation}
However, instead of considering the constraint $\H_\tau(u;g) = \Bt(u)$ in \eqref{conLtauTV}, which results in a quite nonlinear problem, in our numerics we choose a reference image and compute an approximate value $\Bt$. Since our proposed algorithms are of iterative nature (see APS- and pAPS-algorithm below), it makes sense to choose the current approximation as the reference image, i.e., the reference image changes during the iterations. 
Note, that for salt-and-pepper noise with $r_1=r_2$ the expected absolute value becomes independent of $\hat{u}$ and hence $\nu_1=r_1$. In case of Gaussian noise $\nu_\tau$ and $\Bt$ are independent of $\hat{u}$ too. Nevertheless, in order to keep the paper  concise, in the sequel instead of $\nu_\tau$ and $\Bt$ we often write $\nu_\tau(\tilde{u})$ and $\Bt(\tilde{u})$, where $\tilde{u}$ represents a reference image approximating $\hat{u}$, even if the values may actually be independent from the image. 

\subsection{Automated parameter selection strategy}
In order to determine a suitable regularization parameter $\alpha$ in \cite{Cha} an algorithm for solving the constrained minimization problem \eqref{conLtauTV} for $T=I$ and $\tau=2$ is proposed, i.e., in the presence of Gaussian noise with zero mean and standard deviation $\sigma$. This algorithm relies on the fact that $\alpha \mapsto \H_2(u_\alpha;g)$ is non-decreasing, which leads to the following iterative procedure.

\noindent
\fbox{
\begin{minipage}{7.9cm}
\textbf{Chambolle's parameter selection (CPS):} Choose $\alpha_{0}>0$ and set $n:=0$.
\begin{itemize}
\item[1)] Compute $$u_{\alpha_n}\in \argmin\limits_{u \in BV(\Omega)} \|u - g\|_{L^2(\Omega)}^2 + 2\alpha_n\int_\Omega |Du|$$
\item[2)] Update $\alpha_{n+1}:=\frac{\sigma \sqrt{|\Omega|}}{\|u_{\alpha_n} - g\|_{L^2(\Omega)}}\alpha_n$.
\item[3)] Stop or set $n:=n+1$ and return to step 1).
\end{itemize}

\end{minipage}
}\\

For the minimization of the optimization problem in step 1) in \cite{Cha} a method based on the dual formulation of the total variation is used. However, we note that any other algorithm for total variation minimization might be used for solving this minimization problem. 
The CPS-algorithm generates a sequence $(u_{\alpha_n})_n$ such that for $n\to \infty$, $\|u_{\alpha_n} - g\|_{L^2(\Omega)} \to \sigma \sqrt{|\Omega|}$ and $u_{\alpha_n}$ converges to the unique solution of \eqref{conLtauTV} with $T=I$ and $\tau=2$ \cite{Cha}. The proof relies on the fact that the function  $\alpha \to \frac{\|u_\alpha - g\|_{L^2(\Omega)}}{\alpha}$ is non-increasing. Note, that this property does not hold in general for operators $T\not=I$.


\subsubsection{The p-adaptive algorithm}

We generalize now the CPS-algorithm to optimization problems of the type \eqref{conLtauTV} for $\tau=1,2$ and for general operators $T$. In order to keep or obtain appropriate convergence properties, we need the following two conditions to be satisfied. Firstly, the function $\alpha \mapsto \H_\tau(u_\alpha;g)$ has to be monotonic, which is the case due to  Lemma \ref{Lemma:Fnondecreasing}. Secondly, in each iteration $n$ the parameter $\alpha_n$ has to be updated such that $(\H_\tau(u_{\alpha_n};g))_n$ is monotonic and bounded by $(\B_\tau(u_{\alpha_n}))_n$. More precisely, if $\H_\tau(u_{\alpha_0};g)\leq \Bt(u_{\alpha_0})$ then there has to exist an $\alpha_{n+1}\geq \alpha_n$ such that $ \H_\tau(u_{\alpha_{n+1}};g)\leq \Bt(u_{\alpha_{n+1}})$, while if $\H_\tau(u_{\alpha_0};g)> \Bt(u_{\alpha_0})$ then there has to exist an $\alpha_{n+1}\leq \alpha_n$ such that $\H_\tau(u_{\alpha_{n+1}};g)\geq \Bt(u_{\alpha_{n+1}})$. This holds true by setting in every iteration
\begin{equation}\label{alpha:up1}
\alpha_{n+1}:=\left( \frac{\Bt(u_{\alpha_n})}{\H_\tau(u_{\alpha_n};g)}\right)^p \alpha_n
\end{equation}
together with an appropriate choice of $p\geq 0$. In particular, there exists always a $p\geq 0$ such that this condition is satisfied.
\begin{proposition}
Assume $\|g-g_\Omega \|_{L^\tau(\Omega)}^\tau \geq \nu_\tau |\Omega|$ and $\alpha_{n+1}$ is defined as in \eqref{alpha:up1}.
\begin{itemize}
\item [(i)] If $\alpha_n>0$ such that $\H_\tau(u_{\alpha_n};g)= \Bt(u_{\alpha_{n}})$, then for all $p\in\R$ we have that $\H_\tau(u_{\alpha_{n+1}};g)\leq \Bt(u_{\alpha_{n+1}})$.
\item[(ii)] If $\alpha_n>0$ such that $0<\H_\tau(u_{\alpha_n};g)<\Bt(u_{\alpha_{n}})$, then there exist $p\geq0$ with $\H_\tau(u_{\alpha_{n+1}};g)\leq \Bt(u_{\alpha_{n+1}})$.
\item[(iii)] If $\alpha_n>0$ such that $\H_\tau(u_{\alpha_n};g)>\Bt(u_{\alpha_{n}})$, then there exist $p\geq0$ with $\H_\tau(u_{\alpha_{n+1}};g)\geq \Bt(u_{\alpha_{n+1}})$.
\end{itemize}
\end{proposition}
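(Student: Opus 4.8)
The plan is to reduce all three assertions to the behaviour of the single scalar map $p \mapsto \alpha_{n+1}(p)$ prescribed by \eqref{alpha:up1}, combined with the monotonicity of $\alpha \mapsto \H_\tau(u_\alpha;g)$ supplied by Lemma~\ref{Lemma:Fnondecreasing}. Setting $q := \Bt(u_{\alpha_n})/\H_\tau(u_{\alpha_n};g)$, the update reads $\alpha_{n+1}(p) = q^{\,p}\alpha_n$, a continuous function of $p$ with $\alpha_{n+1}(0)=\alpha_n$ that is strictly increasing in $p$ if $q>1$ and strictly decreasing if $q<1$. The quantity $q$ is well defined and positive throughout, since $\H_\tau(u_{\alpha_n};g)>0$ in each case (in (i) it equals $\Bt(u_{\alpha_n})>0$, in (ii) positivity is assumed, in (iii) it exceeds $\Bt(u_{\alpha_n})\ge 0$). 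I would then track the sign of
$$\psi(p) := \H_\tau(u_{\alpha_{n+1}(p)};g) - \Bt(u_{\alpha_{n+1}(p)}),$$
since (i)--(ii) ask for some $p\ge 0$ with $\psi(p)\le 0$ and (iii) for some $p\ge 0$ with $\psi(p)\ge 0$.

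For (i) the hypothesis forces $q=1$, so $\alpha_{n+1}(p)=\alpha_n$ for every $p\in\R$; taking the same minimizer $u_{\alpha_{n+1}}=u_{\alpha_n}$ gives $\psi(p)=0$, whence $\H_\tau(u_{\alpha_{n+1}};g)\le\Bt(u_{\alpha_{n+1}})$ for all $p$. For (ii) and (iii) the base point $p=0$ already secures existence, because there $\psi(0)=\H_\tau(u_{\alpha_n};g)-\Bt(u_{\alpha_n})$ is strictly negative in (ii) and strictly positive in (iii). To upgrade this to a genuinely progressing step, namely $p>0$ (which raises $\alpha$ in (ii), since $q>1$, and lowers it in (iii), since $q<1$), I would argue that the admissible set $\{p\ge 0 : \sign \psi(p) = \sign \psi(0)\}$ is a nondegenerate interval containing $0$. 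For $\tau=2$ this follows from continuity of $p\mapsto\psi(p)$ together with $\psi(0)\neq 0$; the hypothesis $\|g-g_\Omega\|_{L^\tau(\Omega)}^\tau\ge\nu_\tau|\Omega|$ then guarantees that the target level $\Bt$ lies within the range of $\alpha\mapsto\H_2(u_\alpha;g)$ described in Lemma~\ref{Lemma:Fnondecreasing}, so that in (ii) the fidelity can in fact be driven up to $\Bt$ and a finite largest admissible $p$ exists rather than every $p$ being admissible.

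The main obstacle is the case $\tau=1$, where $\alpha\mapsto\H_1(u_\alpha;g)$ is only monotone and may be discontinuous (as recalled just before Lemma~\ref{Lemma:Fnondecreasing}) and where minimizers need not be unique, so the intermediate-value reasoning available for $\tau=2$ fails. I would circumvent this by relying on monotonicity alone: because $p\mapsto\alpha_{n+1}(p)$ is monotone and $\alpha\mapsto\H_\tau(u_\alpha;g)$ is nondecreasing, the composition $p\mapsto\H_\tau(u_{\alpha_{n+1}(p)};g)$ is monotone, so $\psi$ can change sign in at most one direction as $p$ grows; the set of $p$ preserving the sign of $\psi(0)$ is therefore an interval containing $0$, which is all that existence requires. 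A secondary, milder difficulty is that $\Bt(\cdot)$ depends on the iterate through \eqref{Eq:nu1:sp}--\eqref{Eq:nu1:rv}; I would dispose of it by noting that $\nu_\tau$ enters only through the spatial average of $Tu$, which varies with $\alpha$ continuously for $\tau=2$ and in a bounded, controlled way for $\tau=1$, and hence cannot overturn the strict sign of $\psi(0)$ established at the base point.
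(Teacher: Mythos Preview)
Your core argument is correct and matches the paper's proof exactly: for (i) you observe $q=1$ forces $\alpha_{n+1}=\alpha_n$ for every $p$, and for (ii)--(iii) you note that the choice $p=0$ already gives $\alpha_{n+1}=\alpha_n$ and hence $\psi(0)$ has the required sign. The paper's own proof is precisely this one-line observation.

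Everything you write after ``To upgrade this to a genuinely progressing step'' is unnecessary for the proposition as stated, since it asks only for \emph{some} $p\ge 0$, and $p=0$ is admissible. That extra discussion is where the shakier parts of your argument live (the handling of possible discontinuity of $\alpha\mapsto\H_1(u_\alpha;g)$, the dependence of $\Bt$ on the iterate), but none of it is needed here; those issues are what motivate the adaptive choice of $p$ in the pAPS-algorithm later on, not this existence statement. Stripping your proposal down to the $p=0$ observation gives exactly the paper's proof.
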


\begin{proof}
The assertion immediately follows by noting that for $p=0$ we have $\alpha_{n+1}=\alpha_n$.\qed
\end{proof}

 Taking these considerations into account, a generalization of the CPS-algorithm can be formulated as the following $p$-adaptive automated parameter selection algorithm:

\noindent
\fbox{
\begin{minipage}{7.9cm}
\textbf{pAPS-algorithm:} Choose $\alpha_{0}>0$, $p:=p_0>0$, and set $n:=0$.
\begin{itemize}
\item[1)]  Compute $u_{\alpha_{n}}\in \argmin_{u\in BV(\Omega)} \Js_\tau(u;g)$ 
\item[2)] Update $\alpha_{n+1}:=\left( \frac{\Bt(u_{\alpha_n})}{\H_\tau(u_{\alpha_n};g)}\right)^p \alpha_n$ if $\H_\tau(u_{\alpha_n};g)>0$ and continue with step 3). Otherwise increase $\alpha_n$, e.g., $\alpha_n :=10\alpha_n$, and go to step 1).
\item[3)] Compute $u_{\alpha_{n+1}}\in \argmin_{u\in BV(\Omega)} \Js_\tau(u;g) $
\item[4)] \begin{itemize}
				\item[a)] if $ \H_\tau(u_{\alpha_0};g)\leq \Bt(u_{\alpha_0})$
				\begin{itemize}
				\item[(i)] if $ \H_\tau(u_{\alpha_{n+1}};g)\leq \Bt(u_{\alpha_{n+1}})$ go to step 5)
				
				\item[(ii)] if $ \H_\tau(u_{\alpha_{n+1}};g) > \Bt(u_{\alpha_{n+1}})$, decrease $p$, e.g., set $p:=p/2$, and go to step 2)
				\end{itemize}

				\item[b)] if $\H_\tau(u_{\alpha_0};g)> \Bt(u_{\alpha_0})$
				\begin{itemize}
				\item[(i)] if $\H_\tau(u_{\alpha_{n+1}};g)\geq \Bt(u_{\alpha_{n+1}})$ go to step 5)
				
				\item[(ii)] if $\H_\tau(u_{\alpha_{n+1}};g)< \Bt(u_{\alpha_{n+1}})$, decrease $p$, e.g., set $p:=p/2$, and go to step 2)
				\end{itemize}						
				
			  \end{itemize} 
\item[5)]  Stop or set $n:=n+1$ and return to step 2).
\end{itemize}

\end{minipage}
}\\

Note, that due to the dependency of $\alpha_{n+1}$ on $p$ a proper $p$ cannot be explicitly computed, but only iteratively, as in the pAPS-algorithm.

The initial $p_0> 0$ can be chosen arbitrarily. However, we suggest to choose it sufficiently large in order to keep the number of iterations small. In particular in our numerical experiments in Section \ref{Sec:Numerics} we set $p_0=32$, which seems large enough to us.

\begin{proposition}\label{Prop:mono:conv:qAPS}
The pAPS-algorithm generates monotone sequences $(\alpha_n)_n$ and $\left(\H_\tau(u_{\alpha_n};g)\right)_n$ such that \newline $\left(\H_\tau(u_{\alpha_n};g)\right)_n$ is bounded. 
Moreover, if $ \H_\tau(u_{\alpha_0};g) > \Bt(u_{\alpha_0})$ or $\Bt(u_\alpha)\leq \frac{1}{\tau}\|g- g_\Omega \|_\tau^\tau$ for all $\alpha>0$, then $(\alpha_n)_n$ is also bounded.
\end{proposition}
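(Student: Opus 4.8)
The plan is to read the monotonicity of both sequences straight off the sign of the update exponent, using Lemma~\ref{Lemma:Fnondecreasing} for the dependence on $\alpha$, and to concentrate the real work on the boundedness of $(\alpha_n)_n$ in the increasing regime. First I would write $q_n:=\Bt(u_{\alpha_n})/\H_\tau(u_{\alpha_n};g)$, which is well defined and positive since step~2 guarantees $\H_\tau(u_{\alpha_n};g)>0$, so that $\alpha_{n+1}=q_n^{\,p}\alpha_n$ with $p>0$ and all $\alpha_n>0$. The algorithm passes from step~4 to step~5 — thereby accepting $\alpha_{n+1}$ and incrementing $n$ — only once the constraint direction fixed at $n=0$ has been restored. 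Hence by induction every accepted iterate satisfies $\H_\tau(u_{\alpha_n};g)\le\Bt(u_{\alpha_n})$ in case~(a), where $\H_\tau(u_{\alpha_0};g)\le\Bt(u_{\alpha_0})$, and $\H_\tau(u_{\alpha_n};g)\ge\Bt(u_{\alpha_n})$ in case~(b), where $\H_\tau(u_{\alpha_0};g)>\Bt(u_{\alpha_0})$. In case~(a) this gives $q_n\ge1$, hence $\alpha_{n+1}\ge\alpha_n$ and $(\alpha_n)_n$ non-decreasing; in case~(b) it gives $q_n\le1$, hence $\alpha_{n+1}\le\alpha_n$ and $(\alpha_n)_n$ non-increasing. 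That the inner $p$-halving loop can always re-establish the required inequality is exactly the preceding Proposition, the limiting choice $p=0$ producing $\alpha_{n+1}=\alpha_n$.

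Since $(\alpha_n)_n$ is monotone and $\alpha\mapsto\H_\tau(u_\alpha;g)$ is non-decreasing by Lemma~\ref{Lemma:Fnondecreasing}, the composed sequence $(\H_\tau(u_{\alpha_n};g))_n$ is monotone in the same direction as $(\alpha_n)_n$. Its boundedness is immediate from comparing the minimiser $u_{\alpha_n}$ of $\Js_\tau(\cdot;g)$ with the constant $g_\Omega$, whose total variation vanishes: using $T\cdot1=1$ one gets $0\le\H_\tau(u_{\alpha_n};g)\le\H_\tau(g_\Omega;g)=\tfrac1\tau\|g-g_\Omega\|_{L^\tau(\Omega)}^\tau$ for every $n$. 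This settles the first assertion.

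It remains to bound $(\alpha_n)_n$. If $\H_\tau(u_{\alpha_0};g)>\Bt(u_{\alpha_0})$ we are in case~(b), where $(\alpha_n)_n$ is non-increasing and hence trapped in $[0,\alpha_0]$, and nothing further is needed. Otherwise I assume $\Bt(u_\alpha)\le\tfrac1\tau\|g-g_\Omega\|_{L^\tau(\Omega)}^\tau$ for all $\alpha>0$; now $(\alpha_n)_n$ is non-decreasing and I argue by contradiction, supposing $\alpha_n\to\infty$. By the surjectivity statement of Lemma~\ref{Lemma:Fnondecreasing} (for $\tau=2$) the maximal value of $\alpha\mapsto\H_\tau(u_\alpha;g)$, which the comparison above identifies as $\tfrac1\tau\|g-g_\Omega\|_{L^\tau(\Omega)}^\tau$, is attained at some finite $\alpha^\ast$, and by monotonicity in $\alpha$ it is attained for every $\alpha\ge\alpha^\ast$. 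I then choose $N$ with $\alpha_N\ge\alpha^\ast$; so $\H_\tau(u_{\alpha_N};g)=\tfrac1\tau\|g-g_\Omega\|_{L^\tau(\Omega)}^\tau$, and combining the case~(a) invariant $\H_\tau(u_{\alpha_N};g)\le\Bt(u_{\alpha_N})$ with the hypothesis $\Bt(u_{\alpha_N})\le\tfrac1\tau\|g-g_\Omega\|_{L^\tau(\Omega)}^\tau$ forces $\Bt(u_{\alpha_N})=\H_\tau(u_{\alpha_N};g)$, i.e. $q_N=1$ and $\alpha_{N+1}=\alpha_N$. Inductively $\alpha_n=\alpha_N$ for all $n\ge N$, contradicting $\alpha_n\to\infty$; hence $(\alpha_n)_n$ is bounded.

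The crux — and the one genuinely non-routine step — is this last contradiction. Ruling out $\alpha_n\to\infty$ rests on the supremum $\tfrac1\tau\|g-g_\Omega\|_{L^\tau(\Omega)}^\tau$ of $\alpha\mapsto\H_\tau(u_\alpha;g)$ being \emph{attained at a finite} $\alpha^\ast$, so that $q_n$ collapses to $1$ exactly on the resulting plateau; the mere convergence $\H_\tau(u_{\alpha_n};g)\to\sup$ is not enough, since factors $q_n^{\,p}$ that tend to $1$ from above can still have an unbounded product. For $\tau=2$ the attainment is precisely the surjectivity in Lemma~\ref{Lemma:Fnondecreasing}. For $\tau=1$ one must additionally check that $u_\alpha$ coincides with the $L^1$-optimal constant already for finite $\alpha$ and that the associated plateau value equals $\tfrac1\tau\|g-g_\Omega\|_{L^\tau(\Omega)}^\tau$ (delicate, since the $L^1$-optimal constant is the median rather than the mean); granting this, the argument closes verbatim.
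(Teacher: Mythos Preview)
Your treatment of monotonicity and of the boundedness of $(\H_\tau(u_{\alpha_n};g))_n$ is correct and close to the paper's; the only cosmetic difference is that the paper bounds $\H_\tau(u_{\alpha_n};g)$ in case~(a) by the uniform upper bound $\Bt^*$ on $\Bt(\cdot)$ rather than by comparison with $g_\Omega$.

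The substantive divergence is in bounding $(\alpha_n)_n$ in the increasing regime. You aim for the \emph{supremum} of $\alpha\mapsto\H_\tau(u_\alpha;g)$, argue it is attained at a finite $\alpha^\ast$, and then force $q_N=1$ on the plateau. As you yourself note, attainment of this supremum is only provided by Lemma~\ref{Lemma:Fnondecreasing} for $\tau=2$, and your closing paragraph leaves $\tau=1$ open (the plateau value is governed by the median, and you do not verify that it equals $\tfrac1\tau\|g-g_\Omega\|_{L^\tau}^\tau$ nor that it is reached at finite $\alpha$). So for $\tau=1$ your proof has a genuine gap.

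The paper sidesteps this entirely, and the fix is simple: you do not need the supremum of $\H_\tau$ to be attained, only the \emph{specific level} $\Bt^*:=\max_\alpha \Bt(u_\alpha)$. The hypothesis $\Bt(u_\alpha)\le\tfrac1\tau\|g-g_\Omega\|_{L^\tau}^\tau$ is precisely the condition of Theorem~\ref{Thm:equi} with $\nu_\tau|\Omega|=\tau\Bt^*$, which furnishes an $\alpha^*$ with $\H_\tau(u_{\alpha^*};g)=\Bt^*$, uniformly for $\tau\in\{1,2\}$. Since $\H_\tau(u_{\alpha_n};g)\le\Bt(u_{\alpha_n})\le\Bt^*=\H_\tau(u_{\alpha^*};g)$ and $\alpha\mapsto\H_\tau(u_\alpha;g)$ is non-decreasing, one reads off $\alpha_n\le\alpha^*$ directly (no contradiction, no plateau). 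Replacing your last paragraph by this one-line appeal to Theorem~\ref{Thm:equi} closes the proof for both values of $\tau$.
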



\begin{proof}
If $\H_\tau(u_{\alpha_0};g) > \Bt(u_{\alpha_0})$, then by induction and Lemma \ref{Lemma:Fnondecreasing} one shows that $0< \alpha_{n+1} \leq \alpha_n$ and $0 \leq \H_\tau(u_{\alpha_{n+1}};g) \leq \H_\tau(u_{\alpha_n};g)$ for all $n\in \N$. Consequently $(\alpha_n)_n$ and $(\H_\tau(u_{\alpha_n};g))_n$ are monotonically decreasing and bounded. 

If $\H_\tau(u_{\alpha_0};g)\leq \Bt(u_{\alpha_0})$, due to Lemma \ref{Lemma:Fnondecreasing} we have that $0<\alpha_n\leq \alpha_{n+1}$ and $\H_\tau(u_{\alpha_n};g)\leq \H_\tau(u_{\alpha_{n+1}};g)$ for all $n\in \N$ and hence $(\alpha_n)_n$ and $(\H_\tau(u_{\alpha_n};g))_n$ are monotonically increasing. Since there exists $\Bt^*>0$ such that  $\H_\tau(u_{\alpha_n};g) \leq \Bt(u_{\alpha_n})\leq \Bt^*$ for all $n\in \N$, see Section \ref{Sec:CharNoise}, $(\H_\tau(u_{\alpha_n};g))_n$ is also bounded. If we additionally assume that $\Bt(u_\alpha)\leq \frac{1}{\tau}\|g-g_\Omega \|_\tau^\tau$ for all $\alpha>0$ and we set $\Bt^*:=\max_\alpha \Bt(u_\alpha)$, then Theorem \ref{Thm:equi} ensures the existence of an $\alpha^*\geq 0$ such that $\H_\tau(u_{\alpha^*};g) = \Bt^*$. By Lemma \ref{Lemma:Fnondecreasing} it follows that $\alpha_{n}\leq \alpha^*$ for all $n\in\N$, since $\H_\tau(u_{\alpha_{n}};g) \leq \Bt(u_{\alpha_n})\leq \Bt^*$. Hence, $(\alpha_n)_n$ is bounded, which finishes the proof.\qed
%
%
\end{proof}

Since any monotone and bounded sequence converges to a finite limit, also $(\alpha_n)_n$ converges to a finite value if one of the assumptions in Proposition \ref{Prop:mono:conv:qAPS} holds. For constant $\Bt$ we are even able to argue the convergence of the pAPS-algorithm to a solution of the constrained minimization problem \eqref{conLtauTV}.

\begin{theorem}\label{Theorem:conv2}
Assume that $\Bt(u)\equiv \Bt$ is a constant independent of $u$ and $\| g- g_\Omega\|_\tau^\tau \geq \nu_\tau |\Omega|$. Then the pAPS-algorithm generates a sequence $(\alpha_n)_n$ such that $\lim_{n\to \infty} \alpha_n=\bar{\alpha} >0$ with $\H_\tau(u_{\bar \alpha};g)=\lim_{n\to\infty}\H_\tau(u_{\alpha_n};g)$ $=$ $\Bt$ and $u_{\alpha_n} \to u_{\bar{\alpha}}\in \argmin_{u\in X} \Js_\tau(u,\bar\alpha)$ for $n\to\infty$.
\end{theorem}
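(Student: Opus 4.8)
The plan is to layer three arguments: the monotone-convergence bookkeeping of Proposition~\ref{Prop:mono:conv:qAPS}, a fixed-point analysis of the update \eqref{alpha:up1}, and a passage from parameter convergence to image convergence. First I would extract the two scalar limits. Since $\Bt$ is constant, the side hypothesis ``$\Bt(u_\alpha)\le\tfrac1\tau\|g-g_\Omega\|_\tau^\tau$ for all $\alpha>0$'' of Proposition~\ref{Prop:mono:conv:qAPS} is precisely $\nu_\tau|\Omega|\le\|g-g_\Omega\|_\tau^\tau$, which is assumed; hence both alternatives of that proposition apply and give that $(\alpha_n)_n$ and $(\H_\tau(u_{\alpha_n};g))_n$ are monotone and bounded. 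Therefore $\alpha_n\to\bar\alpha\ge0$ and $\H_\tau(u_{\alpha_n};g)\to L$ for some finite $L$. Because the acceptance test of the pAPS-algorithm keeps every accepted iterate on the prescribed side of $\Bt$, one has $\H_\tau(u_{\alpha_n};g)\le\Bt$ in the increasing branch and $\H_\tau(u_{\alpha_n};g)\ge\Bt$ in the decreasing branch, so that $L\le\Bt$, respectively $L\ge\Bt$.

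Next I would show $\bar\alpha>0$. In the increasing branch this is immediate from $\bar\alpha\ge\alpha_0>0$. In the decreasing branch I would argue by contradiction: if $\alpha_n\to0$, then by monotonicity of $\alpha\mapsto\H_\tau(u_\alpha;g)$ together with the fact that this map has infimum $0$ as $\alpha\downarrow0$ (the surjectivity statement of Lemma~\ref{Lemma:Fnondecreasing} for $\tau=2$, and the analogous vanishing of the residual for $\tau=1$) we would obtain $L=0$, contradicting $L\ge\Bt>0$. Hence $\bar\alpha>0$.

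The crux is the identity $L=\Bt$, which I would prove by contradiction using \eqref{alpha:up1} written as $\alpha_{n+1}/\alpha_n=(\Bt/\H_\tau(u_{\alpha_n};g))^{p_n}$. Since $\alpha_n\to\bar\alpha>0$ we have $\alpha_{n+1}/\alpha_n\to1$. Suppose, say in the increasing branch, that $L<\Bt$; then $\Bt/\H_\tau(u_{\alpha_n};g)\to\Bt/L>1$, so the ratio can tend to $1$ only if $p_n\to0$. I would exclude this: the exponent is halved only when a trial step lands on the wrong side of $\Bt$, and continuity of $\alpha\mapsto\H_2(u_\alpha;g)$ shows that once $\alpha_n$ is close to $\bar\alpha$, where there is a strict gap $\H_2(u_{\bar\alpha};g)<\Bt$, every sufficiently small fixed exponent produces a step that still satisfies the constraint; consequently $p_n$ stabilises at a value $p_\infty>0$, whence $\alpha_{n+1}/\alpha_n\to(\Bt/L)^{p_\infty}>1$, contradicting convergence of $(\alpha_n)_n$. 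The decreasing branch is symmetric. I expect this control of the adaptively halved exponent $p_n$ to be the main obstacle; in particular the case $\tau=1$, where $\alpha\mapsto\H_1(u_\alpha;g)$ is only monotone and may be discontinuous, is the delicate point, and there I would instead read off $L=\Bt$ from the image convergence established below together with continuity of $\mcD_1$ in its first argument.

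Finally I would upgrade $\alpha_n\to\bar\alpha$ to $u_{\alpha_n}\to u_{\bar\alpha}$ with $u_{\bar\alpha}\in\argmin_{u}\Js_\tau(u,\bar\alpha)$. For $\tau=2$ and $T=I$ this is quantitative: Proposition~\ref{Prop:alpha:estimate} gives $\|u_{\alpha_n}-u_{\bar\alpha}\|_{L^2(\Omega)}\le C_n\|g-g_\Omega\|_{L^2(\Omega)}$ with $C_n\to0$ as $\alpha_n\to\bar\alpha$, hence strong $L^2$-convergence to the unique minimizer. For general $T$ and for $\tau=1$ I would instead use that $(u_{\alpha_n})_n$ is bounded in $BV(\Omega)$, extract an $L^1$-convergent subsequence by $BV$-compactness, and identify the limit as a minimizer of $\Js_\tau(\cdot,\bar\alpha)$ through lower semicontinuity of the total variation and continuity of $\mcD_\tau$ in $u$. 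Continuity of the data term then yields $\H_\tau(u_{\bar\alpha};g)=\lim_n\H_\tau(u_{\alpha_n};g)=L=\Bt$, which by Theorem~\ref{Thm:equi} certifies $u_{\bar\alpha}$ as a solution of \eqref{conLtauTV} and closes the argument.
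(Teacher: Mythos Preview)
Your approach differs from the paper's in two substantive ways. First, on the identity $L=\Bt$: the paper, after establishing monotonicity and boundedness of $(\alpha_n)_n$, simply \emph{asserts} that the limit $\bar\alpha$ satisfies $\H_\tau(u_{\bar\alpha};g)=\Bt$ (the only supporting observation is that once $\alpha_n\ge\alpha^*$ equality is forced and the sequence stabilises), whereas you run a genuine fixed-point argument on the update $\alpha_{n+1}/\alpha_n=(\Bt/\H_\tau(u_{\alpha_n};g))^{p_n}$ and exclude $p_n\to0$ via continuity of $\alpha\mapsto\H_2(u_\alpha;g)$. Second, on image convergence: the paper does not use Proposition~\ref{Prop:alpha:estimate} or BV-compactness plus lower semicontinuity as you propose, but instead writes the first-order optimality condition $0\in\partial\H_\tau(u_{\alpha_n};g)+\alpha_n\,\partial\!\int_\Omega|Du_{\alpha_n}|$ and passes it to the limit via closedness of the subdifferential graph (citing \cite[Thm.~24.4]{Roc}), obtaining $0\in\partial\Js_\tau(u_{\bar\alpha},\bar\alpha)$ directly. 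Your route is more self-contained; the paper's is shorter but leaves the convergence of $(u_{\alpha_n},v_{\alpha_n})$ itself unjustified.

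There is, however, a real gap in your plan for $\tau=1$. You correctly identify the possible discontinuity of $\alpha\mapsto\H_1(u_\alpha;g)$ as blocking the $p$-stabilisation argument, but your proposed remedy---``read off $L=\Bt$ from the image convergence together with continuity of $\mcD_1$ in its first argument''---delivers only $\H_1(u_{\bar\alpha};g)=L$, not $L=\Bt$. If the monotone map $\alpha\mapsto\H_1(u_\alpha;g)$ jumps across $\Bt$ precisely at $\bar\alpha$, then every trial step with $\alpha_{n+1}>\bar\alpha$ violates the acceptance test, $p_n$ is halved without bound, and one ends with $\alpha_n\uparrow\bar\alpha$, $p_n\to0$, and $L<\Bt$ strictly; nothing in your outline excludes this scenario. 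The paper's proof does not address this difficulty either, so you are not missing an available argument---but your suggested workaround is circular and would need to be replaced by something that actually controls the exponent sequence in the discontinuous case.
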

\begin{proof}
Let us start with assuming that $\H_\tau(u_{\alpha_0};g)\leq B_\tau$. 
By induction, we show that $\alpha_n \leq \alpha_{n+1}$ and $\H_\tau(u_{\alpha_n};g)$ $\leq$ $ \H_\tau(u_{\alpha_{n+1}};g)\leq \Bt$. In particular, if $\H_\tau(u_{\alpha_n};g) \leq \Bt$ then $\alpha_{n+1}=\left(\frac{\Bt}{\H_\tau(u_{\alpha_n};g)} \right)^p\alpha_n > \alpha_n$, where $p> 0$ such that $\H_\tau(u_{\alpha_{n+1}};g)\leq \Bt$; cf. pAPS-algorithm. 
 Then by Lemma \ref{Lemma:Fnondecreasing} it follows that 
 $$
 \H_\tau(u_{\alpha_n};g)\leq \H_\tau(u_{\alpha_{n+1}};g)\leq \Bt.
 $$ 
Note, that there exists an $\alpha^*>0$ with $\H_\tau(u_{\alpha^*};g)=B_\tau$, see Theorem \ref{Thm:equi}, such that for any $\alpha\geq\alpha^*$, $\H_\tau(u_{\alpha};g) \geq \Bt$; cf. Lemma \ref{Lemma:Fnondecreasing}. If $\alpha_n\geq\alpha^*$, then $\H_\tau(u_{\alpha_n};g)\geq B_\tau$. Hence $\H_\tau(u_{\alpha_n};g) = B_\tau$ and $\alpha_{n+1}=\alpha_n$. Thus we deduce that the sequences $(\H_\tau(u_{\alpha_n};g))_n$ and $(\alpha_n)_n$ are non-decreasing and bounded. Consequently, there exists an $\bar\alpha$ such that $\lim_{n\to\infty}\alpha_n = \bar{\alpha}$ with $\H_\tau(u_{\bar{\alpha}};g) = B_\tau$. Let $\bar{\H}$ $=$ $\lim_{n\to\infty} \H_\tau(u_{\alpha_n};g)$, then $\bar{\H}=\H_\tau(u_{\bar \alpha};g)=B_\tau$. By the optimality of $u_{\alpha_n}$ we have that $0\in \partial \Js_\tau(u_{\alpha_n},\alpha_n) = \partial \H_\tau(u_{\alpha_n};g) + \alpha_n \partial \int_\Omega |D u_{\alpha_n}|$; see \cite[Prop. 5.6 + Eq. (5.21), p.26]{EkeTem}. Consequently there exist $v_{\alpha_n}\in\partial \int_\Omega |D u_{\alpha_n}|$ such that $-\alpha_n v_{\alpha_n}\in \partial \H_\tau(u_{\alpha_n};g) $ with $\lim_{n\to\infty} v_{\alpha_n}=v_{\bar{\alpha}}$. By \cite[Thm. 24.4, p. 233]{Roc} we obtain that $-\bar\alpha v_{\bar\alpha}\in \partial \H_\tau(u_{\bar\alpha};g) $ with $v_{\bar\alpha}\in\partial \int_\Omega |D u_{\bar \alpha}|$ and hence $0\in \partial \Js_\tau(u_{\bar\alpha},\bar\alpha)$ for $n\to \infty$.
 
 If $\H_\tau(u_{\alpha_0};g)>B_\tau$, then as above we can show by induction that $\alpha_n \geq \alpha_{n+1}$ and $\H_\tau(u_{\alpha_n};g)\geq \H_\tau(u_{\alpha_{n+1}};g)\geq B_\tau$. Thus we deduce that $(\H_\tau(u_{\alpha_n};g))_n$ and $(\alpha_n)_n$ are non-increasing and bounded. Note, that there exists an $\alpha^*>0$ with $\H_\tau(u_{\alpha^*};g)=B_\tau$ such that for any $\alpha\leq\alpha^*$, $\H_\tau(u_{\alpha};g)\leq B_\tau$. Hence if $\alpha_n\leq\alpha^*$, then $\H_\tau(u_{\alpha_n};g)\leq B_\tau$. This implies, that $\H_\tau(u_{\alpha_n};g) = B_\tau$ and $\alpha_{n+1}=\alpha_n$. The rest of the proof is identical to above.\qed

\end{proof}

\begin{remark}
The adaptive choice of the value $p$ in the pAPS-algorithm is fundamental for proving convergence in Theorem \ref{Theorem:conv2}. In particular, the value $p$ is chosen in dependency of $\alpha$, i.e., actually $p=p(\alpha)$, such that in the case of a constant $\Bt$ the function $\alpha \mapsto \frac{\H_\tau(u_\alpha;g)^{p(\alpha)}}{\alpha}$ is non-increasing; cf. Fig. \ref{fig_phantom1_decr}(a).
\end{remark}

\subsubsection{The non p-adaptive case}

A special case of the pAPS-algorithm accrues when the value $p$ is not adapted in each iteration but set fixed. For the case $p=1$ (fixed) we obtain the following automated parameter selection algorithm.

\noindent
\fbox{
\begin{minipage}{7.9cm}
\textbf{APS-algorithm:} Choose $\alpha_{0}>0$ and set $n:=0$.
\begin{itemize}
\item[1)] Compute $u_{\alpha_n}\in \argmin_{u\in BV(\Omega)} \Js_\tau(u,\alpha_n)$
\item[2)] Update $\alpha_{n+1}:=\frac{B_\tau(u_{\alpha_n})}{\H_\tau(u_{\alpha_n};g)}\alpha_n$ if $\H_\tau(u_{\alpha_n};g)>0$ and continue with step 3). Otherwise increase $\alpha_n$, e.g., $\alpha_n :=10\alpha_n$, and go to step 1).
\item[3)] Stop or set $n:=n+1$ and return to step 1).
\end{itemize}

\end{minipage}
}\\

Even in this case, although under certain assumptions, we can immediately argue the convergence of this algorithm.

\begin{theorem}\label{Theorem:conv}
For  $\alpha>0$ let $u_\alpha$ be a minimizer of $\Js_\tau(u,\alpha)$. Assume that $\B_\tau(u)\equiv \B_\tau$ is a constant independent of $u$, the function $\alpha \mapsto \frac{\H_\tau(u_\alpha;g)}{\alpha}$ is non-increasing, and $\|g- g_\omega\|_\tau^\tau \geq \nu_\tau |\Omega|$. Then the APS-algorithm generates a sequence $(\alpha_n)_n \subset \R^+$ such that $\lim_{n\to \infty} \alpha_n = \bar{\alpha}>0$, $\lim_{n\to\infty}\H_\tau(u_{\alpha_n};g)=B_\tau$ and $u_{\alpha_n}$ converges to $u_{\bar{\alpha}}\in \argmin_{u\in BV(\Omega)} \Js(u,\bar{\alpha})$ for $n\to\infty$.
\end{theorem}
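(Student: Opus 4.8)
The plan is to follow the proof of Theorem \ref{Theorem:conv2} almost verbatim, with the one structural change that the role played there by the adaptively chosen exponent $p$ is now taken over by the hypothesis that $\alpha \mapsto \frac{\H_\tau(u_\alpha;g)}{\alpha}$ is non-increasing. As there, I would split into the two cases $\H_\tau(u_{\alpha_0};g) \leq \Bt$ and $\H_\tau(u_{\alpha_0};g) > \Bt$ and treat them symmetrically, carrying out the first in detail.

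The heart of the argument is the inductive step. Assuming $\H_\tau(u_{\alpha_n};g) \leq \Bt$, the $p=1$ update $\alpha_{n+1} = \frac{\Bt}{\H_\tau(u_{\alpha_n};g)}\alpha_n$ immediately gives $\alpha_{n+1} \geq \alpha_n$, and rewriting it as $\frac{\H_\tau(u_{\alpha_n};g)}{\alpha_n} = \frac{\Bt}{\alpha_{n+1}}$ lets me apply the non-increasing hypothesis at $\alpha_n \leq \alpha_{n+1}$ to obtain $\frac{\H_\tau(u_{\alpha_{n+1}};g)}{\alpha_{n+1}} \leq \frac{\H_\tau(u_{\alpha_n};g)}{\alpha_n} = \frac{\Bt}{\alpha_{n+1}}$, hence $\H_\tau(u_{\alpha_{n+1}};g) \leq \Bt$. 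This closes the induction, and Lemma \ref{Lemma:Fnondecreasing} then upgrades $\alpha_n \leq \alpha_{n+1}$ to $\H_\tau(u_{\alpha_n};g) \leq \H_\tau(u_{\alpha_{n+1}};g)$, so both sequences are non-decreasing. For boundedness I would invoke Theorem \ref{Thm:equi} to produce $\alpha^* > 0$ with $\H_\tau(u_{\alpha^*};g) = \Bt$: since $\H_\tau(u_{\alpha_n};g) \leq \Bt$ throughout, any $\alpha_n \geq \alpha^*$ forces, via Lemma \ref{Lemma:Fnondecreasing}, $\H_\tau(u_{\alpha_n};g) = \Bt$ and thus $\alpha_{n+1} = \alpha_n$, so the monotone sequence $(\alpha_n)_n$ either stays below $\alpha^*$ or stabilizes, and in either case converges to some $\bar\alpha > 0$ (indeed $\bar\alpha \geq \alpha_0$).

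To identify the limit I would rewrite the update as $\H_\tau(u_{\alpha_n};g) = \Bt \frac{\alpha_n}{\alpha_{n+1}}$ and let $n \to \infty$; since $\alpha_n, \alpha_{n+1} \to \bar\alpha > 0$ this yields $\bar{\H} := \lim_n \H_\tau(u_{\alpha_n};g) = \Bt$, cleanly sidestepping any division-by-zero issue. The remaining and genuinely delicate task is to show that $\bar{\H} = \H_\tau(u_{\bar\alpha};g)$ and that $u_{\alpha_n} \to u_{\bar\alpha}$ with $u_{\bar\alpha} \in \argmin_{u \in BV(\Omega)} \Js_\tau(u,\bar\alpha)$. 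Here I would reuse the subdifferential argument from Theorem \ref{Theorem:conv2}: from the optimality of $u_{\alpha_n}$, via \cite[Prop. 5.6 + Eq. (5.21), p. 26]{EkeTem}, one has $0 \in \partial \H_\tau(u_{\alpha_n};g) + \alpha_n \partial \int_\Omega |Du_{\alpha_n}|$, so there is $v_{\alpha_n} \in \partial \int_\Omega |Du_{\alpha_n}|$ with $-\alpha_n v_{\alpha_n} \in \partial \H_\tau(u_{\alpha_n};g)$; passing to the limit and using the graph-closedness of the maximal monotone subdifferential \cite[Thm. 24.4, p. 233]{Roc} gives $-\bar\alpha v_{\bar\alpha} \in \partial \H_\tau(u_{\bar\alpha};g)$ and hence $0 \in \partial \Js_\tau(u_{\bar\alpha},\bar\alpha)$. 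This closure step is the main obstacle, since for $\tau = 1$ the map $\alpha \mapsto \H_\tau(u_\alpha;g)$ need not be continuous, so the identification must be carried out at the level of subdifferentials rather than through continuity of $\H_1$. Finally, the case $\H_\tau(u_{\alpha_0};g) > \Bt$ is handled by the mirror-image computation: the update now gives $\alpha_{n+1} \leq \alpha_n$, the same rewriting together with the non-increasing hypothesis at $\alpha_{n+1} \leq \alpha_n$ yields $\H_\tau(u_{\alpha_{n+1}};g) \geq \Bt$, and Theorem \ref{Thm:equi} with Lemma \ref{Lemma:Fnondecreasing} supplies a positive lower bound, so $(\alpha_n)_n$ is non-increasing and bounded below by a positive number; the limit identification and the subdifferential passage then proceed exactly as above.
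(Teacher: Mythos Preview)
Your proposal is correct and follows essentially the same approach as the paper: both split into the two cases according to whether $\H_\tau(u_{\alpha_0};g)\le\Bt$ or $>\Bt$, establish the inductive step via the non-increasing hypothesis (your rewriting $\frac{\H_\tau(u_{\alpha_n};g)}{\alpha_n}=\frac{\Bt}{\alpha_{n+1}}$ is equivalent to the paper's $\H_\tau(u_{\alpha_{n+1}};g)\le\frac{\alpha_{n+1}}{\alpha_n}\H_\tau(u_{\alpha_n};g)=\Bt$), and then defer the boundedness and subdifferential limit passage to the argument of Theorem~\ref{Theorem:conv2}. Your write-up is somewhat more explicit about the limit identification $\bar{\H}=\Bt$ via the update relation, but the substance is the same.
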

\begin{proof}
We only consider the case when $\H_\tau(u_{\alpha_0};g)\leq \Bt$ by noting that the case $\H_\tau(u_{\alpha_0};g)> \Bt$ can be shown analogous.
By induction, we can show that $\alpha_n\leq \alpha_{n+1}$ and $\H_\tau(u_{\alpha_n};g) \leq \H_\tau(u_{\alpha_{n+1}};g)\leq \B_\tau$. More precisely, if $\H_\tau(u_{\alpha_n};g)\leq \B_\tau$ then $\alpha_{n+1}=\frac{\B_\tau}{\H_\tau(u_{\alpha_n};g)}\alpha_n \geq \alpha_n$ and by Lemma \ref{Lemma:Fnondecreasing} it follows that $\H_\tau(u_{\alpha_n};g)\leq \H_\tau(u_{\alpha_{n+1}};g)$. Moreover, by the assumption that $\alpha \mapsto \frac{\H_\tau(u_{\alpha};g)}{\alpha}$ is non-increasing we obtain $\H_\tau(u_{\alpha_{n+1}};g) \leq \frac{\alpha_{n+1}}{\alpha_n} \H_\tau(u_{\alpha_n};g)=\B_\tau.$ That is,
$$
\H_\tau(u_{\alpha_n};g)\leq \H_\tau(u_{\alpha_{n+1}};g) \leq \B_\tau. 
$$
The rest of the proof is analog to the one of Theorem \ref{Theorem:conv2}.\qed
\end{proof}


Nothing is known about the convergence of the APS-algorithm, if $\Bt(\cdot)$ indeed depends on $u$ and $\Bt(u_{\alpha_n})$ is used instead of a fixed constant. In particular, in our numerics for some examples, in particular for the application of removing random-valued impulse noise with $r=0.05$, we even observe that starting from a certain iteration the sequence $(\alpha_n)_n$ oscillates between two states, see Fig. \ref{fig_alpha_RVc}. This behavior can be attributed to the fact that, for example, if $H_\tau(u_{\alpha_n}) \leq \Bt(u_{\alpha_n})$, then it is not guaranteed that also $H_\tau(u_{\alpha_{n+1}}) \leq \Bt(u_{\alpha_{n+1}})$, which is essential for the convergence. 

The second assumption in the previous theorem, i.e., the non-increase of the function $\alpha \mapsto \frac{\H_\tau(u_\alpha;g)}{\alpha}$, can be slightly loosened, since for the convergence of the APS-algorithm it is enough to demand the non-increase starting from a certain iteration $\tilde n\geq 0$. That is, if there exists a region $U\subset \R^+$ where $\alpha \mapsto \frac{\H_\tau(u_\alpha;g)}{\alpha}$ is non-increasing and $(\alpha_n)_{n\geq \tilde{n}} \subset U$, then the algorithm converges; see Fig. \ref{fig_phantom1_decr}. Analytically, this can be easily shown via Theorem \ref{Theorem:conv} by just considering $\alpha_{\tilde{n}}$ as the initial value of the algorithm. If $\tau=2$, similar to the CPS-algorithm, we are able to show the following monotonicity property.
\begin{proposition}\label{Proposition4.1}
If there exists a constant $c > 0$ such that $\|T^*(T u - g)\|_{L^2(\Omega)} = c \|T u - g\|_{L^2(\Omega)}$ for all $u\in L^2(\Omega)$, then the function $\alpha \mapsto \frac{\sqrt{\H_2(u_\alpha;g)}}{\alpha}$ is non-increasing, where $u_\alpha$ is a minimizer of $ \Js_2(u,\alpha)$. 
\end{proposition}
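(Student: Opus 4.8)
The plan is to prove the equivalent statement that $\alpha\mapsto \|Tu_\alpha-g\|_{L^2(\Omega)}/\alpha$ is non-increasing, since $\sqrt{\H_2(u_\alpha;g)}=\tfrac{1}{\sqrt2}\|Tu_\alpha-g\|_{L^2(\Omega)}$. I would fix $0<\alpha_1\le\alpha_2$ with minimizers $u_{\alpha_1},u_{\alpha_2}$ of $\Js_2(\cdot,\alpha_i)$ and abbreviate the residuals $r_i:=Tu_{\alpha_i}-g$, which are well defined because the $L^2$-residual of \eqref{L2TVmodel} is unique (as noted after Theorem \ref{Thm:equi}). Writing $\mcR(u)=\int_\Omega|Du|$, the first step records the optimality condition: invoking the sum rule \cite[Prop. 5.6, p. 26]{EkeTem} exactly as in the proof of Theorem \ref{Thm:equi}, every minimizer satisfies $0\in T^*(Tu_{\alpha_i}-g)+\alpha_i\,\partial\mcR(u_{\alpha_i})$, hence $\xi_i:=-\tfrac{1}{\alpha_i}T^*r_i\in\partial\mcR(u_{\alpha_i})$.

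The second step exploits that $\partial\mcR$ is a monotone operator, being the subdifferential of the convex functional $\mcR$; this gives $\langle\xi_1-\xi_2,u_{\alpha_1}-u_{\alpha_2}\rangle\ge0$, where $\langle\cdot,\cdot\rangle$ denotes the $L^2(\Omega)$ inner product. I would expand the pairing by transferring $T^*$ onto the other factor, using $\langle T^*r_i,u_{\alpha_1}-u_{\alpha_2}\rangle=\langle r_i,Tu_{\alpha_1}-Tu_{\alpha_2}\rangle=\langle r_i,r_1-r_2\rangle$, which turns the monotonicity inequality into
\begin{equation*}
\left(\tfrac{1}{\alpha_1}+\tfrac{1}{\alpha_2}\right)\langle r_1,r_2\rangle\ \ge\ \tfrac{1}{\alpha_1}\|r_1\|_{L^2(\Omega)}^2+\tfrac{1}{\alpha_2}\|r_2\|_{L^2(\Omega)}^2 .
\end{equation*}

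Setting $a:=\|r_1\|_{L^2(\Omega)}$ and $b:=\|r_2\|_{L^2(\Omega)}$ and bounding $\langle r_1,r_2\rangle\le ab$ by Cauchy--Schwarz, this rearranges to $(b-a)\big(\tfrac{a}{\alpha_1}-\tfrac{b}{\alpha_2}\big)\ge0$. The argument is then closed by a short case distinction: were $\tfrac{a}{\alpha_1}<\tfrac{b}{\alpha_2}$, the second factor would be negative, forcing $b-a\le0$, i.e. $a\ge b$; but $a\ge b$ together with $\alpha_1\le\alpha_2$ yields $\tfrac{a}{\alpha_1}\ge\tfrac{b}{\alpha_1}\ge\tfrac{b}{\alpha_2}$, a contradiction. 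Hence $\tfrac{a}{\alpha_1}\ge\tfrac{b}{\alpha_2}$, which is precisely the asserted monotonicity; the non-decrease of $\alpha\mapsto\|Tu_\alpha-g\|_{L^2(\Omega)}$ from Lemma \ref{Lemma:Fnondecreasing} can be used to streamline this last step.

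The role of the hypothesis, and the point I would be most careful about, is the interpretation of the quantity rather than the inequality chain above: the relation $\|T^*(Tu_\alpha-g)\|_{L^2(\Omega)}=c\|Tu_\alpha-g\|_{L^2(\Omega)}$ gives $\|\xi_\alpha\|_{L^2(\Omega)}=\tfrac{c}{\alpha}\|r_\alpha\|_{L^2(\Omega)}=\sqrt2\,c\,\tfrac{\sqrt{\H_2(u_\alpha;g)}}{\alpha}$, so that the target function is, up to the constant $\sqrt2\,c$, the $L^2$-norm of the canonical subgradient $\xi_\alpha\in\partial\mcR(u_\alpha)$; for $T=I$ one has $c=1$ and recovers precisely the monotone quantity underlying the CPS-algorithm. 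I expect the genuine obstacle to be the rigorous justification of the optimality condition and the sum rule in the $BV(\Omega)\cap L^2(\Omega)$ setting, together with the possible non-uniqueness of $u_\alpha$; the latter is harmless because $\xi_\alpha$ depends on $u_\alpha$ only through the unique residual $r_\alpha$.
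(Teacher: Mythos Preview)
Your argument is correct and takes a genuinely different, more elementary route than the paper. The paper proceeds via the surrogate (forward--backward) iteration: it writes the minimizer of the denoising subproblem as $u_{\alpha}=(I-P_{\frac{\alpha}{\delta}K})(z(u_\alpha))$ with $z(u)=u-\tfrac{1}{\delta}T^*(Tu-g)$, sets $\tilde f(\alpha):=\|P_{\frac{\alpha}{\delta}K}(z(u_\alpha))\|_{L^2}=\tfrac{1}{\delta}\|T^*(Tu_\alpha-g)\|_{L^2}$, and then applies Chambolle's projection lemma \cite[Lemma 4.1]{Cha} to conclude that $\alpha\mapsto \tilde f(\alpha)/\alpha$ is non-increasing. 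The hypothesis $\|T^*(Tu-g)\|_{L^2}=c\|Tu-g\|_{L^2}$ is used there precisely to identify $\tilde f(\alpha)$ with $\tfrac{c}{\delta}\sqrt{2\H_2(u_\alpha;g)}$; without it the projection lemma only controls $\|T^*r_\alpha\|_{L^2}/\alpha$, not $\|r_\alpha\|_{L^2}/\alpha$.

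Your approach bypasses the surrogate construction and the projection lemma entirely: subdifferential monotonicity plus Cauchy--Schwarz yields $(b-a)\big(\tfrac{a}{\alpha_1}-\tfrac{b}{\alpha_2}\big)\ge 0$ directly from the optimality conditions, and the short case analysis (or Lemma~\ref{Lemma:Fnondecreasing}) finishes it. The noteworthy consequence is that your inequality chain never invokes the hypothesis on $T$; you have in fact established that $\alpha\mapsto \sqrt{\H_2(u_\alpha;g)}/\alpha$ is non-increasing for \emph{any} $T\in\mathcal{L}(L^2(\Omega))$, i.e.\ that the exponent $p=\tfrac12$ in Proposition~\ref{Proposition:q} always works when $\tau=2$. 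This is strictly stronger than the stated proposition, and it explains why you could only find an ``interpretative'' role for the assumption. The only technical care needed, which you already flag, is that the sum rule and the identification of the $L^2$-subgradient of $\mcR$ are justified here (they are, by the same reference \cite{EkeTem} the paper uses), and that the computation depends on $u_\alpha$ only through the unique residual $Tu_\alpha-g$.
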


\begin{proof}
We start by replacing the functional $\Js_2$ by a family of surrogate functionals denoted by $\bar{\mcS}$ and defined for $u,a\in X$ as
\begin{equation*}
\begin{split}
\bar{\mcS}(u,a)&:= \Js_{2}(u,\alpha) + \frac{\delta}{2}\|u-a\|_{L^2(\Omega)}^2 - \frac{1}{2}\|T(u-a)\|_{L^2(\Omega)}^2 \\
&=\delta\|u-z(a)\|_{L^2(\Omega)}^2 + 2\alpha \int_\Omega |Du| + \psi(a,g,T)
\end{split}
\end{equation*}
where $\delta>\|T\|^2$, $z(a):=a - \frac{1}{\delta}T^*(T a - g)$, and $\psi$ is a function independent of $u$. It can be shown that the iteration
\begin{equation}\label{surrogate}
u_{\alpha,0}\in X, \quad u_{\alpha,k+1}=\argmin_u \bar{\mcS}(u,u_{\alpha,k}), \ k\geq0
\end{equation}
generates a sequence $(u_{\alpha,k})_k$ which converges weakly for $k\to \infty$ to a minimizer $u_{\alpha}$ of $\Js_2(u,\alpha)$, see for example \cite{DauTesVes}. The unique minimizer $u_{\alpha,k+1}$ is given by $u_{\alpha,k+1} = (I-P_{\frac{\alpha}{\delta}K_{}})(z(u_{\alpha,k}))$, where $K$ is the closure of the set 
$$
\{\Div \xi \ : \ \xi\in C_c^1(\Omega,\R^2), |\xi(x)| \leq 1 \ \forall x\in \Omega\}
$$ 
and $P_K(u):=\argmin_{v\in K} \|u-v\|_{L^2(\Omega)}$; see \cite{Cha}. Then for $k\to \infty$, let us define 
$$
\tilde f(\alpha):= \left\|P_{\frac{\alpha}{\delta}K_{}}( z(u_{\alpha}))\right\|_{L^2(\Omega)} = \left\|\frac{1}{\delta}T^*(T u_{\alpha} - g)\right\|_{L^2(\Omega)}.
$$ 
Since $\|T^*(T u - g)\|_{L^2(\Omega)} = c \|T u - g\|_{L^2(\Omega)}$, it follows that $\tilde f(\alpha)=\frac{c}{\delta} \sqrt{2 \H_2(u_\alpha;g)}$. The assertion follows by applying \cite[Lemma 4.1]{Cha}, which is extendable to infinite dimensions, to $\tilde f$ and by noting that the non-increase of $\alpha \mapsto \frac{\tilde{f}(\alpha)}{\alpha}$ implies the non-increase of $\alpha \mapsto \frac{\sqrt{\H_2(u_\alpha;g)}}{\alpha}$.\qed
\end{proof}

We remark that for convolution type of operators the assumption of Proposition \ref{Proposition4.1} does not hold in general. However, there exist several operators $T$, relevant in image processing, with the property $\|T^*(Tu-g)\|_{L^2(\Omega)} = \|Tu-g\|_{L^2(\Omega)}$. Such operators include $T=I$ for image denoising, $T=1_D$ for image inpainting, where $1_D$ denotes the characteristic function of the domain $D\subset \Omega$, and $T=S\circ A$, where $S$ is a subsampling operator and $A$ is an analysis operator of a Fourier or orthogonal wavelet transform. The latter type of operator is used for reconstructing signals from partial Fourier data \cite{CanRomTao} or in wavelet inpainting \cite{ChaSheZho}, respectively. For all such operators the function $\alpha \mapsto \frac{\sqrt{\H_2(u_\alpha;g)}}{\alpha}$ is non-increasing and hence by setting $p=\frac{1}{2}$ fixed in the pAPS-algorithm or changing the update of $\alpha$ in the APS-algorithm to
$$
\alpha_{n+1} := \sqrt{\frac{\B_2}{\H_2(u_{\alpha_n};g)}} \alpha_n,
$$
where $\B_2$ is a fixed constant, chosen according to \eqref{LtauTVmodel}, we obtain in these situations a convergent algorithm. 
 
We emphasize once more, that in general the non-increase of the function $\alpha \mapsto \frac{\H_\tau(u_\alpha;g)}{\alpha}$ is not guaranteed. 
Nevertheless, there exists always a constant $p\geq0$ such that $\alpha \mapsto \frac{(\H_\tau(u_\alpha;g))^p}{\alpha}$ is indeed non-increasing. For example, $p=\frac{1}{2}$ for operators $T$ with the property $\|T^*(Tu-g)\|_{L^2(\Omega)} = \|Tu-g\|_{L^2(\Omega)}$; cf. Propsition \ref{Proposition4.1}. In particular, one easily checks the following result.
\begin{proposition}\label{Proposition:q}
Let $0<\alpha\leq \beta$, and $u_\alpha$ and $u_\beta$ minimizers of $\Js_\tau(\cdot,\alpha)$ and $\Js_\tau(\cdot,\beta)$, respectively, for $\tau=1,2$. Then $\frac{(\H_\tau(u_{\beta};g))^p}{\beta} \leq \frac{(\H_\tau(u_{\alpha};g))^p}{\alpha}$ if and only if $p \leq \frac{\ln{\beta} - \ln{\alpha}}{\ln{\H_\tau(u_{\beta};g)} - \ln{\H_\tau(u_{\alpha};g)}}$.
\end{proposition}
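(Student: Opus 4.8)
The plan is to reduce the claimed equivalence to an elementary inequality between two positive reals raised to the power $p$, and then to exploit the monotonicity of $\alpha \mapsto \H_\tau(u_\alpha;g)$ from Lemma \ref{Lemma:Fnondecreasing} together with the hypothesis $\alpha \le \beta$ so that all the logarithms appearing are well defined and carry the correct signs. To lighten notation I would abbreviate $a := \H_\tau(u_\alpha;g)$ and $b := \H_\tau(u_\beta;g)$, and assume throughout $a,b>0$ so that $a^p$, $b^p$, and the logarithms make sense; this is precisely the regime in which the quotient on the right-hand side of the statement is meant to be evaluated.

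First I would rewrite $\frac{b^p}{\beta} \le \frac{a^p}{\alpha}$ in the multiplicative form $\alpha\,b^p \le \beta\,a^p$, which, upon dividing by the strictly positive quantity $\alpha\,a^p$, is equivalent to $\left(\frac{b}{a}\right)^p \le \frac{\beta}{\alpha}$. Since $0<\alpha\le\beta$, Lemma \ref{Lemma:Fnondecreasing} yields $a \le b$, so both bases $b/a$ and $\beta/\alpha$ are $\ge 1$ and their natural logarithms are non-negative.

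Next I would apply the strictly increasing natural logarithm to both sides of $\left(\frac{b}{a}\right)^p \le \frac{\beta}{\alpha}$, obtaining the equivalent inequality $p\,(\ln b - \ln a) \le \ln\beta - \ln\alpha$. When $b>a$ the coefficient $\ln b - \ln a$ is strictly positive, so dividing by it preserves the direction of the inequality and gives exactly $p \le \frac{\ln\beta - \ln\alpha}{\ln\H_\tau(u_\beta;g) - \ln\H_\tau(u_\alpha;g)}$, as asserted. Each of these steps is reversible, which delivers the \emph{if and only if}.

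The only genuinely delicate point, and the reason I flagged the positivity assumption, is the degenerate situation $\ln b - \ln a = 0$, i.e.\ $\H_\tau(u_\alpha;g) = \H_\tau(u_\beta;g)$: here the right-hand fraction is undefined and the final division is illegitimate. In that case the reduced inequality reads $0 \le \ln\beta - \ln\alpha$, which holds automatically from $\alpha \le \beta$, so $\frac{b^p}{\beta} \le \frac{a^p}{\alpha}$ is then true for every $p$. I would simply note that the stated characterization is understood under the tacit hypothesis $\H_\tau(u_\alpha;g) < \H_\tau(u_\beta;g)$, consistent with the strictly monotone regime in which the quotient is to be read, so that no further work beyond these reversible algebraic manipulations is required.
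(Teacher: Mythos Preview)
Your argument is correct and is precisely the elementary verification the paper has in mind; the paper does not supply a proof at all but merely prefaces the proposition with the remark that ``one easily checks the following result.'' Your explicit treatment of the degenerate case $\H_\tau(u_\alpha;g)=\H_\tau(u_\beta;g)$, in which the right-hand quotient is undefined while the left-hand inequality holds for every $p$, is a welcome clarification that the paper leaves tacit.
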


\section{Locally constrained TV problem}\label{Sec:locallyTV}

In order to enhance image details, while preserving homogeneous regions, we formulate, as in \cite{DonHinRin,HinRin}, a locally constrained optimization problem. That is, instead of considering \eqref{LtauTVmodel} we formulate 
\begin{equation}\label{cont:lcminP}
\begin{split}
\min_{u\in BV(\Omega)} \int_\Omega |Du| \ \text{ s.t. } \ \int_\Omega w(x,y) |Tu - g|^\tau(y) dy \leq \nu_\tau 
\end{split}
\end{equation}
for almost every $x\in \Omega$, where $w$ is a normalized filter, i.e., $w\in L^\infty(\Omega\times\Omega)$, and $w\geq0$ on $\Omega\times\Omega$ with
\begin{equation}\label{cont:filter}
\begin{split}
&\int_{\Omega}\int_\Omega w(x,y)dy dx =1 \quad \\
&\text{ and } \quad \\
&\int_{\Omega}\int_\Omega w(x,y)|\phi(y)|^\tau dy dx \geq \epsilon \|\phi\|^\tau_{L^\tau(\Omega)} \ 
\end{split}
\end{equation}
for all $\phi\in L^{\tau}(\Omega)$ and for some $\epsilon>0$ independent of $\phi$; cf. \cite{DonHinRin,HinRin}.

\subsection{Local filtering}

In practice for $w$ we may use the mean filter together with a windowing technique, see for example \cite{DonHinRin,HinRin}. In order to explain the main idea we continue in a discrete setting.  Let $\Omega^h$ be a discrete image domain containing $N_1\times N_2$ pixels, $N_1,N_2\in\N$, and  by $|\Omega^h|=N_1N_2$ we denote the size of the discrete image (number of pixels). We approximate functions $u$ by discrete functions, denoted by $u^h$. The considered functions spaces are $X=\R^{N_1\times N_2}$ and $Y=X\times X$. In what follows for all $u^h\in X$ we use the following norms
$$
 \|u^h\|_{\tau} := \|u^h\|_{\ell^\tau(\Omega^h)} = \left( \sum_{x\in\Omega^h} |u^h(x)|^\tau\right)^{1/\tau} 
$$
for $1\leq \tau <+\infty$. Moreover we denote by $u^h_\Omega$ the average value of $u^h\in X$, i.e, $ u^h_\Omega := \frac{1}{|\Omega^h|} \sum_{x\in\Omega^h} u^h(x)$. The discrete gradient $\nabla^h: X \to Y$ and the discrete divergence $\Div^h : Y \to X$ are defined in a standard-way by forward and backward differences such that $\Div^h:= - (\nabla^h)^* $; see for example \cite{Cha,ChaPoc,HinLan2015_1,LanOshSch}. With the above notations and definitions the discretization of the general function in \eqref{LtauTVmultimodel} is given by
\begin{equation}\label{minPh}
J_\tau(u^h,\alpha):=H_\tau(u^h) + \tv_ \alpha(u^h)
\end{equation}
where  $H_\tau(u^h)=\frac{1}{\tau} \|T^hu^h-g^h\|_{\tau}^{\tau}$, $\tau \in\{1,2\}$, $T^h : X \to X$ is a bounded linear operator, $\alpha\in (\R^+)^{N_1 \times N_2}$, and 
\begin{equation}\label{DefTVa}
R_\alpha(u^h) :=  \sum_{x\in\Omega^h} \alpha(x) |\nabla^h u^h(x) |_{l^2}
\end{equation}
with $|y|_{l^2}=\sqrt{y_1^2 + y_2^2}$ for every $y=(y_1,y_2)\in \R^2$. In the sequel if $\alpha$ is a scalar or $\alpha \equiv 1$ in \eqref{DefTVa}, we write instead of $R_\alpha$ or $R_1$ just $\alpha R$ or $R$, respectively, i.e.,
$$
R(u^h) = \sum_{x\in\Omega^h}|\nabla^h u^h(x)|_{l^2}
$$
is the discrete total variation of $u$ in $\Omega^h$, and we write $\bar{E}_\tau$ instead of $E_\tau$ to indicate that $\alpha$ is constant. Introducing some step-size $h$, then for $h\to 0$ (i.e. the number of pixels $N_1N_2$ goes to infinity) one can show, similar as for the case $\alpha\equiv 1$, that $R_{\alpha}$ $\Gamma$-converges to $\int_\Omega \alpha |Du|$; see \cite{Bra,Lan}. 

We turn now to the locally constrained minimization problem, which is given in the discrete setting as
\begin{equation}\label{lcTVP}
\min_{u^h\in X} \tv(u^h)  \quad \text{ s.t. } \quad S_{i,j}^{\tau}(u^h)\leq\frac{\nu_\tau}{\tau} \ \text{ for all } x_{i,j} \in \Omega^h.
\end{equation}
Here $\nu_\tau$ is a fixed constant and 
$$
S_{i,j}^{\tau}(u^h) := \frac{1}{M_{i,j}}\sum_{x_{s,t}\in \mathcal{I}_{i,j}}\frac{1}{\tau}|(T^h u^h)(x_{s,t}) - g^h(x_{s,t}) |^\tau
$$ 
denotes the local residual at $x_{i,j}\in \Omega^h$ with $\mathcal{I}_{i,j}$ being some suitable set of pixels around $x_{i,j}$ of size $M_{i,j}$, i.e., $M_{i,j}= |\mathcal{I}_{i,j}|$. For example, in \cite{DonHinRin,HinLan2015,HinRin} for $\mathcal{I}_{i,j}$ the set
$$
\Omega_{i,j}^{\omega} = \left\{x_{s+i,t+j}\in \Omega^h : - \frac{\omega-1}{2}\leq s,t \leq  \frac{\omega-1}{2}\right\}
$$
with a symmetric extension at the boundary and with $\omega$ being odd is used. That is, $\Omega_{i,j}^\omega$ is a set of pixels in a $\omega$-by-$\omega$ window centered at $x_{i,j}$, i.e., $M_{i,j}=\omega^2$ for all $i,j$, such that $\Omega_{i,j}^\omega \not\subset \Omega^h$ for $x_{i,j}$ sufficiently close to $\partial \Omega$. Additionally we denote by $\tilde \Omega_{i,j}^{\omega}$ a set of pixels in a window centered at $x_{i,j}$ without any extension at the boundary, i.e., 
\begin{equation*}
\begin{split}
\tilde \Omega_{i,j}^{\omega} = \Bigg\{x_{s+i,t+j} : \max\left\{1-(i,j),- \frac{\omega-1}{2}\right\}\leq (s,t) \\
 \leq  \min\left\{\frac{\omega-1}{2},(N_1-i,N_2-j)\right\}\Bigg\}.
\end{split}
\end{equation*}
Hence $\tilde \Omega_{i,j}^\omega \subset \Omega^h$ for all $x_{i,j}\in \Omega^h$. Before we analyze the difference between $\Omega_{i,j}^{\omega}$ and $\tilde \Omega_{i,j}^{\omega}$ with respect to the constrained minimization problem \eqref{lcTVP}, we note that, since $T^h$ does not annihilate constant functions, the existence of a solution of \eqref{lcTVP} is guaranteed; see \cite[Theorem 2]{DonHinRin}\cite[Theorem 2]{HinRin}.

In the following we set $B_\tau:=\frac{\nu_\tau}{\tau}|\Omega^h|$.

\begin{proposition}\label{Lemma:pixelset}
\begin{itemize}
\item[(i)]
If $u^h$ is a solution of \eqref{lcTVP} with $\mathcal{I}_{i,j}= \Omega_{i,j}^{\omega}$, then $H_\tau(u^h) < B_\tau.$
\item[(ii)] 
If $u$ is a solution of \eqref{lcTVP} with $\mathcal{I}_{i,j}= \tilde\Omega_{i,j}^{\omega}$, then $H_\tau(u^h) \leq B_\tau.$
\end{itemize}
\end{proposition}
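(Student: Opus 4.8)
The quantity to be controlled is the global discrepancy $H_\tau(u^h)=\sum_{x\in\Omega^h}\frac1\tau|(T^hu^h-g^h)(x)|^\tau$, and in both cases the plan is to reconstruct this sum from the local residuals by a covering argument. Writing each constraint in \eqref{lcTVP} in unnormalized form, namely $\sum_{x_{s,t}\in\mathcal{I}_{i,j}}\frac1\tau|(T^hu^h-g^h)(x_{s,t})|^\tau\le M_{i,j}\frac{\nu_\tau}\tau$, the idea is to add up a suitable selection of these inequalities so that each pixel of $\Omega^h$ is accounted for in a controlled way, and then to compare the accumulated budget $\sum M_{i,j}\frac{\nu_\tau}\tau$ with $B_\tau=\frac{\nu_\tau}\tau|\Omega^h|$.

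For part (ii) I would exploit that $\tilde\Omega_{i,j}^\omega$ is a Cartesian product of the two one-dimensional clipped windows. First I partition $\{1,\dots,N_1\}$ into consecutive blocks, all interior blocks of length $\omega$ and the two outermost blocks of length between $(\omega+1)/2$ and $\omega$ (a single block when $N_1\le\omega$); each such block is realizable as a one-dimensional clipped window centred at a valid index, and the same is done for $\{1,\dots,N_2\}$. The resulting product grid partitions $\Omega^h$ into rectangles, each of which is exactly some window $\tilde\Omega_{i_k,j_k}^\omega$. Summing the corresponding constraints and using $\sum_k M_{i_k,j_k}=|\Omega^h|$ then gives $H_\tau(u^h)=\sum_k\sum_{x\in\tilde\Omega_{i_k,j_k}^\omega}\frac1\tau|\cdots|^\tau\le\frac{\nu_\tau}\tau\sum_k M_{i_k,j_k}=B_\tau$. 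This step uses only feasibility of $u^h$, not its optimality, and it naturally produces the non-strict inequality.

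For part (i) the windows all have $M_{i,j}=\omega^2$, so no sub-collection can tile $\Omega^h$ exactly and a different bookkeeping is needed: I would sum the constraints over every centre $(i,j)\in\Omega^h$ and use that the symmetric extension folds the out-of-domain positions back onto $\Omega^h$ so that, after summation, every pixel of $\Omega^h$ is counted the same number of times (namely $\omega^2$); this again yields $H_\tau(u^h)\le B_\tau$. The decisive and delicate point — and where the two cases genuinely diverge — is upgrading this to the strict inequality. I expect this to require the minimizing property of $u^h$ rather than mere feasibility: since the boundary windows contain reflected copies of interior pixels, saturating all local budgets simultaneously would over-determine the residual near $\partial\Omega$ in a way that a genuine total-variation minimizer cannot realize, forcing at least one constraint to be inactive and hence $H_\tau(u^h)<B_\tau$. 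Making this last step rigorous, i.e. ruling out simultaneous activity of all the reflected boundary constraints at the minimizer, is the part I expect to be the main obstacle, since the counting argument by itself only delivers $\le$.
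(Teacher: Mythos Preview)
For part~(ii) your partition argument is correct but takes a genuinely different route from the paper. The paper does not pick out a disjoint sub-collection of windows; it simply sums the constraints $S_{i,j}^{\tau}(u^h)\le\nu_\tau/\tau$ over \emph{all} centres $(i,j)\in\Omega^h$ and asserts that, after interchanging the order of summation, the double sum reproduces $H_\tau(u^h)$ exactly. Your tiling sidesteps having to justify any identity about how the weights $1/M_{i,j}$ accumulate at each pixel, at the price of using only a subset of the available constraints; the paper's approach is shorter but leans on a pointwise bookkeeping claim that you avoid entirely.

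For part~(i) your starting point --- summing over all centres --- coincides with the paper's, but you misidentify the source of the strict inequality. The paper does \emph{not} appeal to the minimising property of $u^h$; it treats $H_\tau(u^h)<B_\tau$ as a pure counting/feasibility fact. The stated mechanism is that with $M_{i,j}=\omega^2$ fixed, pixels in the interior set $\Lambda^\omega=\{(i,j):\min\{i-1,j-1,N_1-i,N_2-j\}\ge(\omega-1)/2\}$ are hit exactly $\omega^2$ times in the double sum while pixels in the boundary layer $\Omega^h\setminus\Lambda^\omega$ are hit strictly fewer than $\omega^2$ times, and this strict imbalance is what is claimed to force the strict inequality. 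In particular, your assertion that the reflection makes every pixel appear exactly $\omega^2$ times is not the paper's position; the paper insists the boundary count is strictly smaller. So your plan to extract strictness from total-variation optimality --- and your identification of this as ``the main obstacle'' --- is aiming at the wrong mechanism: in the paper's argument the entire proposition is a combinatorial statement about any feasible point, and there is nothing further to do once the counting is in place.
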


\begin{proof}
\begin{itemize}
\item[(i)]
Since $u^h$ is a solution of \eqref{lcTVP} and $\Omega_{i,j}^\omega$ is a set of pixels in a $\omega$-by-$\omega$ window, we have 
\begin{equation*}
\begin{split}
B_\tau &\geq \sum_{i,j}S_{i,j}^{\tau}(u^h) \\
&=  \sum_{i,j} \frac{1}{\tau \omega^2}\sum_{x_{s,t}\in \Omega_{i,j}^\omega}|g^h(x_{s,t})- T^h u^h(x_{s,t}) |^\tau \\
 &> \frac{1}{\tau}\sum_{i,j} |g^h(x_{i,j})- T^h u^h(x_{i,j})|^\tau = H_\tau(u^h).
\end{split}
\end{equation*}
Here we used that due to the sum over $i,j$ each element (pixel) in $\Omega_{i,j}^\omega$ appears at most $\omega^2$ times. More precisely, any pixel-coordinate in the set $\Lambda^\omega:= \{(i,j) : \min\{i-1,j-1,N_1-i,N_2-j\}\geq \frac{\omega-1}{2}\}$ occurs exactly $\omega^2$-times, while any other pixel-coordinate appears strictly less than $\omega^2$-times. This shows the first statement.
 
\item[(ii)] For a minimizer $u^h$ of \eqref{lcTVP} we obtain
\begin{equation*}
\begin{split}
B_\tau &\geq \sum_{i,j}S_{i,j}^{\tau}(u^h) \\
&=  \sum_{i,j} \frac{1}{\tau M_{i,j}}\sum_{x_{s,t}\in \Omega_{i,j}^\omega}|g^h(x_{s,t}) - T^h u^h(x_{s,t})|^\tau \\
&=  \frac{1}{\tau}\sum_{i,j} |g^h(x_{i,j})- T^h u^h(x_{i,j})|^\tau = H_\tau(u^h),
\end{split}
\end{equation*}
which concludes the proof.\qed
\end{itemize}
\end{proof}
Note, that if $\mathcal{I}_{i,j} = \tilde\Omega_{i,j}^{\omega}$ then by Proposition \ref{Lemma:pixelset} a minimizer of \eqref{lcTVP} also satisfies the constraint of the problem
\begin{equation} \label{cminPh}
\min_{u^h\in X} R(u^h) \quad \text{s.t.} \quad H_\tau(u^h)\leq B_\tau
\end{equation}
(discrete version of \eqref{cminPA}) but is in general of course not a solution of \eqref{cminPh}. 

\begin{proposition}\label{Prop:TVlvsTVs}
Let $\mathcal{I}_{i,j} = \tilde\Omega_{i,j}^{\omega}$, $u_s^h$ be a minimizer of \eqref{cminPh} and $u_l^h$ be a minimizer of \eqref{lcTVP}, then $\tv(u_s^h)\leq \tv(u_l^h)$. 
\end{proposition}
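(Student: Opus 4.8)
The plan is to observe that the two problems \eqref{cminPh} and \eqref{lcTVP} minimize one and the same objective functional, namely the total variation $\tv$, and differ only through their feasibility constraints. The core of the argument is therefore to show that every feasible point of the locally constrained problem \eqref{lcTVP} is also feasible for the globally constrained problem \eqref{cminPh}; the claimed inequality then follows at once, since minimizing $\tv$ over a subset cannot yield a smaller value than minimizing it over a superset.

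First I would invoke Proposition \ref{Lemma:pixelset}(ii). Because the window choice $\mathcal{I}_{i,j} = \tilde\Omega_{i,j}^{\omega}$ is in force, that result guarantees that the minimizer $u_l^h$ of \eqref{lcTVP} satisfies $H_\tau(u_l^h) \leq B_\tau$. But $H_\tau(u^h) \leq B_\tau$ is exactly the single global constraint defining the feasible set of \eqref{cminPh}, so $u_l^h$ is an admissible point for \eqref{cminPh}.

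Since $u_s^h$ is by hypothesis a minimizer of $\tv$ over the feasible set of \eqref{cminPh} and $u_l^h$ lies in that feasible set, the optimality of $u_s^h$ forces $\tv(u_s^h) \leq \tv(u_l^h)$, which is precisely the assertion. I expect no genuine obstacle here: all the substantive work — relating the accumulated local residuals $S_{i,j}^{\tau}$ to the global residual $H_\tau$ through the non-extended windows $\tilde\Omega_{i,j}^{\omega}$ — has already been done in Proposition \ref{Lemma:pixelset}(ii), so the present proposition is essentially an immediate corollary of that feasibility statement.
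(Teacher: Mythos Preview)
Your proposal is correct and follows essentially the same approach as the paper: both arguments hinge on Proposition \ref{Lemma:pixelset}(ii) to conclude that $u_l^h$ is feasible for \eqref{cminPh}, after which the inequality is immediate from the optimality of $u_s^h$. The only cosmetic difference is that the paper phrases the final step as a proof by contradiction, whereas you argue directly; your direct formulation is arguably cleaner.
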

\begin{proof}
Assume that $\tv(u_s^h)> \tv(u_l^h)$. Since $u_s^h$ is a solution of \eqref{cminPh} it satisfies the constraint $H_\tau(u_s^h)\leq B_\tau$. By Proposition \ref{Lemma:pixelset} we also have $H_\tau(u_l^h)\leq B_\tau$. Since $\tv(u_s^h) > \tv(u_l^h)$, $u_s^h$ is not the solution of \eqref{cminPh} which is a contradiction. Hence, $\tv(u_s^h)\leq \tv(u_l^h)$.\qed
\end{proof}

\begin{remark}\label{Remark:TVlvsTVs}
Proposition \ref{Lemma:pixelset} and its consequence are not special properties of the discrete setting. 
Let the filter $w$ in \eqref{cont:lcminP} be such that the inequality in \eqref{cont:filter} becomes an equality with $\epsilon=1/|\Omega|$, as it is the case in Proposition \ref{Lemma:pixelset}(ii), then a solution ${u}_l$ of the locally constrained minimization problem \eqref{cont:lcminP} satisfies
$$
\mcD_\tau({u}_l;g)\leq\frac{\nu_\tau}{\tau}|\Omega| \ \text{ and }\ \int_\Omega |D{u}_l| \geq\int_\Omega |D{u}_s| 
$$
where ${u}_s$ is a solution of \eqref{cminPA}.
\end{remark}

From Proposition \ref{Prop:TVlvsTVs} and Remark \ref{Remark:TVlvsTVs} we conclude, since $\tv(u_s^h) \leq \tv(u_l^h)$ and $\int_\Omega |D{u}_s| \leq\int_\Omega |D{u}_l|$, that $u_s^h$ and ${u}_s$ are smoother than $u_l^h$ and ${u}_l$, respectively. Hence the solution of the locally constrained minimization problem is expected to preserve details better than the minimizer of the globally constrained optimization problem. Since noise can be interpreted as fine details, which we actually want to eliminate, this could also mean, that noise is possibly left in the image.

\subsection{Locally adaptive total variation algorithm}
Whenever $\nu_\tau$ depends on $\hat{u}$ problem \eqref{lcTVP} results in a quite nonlinear problem. Instead of considering nonlinear constraints we choose as in Section \ref{Sec:APS} a reference image $\tilde u$ and compute an approximate $\nu_\tau=\nu_\tau(\tilde u)$. Note, that in our discrete setting for salt-and-pepper noise we have now
\begin{equation*}
\nu_1({u}^h) := r_2-(r_2-r_1) \frac{1}{|\Omega|}\sum_{x\in \Omega^h}(T^h{u}^h)(x)
\end{equation*}
and for random-valued impulse noise we have
\begin{equation*}
\nu_1({u}^h) := \frac{1}{|\Omega^h|}\sum_{x\in \Omega^h}r\left((T^h{u}^h)(x)^2 - (T^h {u}^h)(x) +\frac{1}{2}\right).
\end{equation*}
 In our below proposed locally adaptive algorithms we choose as a reference image the current approximation (see LATV- and pLATV-algorithm below), as also done in the pAPS- and APS-algorithm above. Then we are seeking for a solution $u^h$ such that $S_{i,j}^{\tau}(u^h)$ is close to $\frac{\nu_\tau}{\tau}$. 

We note, that for large $\alpha>0$ the minimization of \eqref{minPh} yields an over-smoothed restoration $u_\alpha^h$ and the residual contains details, i.e., we expect $H_\tau(u_\alpha^h)> B_\tau$. 
 Hence, if $ S_{i,j}^{\tau}(u_\alpha^h) > \frac{\nu_\tau}{\tau}$
we suppose that this is due to image details contained in the local residual image. In this situation we intend to decrease $\alpha$ in the local regions $\mathcal{I}_{i,j}$. In particular, we define, similar as in \cite{DonHinRin,HinRin}, the local quantity $f^\omega_{i,j}$ by
$$
f^\omega_{i,j}:= \begin{cases}
S_{i,j}^{\tau}(u_\alpha^h) & \text{ if } S_{i,j}^{\tau}(u_\alpha^h)> \frac{\nu_\tau}{\tau},\\
\frac{\nu_\tau}{\tau} & \text{otherwise}.
\end{cases}
$$ 
Note, that $\frac{\nu_\tau}{\tau f^\omega_{i,j}}\leq 1$ for all $i,j$ and hence we set
\begin{equation}\label{alpha}
\alpha(x_{i,j}):= \frac{1}{ M_{i,j}}\sum_{x_{s,t}\in \mathcal{I}_{i,j}} \left( \frac{\nu_\tau }{\tau f^\omega_{s,t}} \right)^p \alpha(x_{s,t}).
\end{equation}
On the other hand, for small $\alpha>0$ we get an under-smoothed image $u_\alpha^h$, which still contains noise, i.e., we expect $H_\tau(u_\alpha^h)< B_\tau$. Analogously, if $S_{i,j}^{\tau}(u_\alpha^h) \leq \frac{\nu_\tau}{\tau}$, we suppose that there is still noise left outside the residual image in $\mathcal{I}_{i,j}$. Hence we intend to increase $\alpha$ in the local regions $\mathcal{I}_{i,j}$ by defining 
$$
f^\omega_{i,j}:= \begin{cases}
S_{i,j}^{\tau}(u_\alpha^h) & \text{ if } S_{i,j}^{\tau}(u_\alpha^h)< \frac{\nu_\tau}{\tau},\\
\frac{\nu_\tau}{\tau} & \text{otherwise},
\end{cases}
$$ 
and setting $\alpha$ as in \eqref{alpha}. Notice, that now $\frac{\nu_\tau }{\tau f^\omega_{i,j}}\geq 1$. These considerations lead to the following locally adapted total variation algorithm.

\noindent
\fbox{
\begin{minipage}{7.9cm}
\textbf{LATV-algorithm:} Choose $\alpha_{0}>0$, $p:=p_0>0$, and set $n:=0$.
\begin{itemize}
\item[1)] Compute $u_{\alpha_n}^h \in \argmin_{u^h\in X} J_\tau(u^h,\alpha_n)$
\item[2)] 
	\begin{itemize}
	\item[(a)]  If $H_\tau(u_{\alpha_0}^h) >B_\tau(u_{\alpha_0}^h)$, then set $$f_{i,j}^{\omega}:= \max\left\{S_{i,j}^{\tau}(u_{\alpha_n}^h),\tfrac{\nu_\tau(u_{\alpha_n}^h)}{\tau}\right\}$$
	\item[(b)]  If $H_\tau(u_{\alpha_0}^h)\leq B_\tau(u_{\alpha_0}^h)$, then set $$f_{i,j}^{\omega}:= \max\left\{\min\left\{S_{i,j}^{\tau}(u_{\alpha_n}^h),\tfrac{\nu_\tau(u_{\alpha_n}^h)}{\tau}\right\},\varepsilon\right\}$$
	\end{itemize}

\item[3)] Update 
$$
\alpha_{n+1}(x_{i,j}):=\tfrac{1}{ M_{i,j}}\sum_{x_{s,t}\in \mathcal{I}_{i,j}} \left( \tfrac{\nu_\tau(u_{\alpha_n}^h) }{\tau f^\omega_{s,t}} \right)^p \alpha_n(x_{s,t}).
$$
\item[4)] Stop or set $n:=n+1$ and return to step 1).
\end{itemize}

\end{minipage}
}\\

Here and below $\varepsilon>0$ is a small constant (e.g., in our experiments we choose $\varepsilon=10^{-14}$) to ensure that $f_{i,j}^\omega>0$, since it may happen that $S_{i,j}^{\tau}(u_{\alpha_n}^h)=0$.

If $H_\tau(u_{\alpha_0}^h) >B_\tau(u_{\alpha_0}^h)$, we stop the algorithm as soon as the residual $H_\tau(u_{\alpha_n}^h)<B_\tau(u_{\alpha_n}^h)$ for the first time and set the desired locally varying $\alpha^*=\alpha_{n}$. If $H_\tau(u_{\alpha_0}^h) \leq B_\tau(u_{\alpha_0}^h)$, we stop the algorithm as soon as the residual $H_\tau(u_{\alpha_n}^h)>B_\tau(u_{\alpha_n}^h)$ for the first time and set the desired locally varying $\alpha^*=\alpha_{n-1}$, since $H_\tau(u_{\alpha_{n-1}}^h)\leq \frac{\nu_\tau(u_{\alpha_{n-1}}^h)}{\tau}$.

The LATV-algorithm has the following monotonicity properties with respect to $(\alpha_n)_n$.

\begin{proposition}
Assume $\mathcal{I}_{i,j} = \Omega_{i,j}^\omega$ and let $\varepsilon>0$ be sufficiently small. If $\alpha_0 >0$ such that $H_\tau (u_{\alpha_0}^h) \leq B_\tau(u_{\alpha_0}^h)$, then the LATV-algorithm generates a sequence $(\alpha_n)_n$ such that $$
\sum_{i,j} \alpha_{n+1}(x_{i,j}) > \sum_{i,j} \alpha_{n}(x_{i,j}).
$$
\end{proposition}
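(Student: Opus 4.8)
The plan is to expand the update in step~3 of the LATV-algorithm, exploit that in branch~(b) every multiplier is at least one, and then interchange the order of the two summations so that the double sum over windows collapses into a single sum over $\Omega^h$. First I would record that, since we assume $H_\tau(u_{\alpha_0}^h)\le B_\tau(u_{\alpha_0}^h)$, we are in branch~(b), where
$$
f_{i,j}^\omega = \max\left\{\min\left\{S_{i,j}^{\tau}(u_{\alpha_n}^h),\tfrac{\nu_\tau(u_{\alpha_n}^h)}{\tau}\right\},\varepsilon\right\}.
$$
Choosing $\varepsilon>0$ smaller than $\tfrac{\nu_\tau}{\tau}$ and smaller than every positive value attained by $S_{i,j}^{\tau}(u_{\alpha_n}^h)$, one obtains $f_{i,j}^\omega\le\tfrac{\nu_\tau}{\tau}$ for all $i,j$, hence $\left(\tfrac{\nu_\tau}{\tau f_{i,j}^\omega}\right)^{p}\ge 1$ because $p>0$. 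This pointwise bound is what eventually drives the sum upwards.

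Second, I would carry out the summation interchange. Using $\mathcal{I}_{i,j}=\Omega_{i,j}^\omega$ and $M_{i,j}=\omega^2$, we have
$$
\sum_{i,j}\alpha_{n+1}(x_{i,j}) = \frac{1}{\omega^2}\sum_{i,j}\sum_{x_{s,t}\in\Omega_{i,j}^\omega}\left(\frac{\nu_\tau}{\tau f_{s,t}^\omega}\right)^{p}\alpha_n(x_{s,t}).
$$
The crucial combinatorial fact—exactly the counting used in Proposition~\ref{Lemma:pixelset}—is that, owing to the symmetric extension at the boundary, each pixel of $\Omega^h$ occurs exactly $\omega^2$ times when one ranges over all window-centres $(i,j)$ and all members $x_{s,t}$ of the respective window. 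Interchanging the two sums therefore yields
$$
\sum_{i,j}\alpha_{n+1}(x_{i,j}) = \sum_{x_{s,t}\in\Omega^h}\left(\frac{\nu_\tau}{\tau f_{s,t}^\omega}\right)^{p}\alpha_n(x_{s,t}) \ge \sum_{x_{s,t}\in\Omega^h}\alpha_n(x_{s,t}),
$$
where the inequality uses the multiplier bound from the first step together with $\alpha_n>0$. This already gives the non-strict inequality.

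Third, to upgrade ``$\ge$'' to ``$>$'' I would exhibit at least one pixel at which the multiplier is strictly larger than one, i.e.\ one index with $S_{i,j}^{\tau}(u_{\alpha_n}^h)<\tfrac{\nu_\tau}{\tau}$. Here I would use that, before the algorithm terminates in branch~(b), the iterate still satisfies $H_\tau(u_{\alpha_n}^h)\le B_\tau(u_{\alpha_n}^h)=\tfrac{\nu_\tau}{\tau}|\Omega^h|$, together with the relation between $\sum_{i,j}S_{i,j}^{\tau}(u_{\alpha_n}^h)$ and $H_\tau(u_{\alpha_n}^h)$ established in the proof of Proposition~\ref{Lemma:pixelset}. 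If every local residual satisfied $S_{i,j}^{\tau}(u_{\alpha_n}^h)\ge\tfrac{\nu_\tau}{\tau}$, summing would contradict $H_\tau(u_{\alpha_n}^h)\le B_\tau$ unless all local residuals equal $\tfrac{\nu_\tau}{\tau}$, the degenerate situation in which $u_{\alpha_n}^h$ already meets every local constraint with equality and the algorithm has converged. Excluding this fixed point, some pixel has $S_{i,j}^{\tau}(u_{\alpha_n}^h)<\tfrac{\nu_\tau}{\tau}$, and for $\varepsilon$ small enough $f_{i,j}^\omega=S_{i,j}^{\tau}(u_{\alpha_n}^h)<\tfrac{\nu_\tau}{\tau}$ forces the corresponding multiplier strictly above one, which makes the displayed inequality strict.

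Finally, the step I expect to be the main obstacle is the boundary bookkeeping in the summation interchange: the claim that under the symmetric extension every pixel is counted exactly $\omega^2$ times must be verified precisely, since for centres close to $\partial\Omega$ the window $\Omega_{i,j}^\omega$ contains reflected coordinates, and miscounting these would destroy the mean-preserving property on which the non-strict inequality rests. I would treat this exactly as in the proof of Proposition~\ref{Lemma:pixelset}, separating the interior index set $\Lambda^\omega$ from the boundary layer and checking that the reflection reassigns the out-of-range contributions to genuine pixels without altering the per-pixel total. A secondary, minor point is to fix the floor $\varepsilon$ uniformly in $n$ so that it never interferes with the multiplier bound; this is precisely the role of the hypothesis that $\varepsilon$ be sufficiently small.
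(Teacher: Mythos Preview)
Your outline—expand the update, bound each multiplier below by one, interchange the double sum—matches the paper, but you and the paper part ways on where the \emph{strict} inequality comes from, and in doing so you misquote the counting lemma.

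The paper obtains the strict ``$>$'' already at the summation-interchange step itself, by invoking the same boundary counting as in Proposition~\ref{Lemma:pixelset}(i): for $\mathcal{I}_{i,j}=\Omega_{i,j}^\omega$ with symmetric extension the per-pixel multiplicity is \emph{not} identically $\omega^2$, and this imbalance yields
\[
\sum_{i,j}\frac{1}{\omega^2}\sum_{x_{s,t}\in\Omega_{i,j}^\omega}\Bigl(\tfrac{\nu_\tau}{\tau f_{s,t}^\omega}\Bigr)^{p}\alpha_n(x_{s,t})\;>\;\sum_{i,j}\Bigl(\tfrac{\nu_\tau}{\tau f_{i,j}^\omega}\Bigr)^{p}\alpha_n(x_{i,j}).
\]
The bound $f_{i,j}^\omega\le\nu_\tau/\tau$ is then used only for the final (non-strict) step down to $\sum_{i,j}\alpha_n(x_{i,j})$. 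In particular the paper never needs to locate a pixel with $S_{i,j}^\tau<\nu_\tau/\tau$ and never has to exclude your degenerate fixed-point case.

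By contrast, you assert that ``each pixel of $\Omega^h$ occurs exactly $\omega^2$ times'' so that the interchange gives an \emph{equality}, and you cite Proposition~\ref{Lemma:pixelset} for this. That is precisely the opposite of what Proposition~\ref{Lemma:pixelset}(i) establishes: its strict conclusion $H_\tau(u^h)<B_\tau$ rests on the boundary count differing from $\omega^2$; exact counting would give only $H_\tau\le B_\tau$. So your appeal to that proposition is inverted. Your alternative route to strictness—force a strict multiplier via the global bound $H_\tau(u_{\alpha_n}^h)\le B_\tau(u_{\alpha_n}^h)$ at the current iterate—can be made to work, but it costs you an additional argument (plus the assumption that the iterate is not the degenerate fixed point) that the paper's approach simply sidesteps by extracting the strictness from the boundary imbalance.
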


\begin{proof}
By the same argument as in the proof of Proposition \ref{Lemma:pixelset} we obtain
\begin{equation*}
\begin{split}
\sum_{i,j} \alpha_{n+1}(x_{i,j}) &= \sum_{i,j} \left(\frac{(\nu_\tau(u_{\alpha_n}^h))^p}{\tau^p\omega^2}\sum_{(s,t)\in \Omega_{i,j}^\omega} \frac{\alpha_n(x_{s,t})}{(f_{s,t}^\omega)^p}\right)\\
& > \sum_{i,j} \left(\frac{(\nu_\tau(u_{\alpha_n}^h))^p \omega^2}{\tau^p \omega^2} \frac{\alpha_n(x_{i,j})}{(f_{i,j}^\omega)^p}\right) .
\end{split}
\end{equation*}
Note that $\nu_\tau(\cdot)$ is bounded from below, see Section \ref{Sec:CharNoise}. Consequently there exists an $\varepsilon>0$ such that $\frac{\nu_\tau(u^h)}{\tau} \geq \varepsilon$ for any $u^h$. Then, since $H_\tau(u_{\alpha_0}^h)\leq B_\tau(u_{\alpha_0}^h)$ we have by the LATV-al\-gorithm that  $$f_{i,j}^\omega:= \max\left\{\min\left\{S_{i,j}^{\tau}(u_{\alpha_n}^h),\frac{\nu_\tau(u_{\alpha_n}^h)}{\tau}\right\},\varepsilon\right\}\leq \frac{\nu_\tau(u_{\alpha_n}^h)}{\tau}$$ and hence 
$\sum_{i,j} (\alpha_{n+1})(x_{i,j}) > \sum_{i,j} (\alpha_{n})(x_{i,j}).$ \qed
\end{proof}

\begin{proposition}
Let $\mathcal{I}_{i,j} = \tilde\Omega_{i,j}^\omega$ and $\varepsilon>0$ be sufficiently small. 
\begin{itemize}
\item[(i)] If $\alpha_0 >0$ such that $H_\tau (u_{\alpha_0}^h) > B_\tau(u_{\alpha_0}^h)$, then the LATV-algorithm generates a sequence $(\alpha_n)_n$ such that 
$$
\sum_{i,j} (\alpha_{n+1})(x_{i,j}) \leq \sum_{i,j} (\alpha_{n})(x_{i,j}).
$$
\item[(ii)] If $\alpha_0 >0$ such that $H_\tau (u_{\alpha_0}^h) \leq B_\tau(u_{\alpha_0}^h)$, then the LATV-algorithm generates a sequence $(\alpha_n)_n$ such that 
$$
\sum_{i,j} (\alpha_{n+1})(x_{i,j}) \geq \sum_{i,j} (\alpha_{n})(x_{i,j}).
$$
\end{itemize}
\end{proposition}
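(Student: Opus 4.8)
The plan is to read step~3) of the LATV-algorithm as a weighted local average of $\alpha_n$ over the non-extended window $\tilde\Omega_{i,j}^{\omega}$, and to control the total mass $\sum_{i,j}\alpha_n(x_{i,j})$ by combining two ingredients: the position of the multiplier relative to $1$, which is fixed by the branch (a) or (b) selected in step~2), together with the exact counting identity for $\tilde\Omega_{i,j}^{\omega}$. Abbreviating
\begin{equation*}
\alpha_{n+1}(x_{i,j})=\frac{1}{M_{i,j}}\sum_{(s,t)\in\tilde\Omega_{i,j}^{\omega}}\gamma_{s,t}\,\alpha_n(x_{s,t}),\qquad \gamma_{s,t}:=\left(\frac{\nu_\tau(u_{\alpha_n}^h)}{\tau f^{\omega}_{s,t}}\right)^{p}\ge 0,
\end{equation*}
the decisive structural fact is the telescoping already exploited in the proof of Proposition~\ref{Lemma:pixelset}(ii): for the non-extended window and any nonnegative array $h$,
\begin{equation*}
\sum_{i,j}\frac{1}{M_{i,j}}\sum_{(s,t)\in\tilde\Omega_{i,j}^{\omega}}h_{s,t}=\sum_{i,j}h_{i,j}.
\end{equation*}
I would apply this identity with $h=\alpha_n\ge 0$, after first sandwiching each $\gamma_{s,t}$ against $1$.

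For part~(i) the hypothesis $H_\tau(u_{\alpha_0}^h)>B_\tau(u_{\alpha_0}^h)$ keeps the algorithm in branch~(a) for every $n$, where $f^{\omega}_{s,t}=\max\{S_{s,t}^{\tau}(u_{\alpha_n}^h),\tfrac{\nu_\tau(u_{\alpha_n}^h)}{\tau}\}\ge\tfrac{\nu_\tau(u_{\alpha_n}^h)}{\tau}$. Hence $\tfrac{\nu_\tau(u_{\alpha_n}^h)}{\tau f^{\omega}_{s,t}}\le 1$ and, since $p>0$, $\gamma_{s,t}\le 1$. Because $\alpha_n\ge 0$ this gives the pointwise bound $\alpha_{n+1}(x_{i,j})\le \tfrac{1}{M_{i,j}}\sum_{(s,t)\in\tilde\Omega_{i,j}^{\omega}}\alpha_n(x_{s,t})$; summing over $(i,j)$ and invoking the counting identity with $h=\alpha_n$ yields $\sum_{i,j}\alpha_{n+1}(x_{i,j})\le\sum_{i,j}\alpha_n(x_{i,j})$.

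Part~(ii) is the mirror image. Now $H_\tau(u_{\alpha_0}^h)\le B_\tau(u_{\alpha_0}^h)$ selects branch~(b), where $f^{\omega}_{s,t}=\max\{\min\{S_{s,t}^{\tau}(u_{\alpha_n}^h),\tfrac{\nu_\tau(u_{\alpha_n}^h)}{\tau}\},\varepsilon\}$. Here I would first record, using that $\nu_\tau(\cdot)$ is bounded below (Section~\ref{Sec:CharNoise}), that for $\varepsilon$ sufficiently small one has $\varepsilon\le\tfrac{\nu_\tau(u_{\alpha_n}^h)}{\tau}$; consequently the outer $\max$ with $\varepsilon$ cannot push $f^{\omega}_{s,t}$ above $\tfrac{\nu_\tau(u_{\alpha_n}^h)}{\tau}$, so $f^{\omega}_{s,t}\le\tfrac{\nu_\tau(u_{\alpha_n}^h)}{\tau}$ and therefore $\gamma_{s,t}\ge 1$. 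The same two steps as before, with the reversed estimate $\alpha_{n+1}(x_{i,j})\ge\tfrac{1}{M_{i,j}}\sum_{(s,t)\in\tilde\Omega_{i,j}^{\omega}}\alpha_n(x_{s,t})$ and the counting identity, give $\sum_{i,j}\alpha_{n+1}(x_{i,j})\ge\sum_{i,j}\alpha_n(x_{i,j})$.

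The step that carries the real weight is the counting identity for $\tilde\Omega_{i,j}^{\omega}$, and it is precisely this point that explains why the conclusions here are only weak inequalities, in contrast with the strict inequality obtained for the extended window in the preceding proposition. For $\Omega_{i,j}^{\omega}$ the normalization $M_{i,j}\equiv\omega^2$ is constant and the symmetric boundary extension makes the interchanged double sum miscount coordinates, producing a genuine strict gain in one direction; for $\tilde\Omega_{i,j}^{\omega}$ the varying normalization $M_{i,j}=|\tilde\Omega_{i,j}^{\omega}|$ is exactly what makes the double sum reproduce $\sum_{i,j}h_{i,j}$ with no slack. I would therefore concentrate the care on justifying this identity — that is, on the interchange of summation and the resulting coverage coefficients $\sum_{(i,j):(s,t)\in\tilde\Omega_{i,j}^{\omega}}M_{i,j}^{-1}$ — and on the elementary check $\varepsilon\le\nu_\tau(u_{\alpha_n}^h)/\tau$, which is what prevents the safeguard $\varepsilon$ in branch~(b) from reversing the sign of $\gamma_{s,t}-1$.
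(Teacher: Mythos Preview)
Your approach mirrors the paper's proof: both bound the multiplier $\gamma_{s,t}$ against $1$ using the branch selected in step~2), and both invoke the same double-sum identity for $\tilde\Omega_{i,j}^\omega$ (the paper applies it to $h_{s,t}=\gamma_{s,t}\,\alpha_n(x_{s,t})$ directly, you to $h=\alpha_n$ after the pointwise bound, which is logically equivalent). The treatment of~(ii), including the use of the lower bound on $\nu_\tau$ to ensure $\varepsilon\le\nu_\tau/\tau$, is also the same.

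Since you flag the counting identity as the step deserving the most care, one caution is in order: the equality
\[
\sum_{i,j}\frac{1}{M_{i,j}}\sum_{(s,t)\in\tilde\Omega_{i,j}^{\omega}}h_{s,t}=\sum_{i,j}h_{i,j}
\]
does \emph{not} hold for arbitrary $h$. After swapping the order of summation the coefficient of $h_{s,t}$ is $c_{s,t}=\sum_{(i,j)\in\tilde\Omega_{s,t}^{\omega}}M_{i,j}^{-1}$, which equals $1$ in the interior but not near the boundary: already in one dimension with $N=5$, $\omega=3$ one finds $c_1=c_5=\tfrac12+\tfrac13=\tfrac56$ and $c_2=c_4=\tfrac12+\tfrac13+\tfrac13=\tfrac76$. (Only the aggregate $\sum_{s,t}c_{s,t}=|\Omega^h|$ is exact, which is why the identity does hold for constant $h$.) The paper's own argument rests on the same equality, so you are faithfully reproducing its strategy; but the boundary behaviour you intended to verify is genuinely delicate, and the identity as stated requires qualification rather than a routine justification.
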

\begin{proof}
\begin{itemize}
\item[(i)] By the same argument as in the proof of Proposition \ref{Lemma:pixelset} and since $f_{i,j}^{\omega}:= \max\left\{S_{i,j}^{\tau}(u_{\alpha_n}^h),\frac{\nu_\tau(u_{\alpha_n}^h)}{\tau}\right\} \geq \frac{\nu_\tau(u_{\alpha_n}^h)}{\tau}$ we obtain
\begin{equation*}
\begin{split}
\sum_{i,j} \alpha_{n+1}(x_{i,j}) &= \sum_{i,j} \left(\frac{\nu_\tau(u_{\alpha_n}^h)^p}{\tau^p M_{i,j}}\sum_{x_{s,t}\in \tilde\Omega_{i,j}^\omega} \frac{\alpha_n(x_{s,t})}{(f_{s,t}^\omega)^p}\right)\\
&=\sum_{i,j} \left(\left(\frac{\nu_\tau(u_{\alpha_n}^h)}{\tau}\right)^p \frac{\alpha_n(x_{i,j})}{(f_{i,j}^\omega)^p}\right)\\
& \leq \sum_{i,j} \alpha_{n}(x_{i,j}).
\end{split}
\end{equation*}

\item[(ii)] Since $\nu_\tau(\cdot)$ is bounded from below, see Section \ref{Sec:CharNoise}, there exists an $\varepsilon>0$ such that $\frac{\nu_\tau(u^h)}{\tau} \geq \varepsilon$ for any $u^h$. Hence 
\begin{equation*}
\begin{split}
f_{i,j}^\omega:= &\max\left\{\min\left\{S_{i,j}^{\tau}(u_{\alpha_n}^h),\frac{\nu_\tau(u_{\alpha_n}^h)}{\tau}\right\},\varepsilon\right\} \\
&\leq \frac{\nu_\tau(u_{\alpha_n}^h)}{\tau}
\end{split}
\end{equation*}
and by the same arguments as above we get 
\begin{equation*}
\begin{split}
\sum_{i,j} \alpha_{n+1}(x_{i,j})&=\sum_{i,j} \left(\left(\frac{\nu_\tau(u_{\alpha_n}^h)}{\tau}\right)^p \frac{\alpha_n(x_{i,j})}{(f_{i,j}^\omega)^p}\right) \\
&\geq \sum_{i,j} \alpha_{n}(x_{i,j}).
\end{split}
\end{equation*} \qed
\end{itemize}
\end{proof}

In contrast to the pAPS-algorithm in the LATV-algorithm the power $p>0$ is not changed during the iterations and should be chosen sufficiently small, e.g., we set $p=\frac{1}{2}$ in our experiments. Note, that small $p$ only allow small changes of $\alpha$ in each iteration. In this way the algorithm is able the generate a function $\alpha^*$ such that $H_\tau(u_{\alpha^*}^h)$ is very close to $\frac{\nu_\tau(u_{\alpha^*}^h)}{\tau}$. On the contrary, small $p$ have the drawback that the number of iterations till termination are kept large. Since the parameter $p$ has to be chosen manually, the LATV-algorithm, at least in the spirit, seems to be similar to Uzawa's method, where also a parameter has to be chosen. The proper choice of such a parameter might be complicated and hence we are desiring for an algorithm where we do not have to tune parameters manually. Because of this and motivated by the pAPS-algorithm we propose the following $p$ adaptive algorithm:



\noindent
\fbox{
\begin{minipage}{7.9cm}
\textbf{pLATV-algorithm:} Choose $\alpha_{0}>0$, $p:=p_0>0$, and set $n:=0$.
\begin{itemize}
\item[0)] Compute $u_{\alpha_n} \in \argmin_{u^h\in X}  J_\tau(u^h,\alpha_n)$
\item[1)] 
	\begin{itemize}
	\item[(a)]  If $H_\tau(u_{\alpha_0}^h) >B_\tau(u_{\alpha_0}^h)$, then set $$f_{i,j}^\omega:= \max\left\{S_{i,j}^{\tau}(u_{\alpha_n}^h),\tfrac{\nu_\tau(u_{\alpha_n}^h)}{\tau}\right\}$$
	\item[(b)]  If $H_\tau(u_{\alpha_0}^h)\leq B_\tau(u_{\alpha_0}^h)$, then set $$f_{i,j}^\omega:= \max\left\{\min\left\{S_{i,j}^{\tau}(u_{\alpha_n}^h),\tfrac{\nu_\tau(u_{\alpha_n}^h)}{\tau}\right\},\varepsilon \right\}$$
	\end{itemize}

\item[2)] Update 
$$
\alpha_{n+1}(x_{i,j}):=\tfrac{\alpha_n(x_{i,j})}{ M_{i,j}}\sum_{x_{s,t}\in \mathcal{I}_{i,j}} \left( \tfrac{\nu_\tau(u_{\alpha_n}^h) }{\tau f^\omega_{s,t}} \right)^p.
$$

\item[3)] Compute $u_{\alpha_{n+1}}^h \in \argmin_{u^h\in X} J_\tau(u^h,\alpha_{n+1})$

\item[4)] 
\begin{itemize}
\item[(a)] if $H_\tau(u_{\alpha_0}^h) \leq B_\tau(u_{\alpha_0}^h)$
\begin{itemize}
\item[(i)] if $H_\tau(u_{\alpha_{n+1}}^h) \leq B_\tau(u_{\alpha_{n+1}}^h)$, go to step 5)
\item[(ii)] if $H_\tau(u_{\alpha_{n+1}}^h) > B_\tau(u_{\alpha_{n+1}}^h)$, decrease $p$, e.g., set $p=p/10$, and go to step 2)
\end{itemize}

\item[(b)] if $H_\tau(u_{\alpha_0}^h) > B_\tau(u_{\alpha_0}^h)$
\begin{itemize}
\item[(i)] if $H_\tau(u_{\alpha_{n+1}}^h) \geq B_\tau(u_{\alpha_{n+1}}^h)$, go to step 5)
\item[(ii)] if $H_\tau(u_{\alpha_{n+1}}^h) < B_\tau(u_{\alpha_{n+1}}^h)$, decrease $p$, e.g., set $p=p/10$, and go to step 2)
\end{itemize}
\end{itemize}

\item[5)] Stop or set $n:=n+1$ and return to step 1).
\end{itemize}

\end{minipage}
}\\

In our numerical experiments this algorithm is terminated as soon as $|H_\tau(u_{\alpha_n}^h) - B_\tau(u_{\alpha_n}^h)| \leq 10^{-6}$ and $H_\tau(u_{\alpha_{n}}^h) \leq B_\tau(u_{\alpha_{n}}^h)$. Additionally we stop iterating when $p$ is less than machine precision, since then anyway no progress is to expect. Due to the adaptive choice of $p$ we obtain a monotonic behavior of the sequence $(\alpha_n)_n$.

\graphicspath{{./graphics/}}
\begin{figure*}[htbp!]
\begin{center}
\hspace{0cm}
\includegraphics[height=4.2cm]{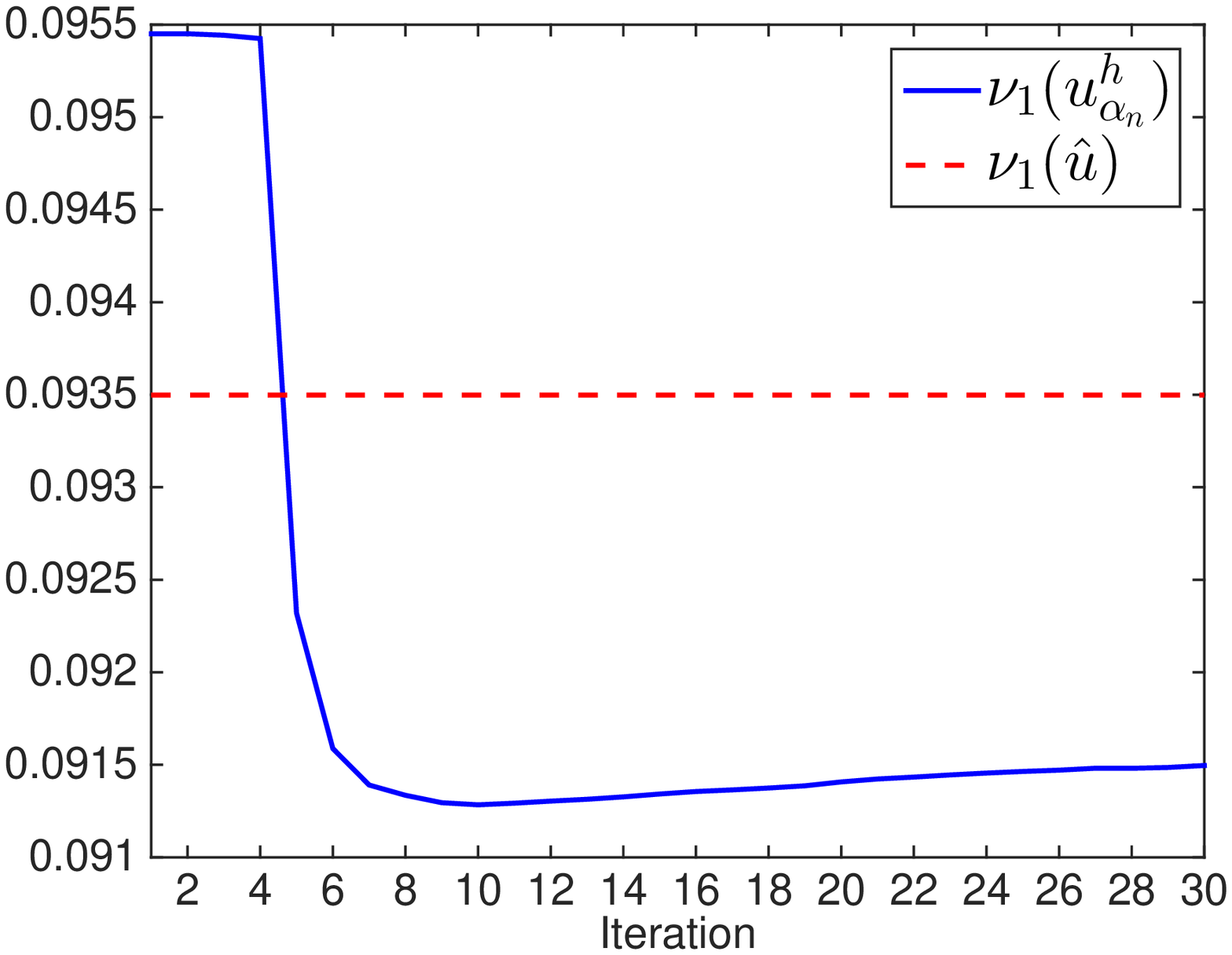}
\includegraphics[height=4.2cm]{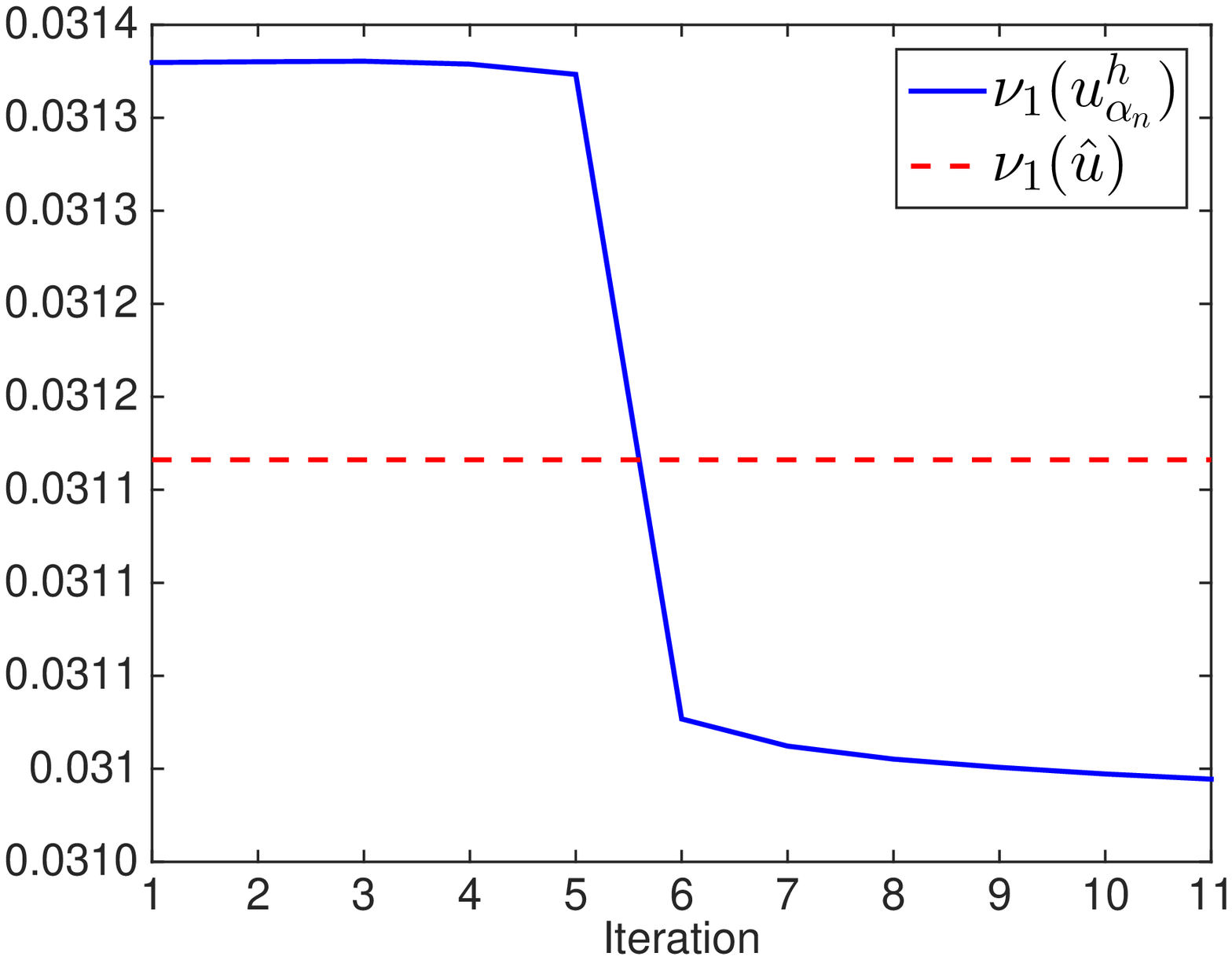}
\includegraphics[height=4.2cm]{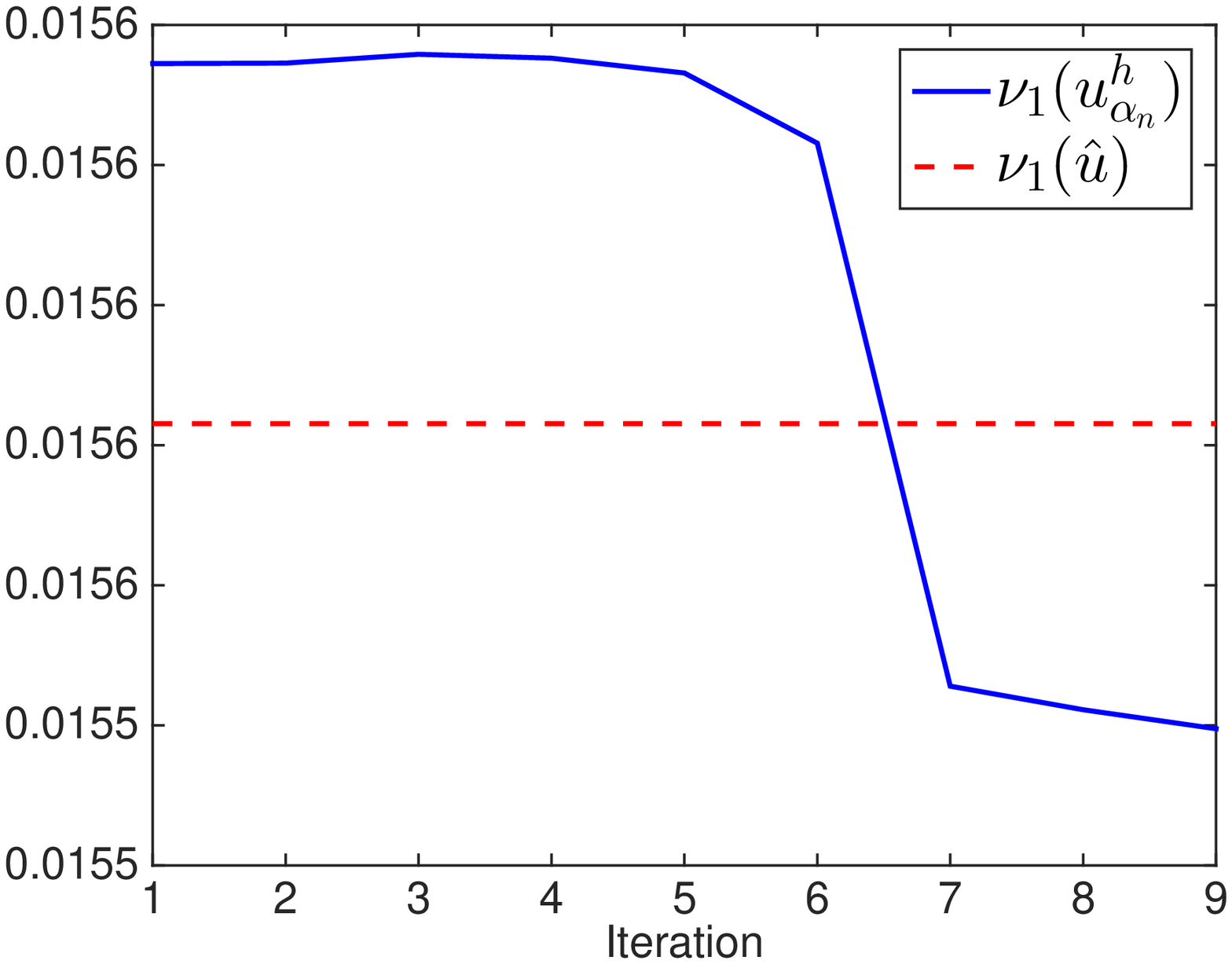}
\end{center}    
\caption{\small \it Progress of $\nu_\tau(u_{\alpha_n}^h)$ of the LATV-algorithm with $p=\frac{1}{8}$ and $\alpha_0=10^{-2}$ for removing random-valued impulse noise with $r=0.3$ (left), $r=0.1$ (middle), $r=0.05$ (right) from the cameraman-image (cf. Fig. \ref{fig_org1}(b)).  }
\label{fig_nu_plot_03}
\end{figure*}

\graphicspath{{./graphics/}}
\begin{figure*}[htbp!]
\begin{center}
\hspace{0cm}
\includegraphics[height=4.2cm]{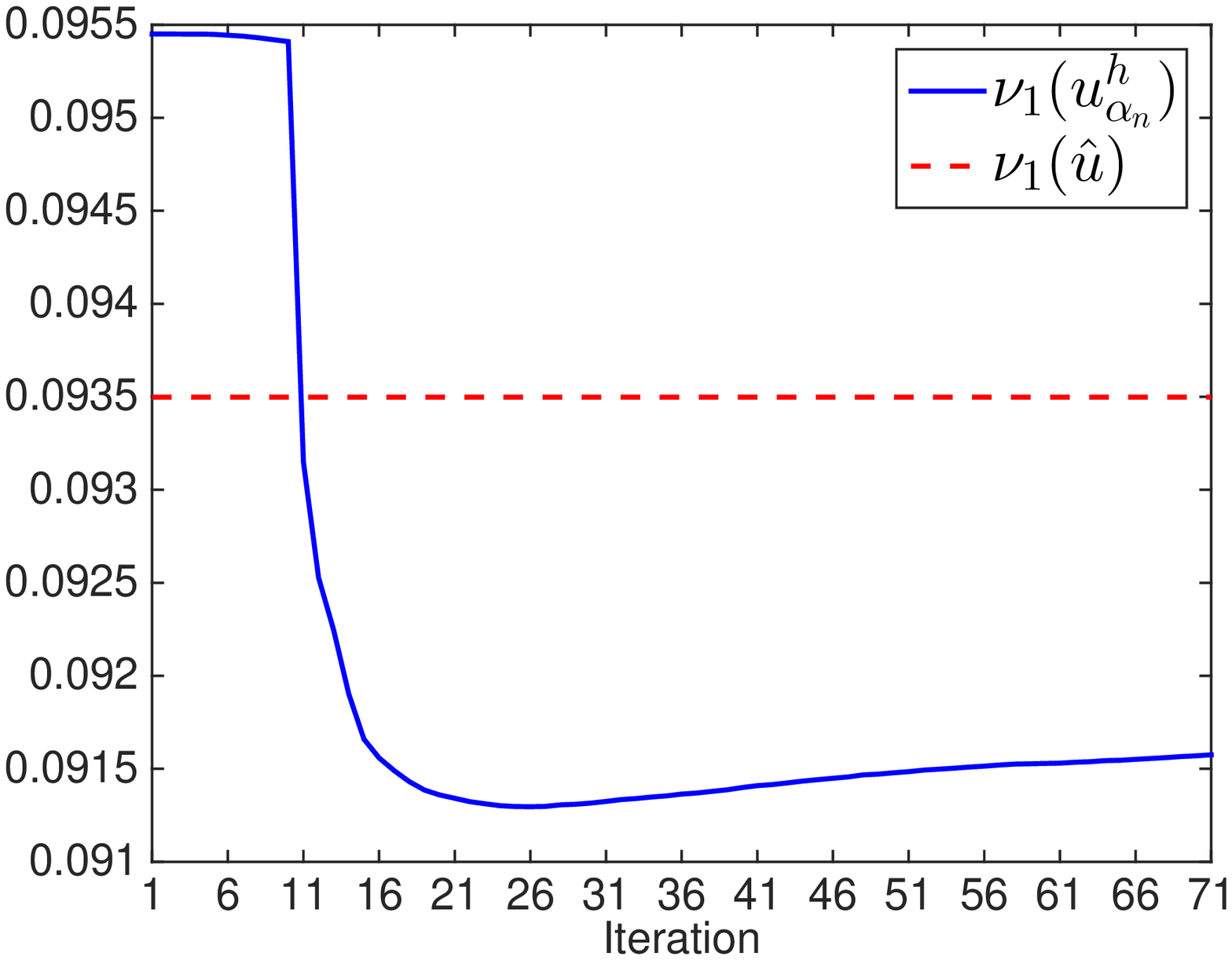}
\includegraphics[height=4.2cm]{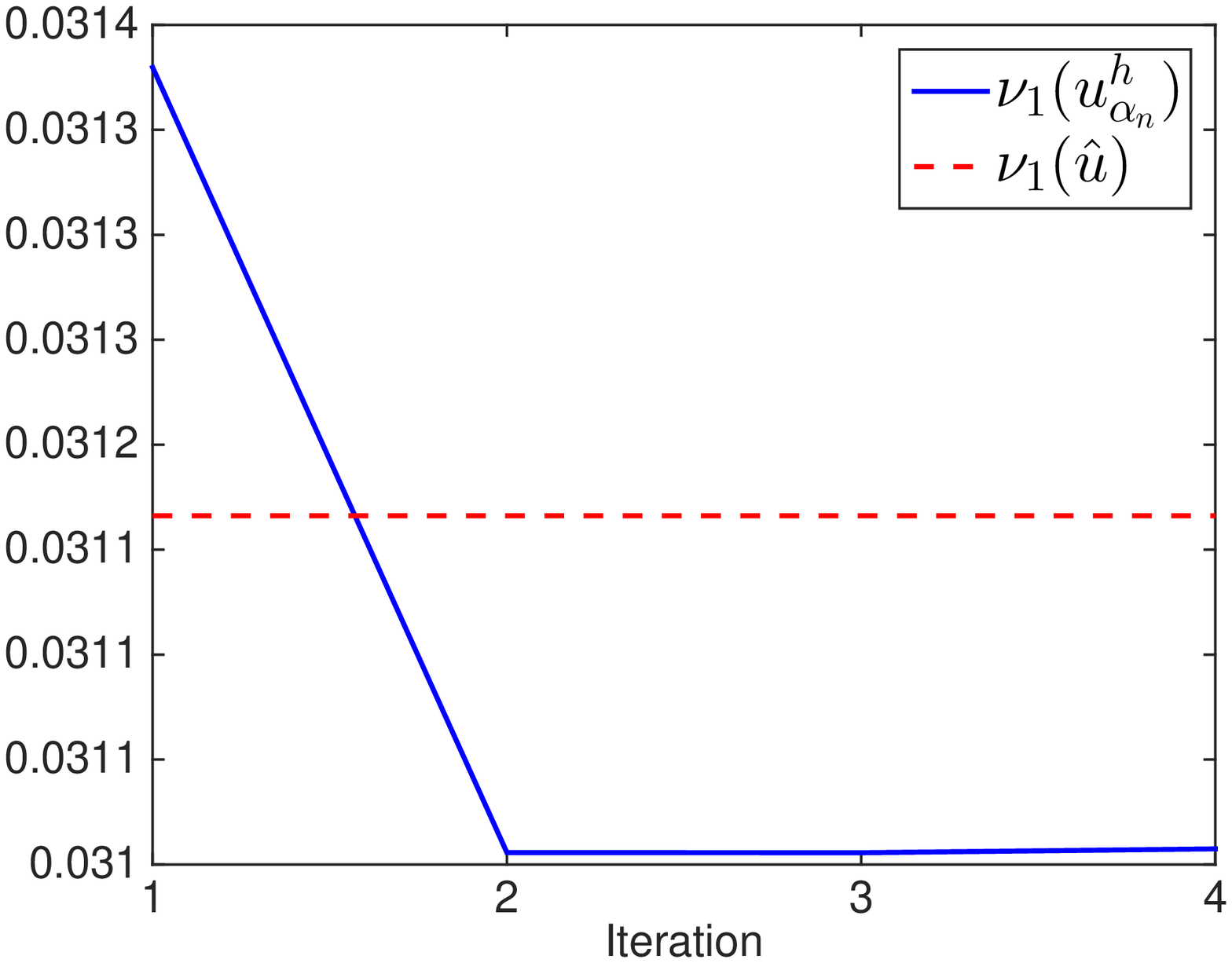}
\includegraphics[height=4.2cm]{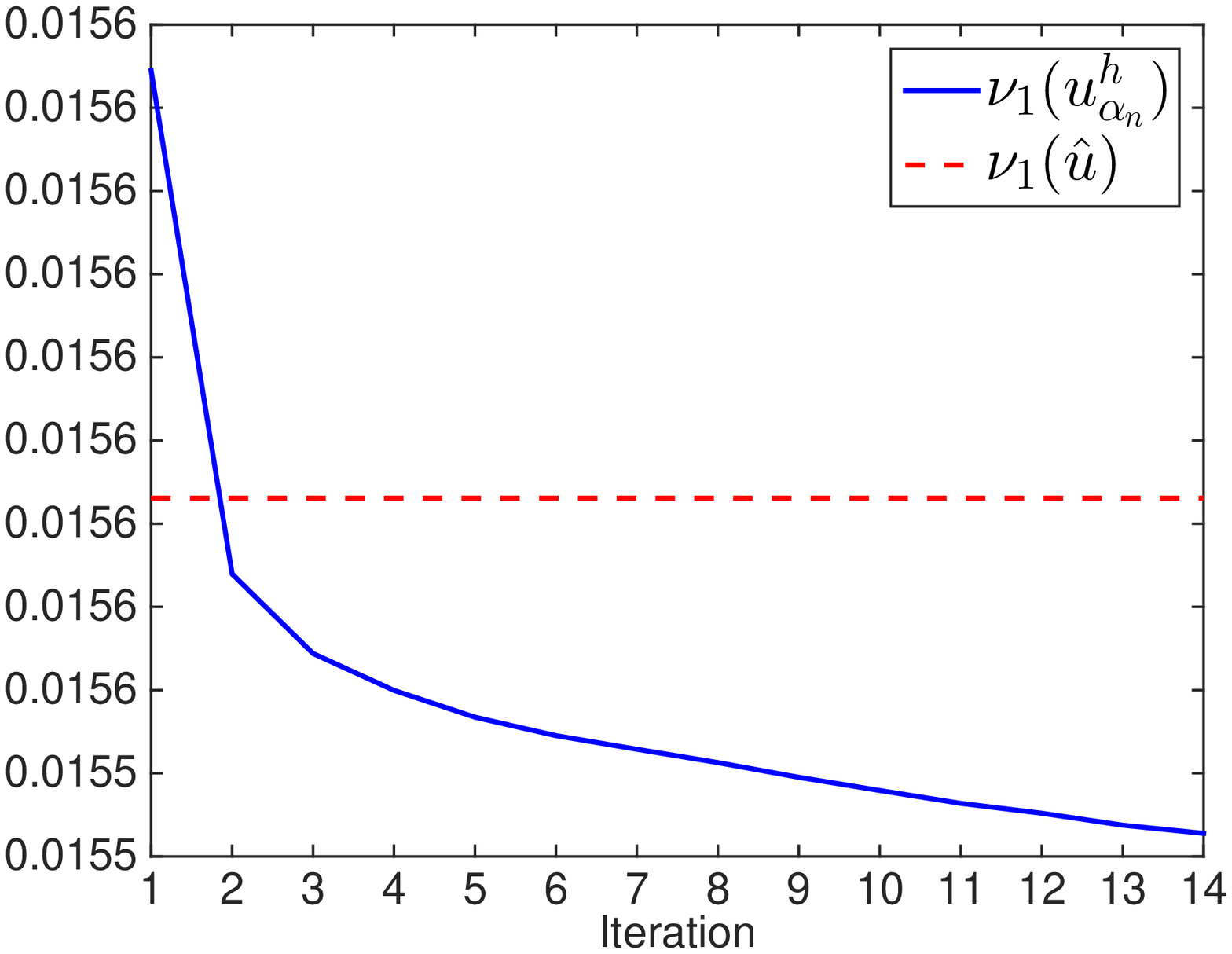}
\end{center}    
\caption{\small \it Progress of $\nu_\tau(u_{\alpha_n}^h)$ of the pLATV-algorithm with $p_0=\frac{1}{2}$ and $\alpha_0=10^{-2}$ for removing random-valued impulse noise with $r=0.3$ (left), $r=0.1$ (middle), $r=0.05$ (right) from the cameraman-image (cf. Fig. \ref{fig_org1}(b)).  }
\label{fig_nup_plot_03}
\end{figure*}

\begin{proposition}
The sequence $(\alpha_n)_n$ generated by the pLATV-algorithm is for any point $x\in \Omega$ monotone. In particular, it is monotonically decreasing for $\alpha_0$ such that $H_\tau(u_{\alpha_0}^h)>B_\tau(u_{\alpha_0}^h)$, and monotonically increasing for $\alpha_0$ such that $H_\tau(u_{\alpha_0}^h)\leq B_\tau(u_{\alpha_0}^h)$.
\end{proposition}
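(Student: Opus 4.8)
The plan is to exploit the fact that, in contrast to the LATV update, the pLATV update in step~2 is a purely pointwise multiplicative rescaling. Indeed, the factor $\alpha_n(x_{i,j})$ is pulled out of the window average, so that
$$
\alpha_{n+1}(x_{i,j}) = c_{i,j}^{(n)}\,\alpha_n(x_{i,j}), \qquad c_{i,j}^{(n)} := \frac{1}{M_{i,j}}\sum_{x_{s,t}\in \mathcal{I}_{i,j}} \left( \frac{\nu_\tau(u_{\alpha_n}^h)}{\tau f^\omega_{s,t}} \right)^{p}.
$$
Since $\alpha_0>0$ and (as verified below) $c_{i,j}^{(n)}>0$ in both regimes, all iterates stay strictly positive, so the sign of $\alpha_{n+1}(x_{i,j})-\alpha_n(x_{i,j})$ is governed entirely by whether the factor $c_{i,j}^{(n)}$ exceeds or falls below $1$. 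It therefore suffices to compare each base $\nu_\tau(u_{\alpha_n}^h)/(\tau f_{s,t}^\omega)$ with $1$.

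The decisive structural observation is that the branch in step~1 is fixed once and for all by the sign of $H_\tau(u_{\alpha_0}^h)-B_\tau(u_{\alpha_0}^h)$, so the whole sequence remains in a single regime. First I would treat the decreasing case $H_\tau(u_{\alpha_0}^h)>B_\tau(u_{\alpha_0}^h)$, where step~1(a) sets $f_{i,j}^\omega=\max\{S_{i,j}^{\tau}(u_{\alpha_n}^h),\nu_\tau(u_{\alpha_n}^h)/\tau\}\geq \nu_\tau(u_{\alpha_n}^h)/\tau>0$. Consequently each base satisfies $\nu_\tau(u_{\alpha_n}^h)/(\tau f_{s,t}^\omega)\leq 1$, and since $p>0$ raising to the power $p$ preserves this inequality; averaging over the window gives $c_{i,j}^{(n)}\leq 1$, whence $\alpha_{n+1}(x_{i,j})\leq \alpha_n(x_{i,j})$ at every pixel.

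For the increasing case $H_\tau(u_{\alpha_0}^h)\leq B_\tau(u_{\alpha_0}^h)$, step~1(b) sets $f_{i,j}^\omega=\max\{\min\{S_{i,j}^{\tau}(u_{\alpha_n}^h),\nu_\tau(u_{\alpha_n}^h)/\tau\},\varepsilon\}$. Here I would invoke the lower bound on $\nu_\tau$ established in Section~\ref{Sec:CharNoise}: choosing $\varepsilon>0$ small enough that $\nu_\tau(u^h)/\tau\geq\varepsilon$ for every $u^h$ forces $f_{i,j}^\omega\leq \nu_\tau(u_{\alpha_n}^h)/\tau$, so that each base is now $\geq 1$ and (again using $p>0$) $c_{i,j}^{(n)}\geq 1$. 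Multiplying by $\alpha_n(x_{i,j})>0$ yields $\alpha_{n+1}(x_{i,j})\geq \alpha_n(x_{i,j})$ pointwise, matching the claimed direction.

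Finally I would note that these comparisons are insensitive to the $p$-reduction loop in step~4: whenever $p$ is reduced and step~2 is re-entered, the weights $f_{s,t}^\omega$ from step~1 are unchanged, as they depend only on $u_{\alpha_n}^h$, so the argument holds verbatim for every $p>0$. The only place where genuine care is needed is recognizing this structural difference from the LATV update: because the center weight $\alpha_n(x_{i,j})$ factors out of the average, one obtains monotonicity of $\alpha_n$ at each individual point $x\in\Omega$, rather than merely of the aggregate $\sum_{i,j}\alpha_n(x_{i,j})$ as in the preceding LATV propositions. The windowing geometry and the distinction between $\Omega_{i,j}^\omega$ and $\tilde\Omega_{i,j}^\omega$, essential there, play no role here.
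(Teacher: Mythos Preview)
Your argument is correct and follows essentially the same route as the paper's proof: in each regime the definition of $f_{i,j}^\omega$ forces $\nu_\tau(u_{\alpha_n}^h)/(\tau f_{s,t}^\omega)$ to lie on one side of $1$, so the multiplicative factor $c_{i,j}^{(n)}$ does too, and pointwise monotonicity follows. Your treatment is in fact more explicit than the paper's in two respects---you spell out the $\varepsilon\le\nu_\tau/\tau$ requirement in case~(b) (the paper absorbs this into ``by similar arguments'') and you note that the $p$-reduction loop leaves $f_{s,t}^\omega$ untouched---but the core mechanism is identical.
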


\begin{proof}
For $H_\tau(u_{\alpha_0}^h) > \Bt(u_{\alpha_0}^h)$ we can show by induction that by the pLATV-algorithm $f_{i,j}^\omega \geq \frac{\nu_\tau(u_{\alpha_n}^h)}{\tau}$ and hence $1 \geq \frac{\nu_\tau(u_{\alpha_n}^h)}{\tau f_{i,j}^\omega}$ for all $n$. Then by the definition of $\alpha_{n+1}$ it follows
\begin{equation*}
\begin{split}
\alpha_{n+1}(x_{i,j})&:=\frac{\alpha_n(x_{i,j})}{ M_{i,j}}\sum_{x_{s,t}\in \mathcal{I}_{i,j}} \left( \frac{\nu_\tau(u_{\alpha_n}^h) }{\tau f^\omega_{s,t}} \right)^p \\
&\leq \alpha_n(x_{i,j}).
\end{split}
\end{equation*}
By similar arguments we obtain for $\alpha_0$ with $H_\tau(u_{\alpha_0}^h) \leq B_\tau(u_{\alpha_0}^h)$ that $
\alpha_{n+1}(x_{i,j}) \geq \alpha_n(x_{i,j})
$ for all $x_{i,j}\in \Omega$. \qed
\end{proof}

We are aware of the fact that using $u_{\alpha_n}^h$ as a reference image in the LATV- and pLATV-algorithm to compute $\nu_\tau$ may commit errors. However, we recall that for Gaussian noise we set $\nu_2=\sigma^2$ and for salt-and-pepper noise with $r_1=r_2$ we have $\nu_1= r_1$. In these cases $\nu_\tau$ does not depend on the original image and hence we do not commit any error by computing $\nu_\tau$. For random-valued impulse noise corrupted images the situation is different and $\nu_1$ indeed depends on the true image. In this situation errors may be committed when $u_{\alpha_n}^h$ is used as a reference image for calculating $\nu_\tau$; see Figs. \ref{fig_nu_plot_03} and \ref{fig_nup_plot_03}. Hence, in order to improve the proposed algorithm, for such cases for future research it might be of interest to find the optimal reference image to obtain a good approximation of the real value $\nu_\tau$. 

In contrast to the SA-TV algorithm presented in \cite{DonHinRin,HinRin}, where the initial regularization parameter has to be chosen sufficiently small, in the LATV-algorithm as well as in the pLATV-algorithm the initial value $\alpha_0$ can be chosen arbitrarily positive. However, in the case $H_\tau(u_{\alpha_0}^h) > B_\tau(u_{\alpha_0}^h)$ we cannot guarantee in general that the solution $u_{\alpha}$ obtained by the pLATV-algorithm fulfills $H_\tau(u_{\alpha}^h) \leq B_\tau(u_{\alpha}^h)$, not even if $B_\tau(\cdot)$ is constant, due to the stopping criterion with respect to the power $p$. On the contrary, if $H_\tau(u_{\alpha_0}^h) \leq B_\tau(u_{\alpha_0}^h)$, then the pLATV-algorithm generates a sequence $(u_{\alpha_n}^h)_n$ such that $H_\tau(u_{\alpha_n}^h) \leq B_\tau(u_{\alpha_n}^h)$ for all $n$ and hence also for the solution of the algorithm. As a consequence we would wish to choose $\alpha_0>0$ such that $H_\tau(u_{\alpha_0}^h) \leq B_\tau(u_{\alpha_0}^h)$, which may be realized by the following simple automated procedure:

\noindent
\fbox{
\begin{minipage}{7.9cm}
\textbf{Algorithm 1:} Input: $\alpha_0 >0$ (arbitrary);
\begin{itemize}
\item[1)] Compute $u_{\alpha_0}^h \in \argmin_{u^h\in X}  J_\tau(u^h,\alpha_0)$.
\item[2)] If $H_\tau(u_{\alpha_0}^h) > B_\tau(u_{\alpha_0}^h)$ decrease $\alpha_0$ by setting $\alpha_0=c_{\alpha_0}  \alpha_0$, with $c_{\alpha_0}\in (0,1)$, and continue with step 1), otherwise stop and return $\alpha_0$.
\end{itemize}
\end{minipage}
}
%
%
%
%
%
%

\section{Total variation minimization}\label{Sec:TVmin}

In this section we are concerned with developing numerical methods for computing a minimizer of the discrete multi-scale $L^{\tau}$-TV model, i.e.,
\begin{equation}\label{minP1}
\min_{u^h\in X} \{J_\tau(u^h,\alpha):=H_\tau(u^h) + \tv_ \alpha(u^h)\}.
\end{equation}

\subsection{$L^2$-TV minimization}\label{SubSec:L2TV}
Here we consider the case $\tau=2$, i.e., the minimization problem
\begin{equation}\label{Eq:L2TVa}
\min_{u^h\in X} \frac{1}{2}\|T^h u^h - g^h\|_{2}^2 + \tv_\alpha(u^h),
\end{equation}
and present solution methods, first for the case $T^h=I$ and then for general linear bounded operators $T^h$.

\subsubsection{An algorithm for image denoising}\label{Sec:AlgfImagDen}
If $T^h=I$, then \eqref{Eq:L2TVa} becomes an image denoising problem, i.e., the minimization problem  

%
\begin{equation}\label{P:ImageDenoising}
\min_{u^h\in X} \| u^h - g^h \|_{2}^2 + 2 R_\alpha(u^h).
\end{equation} 
For solving this problem we use the algorithm of Chambolle and Pock \cite{ChaPoc}, which leads to the following iterative scheme:

\noindent
\fbox{
\begin{minipage}{7.9cm}
\textbf{Chambolle-Pock algorithm:}
Initialize $\tau, \sigma >0$, $\theta \in [0,1]$, $(\vec{p}_0^h,u_0^h)\in Y\times X$, set $\bar{u}_0^h=u_0^h$, and set $n=0$.
\begin{enumerate}
\item Compute 
$$
\vec p_{n+1}^h(x) =\frac{\vec p_n^h(x) + \sigma \nabla^h \bar{u}_n^h(x)}{\max\left\{\frac{1}{\alpha(x)} |\vec p_n^h(x) + \sigma \nabla^h \bar{u}_n^h(x)|,1\right\}},
$$
for all $x\in \Omega^h$.
\item Compute $u_{n+1}^h = \frac{u_n^h + \tau \Div^h \vec p_{n+1}^h + \tau g^h}{1+\tau}$.
\item Set $ \bar{u}_{n+1}^h= u_{n+1}^h + \theta (u_{n+1}^h - u_n^h)$.
\item Stop or set $n:=n+1$ and return to step 1).
\end{enumerate}

\end{minipage}
}\\

In our numerical experiments we choose $\theta =1$. In particular, in \cite{ChaPoc} it is shown that for $\theta=1$ and $\tau\sigma \|\nabla^h\|^2< 1$ the algorithm converges.

\subsubsection{An algorithm for linear bounded operators}

Assume, that $T^h$ is a linear bounded operator from $X$ to $X$, different to the identity $I$. Then instead of minimizing \eqref{Eq:L2TVa} directly, we introduce the surrogate functional
\begin{equation}\label{surrogateFct1}
\begin{split}
\mcS(u^h,a^h):&= \frac{1}{2}\|T^hu^h-g^h\|_{2}^2 + R_{\alpha}(u^h) + \frac{\delta}{2} \|u^h-a^h\|_{2}^2 \\
&\phantom{\frac{1}{2}\|T^hu^h-g^h\|_{2}^2}- \frac{1}{2} \| T^h(u^h-a^h)\|_{2}^2 \\
           &= \frac{\delta}{2} \|u^h - z(a^h)\|_{2}^2 +  R_{\alpha}(u^h) + \psi(a^h,g^h,T^h),
\end{split}
\end{equation}
with $a^h,u^h\in X$, $z(a^h)= a^h - \frac{1}{\delta}(T^h)^*(T^h a^h -g^h)$, $\psi$ a function independent of $u^h$, and where we assume $\delta > \|T^h\|^2$; see \cite{DauDefDeM,DauTesVes}. Note that
$$
\min_{u^h\in X} \mcS(u^h,a^h) \Leftrightarrow \min_{u\in X} \|u^h - z(a^h)\|_{2}^2 +  2 R_{\frac{\alpha}{\delta}}(u^h)
$$
and hence to obtain a minimizer amounts to solve a minimization problem of the type \eqref{P:ImageDenoising} and can be solved as described in Section \ref{Sec:AlgfImagDen}. Then an approximate solution of \eqref{Eq:L2TVa} can be computed by the following iterative algorithm: Choose $u_{0}^h \in X$ and iterate for $n\geq 0$
\begin{equation}\label{alg:surrogate}
u_{n+1}^h=\argmin_{u^h\in X} \mcS(u^h,u_n^h).
\end{equation}

%

For scalar $\alpha$ it is shown in \cite{ComWaj,DauDefDeM,DauTesVes} that this iterative procedure generates a sequence $(u_n^h)_n$ which converges to a minimizer of \eqref{Eq:L2TVa}. This convergence property can be easily extended to our non-scalar case yielding the following result.
\begin{theorem}
For $\alpha: \Omega \to \R^+$ the scheme in \eqref{alg:surrogate} generates a sequence $(u_n^h)_n$, which converges to a solution of \eqref{Eq:L2TVa} for any initial choice of $u_0^h\in X$.
\end{theorem}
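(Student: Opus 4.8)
The plan is to follow the majorization–minimization (surrogate functional) argument used for scalar $\alpha$ in \cite{ComWaj,DauDefDeM,DauTesVes}, and to observe that every property of the regularizer entering that argument survives the replacement of the scalar weight by the spatially varying penalty $R_\alpha$. Indeed, those proofs use only that the penalty is proper, convex and lower semicontinuous, and that the full objective $J_2(\cdot,\alpha)$ is coercive; no step exploits the algebraic form of the penalty. Since $X=\R^{N_1\times N_2}$ is finite-dimensional and $\alpha$ takes finitely many strictly positive values, $\alpha_{\min}:=\min_{x}\alpha(x)>0$, whence $R_\alpha(u^h)\geq \alpha_{\min}R(u^h)$; combined with the standing assumption that $T^h$ does not annihilate constants, this gives coercivity of $J_2(\cdot,\alpha)$ exactly as in the scalar case.

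First I would record the two features of the surrogate that make \eqref{alg:surrogate} a descent scheme. From \eqref{surrogateFct1}, for $\delta>\|T^h\|^2$ one has $\mcS(u^h,a^h)-J_2(u^h,\alpha)=\frac{\delta}{2}\|u^h-a^h\|_2^2-\frac{1}{2}\|T^h(u^h-a^h)\|_2^2\geq\frac{\delta-\|T^h\|^2}{2}\|u^h-a^h\|_2^2\geq0$, so $\mcS(\cdot,a^h)$ majorizes $J_2(\cdot,\alpha)$ and agrees with it on the diagonal, $\mcS(a^h,a^h)=J_2(a^h,\alpha)$. Because $\mcS(\cdot,a^h)=\frac{\delta}{2}\|\cdot-z(a^h)\|_2^2+R_\alpha(\cdot)+\psi$ is $\delta$-strongly convex, proper and l.s.c., it has a unique minimizer, so the map $\Phi(a^h):=\argmin_{u^h\in X}\mcS(u^h,a^h)$ is well defined and the scheme reads $u_{n+1}^h=\Phi(u_n^h)$.

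Next I would establish the chain $J_2(u_{n+1}^h,\alpha)\leq\mcS(u_{n+1}^h,u_n^h)\leq\mcS(u_n^h,u_n^h)=J_2(u_n^h,\alpha)$, giving monotone decrease of $(J_2(u_n^h,\alpha))_n$, and, from $\delta$-strong convexity of $\mcS(\cdot,u_n^h)$ at its minimizer, the sharper bound $J_2(u_n^h,\alpha)-J_2(u_{n+1}^h,\alpha)\geq\frac{\delta}{2}\|u_{n+1}^h-u_n^h\|_2^2$. Telescoping yields $\sum_n\|u_{n+1}^h-u_n^h\|_2^2<\infty$, hence $\|u_{n+1}^h-u_n^h\|_2\to0$. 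By coercivity the sublevel set $\{J_2(\cdot,\alpha)\leq J_2(u_0^h,\alpha)\}$ is bounded, so $(u_n^h)_n$ is bounded and admits a subsequence $u_{n_k}^h\to u^*$. Continuity of $\Phi$ (through $z(\cdot)$ and the proximal map) together with $\|u_{n+1}^h-u_n^h\|_2\to0$ forces $u^*=\Phi(u^*)$; and such a fixed point satisfies $0\in(T^h)^*(T^hu^*-g^h)+\partial R_\alpha(u^*)$, since the two cross terms cancel at $u^h=a^h=u^*$. As this is the (by convexity, sufficient) optimality condition for \eqref{Eq:L2TVa}, $u^*$ is a minimizer.

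Finally I would upgrade subsequential to full convergence. Note that $\Phi$ is precisely the proximal–gradient (forward–backward) operator $\Phi(a^h)=\operatorname{prox}_{R_\alpha/\delta}\bigl(a^h-\tfrac{1}{\delta}(T^h)^*(T^ha^h-g^h)\bigr)$ with step $1/\delta<1/\|T^h\|^2$; being the composition of a nonexpansive gradient step with the firmly nonexpansive map $\operatorname{prox}_{R_\alpha/\delta}$, it is averaged, so $(\|u_n^h-u^*\|_2)_n$ is nonincreasing for the fixed point $u^*$ just found. Fejér monotonicity, together with the fact that every subsequential limit is a fixed point, then yields convergence of the whole sequence $u_n^h\to u^*$ via Opial's lemma (equivalently, the classical forward–backward convergence result). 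The main obstacle I anticipate is not any single algebraic step but making the coercivity argument airtight when $T^h$ is not injective: one must control the oscillatory part of $u^h$ by $\alpha_{\min}R(\cdot)$ and its mean by $\|T^h\cdot-g^h\|_2$, using $T^h1\neq0$; once that is secured, the remaining steps are routine adaptations of the scalar-weight proofs.
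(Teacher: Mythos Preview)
Your proposal is correct and follows exactly the approach the paper indicates: the paper's own proof consists solely of the remark that the argument goes through \emph{analogue to} \cite{DauTesVes}, and what you have written is precisely that surrogate-functional/forward--backward argument, with the only change being the harmless replacement of $\alpha R$ by $R_\alpha$ (whose convexity, lower semicontinuity, and lower bound $R_\alpha\geq\alpha_{\min}R$ are all that is used). Your added detail on full-sequence convergence via the averaged-operator viewpoint is a standard and valid way to close the argument.
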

\begin{proof}
A proof can be accomplished analogue to \cite{DauTesVes}. 
\qed
\end{proof}

\subsection{An algorithm for $L^1$-TV minimization}
The computation of a minimizer of 
\begin{equation}\label{Eq:L1TVa}
\min_{u^h\in X}  \|T^h u^h - g^h \|_{1} + \tv_\alpha(u^h),
\end{equation} 
is due to the non-smooth $\ell^1$-term in general more complicated than obtaining a solution of the $L^2$-TV model. Here we suggest to employ a trick, proposed in \cite{AujGilChaOsh} for $L^1$-TV minimization problems with a scalar regularization parameter, to solve \eqref{Eq:L1TVa} in two steps. In particular, we substitute the argument of the $\ell^1$-norm by a new variable $v$, penalize the functional by an $L^2$-term, which should keep the difference between $v$ and $Tu-g$ small, and minimize with respect to $v$ and $u$. That is, 
we replace the original minimization \eqref{Eq:L1TVa} by
\begin{equation}\label{reg_minP}
\min_{v^h,u^h\in X} \|v^h\|_{1} + \frac{1}{2 \gamma} \| T^h u^h - g^h - v^h \|_{2}^2 + R_{\alpha}(u^h),
\end{equation}
where $\gamma>0$ is small, so that we have $g^h\approx T^h u^h - v^h$. Actually, it can be shown that \eqref{reg_minP} converges to \eqref{Eq:L1TVa} as $\gamma\to 0$. In our experiments we actually choose $\gamma=10^{-2}$. This leads to the following alternating algorithm.

\noindent
\fbox{
\begin{minipage}{7.9cm}
\textbf{$L^1$-TV$_\alpha$ algorithm:} Initialize $\alpha >0$, $u_0^h \in X$ and set $n:=0$.
\begin{itemize}
\item[1)] Compute 
\begin{equation*}
v_{n+1}^h=\argmin_{v^h\in X} \|v^h\|_{1} + \frac{1}{2 \gamma} \| T^h u_{n}^h - g^h - v^h \|_{2}^2
\end{equation*}

\item[2)] 
Compute 
\begin{equation*}
u_{n+1}^h \in \argmin_{u^h\in X} \frac{1}{2 \gamma} \| T^h u^h - g^h - v_{n+1}^h \|_{2}^2 + R_{\alpha}(u^h)
\end{equation*} 

\item[3)] Stop or set $n:=n+1$ and return to step 1).
\end{itemize}

\end{minipage}
}\\

The minimizer $v_{n+1}^h$ in step 1) of the $L^1$-TV$_\alpha$ algorithm can be easily computed via a soft-thresholding, i.e., $v_{n+1}^h=\ST(T^h u_n^h - g^h,\gamma)$, where
$$
\ST(g^h,\gamma)(x) =
\begin{cases}
g^h(x) - \gamma  & \text{ if } g^h(x) > \gamma, \\
0                      & \text{ if } |g^h(x)| \leq \gamma, \\
g^h(x) + \gamma & \text{ if } g^h(x) < -\gamma \\
\end{cases}
$$
for all $x\in \Omega^h$. The minimization problem in step 2) is equivalent to 
\begin{equation}\label{equ:min}
\argmin_{u^h\in X} \frac{1}{2}\| T^h u^h - g^h - v_{n+1}^h \|_{2}^2 + R_{\gamma\alpha}(u^h)
\end{equation}
and hence is of the type \eqref{Eq:L2TVa}. Thus an approximate solution of \eqref{equ:min} can be computed as described above; see Section \ref{SubSec:L2TV}. 

\begin{theorem}
The sequence $(u_n^h,v_n^h)_n$ generated by the $L^1$-TV$_\alpha$ algorithm converges to a minimizer of  \eqref{reg_minP}.
\end{theorem}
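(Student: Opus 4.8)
The plan is to read the $L^1$-TV$_\alpha$ algorithm as a two-block Gauss--Seidel (alternating) minimization of the jointly convex objective of \eqref{reg_minP},
$$
F(u^h,v^h):=\|v^h\|_{1} + \frac{1}{2\gamma}\|T^h u^h - g^h - v^h\|_{2}^2 + R_{\alpha}(u^h),
$$
and to combine the standard descent/compactness machinery with the convergence theory for such schemes. Jointly, $F$ is proper, convex and lower semicontinuous on the finite-dimensional space $X\times X$: the first and last terms are convex, while the middle term is the composition of the convex map $w\mapsto \frac{1}{2\gamma}\|w-g^h\|_{2}^2$ with the affine map $(u^h,v^h)\mapsto T^h u^h - v^h$.

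First I would establish coercivity of $F$, which simultaneously yields existence of a minimizer and boundedness of the iterates. Writing $u^h = c\,\1 + u^h_\perp$ with $u^h_\perp$ of zero mean, the term $R_{\alpha}$ controls $u^h_\perp$ (since $\alpha$ is bounded below by $\min_{x}\alpha(x)>0$ on the finite grid and $R$ is coercive on the zero-mean subspace via the discrete Poincar\'e inequality), whereas $T^h\1\neq 0$ together with $\|v^h\|_{1}$ and the quadratic term control the constant part $c$ and the magnitude of $v^h$; hence $F(u^h,v^h)\to\infty$ as $\|(u^h,v^h)\|\to\infty$.

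Next I would verify that the two subproblems are solved exactly and that the scheme genuinely decreases $F$. Step~1) is strictly convex in $v^h$ and solved uniquely by the soft-thresholding $v^h_{n+1}=\ST(T^h u^h_n-g^h,\gamma)$; step~2) is an $L^2$-TV$_\alpha$ problem of the type \eqref{Eq:L2TVa}, whose solution set is nonempty by the same coercivity argument. Consequently
$$
F(u^h_{n+1},v^h_{n+1})\le F(u^h_{n},v^h_{n+1})\le F(u^h_n,v^h_n),
$$
so $(F(u^h_n,v^h_n))_n$ is nonincreasing and bounded below, hence convergent, and by coercivity $(u^h_n,v^h_n)_n$ is bounded and admits cluster points. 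Writing out the subdifferential optimality conditions of each block and invoking the classical convergence result for two-block coordinate descent of convex functionals with separable nonsmoothness (which for exactly two blocks does not require strict convexity of the second block), I would conclude that every cluster point of $(u^h_n,v^h_n)_n$ is a global minimizer of $F$, i.e. a solution of \eqref{reg_minP}.

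The main obstacle is upgrading this subsequential statement to convergence of the whole sequence, because $F$ is not strictly convex in $u^h$: the quadratic term is flat on $\ker T^h$ and $R_{\alpha}$ is flat on constants, so the minimizer set need not be a singleton. To close this gap I would first observe that the residual $w^\ast:=T^h u^\ast - v^\ast$ is identical at every minimizer: along a segment joining two minimizers $F$ is affine, which forces each convex summand, in particular the strictly convex quadratic in $w$, to be affine, whence $w^\ast$ is unique. Since $v^h_{n+1}$ depends nonexpansively on $T^h u^h_n$ through $\ST(\cdot,\gamma)$, this pins down the limiting residual and the $v$-component, and I would then combine the descent inequality with a Fej\'er/Opial-type monotonicity argument in the remaining, effectively strictly convex, directions to conclude that the full sequence $(u^h_n,v^h_n)_n$ converges to a single minimizer of \eqref{reg_minP}.
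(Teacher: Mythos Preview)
Your approach is precisely the one the paper has in mind: the paper's own proof consists of the single sentence ``The statement can be shown analogue to \cite{AujGilChaOsh}'', and that reference treats the problem exactly as you do, namely as a two-block alternating minimization of the jointly convex functional $F$, with monotone descent, coercivity, and a cluster-point argument. Your sketch is therefore considerably more detailed than what the paper itself supplies, and the first three paragraphs are correct and complete.

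The only place where your argument is softer than it should be is the final paragraph. You correctly isolate the genuine difficulty (non-uniqueness of the $u$-minimizer when $\ker T^h$ is nontrivial), and your observation that the residual $w^\ast = T^h u^\ast - v^\ast$ is unique across the minimizer set is the right structural fact. However, the phrase ``Fej\'er/Opial-type monotonicity argument in the remaining, effectively strictly convex, directions'' is doing a lot of work without justification: Fej\'er monotonicity of the iterates with respect to the solution set is \emph{not} automatic for plain alternating minimization, and Opial's lemma applies to nonexpansive fixed-point iterations, which this scheme is not in any obvious metric. In the finite-dimensional setting you can instead close the argument more directly: once $T^h u_n^h \to T^h u^\ast$ and $v_n^h \to v^\ast$ (which follows from convergence of the residual and the $1$-Lipschitz soft-threshold), the $u$-subproblem becomes, asymptotically, minimization of $R_\alpha$ over an affine set, and one can show the selection made by the $L^2$-TV solver is single-valued and continuous in the data, yielding convergence of the full sequence. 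Alternatively, and this is closer to what \cite{AujGilChaOsh} actually does, one simply assumes $T^h$ injective (true for the identity and the blurs considered in the paper), which makes the $u$-block strictly convex and the whole issue disappears.
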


\begin{proof}
The statement can be shown analogue to \cite{AujGilChaOsh}.
\qed
\end{proof}

\subsection{A primal-dual method for $L^1$-TV minimization}

For solving \eqref{LtauTVmodel} with $\tau=1$ we suggest, alternatively to the above method, to use the primal-dual method of \cite{HinRin} adapted to our setting, where a Huber regularisation of the gradient of $u$ is considered; see \cite{HinRin} for more details.
Denoting by $\bar{u}$ a corresponding solution of the primal problem and $\bar{\vec{p}}$ the solution of the associated dual problem, the optimality conditions due to the Fenchel theorem \cite{EkeTem} are given by
\begin{eqnarray*}
&&-\Div \bar{\vec{p}}(x) = -\kappa \Delta \bar{u}(x) + \frac{1}{\beta+\mu} T^*(T\bar{u}(x) - g(x) ) \\
&& \phantom{-\Div \bar{\vec{p}}(x) = -\kappa}+ \frac{\mu}{\beta+\mu} T^* \frac{T\bar{u}(x)  - g(x)}{\max\{\beta, |T \bar{u}(x) - g(x) |\}} \\
&&- \bar{\vec{p}}(x) =\frac{1}{\gamma} \nabla \bar{u}(x) \quad \text{if } |\bar{\vec{p}}(x)|_{l^2} <\alpha(x)\\
&&- \bar{\vec{p}}(x) =\alpha(x) \frac{\nabla \bar{u}(x)}{|\nabla \bar{u}(x)|_{l^2}} \quad \text{if } |\bar{\vec{p}}(x)|_{l^2} =\alpha(x),
\end{eqnarray*}
for all $x\in \Omega$, where $\kappa, \beta, \mu$, and $\gamma$ are fixed positive constants. The latter two conditions can be summarized to $-\bar{\vec{p}}(x) = \frac{\alpha(x) \nabla \bar{u}(x)}{\max\{\gamma \alpha(x),  |\nabla \bar{u}(x)|_{l^2} \}}$. Then setting $\bar{\vec{q}} = - \bar{\vec{p}}$ and $\bar{v} = \frac{T\bar{u}  - g}{\max\{\beta, |T \bar{u} - g |\}}$ leads to the following system of equation:
\begin{equation}\label{Eq:Op:cond:pd}
\begin{split}
&0 = -\max\{\beta, |T \bar{u}(x) - g(x) |\} \bar{v} + T\bar{u}(x)  - g(x) \\
&0=\Div \bar{\vec{q}}(x) + \kappa \Delta \bar{u}(x) - \frac{1}{\beta+\mu} T^*(T\bar{u}(x) - g(x) ) \\
&\phantom{0=} - \frac{\mu}{\beta+\mu} T^* \bar{v}(x)  \\
&0=\max\{\gamma \alpha(x),  |\nabla \bar{u}(x)|_{l^2} \} \bar{\vec{q}}(x) - \alpha(x) \nabla \bar{u}(x)
\end{split}
\end{equation}
for all $x\in \Omega$. This system can be solved efficiently by a semi-smooth Newton algorithm; see Appendix \ref{Appendix:Semismooth} for a description of the method and for the choice of the parameters $\kappa, \beta, \mu$, and $\gamma$.


Note, that different algorithms presented in the literature can also be adjusted to the case of a locally varying regularization parameter, such as \cite{Cha,ChaPoc2015,LorPoc}. However, it is not the scope of this paper to compare different algorithms in order to detect the most efficient one, although this is an interesting research topic in its own right.

\section{Numerical experiments}\label{Sec:Numerics}

In the following we present numerical experiments for studying the behavior of the proposed algorithms (i,e., APS-, pAPS-, LATV-, pLATV-algorithm) with respect to its image restoration capabilities and its stability concerning the choice of the initial value $\alpha_0$. The performance of these methods is compared quantitatively by means of the peak signal-to-noise-ratio (PSNR) \cite{Bov}, which is widely used as an image quality assessment measure, and the structural similarity measure (MSSIM) \cite{WanBovSheSim}, which relates to perceived visual quality better than PSNR. When an approximate solution of the $L^1$-TV model is calculated, we also compare the restorations by the mean absolute error (MAE), which is an $L^1$-based measure defined as
$$
\operatorname{MAE} = {\| u - \hat{u} \|_{L^1(\Omega)}}, 
$$
where $\hat{u}$ denotes the true image and $u$ represents the obtained restoration. In general, when comparing PSNR and MSSIM, large values indicate better reconstruction than smaller values, while the smaller MAE becomes the better the reconstruction results are.

Whenever an image is corrupted by Gaussian noise we compute a solution by means of the (multi-scale) $L^2$-TV model, while for images containing impulsive noise the (multi-scale) $L^1$-TV model is always considered.

In our numerical experiments the CPS-, APS-, and pAPS-algorithm are terminated as soon as 
$$
\frac{| \H_\tau(u_{\alpha_n};g) - \Bt(u_{\alpha_n}) |}{\Bt(u_{\alpha_n})} \leq \epsilon_B = 10^{-5}
$$
or the norm of the difference of two successive iterates $\alpha_{n}$ and $\alpha_{n+1}$ drops below the threshold $\epsilon_\alpha=10^{-10}$, i.e., $\| \alpha_{n} - \alpha_{n+1}\|< \epsilon_\alpha$. The latter stopping criterion is used to terminate the algorithms if $(\alpha_n)_n$ stagnates and only very little progress is to expect. In fact, if our algorithm converges at least linearly, i.e., if there exists an $\varepsilon_\alpha \in (0,1)$ and an $m>0$ such that for all $n\geq m$ we have $\| \alpha_{n+1} - \alpha_{\infty}\|< \varepsilon_\alpha \| \alpha_{n} - \alpha_{\infty}\|$, the second stopping criterion at least ensures that the distance between our obtained result $\alpha$ and $\alpha_{\infty}$ is $\| \alpha - \alpha_{\infty}\| \leq \frac{\epsilon_\alpha \varepsilon_\alpha}{1-\varepsilon_\alpha}$.


\subsection{Automatic scalar parameter selection}
For automatically selecting the scalar parameter $\alpha$ in \eqref{LtauTVmodel} we presented in Section \ref{Sec:APS} the APS- and pAPS-algorithm. Here we compare their performance for image denoising and image deblurring.

\subsubsection{Gaussian noise removal}

For recovering images corrupted by Gaussian noise with mean zero and standard deviation $\sigma$ we minimize the functional in \eqref{LtauTVmodel} by setting $\tau=2$ and $T=I$. Then $\B_2=\frac{\sigma^2}{2}|\Omega|$ is a constant independent of $u$. The automatic selection of a suitable regularization parameter $\alpha$ is here performed by the CPS-, APS-, and pAPS-algorithm, where the contained minimization problem is solved by the method presented in Section \ref{Sec:AlgfImagDen}. We recall, that by \cite[Theorem 4]{Cha} and Theorem \ref{Theorem:conv2} it is ensured that the CPS- and the pAPS-algorithm generate sequences $(\alpha_n)_n$ which converge to $\bar{\alpha}$ such that $u_{\bar{\alpha}}$ solves \eqref{conLtauTV}. In particular, in the pAPS-algorithm the value $p$ is chosen in dependency of $\alpha$, i.e., $p=p(\alpha)$, such that $\alpha \to \frac{(\H_\tau(u_\alpha;g))^{p(\alpha)}}{\alpha}$ is non-increasing, see Fig. \ref{fig_phantom1_decr}(a). This property is fundamental for obtaining convergence of this algorithm; see Theorem \ref{Theorem:conv2}. For the APS-algorithm such a monotonic behavior is not guaranteed and hence we cannot ensure its convergence. Nevertheless, if the APS-algorithm generates $\alpha$'s such that the function $\alpha \to \frac{\H_\tau(u_\alpha;g)}{\alpha}$ is non-increasing, then it indeed converges to the desired solution, see Theorem \ref{Theorem:conv}. Unfortunately, the non-increase of the function $\alpha \to \frac{\H_\tau(u_\alpha;g)}{\alpha}$ does not hold always, see Fig. \ref{fig_phantom1_decr}(b).

\graphicspath{{./graphics/}}
\begin{figure}[htbp!]
\begin{center}
\hspace{0cm}
    \subfigure[phantom]{\includegraphics[height=4.1cm]{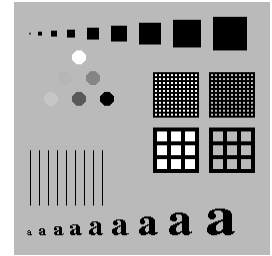}}
    \subfigure[cameraman]{\includegraphics[height=4.1cm]{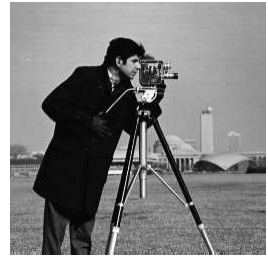}}
    \subfigure[barbara]{\includegraphics[height=4.1cm]{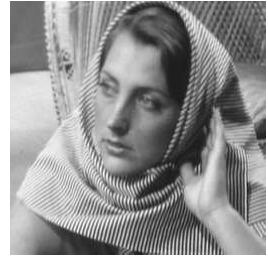}}
    \subfigure[lena]{\includegraphics[height=4.1cm]{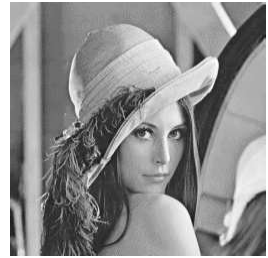}}
\end{center}    
\caption{\small \it Original images.  }
\label{fig_org1}
\end{figure}

\graphicspath{{./graphics/}}
\begin{figure}[htbp!]
\begin{center}
\hspace{0cm}
    \subfigure[]{\includegraphics[height=4.8cm]{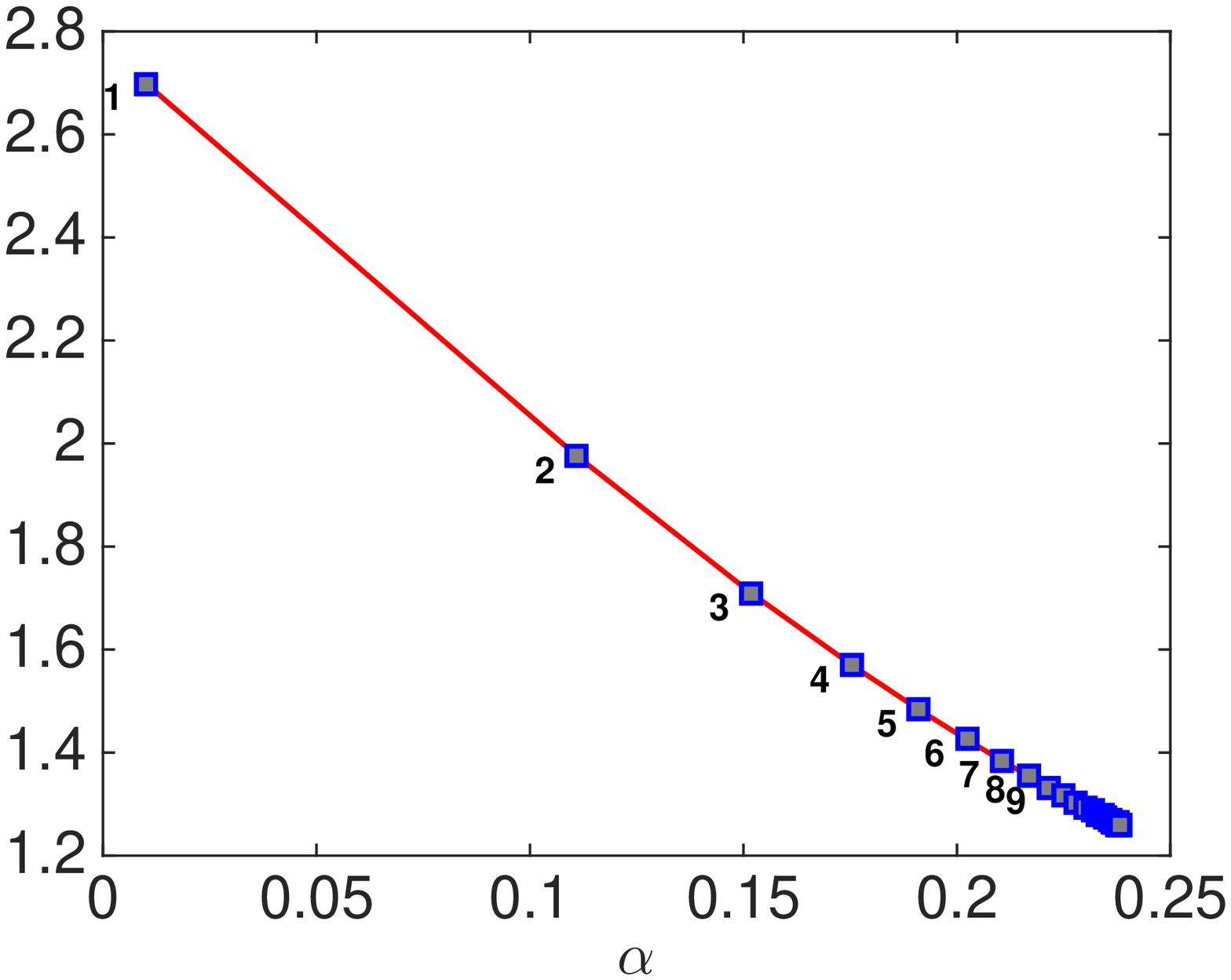}}
    \subfigure[]{\includegraphics[height=4.8cm]{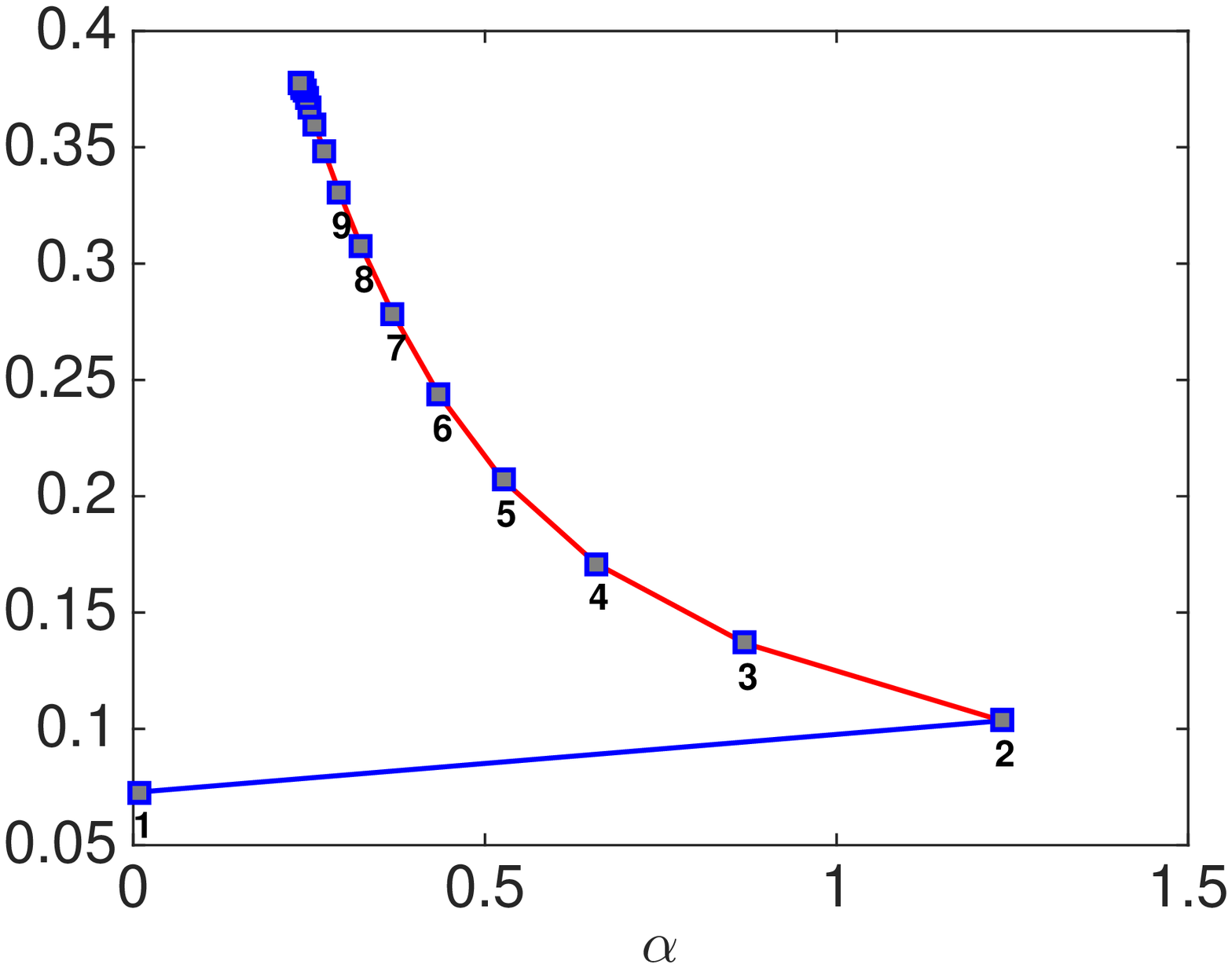}}
\end{center}    
\caption{\small \it Denoising of the phantom-image corrupted with Gaussian white noise with $\sigma=0.03$. (a) Plot of the function $\alpha \to \frac{\H_\tau(u_\alpha;g)^p}{\alpha}$ of the pAPS-algorithm with $\alpha_0=10^{-2}$. (b) Plot of the function $\alpha \to \frac{\H_\tau(u_\alpha;g)}{\alpha}$ of the APS-algorithm with $\alpha_0=10^{-2}$. The desired monotone behavior is observed after the second iteration (red part of the curve).  }
\label{fig_phantom1_decr}
\end{figure}

For a performance-comparison here we 
 consider the phantom-image of size $256 \times 256$ pixels, see Fig. \ref{fig_org1}(a), corrupted only by Gaussian white noise with $\sigma=0.3$. 
 In the pAPS-algorithm we set $p_0=32$. The behavior of the sequence $(\alpha_n)_n$ for different initial $\alpha_0$, i.e., $\alpha_0 \in \{1, 10^{-1}, 10^{-2}\}$ is depicted in Fig. \ref{fig_alpha_plot1}.  We observe that all three methods for arbitrary $\alpha_0$ converge to the same regularization parameter $\alpha$ and hence generate results with the same PSNR and MSSIM (i.e., PSNR$=19.84$ and MSSIM$=0.7989$). Hence, in these experiments, despite the lack of theoretical convergence, also the APS-algorithm seems to converge to the desired solutions. We observe the same behavior for different $\sigma$ as well.  

\graphicspath{{./graphics/}}
\begin{figure*}[htbp!]
\begin{center}
\hspace{0cm}
    \subfigure[CPS-algorithm]{\includegraphics[height=4.2cm]{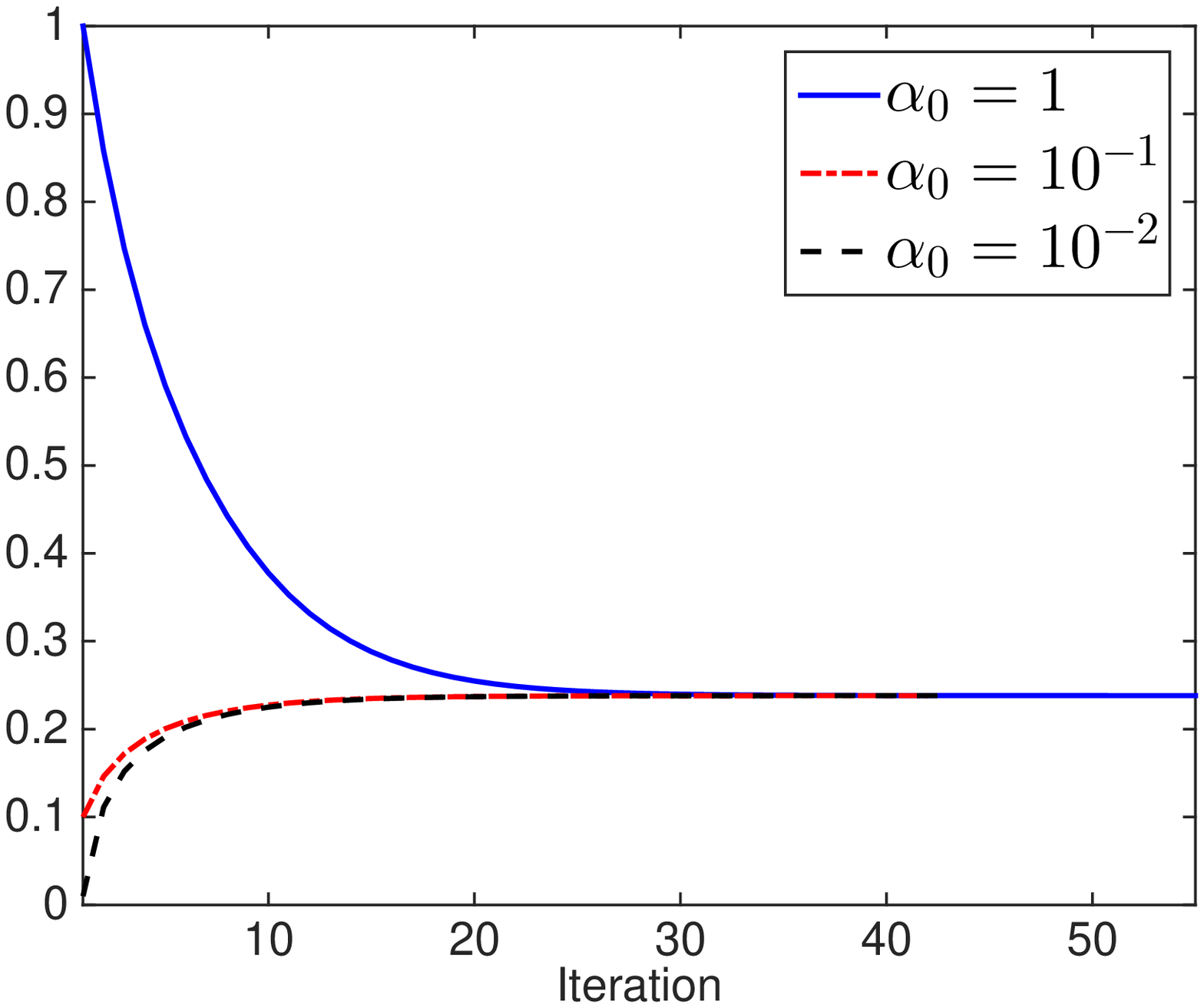}}
    \subfigure[APS-algorithm]{\includegraphics[height=4.2cm]{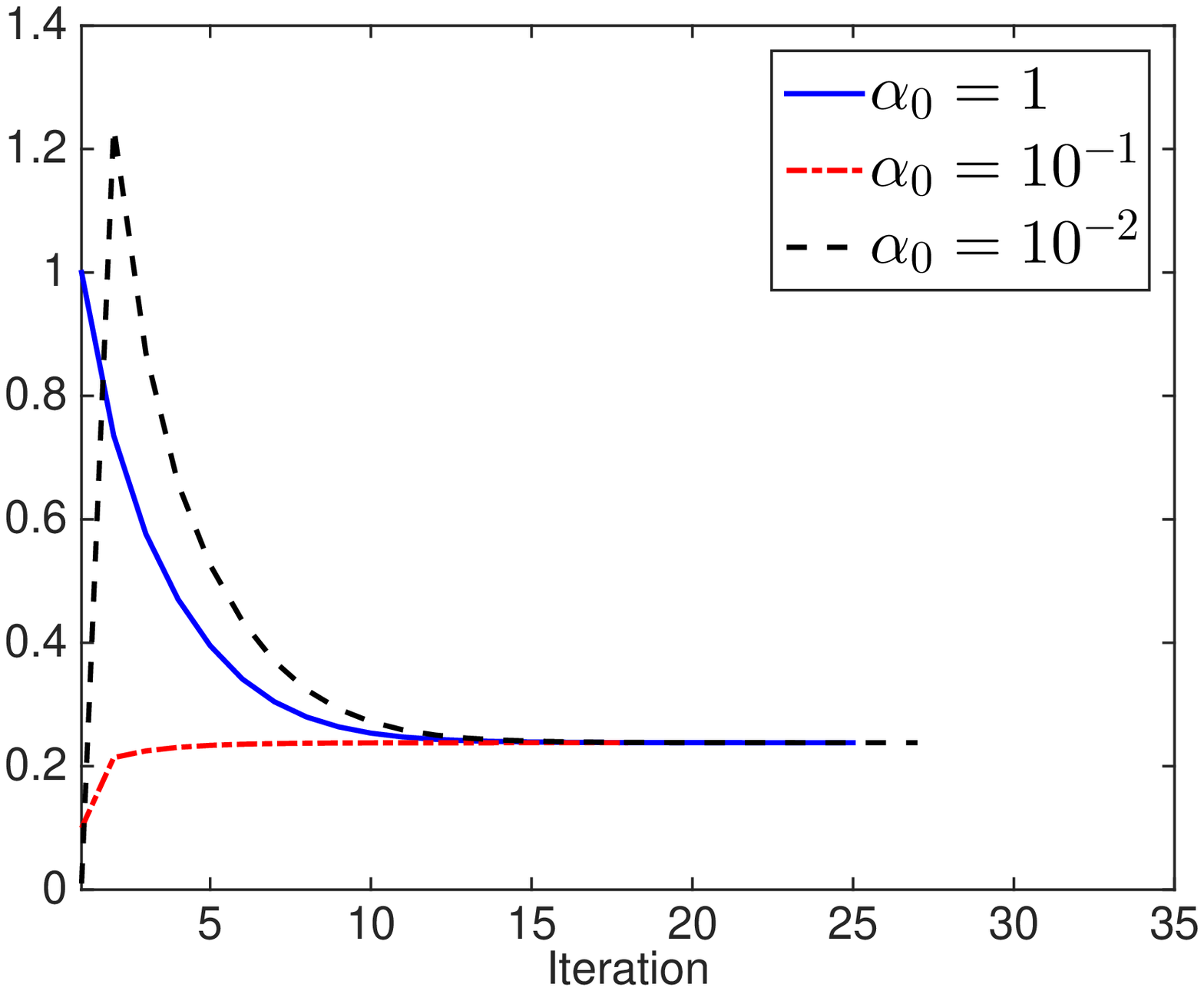}}
    \subfigure[pPAS-algorithm]{\includegraphics[height=4.2cm]{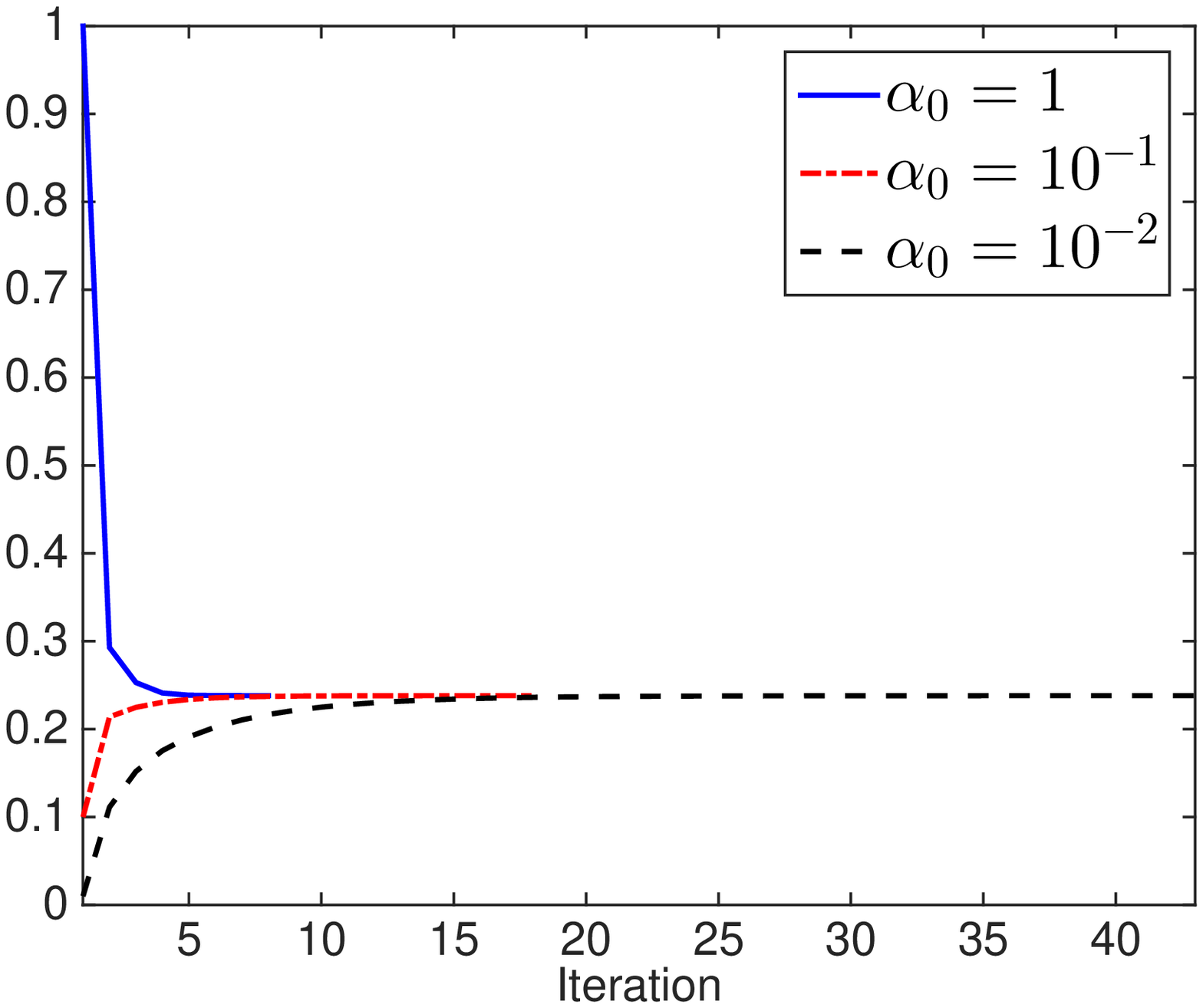}}
\end{center}    
\caption{\small \it Denoising of the phantom-image corrupted with Gaussian white noise with $\sigma=0.03$.}
\label{fig_alpha_plot1}
\end{figure*}

\begin{table*}
\footnotesize
\caption{\small \it Number of iterations needed for the reconstruction of the phantom-image corrupted by Gaussian white noise with different standard deviations $\sigma$. In the pAPS-algorithm we set $p_0=32$.}
\label{table_phantom1_scalar:1}
\begin{center}
\begin{tabular}{c||c|c|c||c|c|c||c|c|c||c|c|c}
\toprule
\multicolumn{1}{c||}{}  &  \multicolumn{3}{|c||}{$\sigma=0.3$} & \multicolumn{3}{c||}{$\sigma=0.1$} & \multicolumn{3}{c||}{$\sigma=0.05$} & \multicolumn{3}{c}{$\sigma=0.01$} \\
$\alpha_0$ & CPS & APS & pAPS  & CPS & APS & pAPS  & CPS & APS & pAPS & CPS & APS & pAPS\\ \hline
$1$          &  $55$ & $25$ & $8$  & $47$ & $21$ & $21$ & $46$ & $20$ & $20$ & $47$ & $17$ & $47$ \\
$10^{-1}$ & $42$ & $18$ & $18$ & $38$ & $17$ & $6$   & $44$ & $20$ & $20$ & $46$ & $18$ & $46$  \\
$10^{-2}$ & $43$ & $27$ & $43$ & $40$ & $20$ & $40$ & $40$ & $18$ & $40$ & $39$ & $17$ & $6$ \\
$10^{-3}$ & $43$ & $31$ & $43$ & $41$ & $22$ & $41$ & $41$ & $19$ & $41$ & $41$ & $20$ & $41$ \\
$10^{-4}$ & $43$ & $35$ & $43$ & $41$ & $23$ & $41$ & $41$ & $22$ & $41$ & $41$ & $19$ & $41$ \\ \hline 
\end{tabular}

\end{center}
\end{table*}

Looking at the number of iterations needed till termination, we observe from Table \ref{table_phantom1_scalar:1} that the APS-algorithm always needs significantly less iterations than the CPS-algorithm till termination. This is attributed to the different updates of $\alpha$. Recall, that for a fixed $\alpha_n$ in the CPS-algorithm we set $\alpha^{CPS}_{n+1}:=\sqrt{\frac{\nu_2 |\Omega|}{\tau \H_2(u_{\alpha_n};g)}}\alpha_n$, while in the APS-algorithm the update is performed as $\alpha^{APS}_{n+1}:=\frac{\nu_2 |\Omega|}{\tau \H_2(u_{\alpha_n};g)}\alpha_n$. Note, that $ \sqrt{\frac{\nu_2 |\Omega|}{\tau \H_2(u_{\alpha_n};g)}} \leq \frac{\nu_2 |\Omega|}{\tau \H_2(u_{\alpha_n};g)} $, if $\frac{\nu_2 |\Omega|}{\tau \H_2(u_{\alpha_n};g)}\geq 1$ and $ \sqrt{\frac{\nu_2 |\Omega|}{\tau \H_2(u_{\alpha_n};g)}} \geq \frac{\nu_2 |\Omega|}{\tau \H_2(u_{\alpha_n};g)} $, if $\frac{\nu_2 |\Omega|}{\tau \H_2(u_{\alpha_n};g)}\leq 1$. Hence, we obtain $\alpha_n \leq \alpha^{CPS}_{n+1} \leq \alpha^{APS}_{n+1}$ if $\frac{\nu_2 |\Omega|}{\tau \H_2(u_{\alpha_n};g)}\geq 1$ and $\alpha_n\geq \alpha^{CPS}_{n+1} \geq \alpha^{APS}_{n+1}$ if $\frac{\nu_2 |\Omega|}{\tau \H_2(u_{\alpha_n};g)}\leq 1$. That is, in the APS-algorithm $\alpha$ changes more significantly in each iteration than in the CPS-algorithm, which leads to a faster convergence with respect to the number of iterations. Nevertheless, exactly this behavior allows the function $\alpha \to \frac{\H_2(u_\alpha;g)}{\alpha}$ to increase which is responsible that the convergence of the APS-algorithm is not guaranteed in general. However, in our experiments we observed that the function $\alpha \to \frac{\H_2(u_\alpha;g)}{\alpha}$ only increases in the first iterations, but non-increases (actually even decreases) afterwards, see Fig. \ref{fig_phantom1_decr}(b). This is actually enough to guarantee convergence, as discussed in Section \ref{Sec:APS}, since we can consider the solution of the last step in which the desired monotonic behavior is not fulfilled as a ``new'' initial value. Since from this point on the non-increase holds, we get convergence of the algorithm. 

The pAPS-algorithm is designed to ensure the non-increase of the function $\alpha \to \frac{(\H_\tau(u_\alpha;g))^{p(\alpha)}}{\alpha}$ by choosing $p(\alpha)$ in each iteration accordingly, which is done by the algorithm automatically. If $p(\alpha)=p=1/2$ in each iteration, then the pAPS-algorithm becomes the CPS-algorithm, as it happens sometimes in practice (indicated by the same number of iterations in Table \ref{table_phantom1_scalar:1}). Since the CPS-algorithm converges \cite{Cha}, the pAPS-algorithm always yields $p\geq 1/2$. In particular, we observe that if the starting value $\alpha_0$ is larger than the requested regularization parameter $\alpha$, less iteration till termination are needed than with the CPS-algorithm. On the contrary, if $\alpha_0$ is smaller than the desired $\alpha$, $p=1/2$ is chosen by the algorithm to ensure the monotonicity. The obtained result of the pAPS-algorithm is independent on the choice of $p_0$ as visible from Fig. \ref{fig_p_test}. In this plot we also specify the number of iterations needed till termination. On the optimal choice of $p_0$ with respect to the number of iterations, we conclude from Fig. \ref{fig_p_test} that $p_0=32$ seems to do a good job, although the optimal value may depend on the noise-level. 

\graphicspath{{./graphics/}}
\begin{figure}[htbp!]
\begin{center}
\hspace{0cm}
    \includegraphics[height=4.8cm]{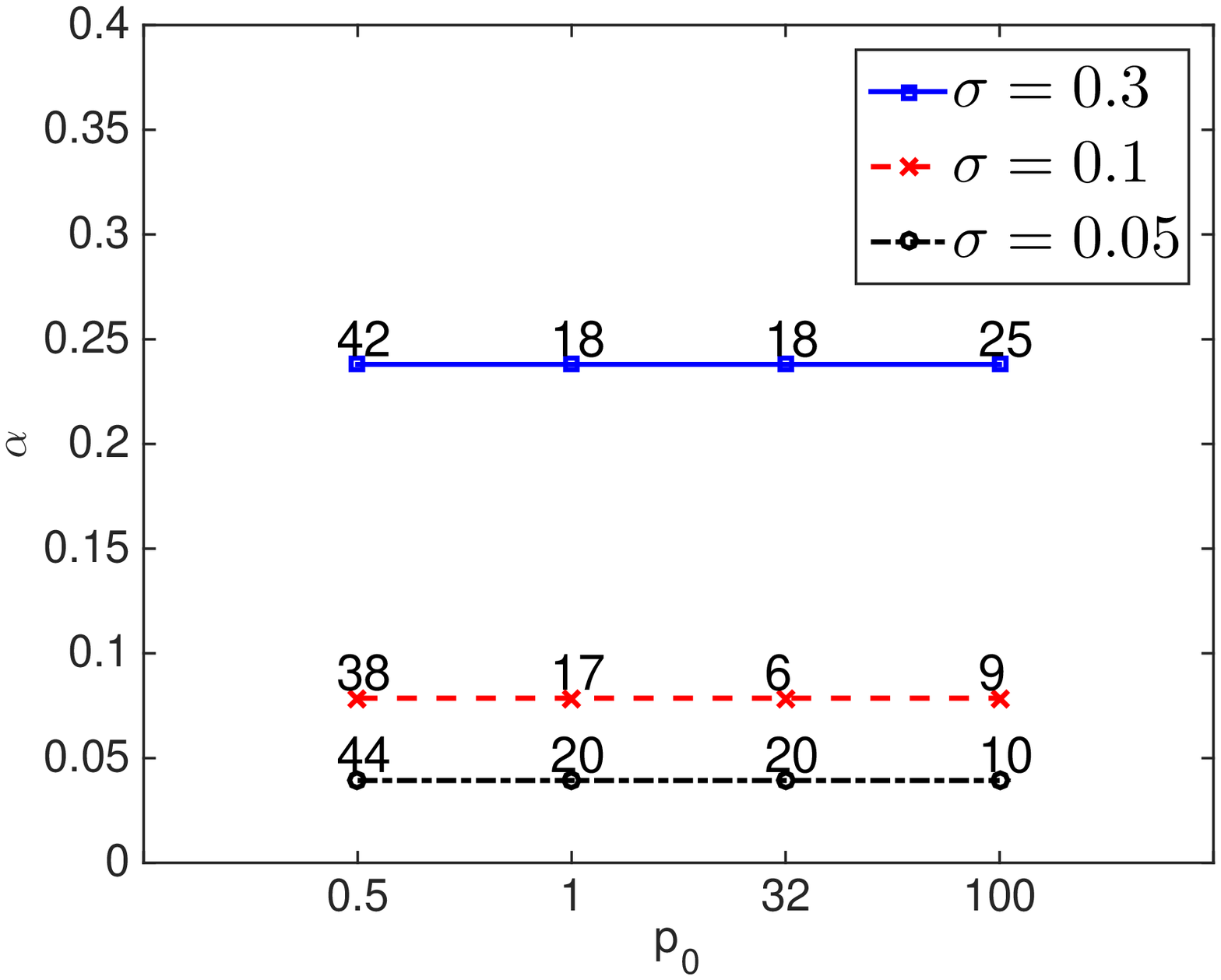}
\end{center}    
\caption{\small \it Regularization parameter $\alpha$ obtained by the pAPS-algorithm with different $p_0$ for denoising the phantom-image corrupted with Gaussian white noise for different $\sigma$.}
\label{fig_p_test}
\end{figure}

Similar behaviors as described above are also observed for denoising other and real images as well.


\subsubsection{Image deblurring}

Now, we consider the situation when an image is corrupted by some additive Gaussian noise and additionally blurred. Then the operator $T$ is chosen according to the blurring kernel, which we assume here to be known. For testing the APS- and pAPS-algorithm in this case we take the cameraman-image of Fig. \ref{fig_org1}(b), which is of size $256\times 256$ pixels, blur it by a Gaussian blurring kernel of size $5\times 5$ pixels and standard deviation $10$ and additionally add some Gaussian white noise with variance $\sigma^2$. The minimization problem in the APS- and pAPS-algorithm is solved approximately by the algorithm in \eqref{alg:surrogate}. In Fig. \ref{fig_alpha_G} the progress of $\alpha_n$ for different $\sigma$'s, i.e., $\sigma \in \{0.3, 0.1, 0.05\}$, and different $\alpha_0$'s, i.e., $\alpha_0 \in  \{1, 10^{-1}, 10^{-2}\}$ are presented. In these tests both algorithms converge to the same regularization parameter and minimizer. From the figure we observe, that the pAPS-algorithm needs much less iterations than the APS-algorithm till termination. This behavior might be attributed to the choice of the power $p$ in the pAPS-algorithm, since we observe in all our experiments that $p>1$ till termination.

\graphicspath{{./graphics/}}
\begin{figure*}[htbp!]
\begin{center}
\hspace{0cm}\subfigure[$\sigma=0.3$; pAPS-algorithm]{\includegraphics[height=4.2cm]{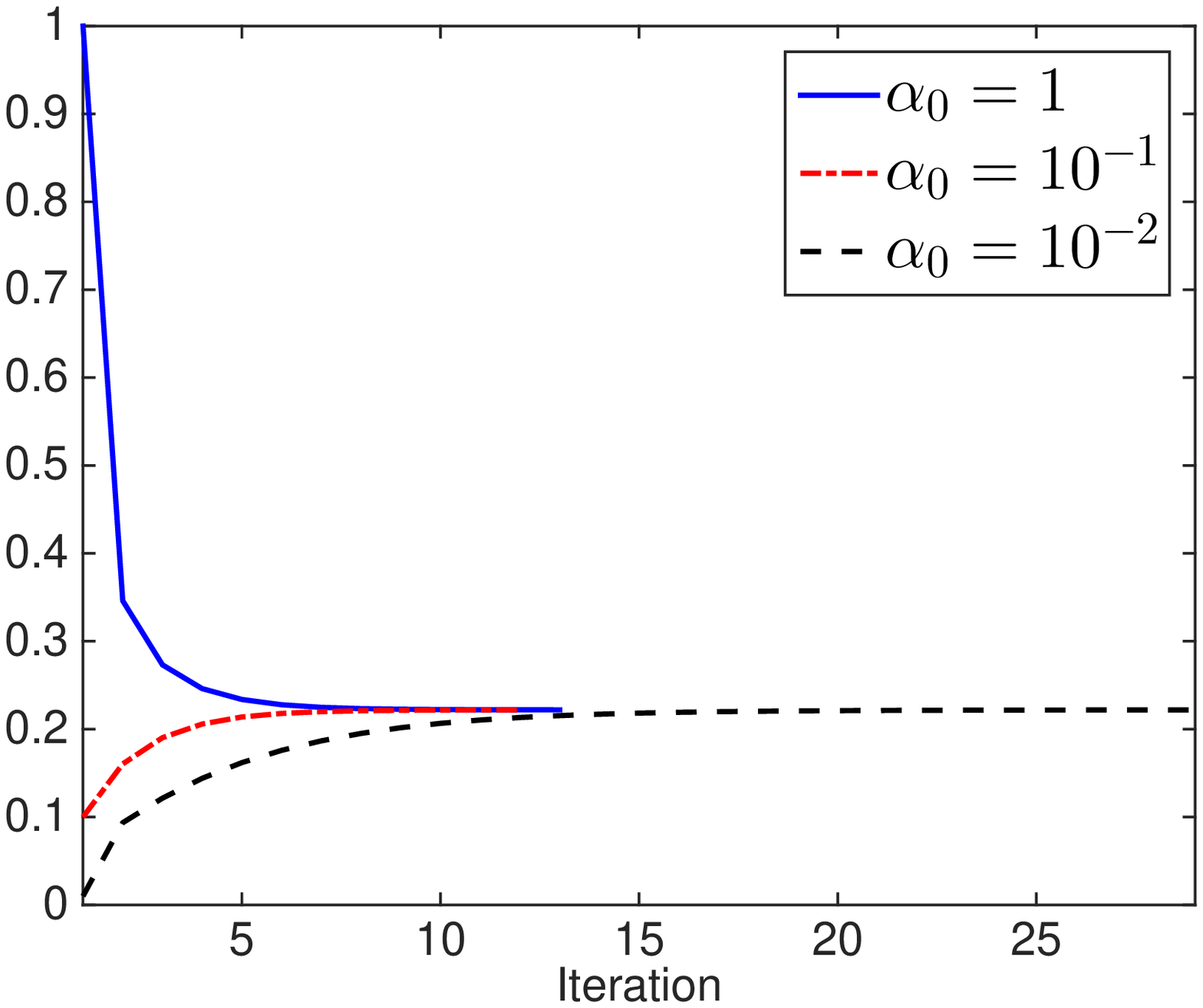}}
\subfigure[$\sigma=0.1$; pAPS-algorithm]{\includegraphics[height=4.2cm]{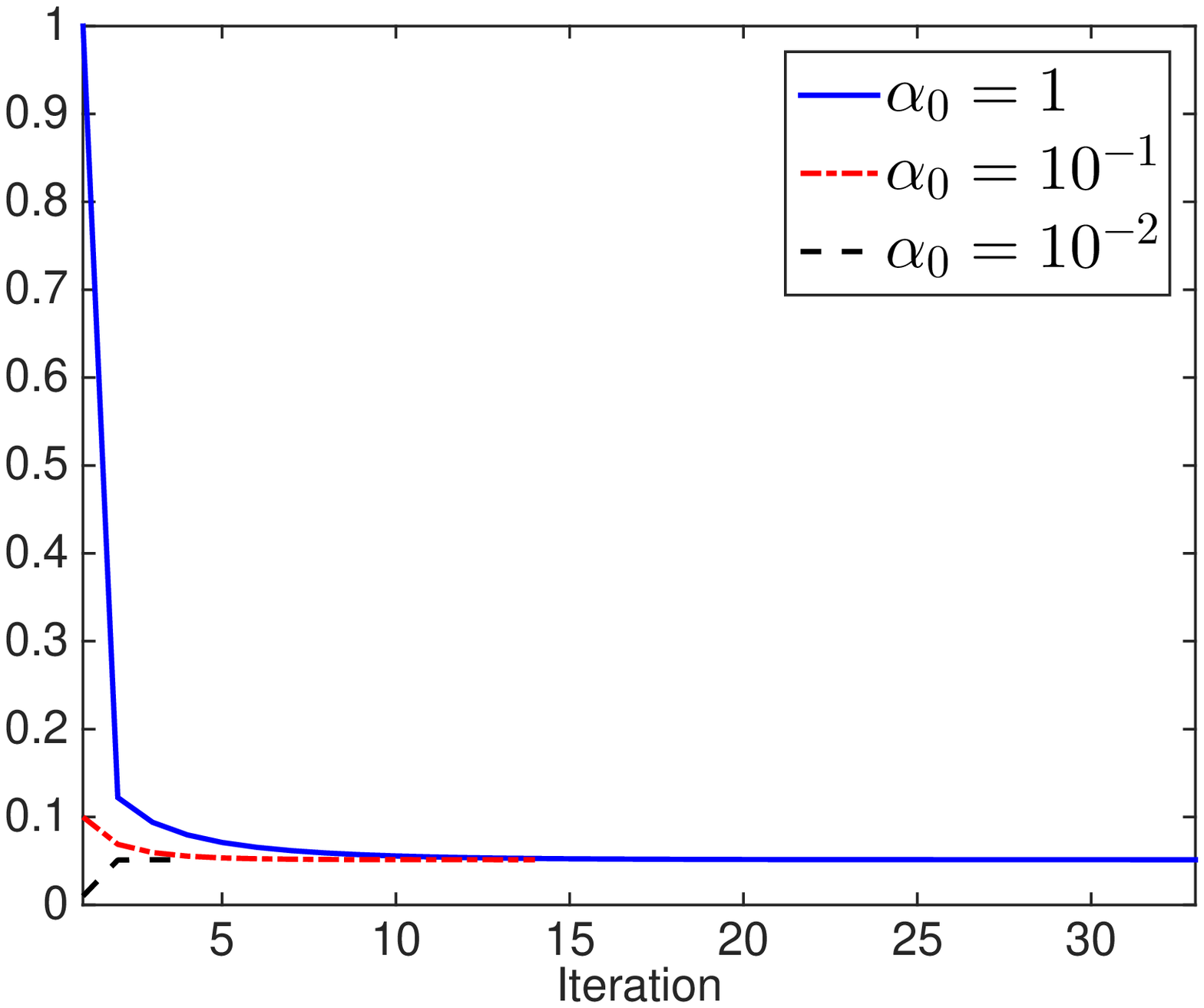}}
\subfigure[$\sigma=0.05$; pAPS-algorithm]{\includegraphics[height=4.2cm]{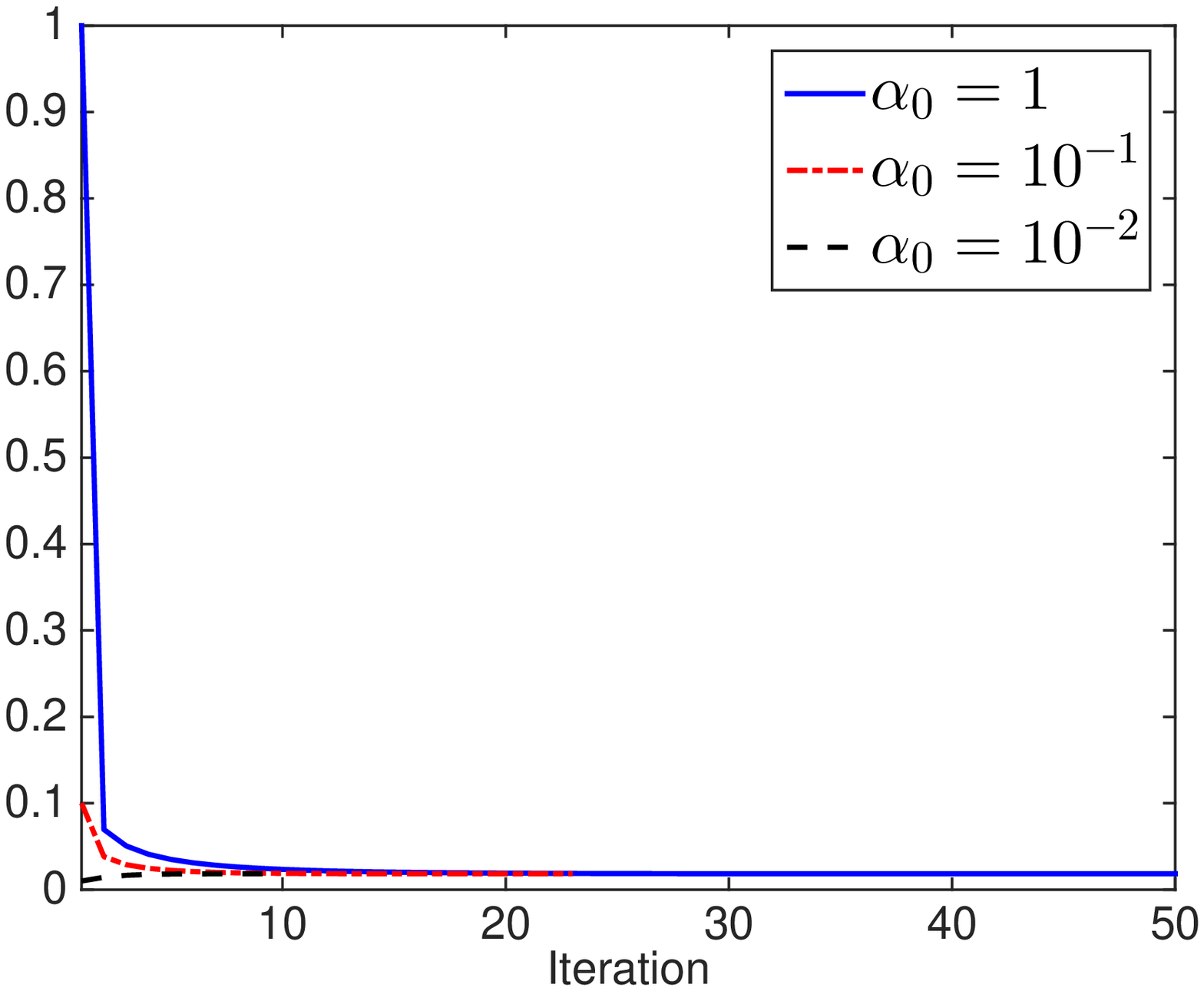}}
\subfigure[$\sigma=0.3$; APS-algorithm]{\includegraphics[height=4.2cm]{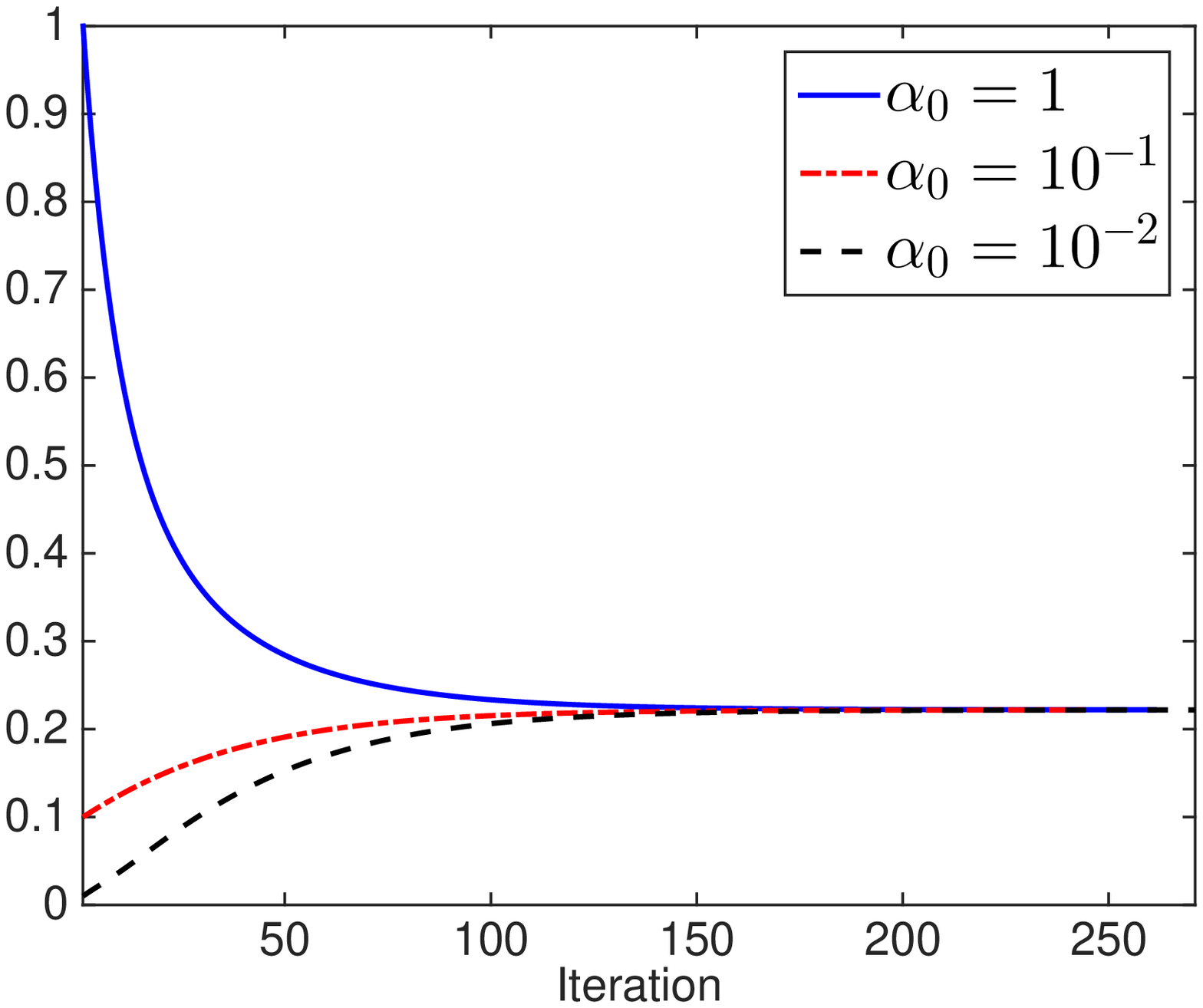}}
\subfigure[$\sigma=0.1$; APS-algorithm]{\includegraphics[height=4.2cm]{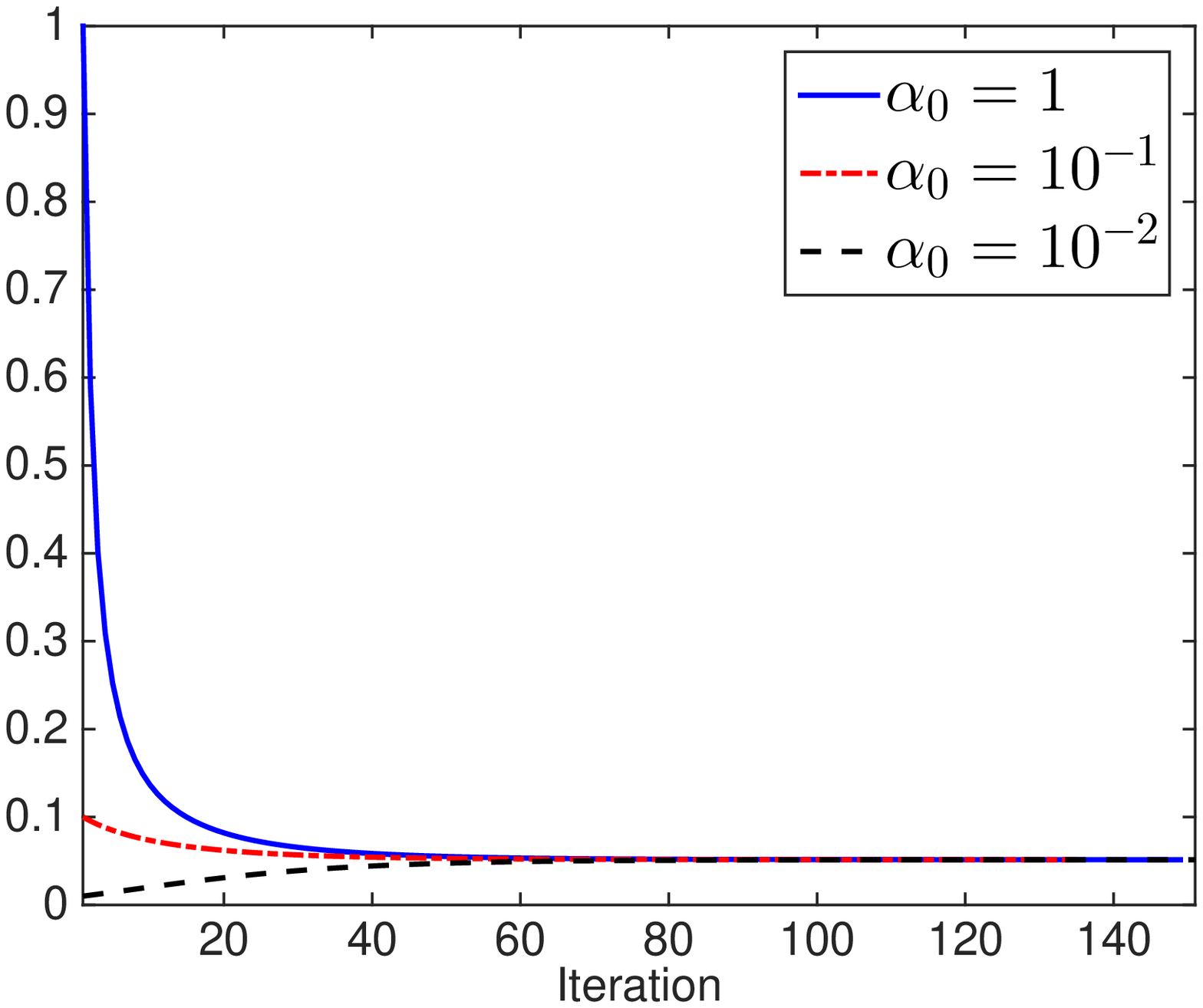}}
\subfigure[$\sigma=0.05$; APS-algorithm]{\includegraphics[height=4.2cm]{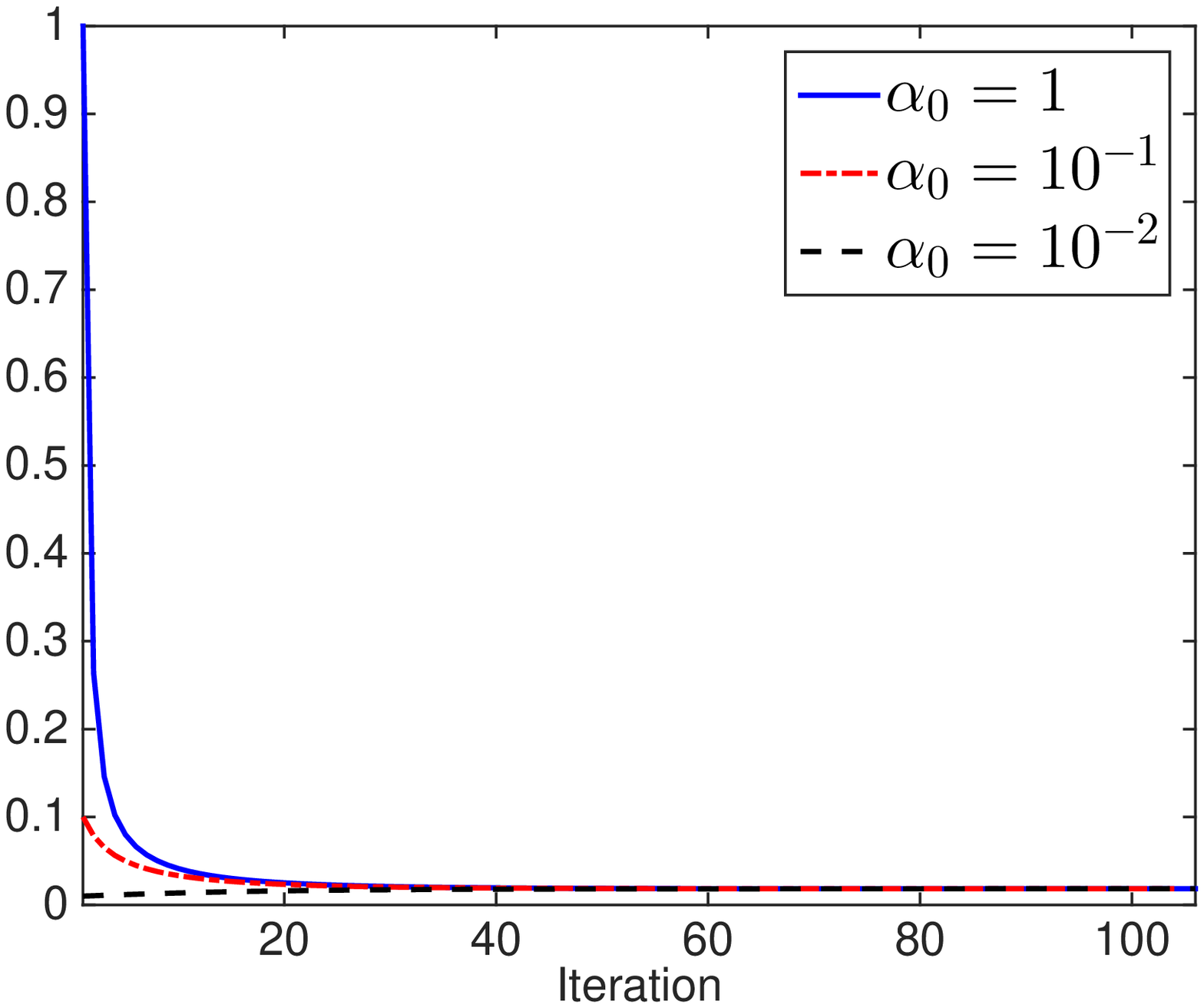}}
\end{center}    
\caption{\small \it Reconstruction of the cameraman-image corrupted by Gaussian blurring kernel of size $5\times 5$ and standard deviation $10$. In the pAPS-algorithm we set $p_0=32$.}
\label{fig_alpha_G}
\end{figure*}

\subsubsection{Impulsive noise removal}

It has been demonstrated that for removing impulsive noise in images one should minimize the $L^1$-TV model rather than the $L^2$-TV model. Then for calculating a suitable regularization parameter $\alpha$ in the $L^1$-TV model we use the APS-  and pAPS-algorithm, in which the minimization problems are solved approximately by the $L^1$-TV$_\alpha$-algorithm. Here, we consider the cameraman-image corrupted by salt-and-pepper noise or random-valued impulse noise with different noise-levels, i.e., $r_1=r_2 \in \{0.3, 0.1, 0.05\}$ and $r \in \{0.3, 0.1, 0.05\}$ respectively. The obtained results for different $\alpha_0$'s are depicted in Fig. \ref{fig_alpha_SP} and Fig. \ref{fig_alpha_RV}. For the removal of salt-and-pepper noise we observe from Fig. \ref{fig_alpha_SP} similar behaviors of the APS- and pAPS-algorithm as above for removing Gaussian noise. In particular, both algorithms converge to the same regularization parameter. However, in many cases the APS-algorithm needs significantly less iterations than the pAPS-algorithm. These behaviors are also observed in Fig.~\ref{fig_alpha_RV} for removing random-valued impulse noise as long as the APS-algorithm finds a solution. In fact, for $r=0.05$ it actually does not converge but oscillates as depicted in Fig.~\ref{fig_alpha_RVc}. 

\graphicspath{{./graphics/}}
\begin{figure*}[htbp!]
\begin{center}
\hspace{0cm}\subfigure[$r_1=r_2=0.3$; pAPS-algorithm]{\includegraphics[height=4.2cm]{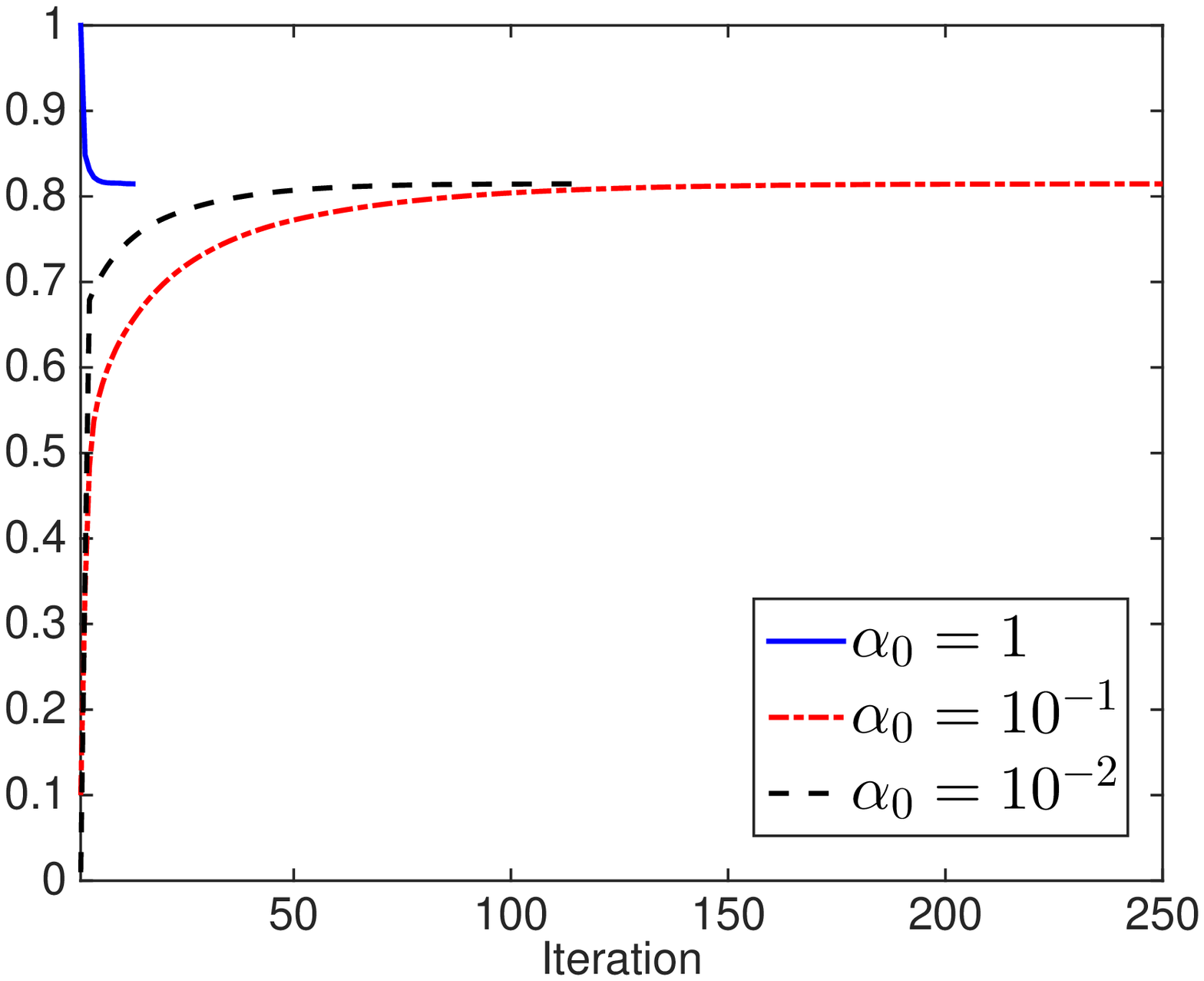}}
\subfigure[$r_1=r_2=0.1$; pAPS-algorithm]{\includegraphics[height=4.2cm]{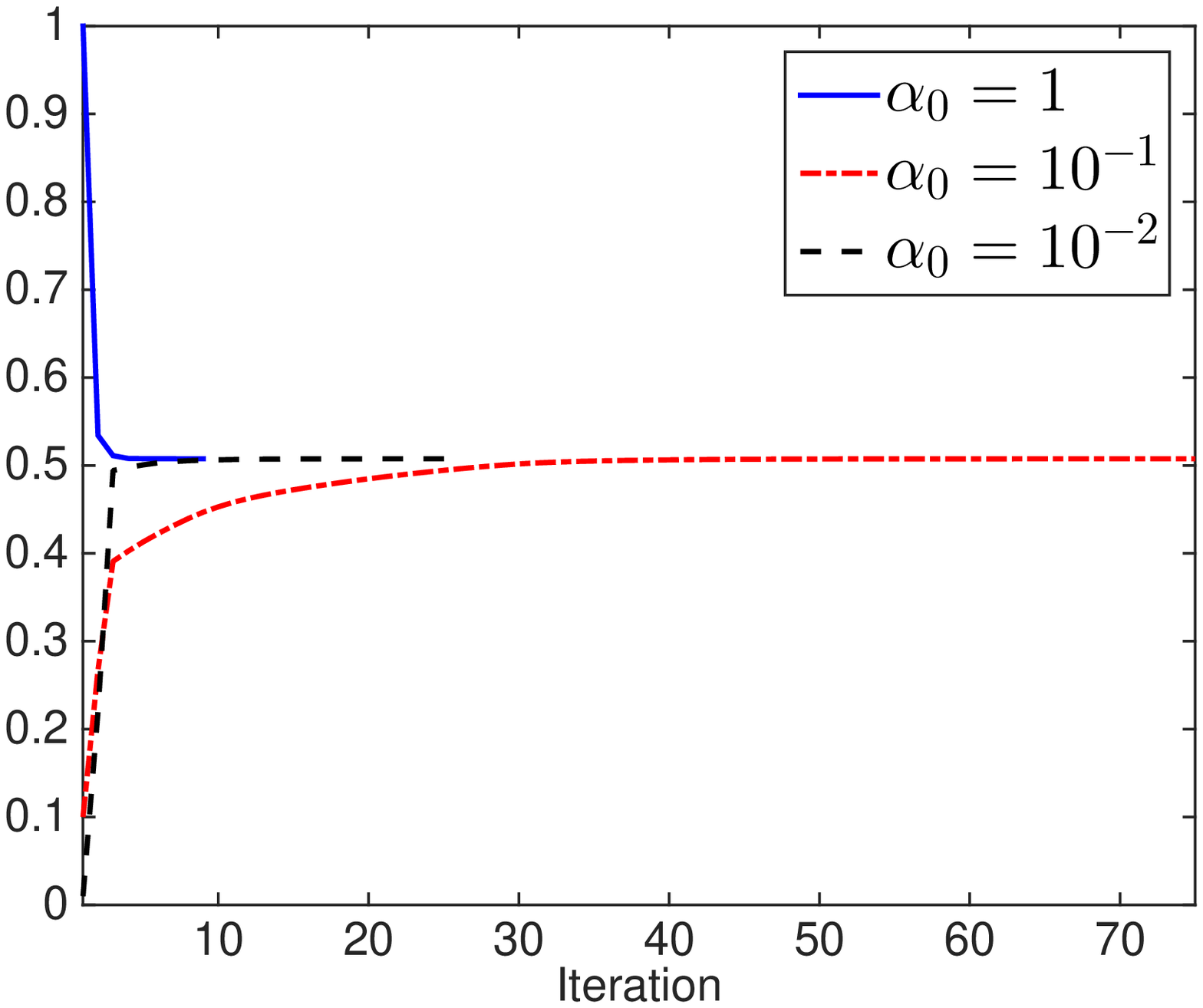}}
\subfigure[$r_1=r_2=0.05$; pAPS-algorithm]{\includegraphics[height=4.2cm]{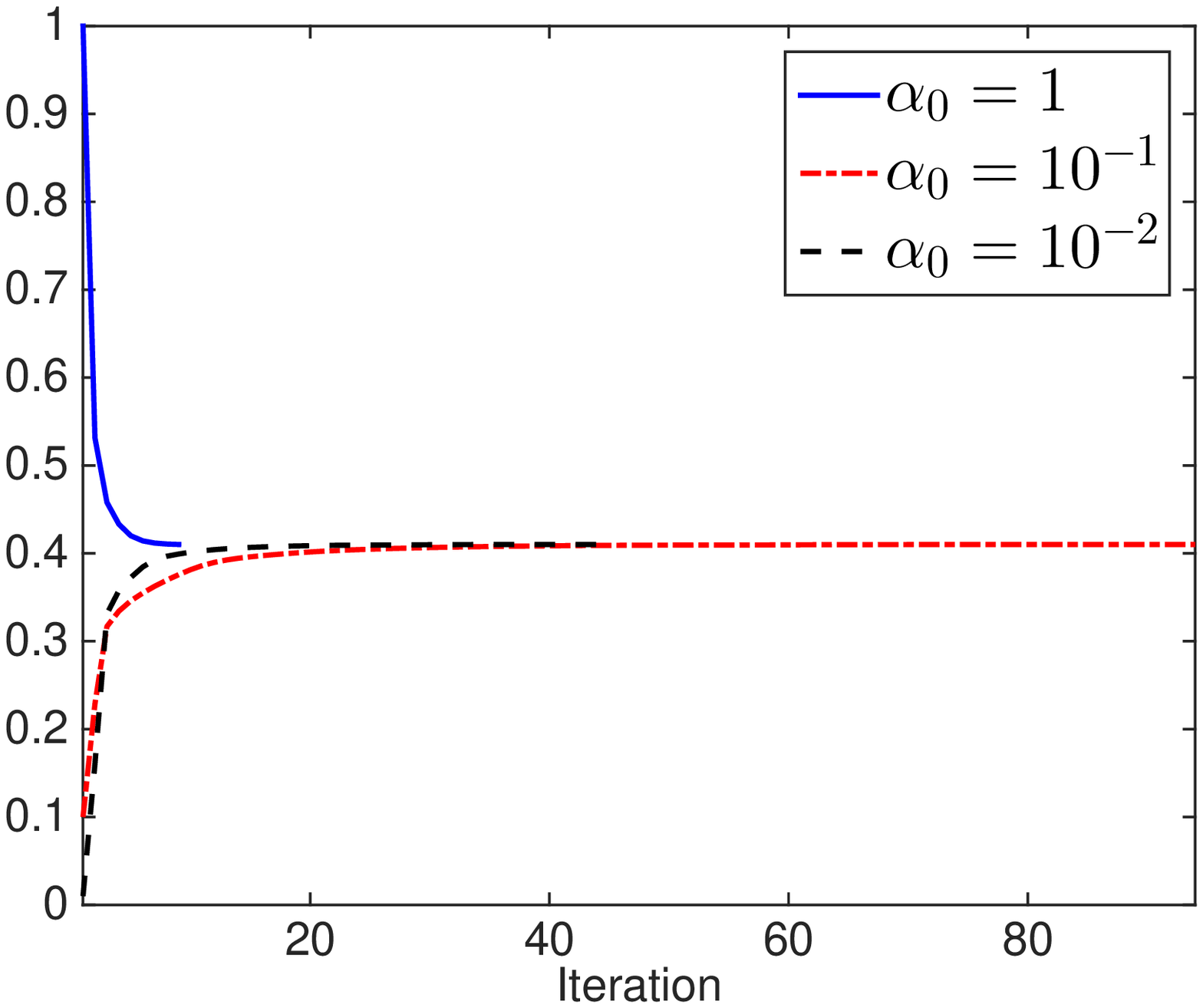}}
\subfigure[$r_1=r_2=0.3$; APS-algorithm]{\includegraphics[height=4.2cm]{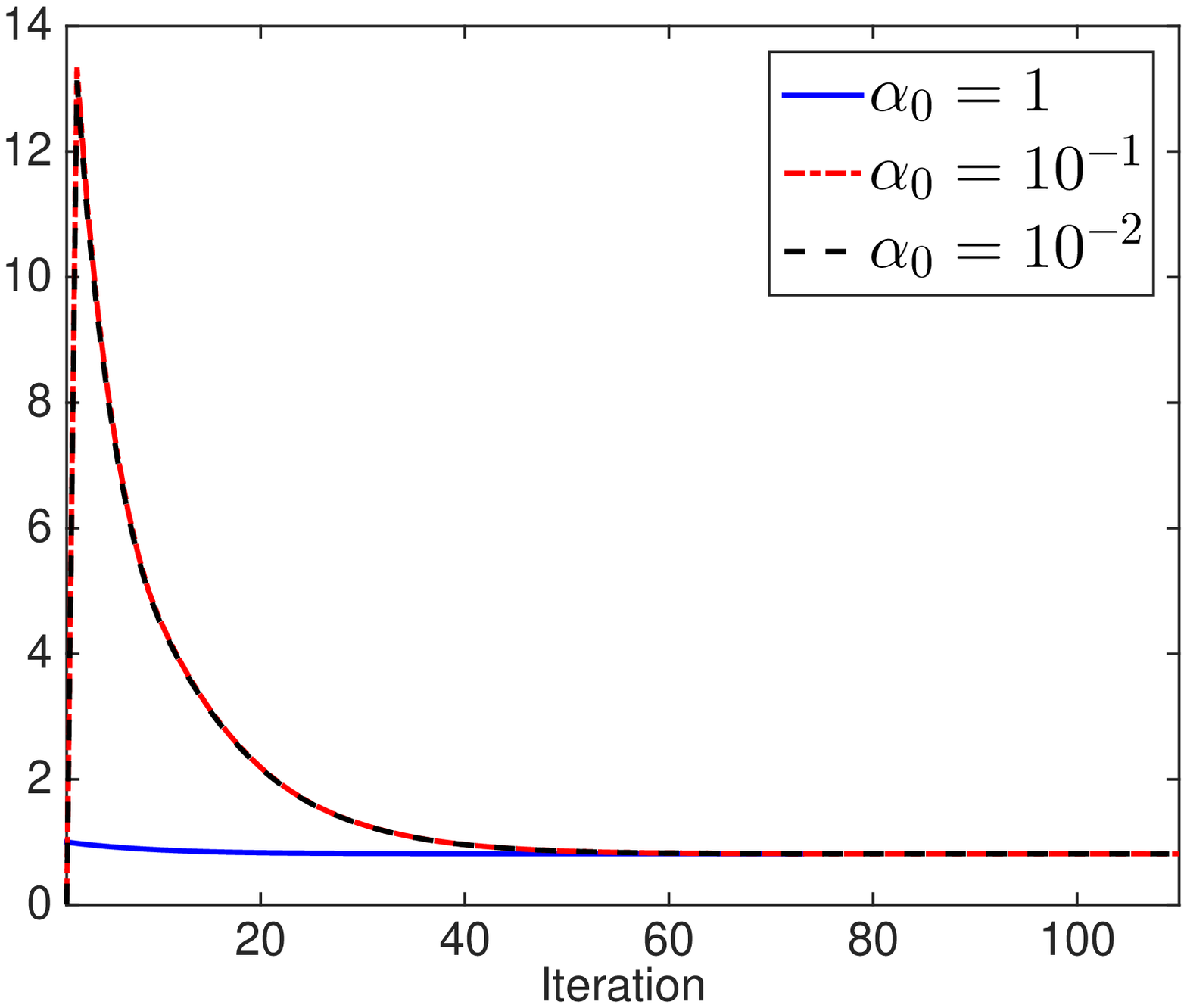}}
\subfigure[$r_1=r_2=0.1$; APS-algorithm]{\includegraphics[height=4.2cm]{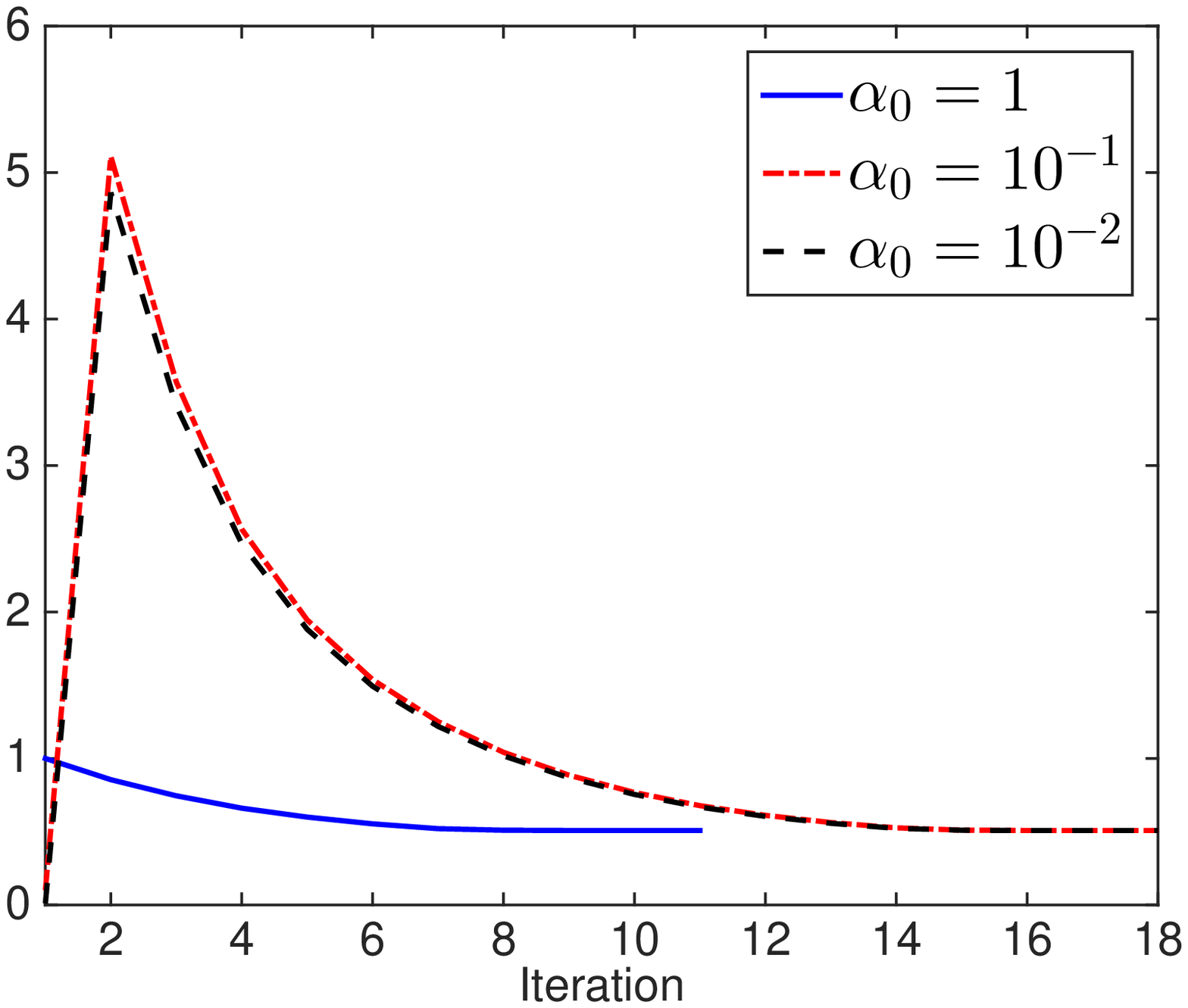}}
\subfigure[$r_1=r_2=0.05$; APS-algorithm]{\includegraphics[height=4.2cm]{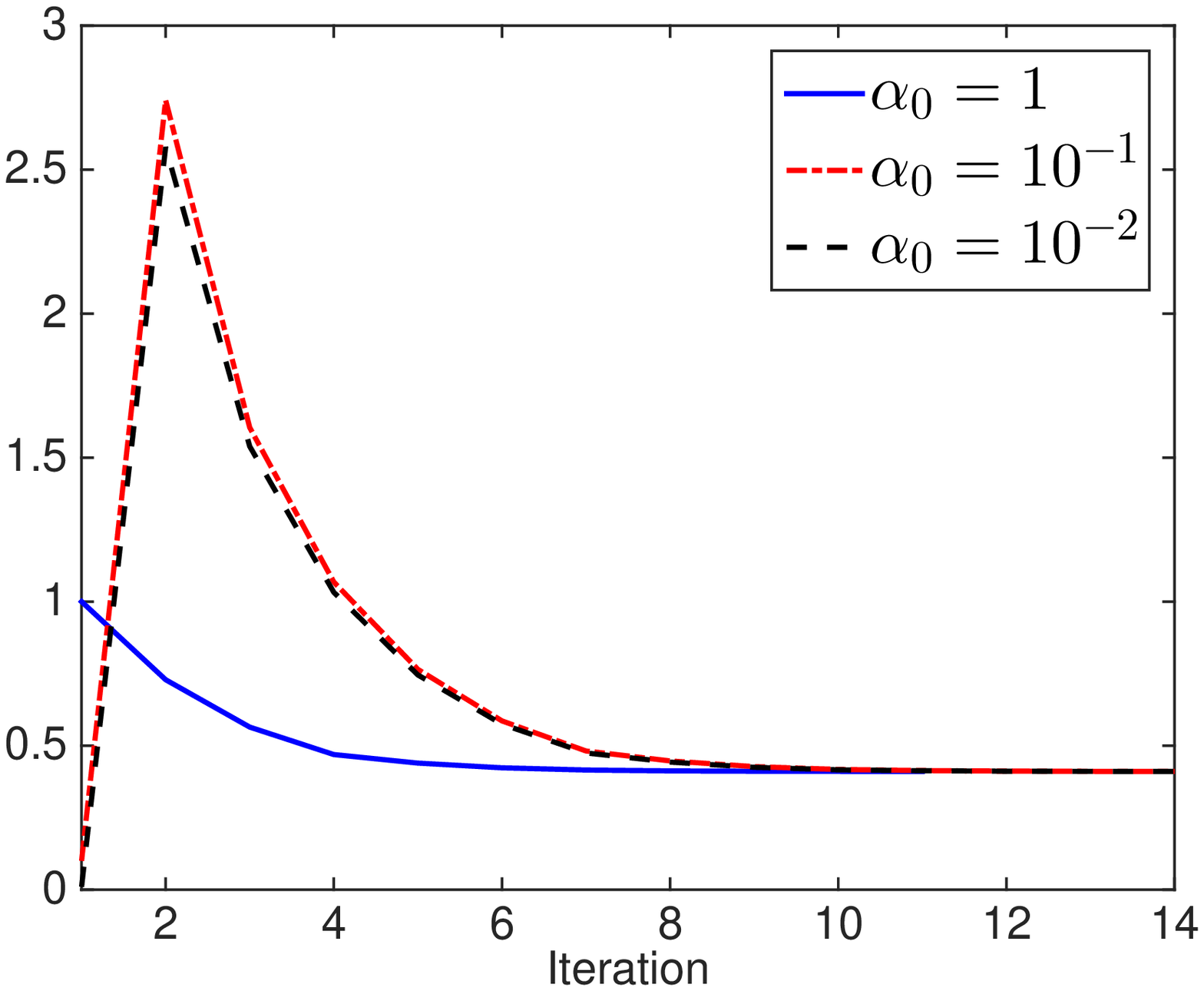}}
\end{center}    
\caption{\small \it Denoising of the cameraman-image corrupted by salt-and-pepper noise. In the pAPS-algorithm we set $p_0=32$.  }
\label{fig_alpha_SP}
\end{figure*}

\graphicspath{{./graphics/}}
\begin{figure*}[htbp!]
\begin{center}
\hspace{0cm}
\subfigure[$r=0.3$; pAPS-algorithm]{\includegraphics[height=4.2cm]{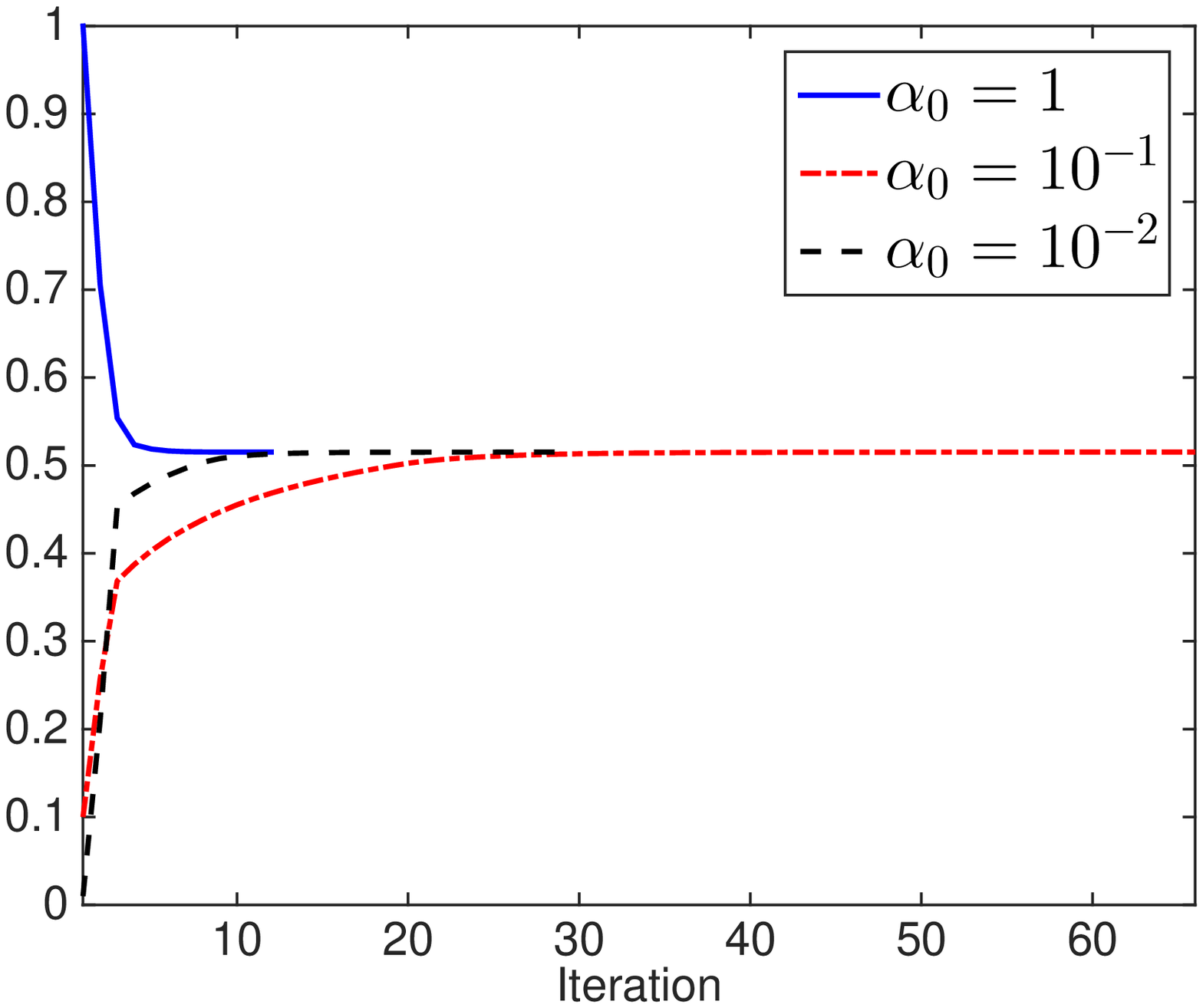}}
\subfigure[$r=0.1$; pAPS-algorithm]{\includegraphics[height=4.2cm]{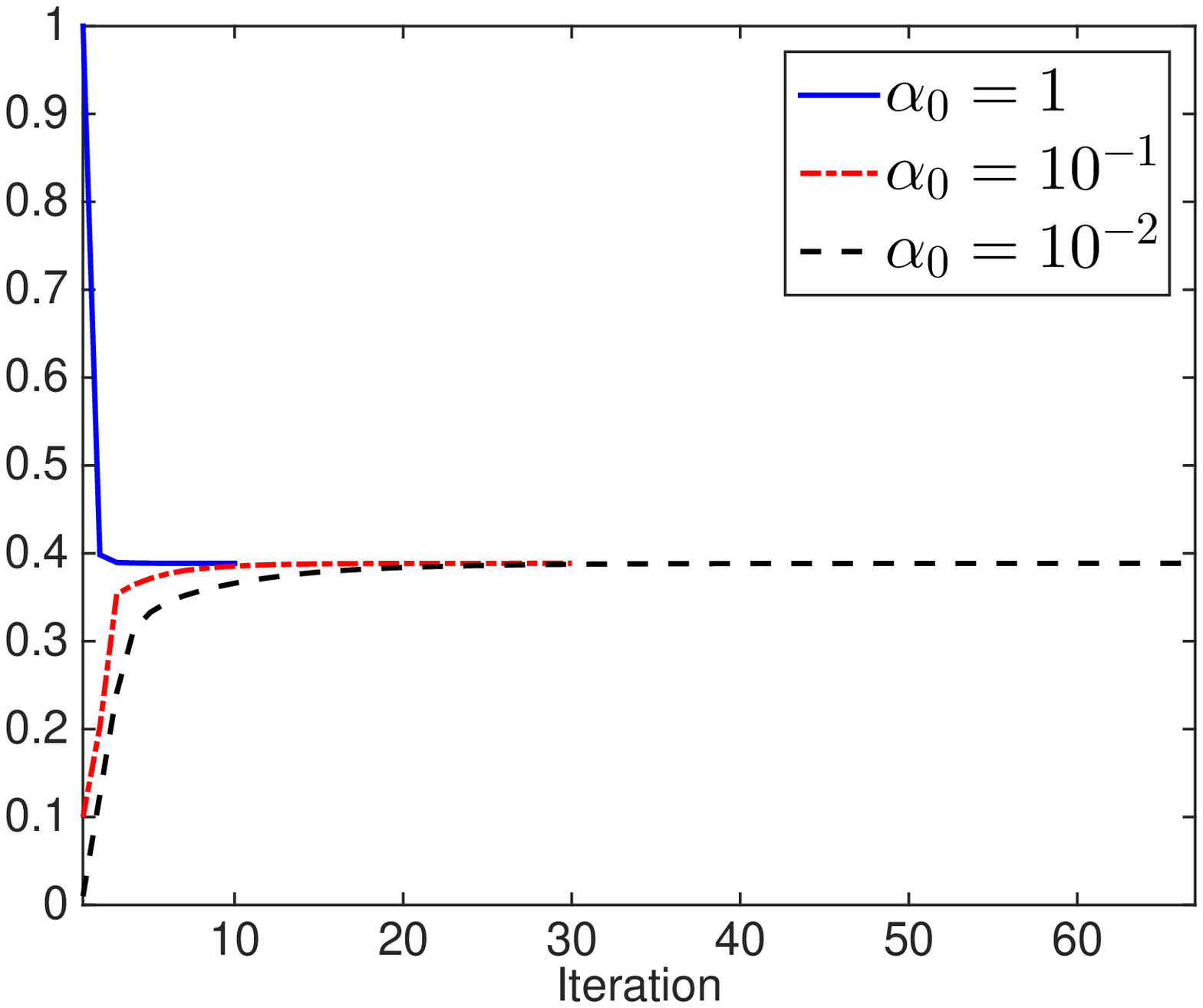}}
\subfigure[$r=0.05$; pAPS-algorithm \label{fig_alpha_RVc}]{\includegraphics[height=4.2cm]{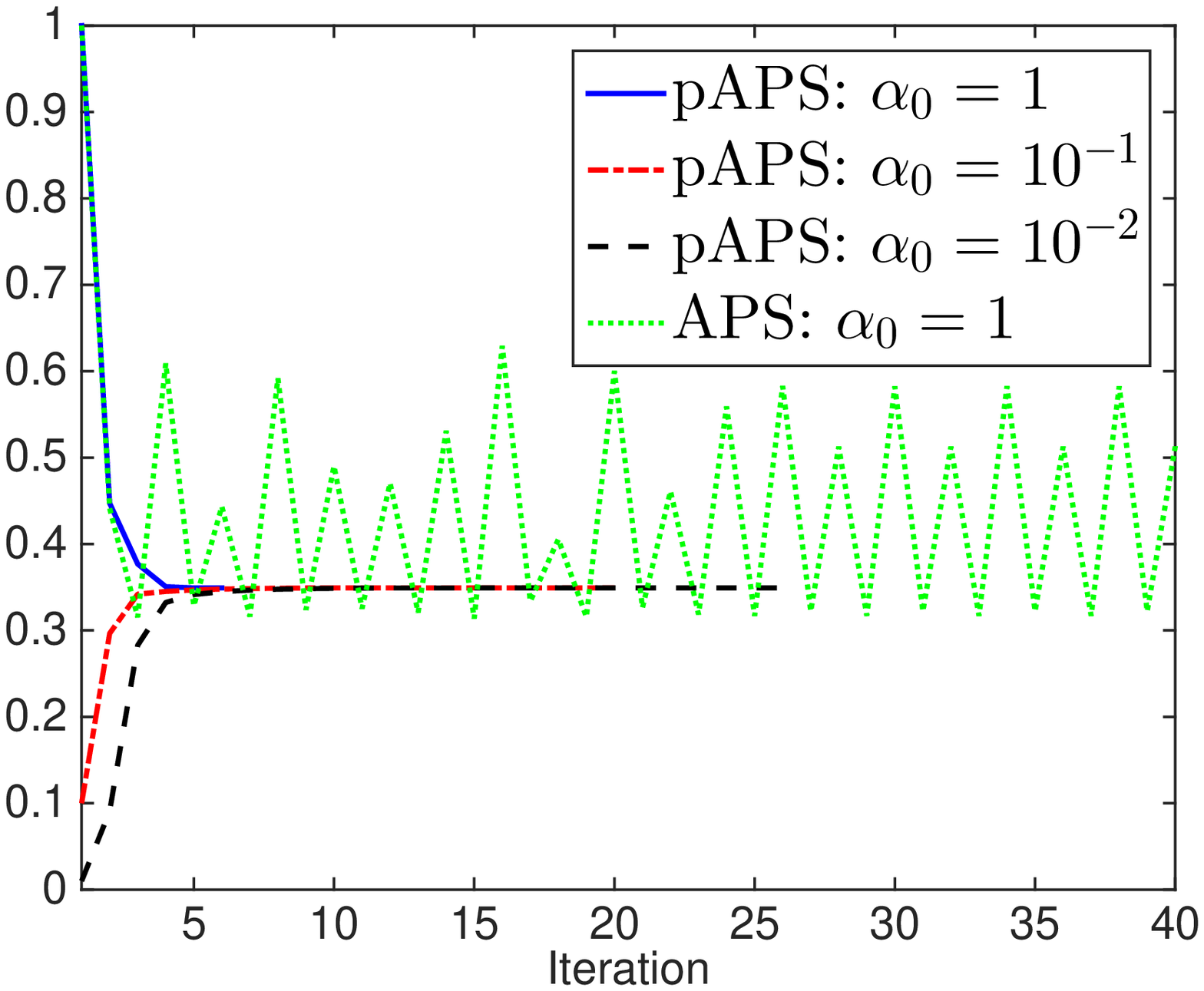}}
\subfigure[$r=0.3$; APS-algorithm]{\includegraphics[height=4.2cm]{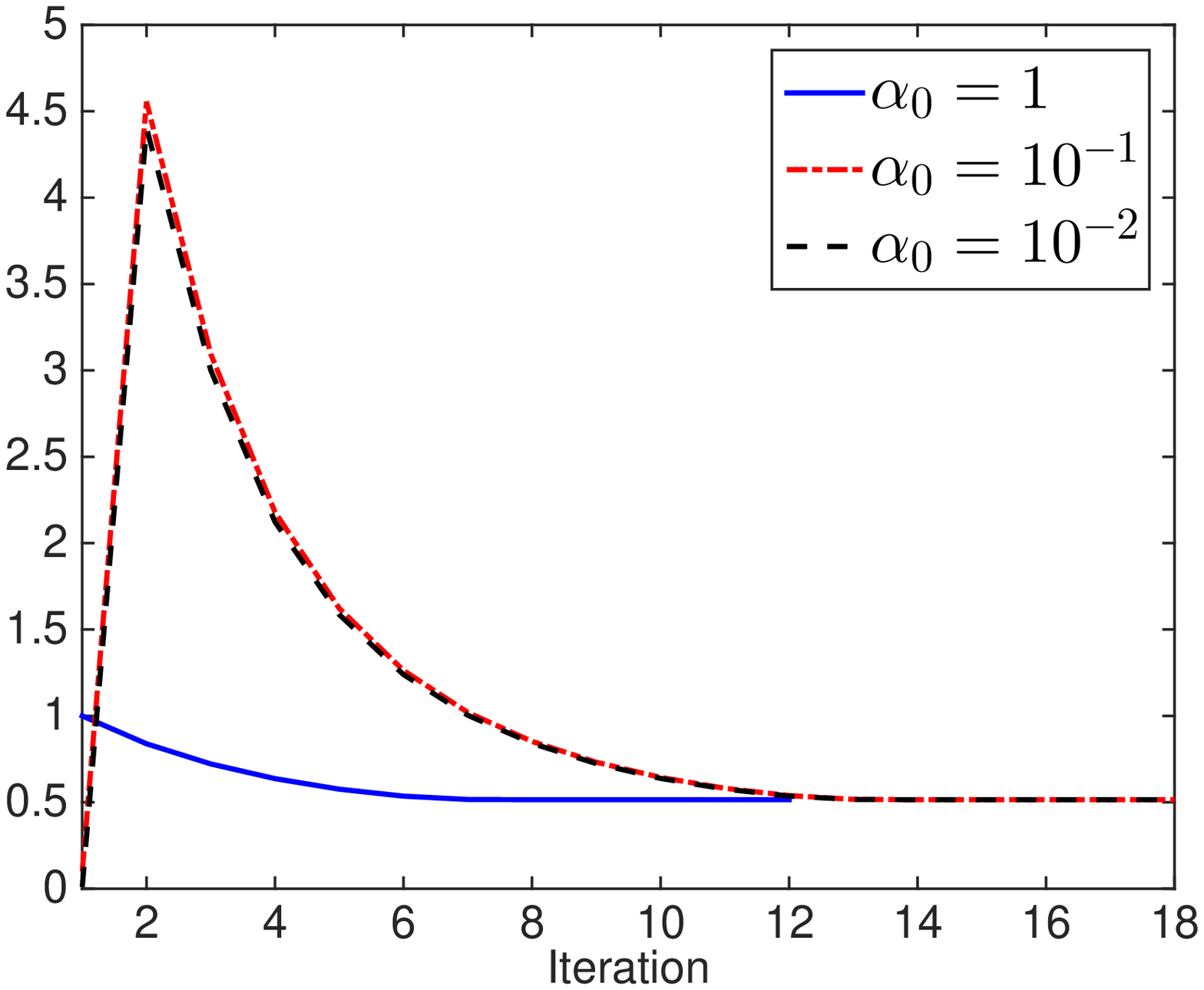}}
\subfigure[$r=0.1$; APS-algorithm]{\includegraphics[height=4.2cm]{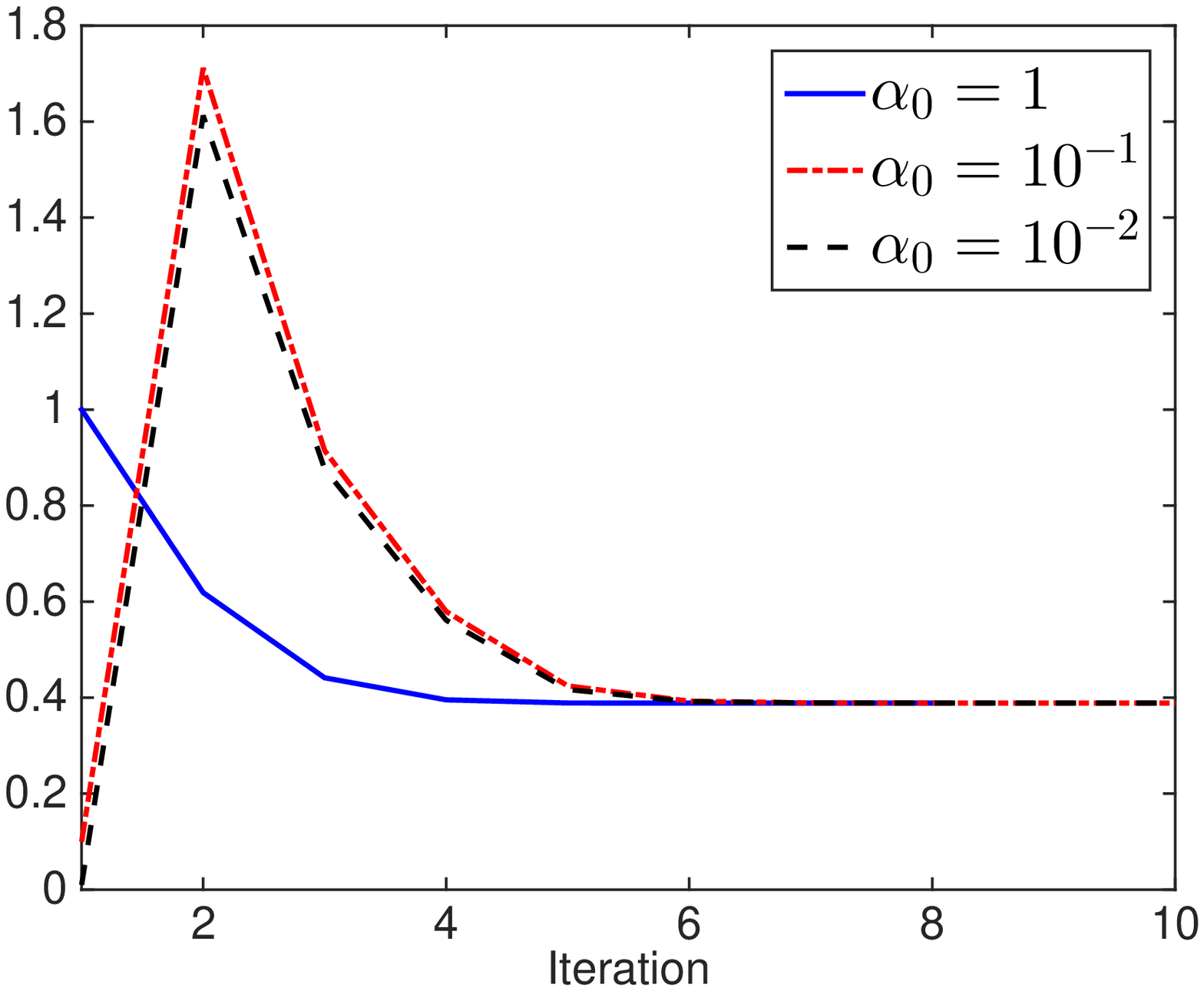}}
\end{center}    
\caption{\small \it Denoising of the cameraman-image corrupted by random-valued impulse noise. In the pAPS-algorithm we set $p_0=32$. }
\label{fig_alpha_RV}
\end{figure*}

\subsection{Locally adaptive total variation minimization}
 In this section various experiments are presented to evaluate the performance of the  LATV- and pLATV-algorithm presented in Section \ref{Sec:locallyTV}. Their performance is compared with the proposed pAPS-algorithm as well as with the SA-TV-algorithm introduced in \cite{DonHinRin} for $L^2$-TV minimization and in \cite{HinRin} for $L^1$-TV minimization. We recall that the SA-TV methods perform an approximate solution for the optimization problem in \eqref{LtauTVmultimodel2}, respectively, and compute automatically a spatially varying $\lambda$ based on a local variance estimation. However, as pointed out in \cite{DonHinRin,HinRin}, they only perform efficiently when the initial $\lambda$ is chosen sufficiently small, as we will do in our numerics. On the contrary, for the LATV- and pLATV-algorithm any positive initial $\alpha_0$ is sufficient. 
 
 For the comparison we consider four different images, shown in Fig. \ref{fig_org1}, which are all of size $256 \times 256$ pixels. In all our experiments for the SA-TV-algorithm we use $\mathcal{I}_{i,j}= \Omega_{i,j}^\omega$, see \cite{DonHinRin}, and we set the window-size to $11 \times 11$ pixels in the case of Gaussian noise and to $21 \times 21$ pixels in case of impulse noise. For the LATV- and pLATV-algorithm we use the window-size $\omega=11$, if not otherwise specified, and choose $p_0=\frac{1}{2}$.
 
\subsection{Gaussian noise removal}

\subsubsection{Dependency on the initial regularization parameter}
We start this section by investigating the stability of the SA-TV-, LATV-, and pLATV-algorithm with respect to the initial regularization parameter, i.e., $\lambda_0$ for the SA-TV-algorithm and $\alpha_0$ for the other algorithms, by denoising the cameraman-image corrupted by Gaussian white noise with standard deviation $\sigma=0.1$. In this context we also compare the difference of the pLATV-algorithm with and without using Algorithm 1 for computing automatically an initial parameter, where we set $c_{\alpha_0}=\frac{1}{5}$. The minimization problems contained in the LATV- and pLATV-algorithm are solved as described in Section \ref{Sec:AlgfImagDen}. For comparison reasons we define the values PSNR$_{\text{diff}}:= \max_{\alpha_0}$ PSNR$(\alpha_0) - \min_{\alpha_0}$ PSNR$(\alpha_0)$ and MSSIM$_{\text{diff}}:= \max_{\alpha_0}$ MSSIM$(\alpha_0) - \min_{\alpha_0}$ MSSIM$(\alpha_0)$ to measure the variation of the considered quality measures. Here PSNR$(\alpha_0)$ and MSSIM$(\alpha_0)$ are the  PSNR and MSSIM values of the reconstructions, which are obtained from the considered algorithms when the initial regularization parameter is set to $\alpha_0$. From Table \ref{table_comparison_adaptive_reg} we observe that the pLATV-algorithm with and without Algorithm 1 are more stable with respect to the initial regularization parameter than the LATV-algorithm and the SA-TV-algorithm. This stable performance of the pLATV-algorithm is reasoned by the adaptivity of the value $p$, which allows the algorithm to reach the desired residual (at least very closely) for any $\alpha_0$. As expected, the pLATV-algorithm with Algorithm 1 is even more stable with respect to $\alpha_0$ than the pLATV-algorithm alone, since, due to Algorithm 1, the difference of the actually used initial parameters in the pLATV-algorithm is rather small leading to very similar results. Note, that if $\alpha_0$ is sufficiently small, then the pLATV-algorithm with and without Algorithm 1 coincide, see Table \ref{table_comparison_adaptive_reg} for $\alpha_0\in \{10^{-2},10^{-3},10^{-4}\}$. Actually in the rest of our experiments we choose $\alpha_0$ always so small that Algorithm 1 returns the inputted $\alpha_0$.

\begin{table*}
\caption{\small \it PSNR and MSSIM of the reconstruction of the cameraman-image corrupted by Gaussian white noise with standard deviation $\sigma=0.1$ via the LATV- and pLATV-algorithm with different $\alpha_0$ and via the SA-TV-algorithm with different $\lambda_0$. In the LATV- and pLATV-algorithm we use $\mathcal{I}_{i,j}=\tilde \Omega_{i,j}$ with window-size $11 \times 11$ pixels in the interior and we set $p_0=\frac{1}{2}$.}
\label{table_comparison_adaptive_reg}
\begin{center}
\begin{tabular}{c|c|c|c|c|c|c|c|c}
\toprule
\multicolumn{1}{c|}{}  & \multicolumn{2}{c|}{SA-TV}& \multicolumn{2}{c|}{LATV}& \multicolumn{2}{c|}{pLATV} & \multicolumn{2}{c}{pLATV with Algorithm 1}\\
$\alpha_0$/$\lambda_0$   & PSNR    & MSSIM     & PSNR    & MSSIM    & PSNR    & MSSIM & PSNR    & MSSIM   \\ \hline
$1$           & 27.82 & 0.8155 & 27.44 & 0.8258 & 27.37 & 0.8260 & 27.37 & 0.8168\\ 
$10^{-1}$ & 27.77 & 0.8123 & 27.59 & 0.8211  & 27.41 & 0.8189 & 27.38 & 0.8166 \\
$10^{-2}$ & 27.71 & 0.8107 & 27.39 & 0.8167  & 27.37 & 0.8167 & 27.37 & 0.8167\\
$10^{-3}$ & 27.42 & 0.8007 & 27.40 & 0.8167  & 27.38 & 0.8168 & 27.38 & 0.8168\\ 
$10^{-4}$ & 27.56 & 0.7792 & 27.40 & 0.8168  & 27.38 & 0.8168 & 27.38 & 0.8168\\ \hline
\multicolumn{1}{c|}{PSNR$_{\text{diff}}$} &  \multicolumn{2}{|c|}{$0.39646$} & \multicolumn{2}{|c|}{$0.20257$}& \multicolumn{2}{|c}{$0.044473$} & \multicolumn{2}{|c}{$0.012704$}\\
\multicolumn{1}{c|}{MSSIM$_{\text{diff}}$} &  \multicolumn{2}{|c|}{$0.036322$} & \multicolumn{2}{|c|}{$0.0091312$} & \multicolumn{2}{|c}{$0.0092963$} & \multicolumn{2}{|c}{$0.00019843$}\\
\end{tabular}

\end{center}
\end{table*}

\subsubsection{Dependency on the local window}
In Table \ref{table_comparison_qLATV_Omega} we report on the performance-tests of the pLATV-algorithm with respect to the chosen type of window, i.e., $\mathcal{I}_{i,j}=\tilde{\Omega}_{i,j}^\omega$ and $\mathcal{I}_{i,j}={\Omega}_{i,j}^\omega$. We observe that independently which type of window is used the algorithm finds nearly the same reconstruction. This may be attributed to the fact that the windows in the interior are the same for both types of window. Nevertheless, the boundaries are treated differently, which leads to different theoretical results, but seems not to have significant influence on the practical behavior. A similar behavior is observed for the LATV-algorithm, as the LATV- and pLATV-algorithm return nearly the same reconstructions as observed below in Table \ref{table_comparison_adaptive}. Since for both types of windows nearly the same results are obtained, in the rest of our experiments we limit ourselves to always set $\mathcal{I}_{i,j}=\tilde \Omega_{i,j}^\omega$ in the LATV- and pLATV-algorithm.


\begin{table*}
\caption{\small \it PSNR and MSSIM of the reconstruction of different images corrupted by Gaussian white noise with standard deviation $\sigma$ via the pLATV-algorithm with $\alpha_0=10^{-4}$.}
\label{table_comparison_qLATV_Omega}
\begin{center}
\begin{tabular}{c|c|c|c|c|c} 
\toprule
\multicolumn{2}{c|}{}  & \multicolumn{2}{c|}{pLATV with $\mathcal{I}=\tilde{\Omega}$}& \multicolumn{2}{c}{pLATV with $\mathcal{I}={\Omega}$} \\
Image & $\sigma$ & PSNR & MSSIM& PSNR& MSSIM  \\ \hline
cameraman    & $0.3$      & 22.47  & 0.6807 & 22.47 & 0.6809  \\ 
                       & $0.1$      & 27.38  & 0.8168 & 27.37 & 0.8165   \\
                       & $0.05$    & 30.91 & 0.8875 & 30.92 & 0.8875   \\
                       & $0.01$    & 40.69 & 0.9735 & 40.68 & 0.9735  \\  \hline
lena    & $0.3$      & 22.31  & 0.5947 & 22.30 & 0.5950  \\ 
           & $0.1$      & 26.85  & 0.7447 & 26.87 & 0.7448   \\
           & $0.05$    & 30.15 & 0.8301 & 30.15 & 0.8300   \\
           & $0.01$    & 39.69 & 0.9699 & 39.68 & 0.9699  \\  \hline
\end{tabular}

\end{center}
\end{table*}

Next, we test the pLATV-algorithm for different values of the window-size varying from 3 to 15. Fig. \ref{fig_window_test} shows the PSNR and MSSIM of the restoration of the cameraman-image degraded by different types of noise (i.e., Gaussian noise with $\sigma=0.3$ or $\sigma=0.1$, salt-and-pepper noise with $r_1=r_2=0.3$ or $r_1=r_2=0.1$, or random-valued impulse noise with $r=0.3$ or $r=0.1$), where the pLATV-algorithm with $\alpha_0=10^{-2}$ and $p_0=1/2$ is used. We observe that the PSNR and MSSIM are varying only slightly with respect to changing window-size. However, in the case of Gaussian noise elimination the PSNR and MSSIM increases very slightly with increasing window-size, while in the case of impulse noise contamination such a behavior cannot be observed. In Fig. \ref{fig_window_test} we also specify the number of iterations needed till termination of the algorithm. From this we observe that a larger window-size results in most experiments in more iterations.


\graphicspath{{./graphics/}}
\begin{figure}[htbp!]
\begin{center}
\hspace{0cm}
\includegraphics[height=4.2cm]{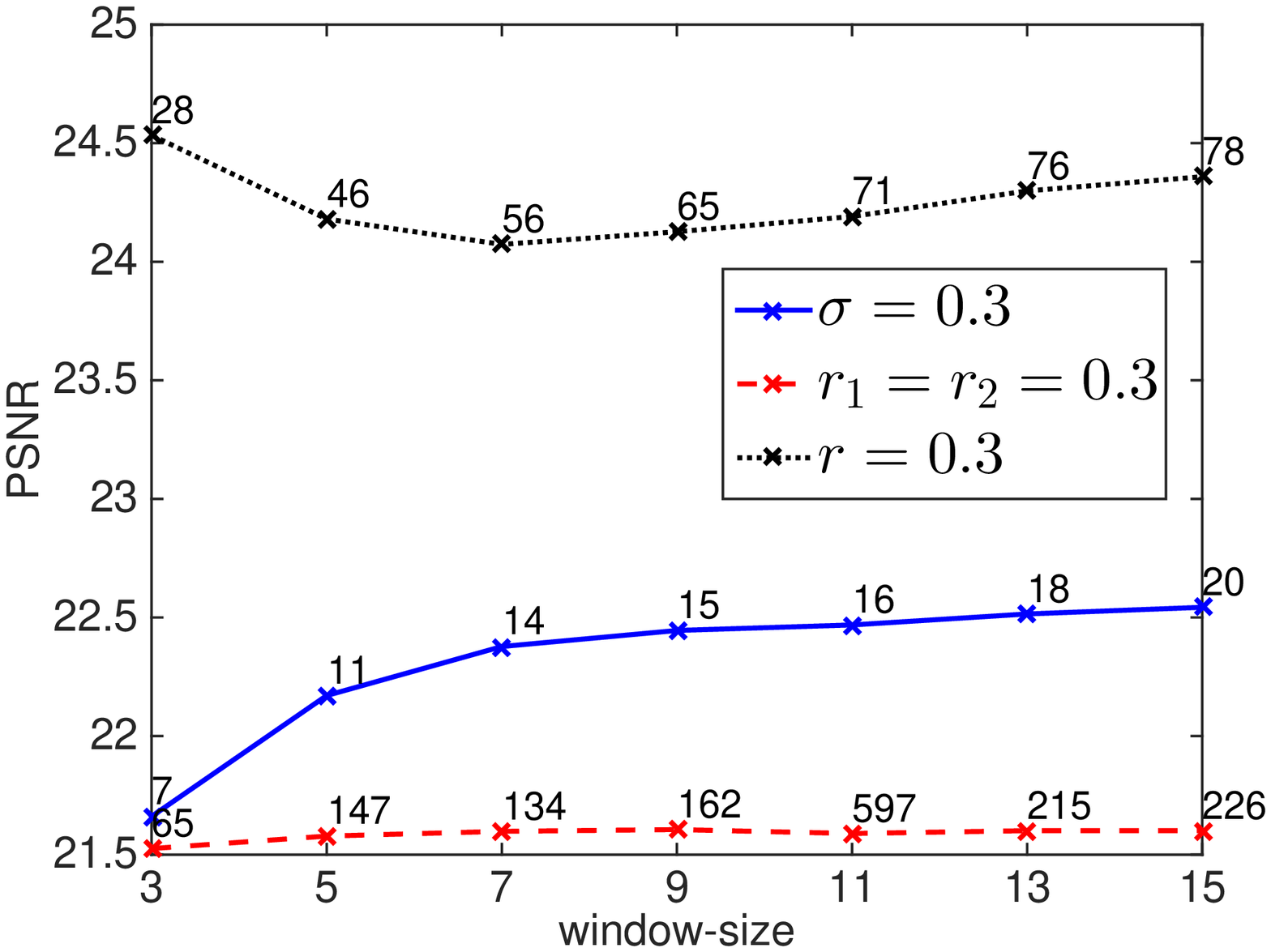}
\includegraphics[height=4.2cm]{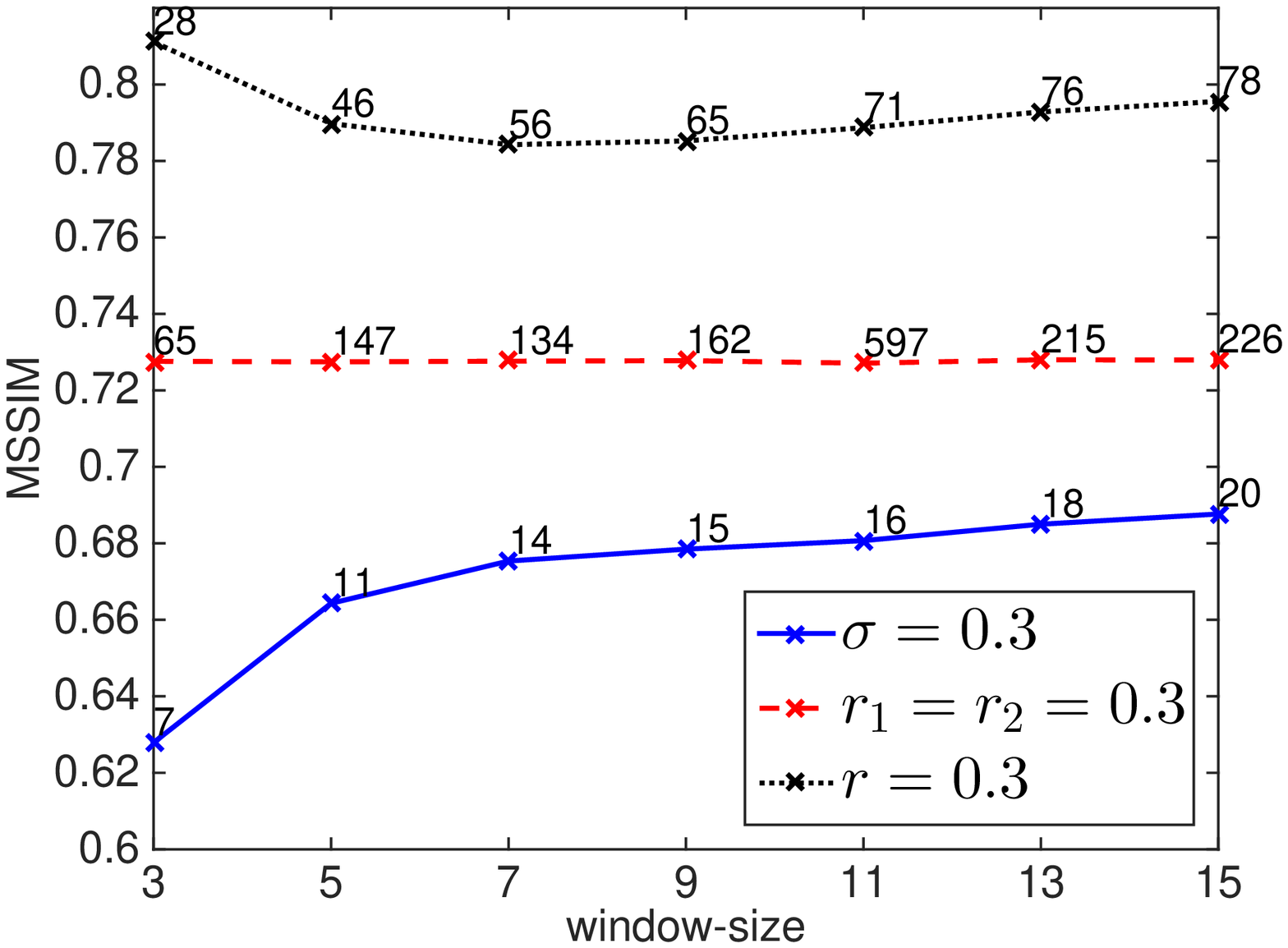}
\includegraphics[height=4.2cm]{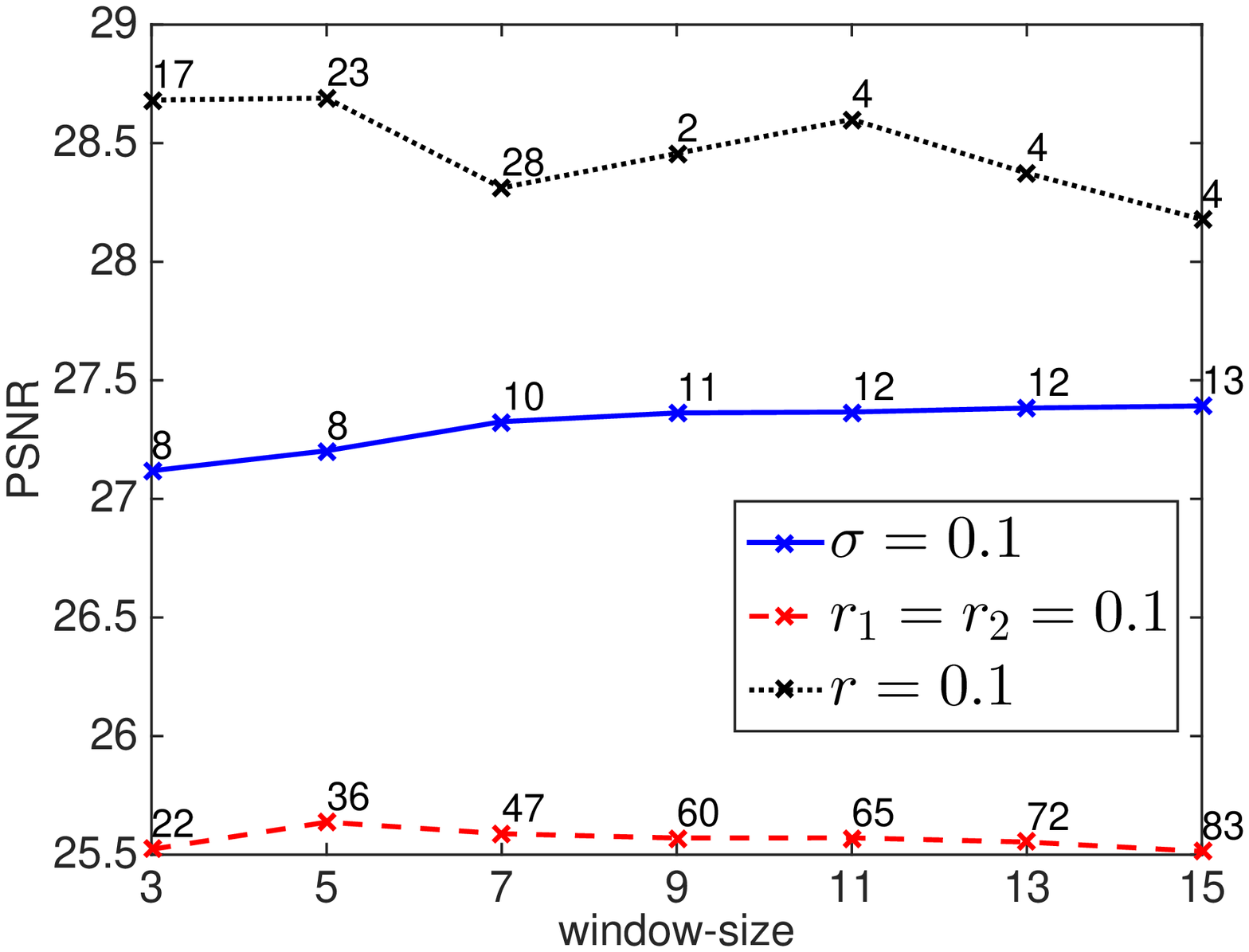}
\includegraphics[height=4.2cm]{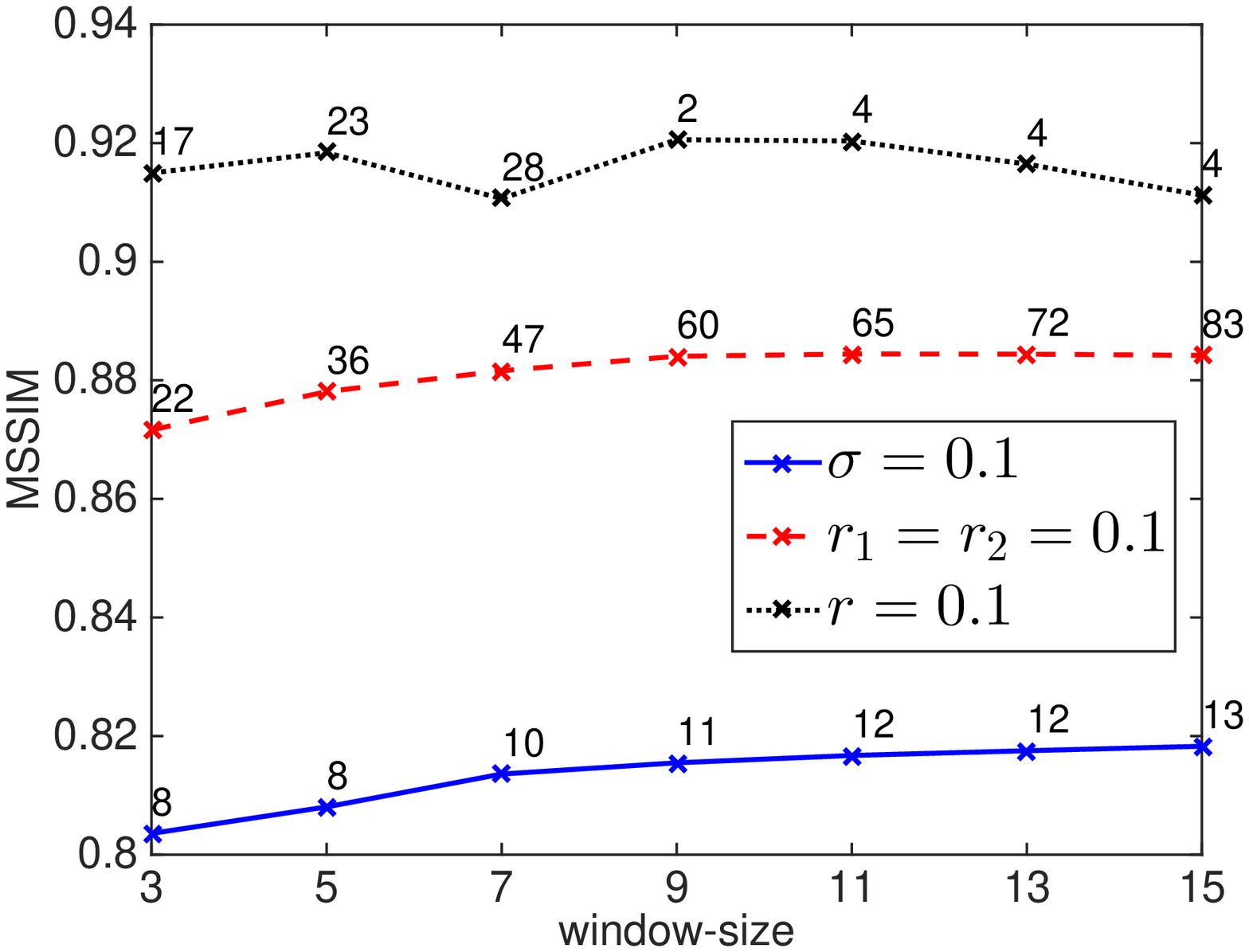}
\end{center}    
\caption{\small \it Restoration of the cameraman-image corrupted by different types of noise via the pLATV-method with different window-sizes. }
\label{fig_window_test}
\end{figure}

\subsubsection{Homogeneous noise}

Now we test the algorithms for different images corrupted by Gaussian noise with zero mean and different standard deviations $\sigma$, i.e., $\sigma \in \{0.3, 0.1, 0.05, 0.01\}$. The initial regularization parameter $\alpha_0$ is set to $10^{-4}$ in the pAPS-, LATV-, and pLATV-algorithm. In the SA-TV-algorithm we choose $\lambda_0=10^{-4}$, which seems sufficiently small. From Table \ref{table_comparison_adaptive} we observe that all considered algorithms behave very similar. However, for $\sigma\in \{ 0.1, 0.05, 0.01\}$ the SA-TV-algorithm most of the times performs best with respect to PSNR and MSSIM, while sometimes the LATV- and pLATV-algorithm have larger PSNR and MSSIM. That is, looking at these quality measures a locally varying regularization weight is preferred  to a scalar one, as long as $\sigma$  is sufficiently small. In Fig. \ref{fig_cam_G01} we present the reconstructions obtained via the considered algorithms and we observe that the LATV- and pLATV-algorithm generate visually the best results, while the result of the SA-TV-algorithm seems in some parts over-smoothed. For example, the very left tower in the SA-TV-reconstruction is completely vanished. This object is in the other restorations still visible. For large standard deviations, i.e. $\sigma=0.3$, we observe from Table \ref{table_comparison_adaptive} that the SA-TV method performs clearly worse than the other methods, while the pAPS-algorithm usually has larger PSNR  and the LATV- and pLATV-algorithm have larger MSSIM. Hence, whenever the noise-level is too large and details are considerably lost due to noise, the locally adaptive methods are not able to improve the restoration quality.

\begin{table*}
\scriptsize
\vspace{2mm}
\caption{\small \it PSNR- and MSSIM-values of the reconstruction of different images corrupted by Gaussian white noise with standard deviation $\sigma$ via pAPS-, LATV-, pLATV-algorithm with $\alpha_0=10^{-4}$ and SA-TV-algorithm with $\lambda_0=10^{-4}$. In the LATV- and pLATV-algorithm we use $\mathcal{I}_{i,j}=\tilde \Omega_{i,j}$ with window-size $11 \times 11$ pixels in the interior and we set $p_0=\frac{1}{2}$.}
\label{table_comparison_adaptive}
\begin{center}
\begin{tabular}{c|c|c|c|c|c|c|c|c|c}
\toprule
\multicolumn{2}{c|}{}  &  \multicolumn{2}{c|}{pAPS (scalar $\alpha$)} & \multicolumn{2}{c|}{SA-TV}& \multicolumn{2}{c|}{LATV}& \multicolumn{2}{c}{pLATV} \\
Image          & $\sigma$ & PSNR    & MSSIM     & PSNR    & MSSIM    & PSNR    & MSSIM    & PSNR    & MSSIM \\ \hline
phantom     
                    & $0.3$     & $19.84$ & $0.7989$ & $19.83$ & $0.8319$ & ${\bf 20.35}$ & $0.8411$ & $20.31$ & ${\bf 0.8432}$ \\
					&   $0.1$     & $28.97$ & $0.9644$ & $28.97$ & $0.9648$ & ${\bf 29.50}$ & ${\bf 0.9680}$ & ${\bf 29.50}$ & ${\bf 0.9680}$ \\
					&   $0.05$   & $34.97$ & $0.9887$ & $33.77$ & $0.9867$ & ${\bf 35.51}$ & ${\bf 0.9882}$ & ${\bf 35.51}$ & ${\bf 0.9882}$ \\
					&   $0.01$   & $48.88$ & ${\bf 0.9994}$ & $47.38$ & $0.9987$ & $49.46$ & $0.9993$ & ${\bf 49.53}$ & $0.9993$ \\ \hline
cameraman 
                    & $0.3$     & ${\bf 22.62}$ & ${\bf 0.6911}$ & $22.03$ & $0.6806$ & $22.47$ & $0.6807$ & $22.47$ & $0.6807$\\
					&   $0.1$     & $27.31$ & $0.8109$ & ${\bf 27.56}$ & $0.7792$ & $27.40$ & $0.8168$ & $27.38$ & ${\bf 0.8168}$ \\
					&   $0.05$   & $30.75$ & $0.8788$ & ${\bf 31.60}$ & ${\bf 0.8929}$ & $30.95$ & $0.8878$ & $30.91$ & $0.8875$ \\
					&   $0.01$   & $40.51$ & $0.9731$ & ${\bf 40.92}$ & $0.9649$ & $40.73$ & ${\bf 0.9737}$ & $40.69$ & $0.9735$ \\ \hline	
barbara		
                    &   $0.3$     & ${\bf 21.22}$ & $0.5022$ & $19.78$ & $0.4470$ & $21.05$ & ${\bf 0.5032}$ & $21.05$ & ${\bf 0.5032}$\\
					&   $0.1$     & $24.70$ & $0.7145$ & ${\bf 25.53}$ & ${\bf 0.7292}$ & $24.93$ & $0.7278$ & $24.93$ & $0.7278$ \\
					&   $0.05$   & $28.22$ & $0.8514$ & ${\bf 29.94}$ & ${\bf 0.8801}$ & $28.49$ & $0.8584$ & $28.49$ & $0.8584$\\
					&   $0.01$   & $38.91$ & $0.9791$ & ${\bf 40.56}$ & ${\bf 0.9809}$ & $39.08$ & $0.9788$ & $39.08$ & $0.9788$ \\ \hline
lena     		
                    &   $0.3$     & ${\bf 22.42}$ & $0.5930$ & $21.09$ & $0.5474$ & $22.33$ & ${\bf 0.5951}$ & $22.31$ & $0.5947$ \\
					&   $0.1$     & $26.84$ & $0.7393$ & ${\bf 27.31}$ & ${\bf 0.7528}$ & $26.85$ & $0.7447$ & $26.85$ & $0.7447$ \\
					&   $0.05$   & $30.06$ & $0.8261$ & ${\bf 30.92}$ & ${\bf 0.8385}$ & $30.16$ & $0.8307$ & $30.15$ & $0.8301$ \\
					&   $0.01$   & $39.62$ &$0.9685$  & ${\bf 39.81}$ & $0.9660$ & $39.76$ & ${\bf 0.9708}$ & $39.69$ & $0.9699$ \\ \hline
\end{tabular}

\end{center}
\end{table*}

\graphicspath{{./graphics/}}
\begin{figure}[htbp!]
\begin{center}
\hspace{0cm}
    \subfigure[pAPS (PSNR: $27.31$; MSSIM: $0.8109$)]{\includegraphics[height=4.1cm]{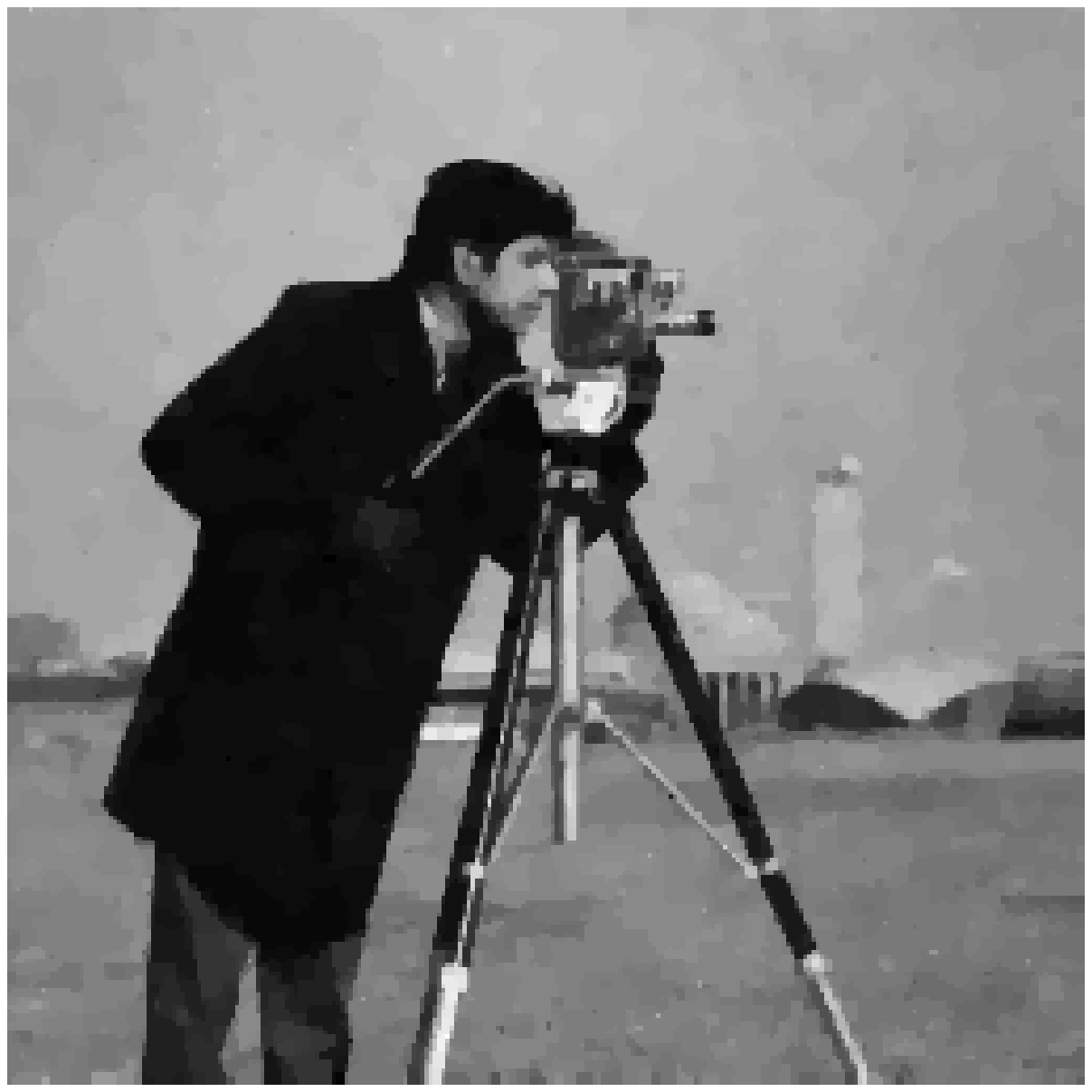}}
    \subfigure[SA-TV (PSNR: $27.56$; MSSIM: $0.7792$)]{\includegraphics[height=4.1cm]{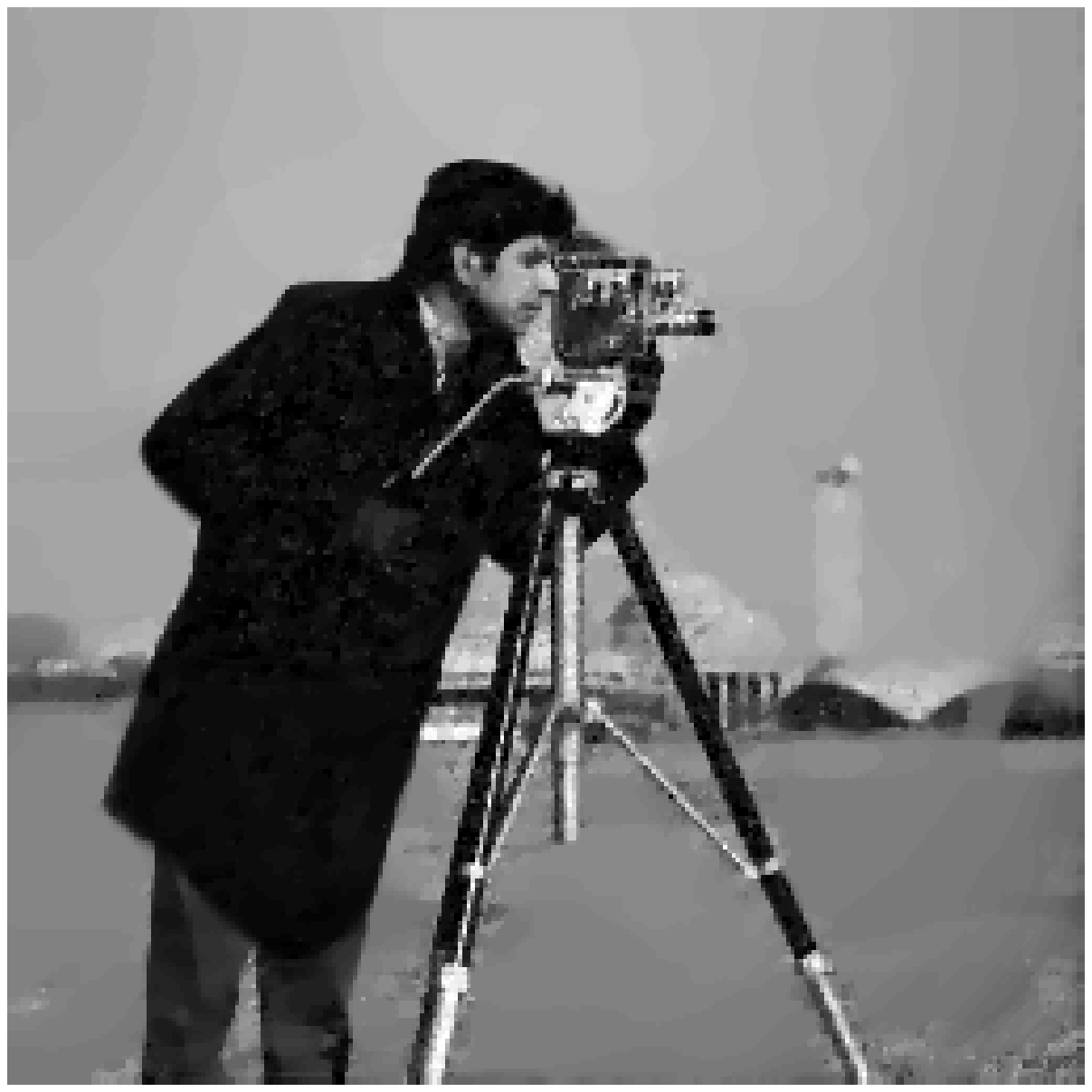}}
    \subfigure[ LATV (PSNR: $27.40$; MSSIM: $0.8170$)]{\includegraphics[height=4.1cm]{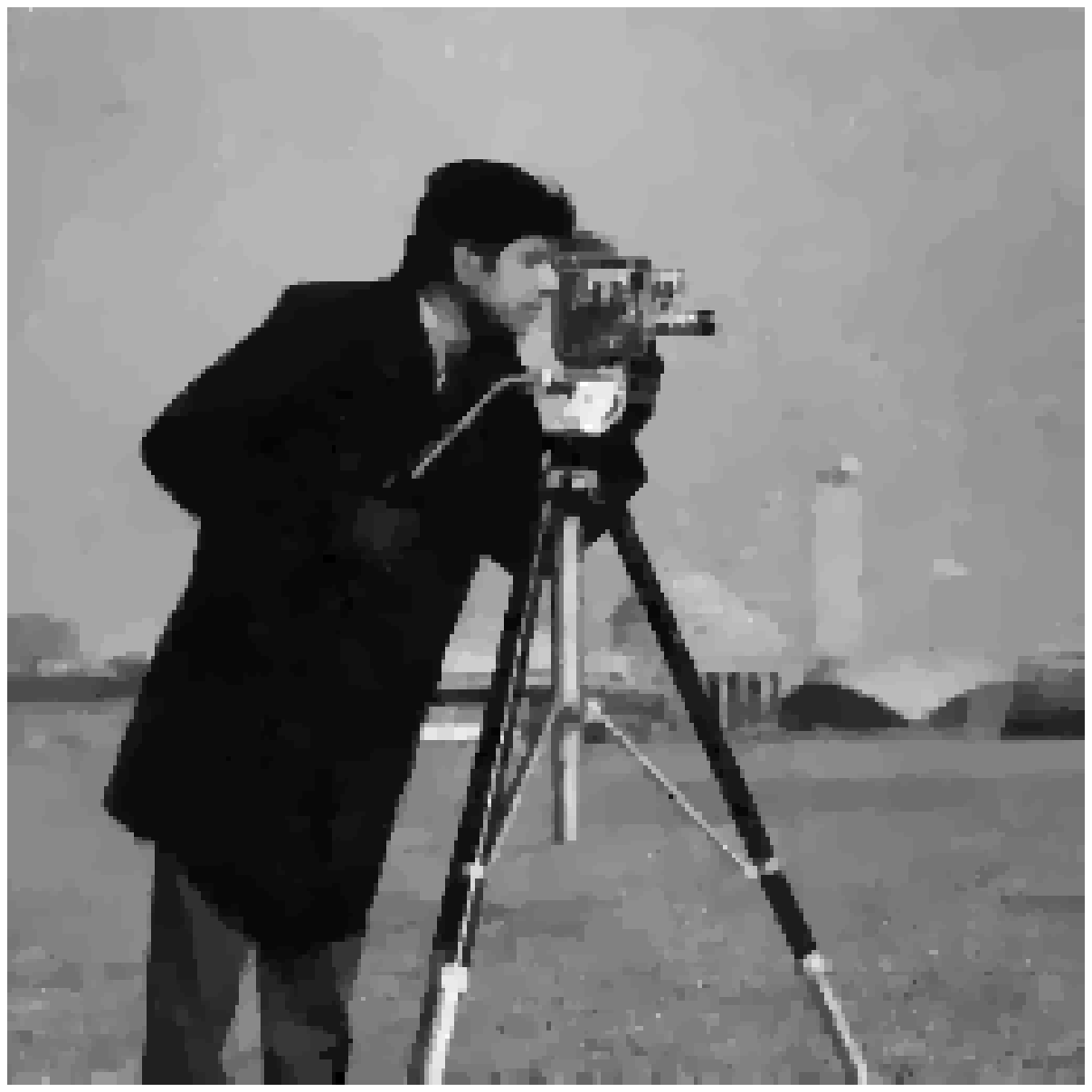}}
    \subfigure[pLATV (PSNR: $27.38$; MSSIM: $0.8171$)]{\includegraphics[height=4.1cm]{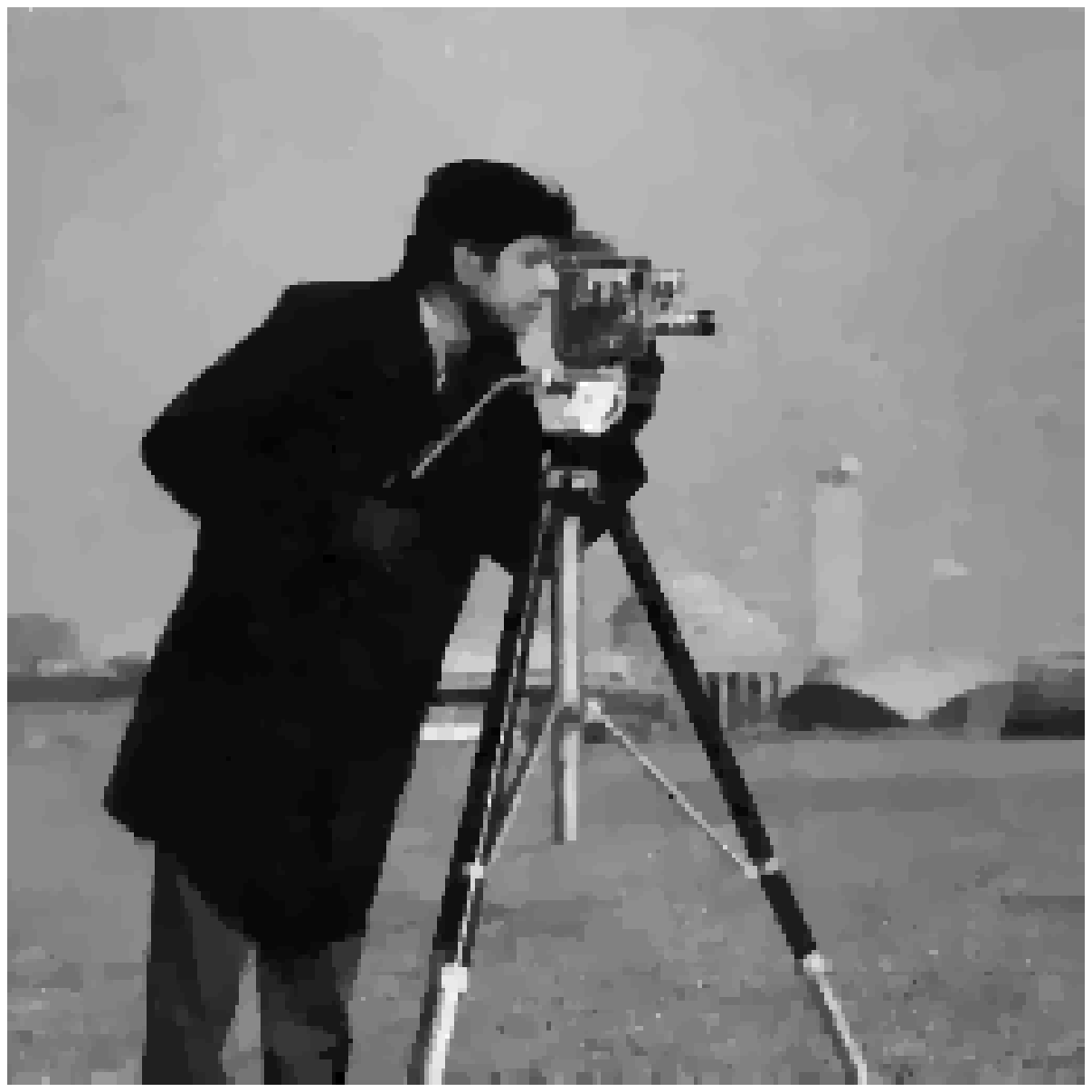}}
\end{center}    
\caption{\small \it Reconstruction of the cameraman-image corrupted by Gaussian white noise with $\sigma=0.1$.} 
\label{fig_cam_G01}
\end{figure} 
 

\subsubsection{Non-homogeneous noise}

For this experiment we consider the cameraman-image degraded by Gaussian white noise with variance $\sigma^2 = 0.0025$ in the whole domain $\Omega$ except a rather small area (highlighted in red in Fig. \ref{fig_inhomo_a}), denoted by $\tilde{\Omega}$, where the variance is 6 times larger, i.e., $\sigma^2=0.015$ in this part. Since the noise-level is in this application not homogeneous, the pLATV-algorithm presented in Section \ref{Sec:locallyTV} has to be adjusted to this situation accordingly. This can be done by making $\nu_\tau$ (here $\tau=2$) locally dependent and we write $\nu_\tau=\nu_\tau(\hat{u})(x)$ to stress the dependency on the true image $\hat{u}$ and on the location $x\in \Omega$ in the image. In particular, for our experiment we set $\nu_2=0.015$ in $\tilde \Omega$, while $\nu_2=0.0025$ in $\Omega\setminus\tilde{\Omega}$. Since $\nu_\tau$ is now varying, we also have to adjust the definition of $\Bt$ and $B_\tau$ to
$$
\Bt(u):= \int_\Omega \nu_\tau(u)(x) \ \diff x \ \text{and} \ B_\tau(u^h):= \sum_{x\in {\Omega^h}} \nu_\tau(u^h)(x),
$$
respectively for the continuous and discrete setting. Making these adaptations allows us to apply the pLATV-algorithm as well as the pAPS-algorithm to the application of removing non-uniform noise. 

The reconstructions obtained by the pAPS-algorithm (with $p_0=32$ and $\alpha_0=10^{-2}$) and by the pLATV-algorithm (with $p_0=1/2$ and $\alpha_0=10^{-2}$) are shown in Figs. \ref{fig_inhomo_b} and \ref{fig_inhomo_c} respectively. Due to the adaptive choice of $\alpha$, see Fig. \ref{fig_inhomo_d} where light colors indicate a large value, the pLATV-algorithm is able to remove all the noise considerably, while the pAPS-algorithm returns a restoration, which still retains noise in $\tilde{\Omega}$.

\graphicspath{{./graphics/}}
\begin{figure}[htbp!]
\begin{center}
\hspace{0cm}
    \subfigure[noisy observation \label{fig_inhomo_a}]{\includegraphics[height=4.1cm]{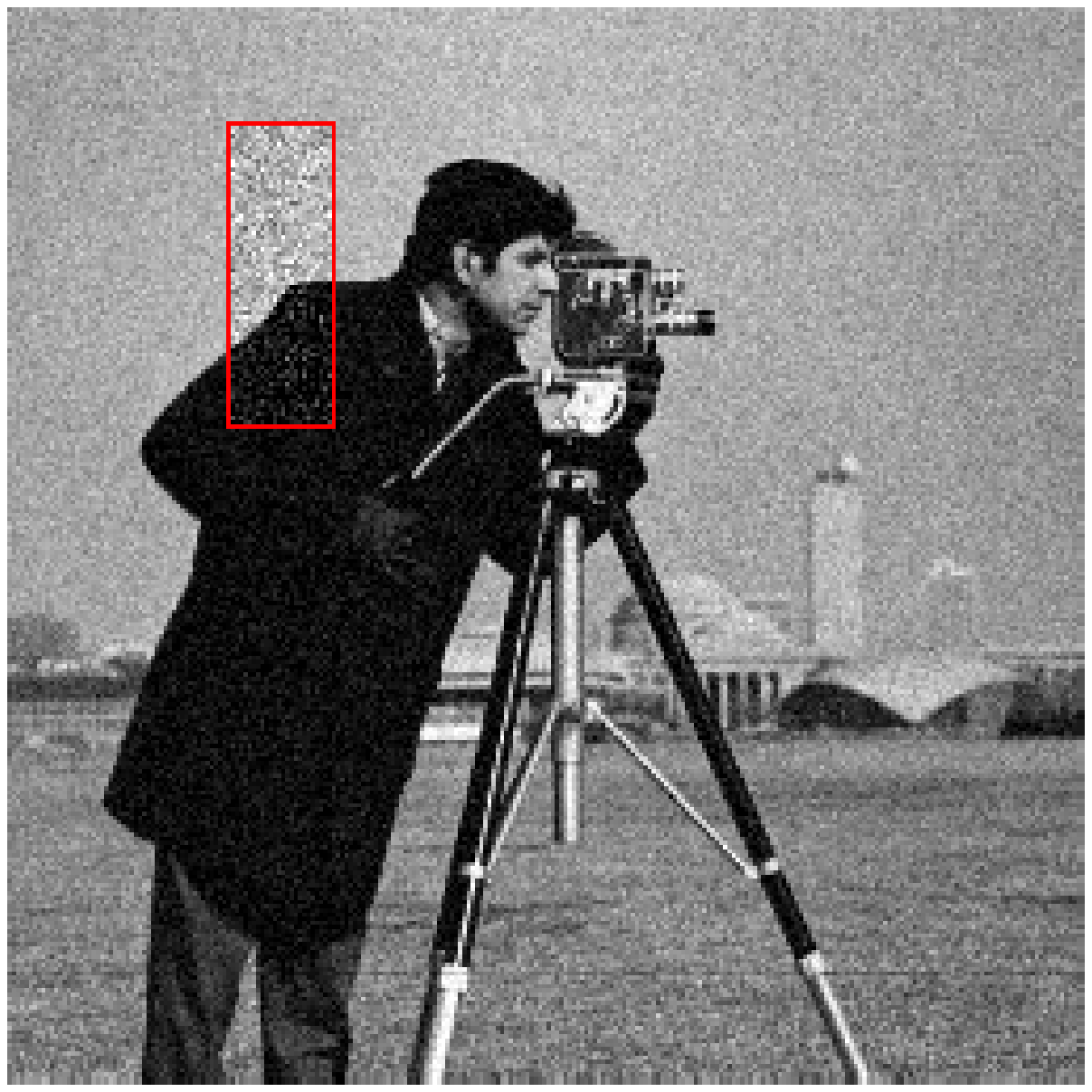}}
    \subfigure[pAPS (PSNR: $30.01$; MSSIM: $0.8551$)\label{fig_inhomo_b}]{\includegraphics[height=4.1cm]{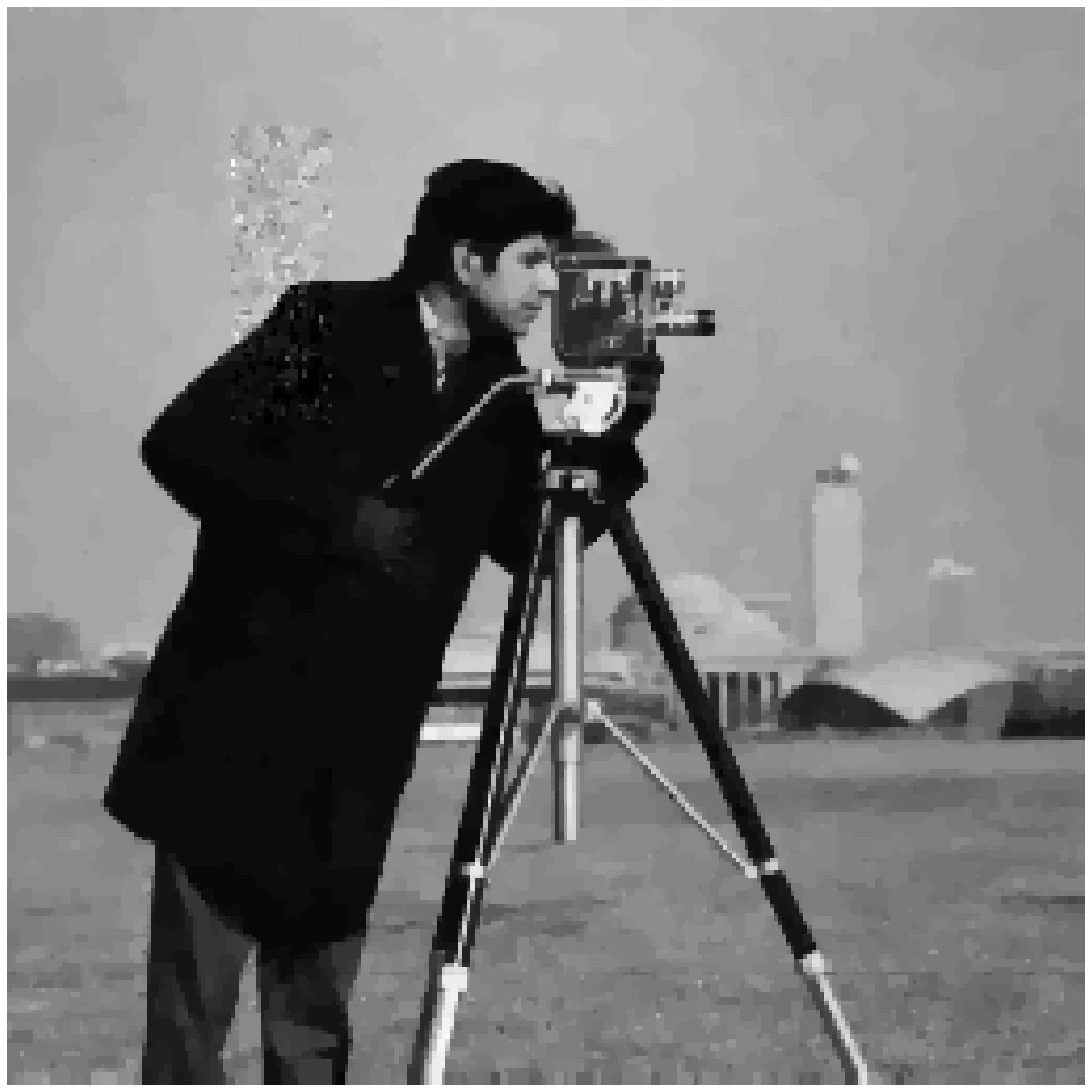}}
    \subfigure[ pLATV (PSNR: $30.85$; MSSIM: $0.8859$)\label{fig_inhomo_c}]{\includegraphics[height=4.1cm]{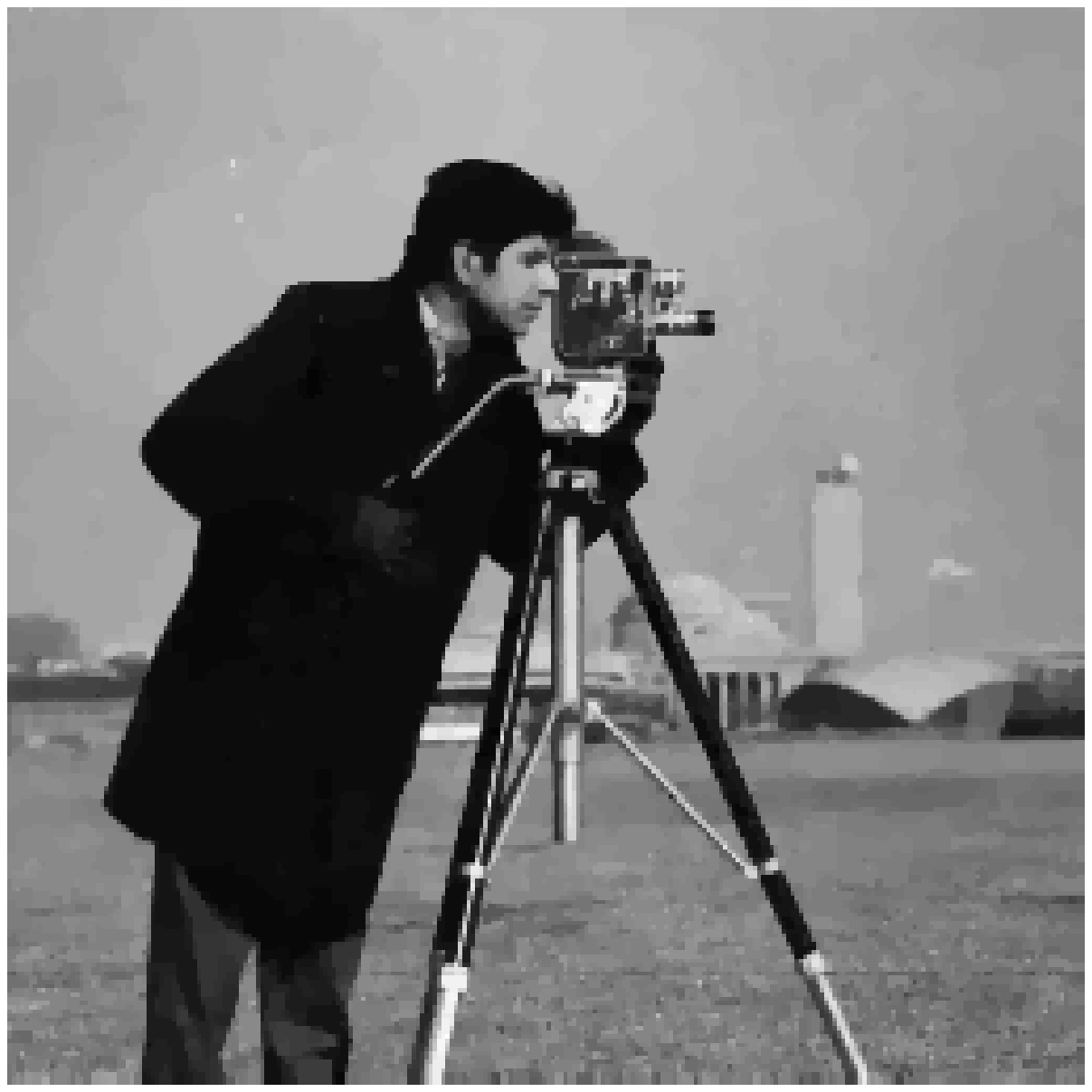}}
    \subfigure[Locally varying $\alpha$\label{fig_inhomo_d}]{\includegraphics[height=4.1cm]{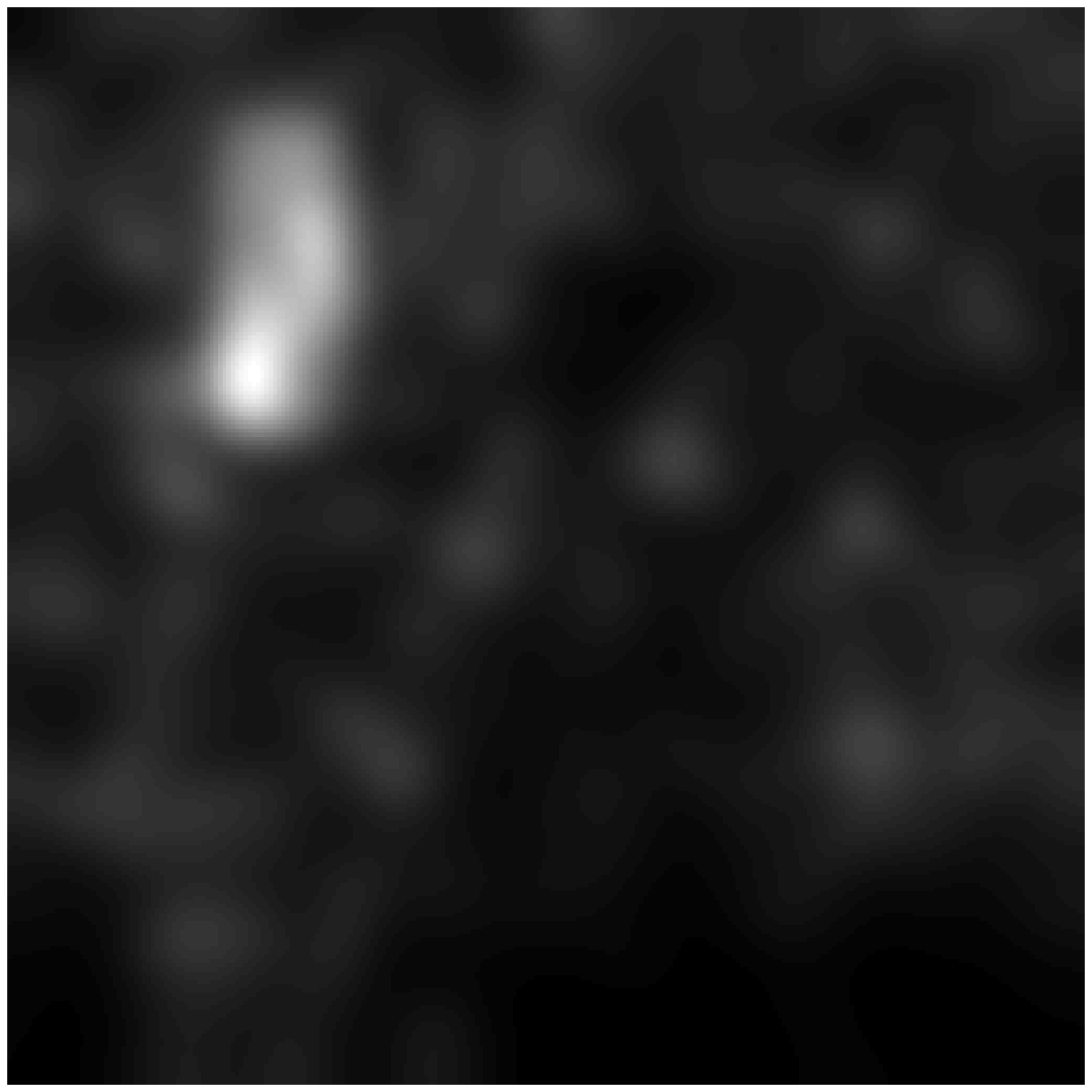}}
\end{center}    
\caption{\small \it Reconstruction of the cameraman-image corrupted by Gaussian white noise with $\sigma^2=0.0025$ except in the in (a) highlighted area where $\sigma^2=0.015$.} 
\label{fig_inhomo}
\end{figure}

\subsection{Deblurring and Gaussian noise removal}

The performance of the algorithms for restoring images corrupted by Gaussian blur with blurring kernel of size $5\times 5$ pixels and standard deviation $10$ and additive Gaussian noise with standard deviation $\sigma$ is reported in Table \ref{table_comparison_adaptive_deblurring}. Here we observe that the LATV- as well as the pLATV-algorithm outperform the SA-TV-algorithm for nearly any example. This observation is also clearly visible in Fig. \ref{fig_cam_Gb01}, where the SA-TV-algorithm produces a still blurred output. The pAPS-algorithm generates very similar reconstructions as the LATV- and pLATV-algorithm, which is also reflected by similar PSNR and MSSIM. Similarly as before, the pAPS-algorithm performs best when $\sigma=0.3$, while for smaller $\sigma$ the LATV-algorithm has always the best PSNR.

\begin{table*}
\scriptsize
\caption{\small \it PSNR- and MSSIM-values of the reconstruction of different images corrupted by Gaussian blur (blurring kernel of size $5 \times 5$ pixels with standard deviation 10) and additive Gaussian noise with standard deviation $\sigma$  via pAPS-, LATV-, pLATV-algorithm with $\alpha_0=10^{-2}$ and SA-TV-algorithm with $\lambda_0=10^{-4}$. In the LATV- and pLATV-algorithm we use $\mathcal{I}_{i,j}=\tilde \Omega_{i,j}$ with window-size $11 \times 11$ pixels in the interior and set $p_0=\frac{1}{2}$.}
\label{table_comparison_adaptive_deblurring}
\begin{center}
\begin{tabular}{c|c|c|c|c|c|c|c|c|c}
\toprule
\multicolumn{2}{c|}{}  &  \multicolumn{2}{c|}{pAPS (scalar $\alpha$)} & \multicolumn{2}{c|}{SA-TV}& \multicolumn{2}{c|}{LATV}& \multicolumn{2}{c}{pLATV} \\
Image          & $\sigma$ & PSNR    & MSSIM     & PSNR    & MSSIM    & PSNR    & MSSIM    & PSNR    & MSSIM \\ \hline
phantom     
                    & $0.3$ & $16.23$ & $0.6958$ & 15.65 & 0.6632 & {\bf 16.32} &0.6995 &  16.31 &  {\bf 0.6997} \\  
					&   $0.1$     & 17.86 & 0.7775 & 17.31 & 0.7442 & {\bf 17.98} & 0.7914 & 17.97 & {\bf 0.7923} \\
					&   $0.05$   & 18.89 & 0.7784 & 19.20 & 0.8193 & {\bf 19.49} & {\bf 0.8343} & 19.39 & 0.8279 \\ \hline
cameraman 
                    & $0.3$ & {\bf 21.04} & {\bf 0.6410} &  19.26 & 0.5990 & 20.86 & 0.6272  & 20.86 & 0.6272\\
					& $0.1$ & 23.11 & {\bf 0.7175} & 22.64 & 0.6957 & {\bf 23.17} & 0.7157 & 23.15 & 0.7156 \\
					& $0.05$ & 24.14 & 0.7562 & 23.75 & 0.7393 &  {\bf 24.22} & {\bf 0.7573} & 24.21 & 0.7570\\ \hline
barbara		
                    & $0.3$     & {\bf 20.58} & {\bf 0.4556} & 18.95 & 0.4314 & 20.42 & 0.4517 & 20.42 & 0.4515 \\
					&   $0.1$     &  {\bf 22.16} & 0.5589 & 22.09 & {\bf 0.5687} & {\bf 22.16} & 0.5597 & {\bf 22.16} & 0.5597 \\
					&   $0.05$   & 22.87 & 0.6245 & 22.88 & 0.6268 & {\bf 22.92} & {\bf 0.6273} & 22.90 & 0.6255 \\ \hline
lena     		
                    & $0.3$     & {\bf 21.75} & {\bf 0.5542} & 20.10& 0.5278 & 21.71 & 0.5529 & 21.69 & 0.5528 \\
					&   $0.1$     & 24.44 & 0.6496 & 24.39 & {\bf 0.6574} & {\bf 24.50} & 0.6514 & 24.49 & 0.6510 \\
					&   $0.05$   & 25.83 & 0.7047 & 25.81 & {\bf 0.7091} & {\bf 25.92} & 0.7066 & 25.91 & 0.7062 \\ \hline
\end{tabular}
\end{center}
\end{table*}

\graphicspath{{./graphics/}}
\begin{figure}[htbp!]
\begin{center}
\hspace{0cm}
    \subfigure[pAPS (PSNR: $23.11$; MSSIM: $0.7175$)]{\includegraphics[height=4.1cm]{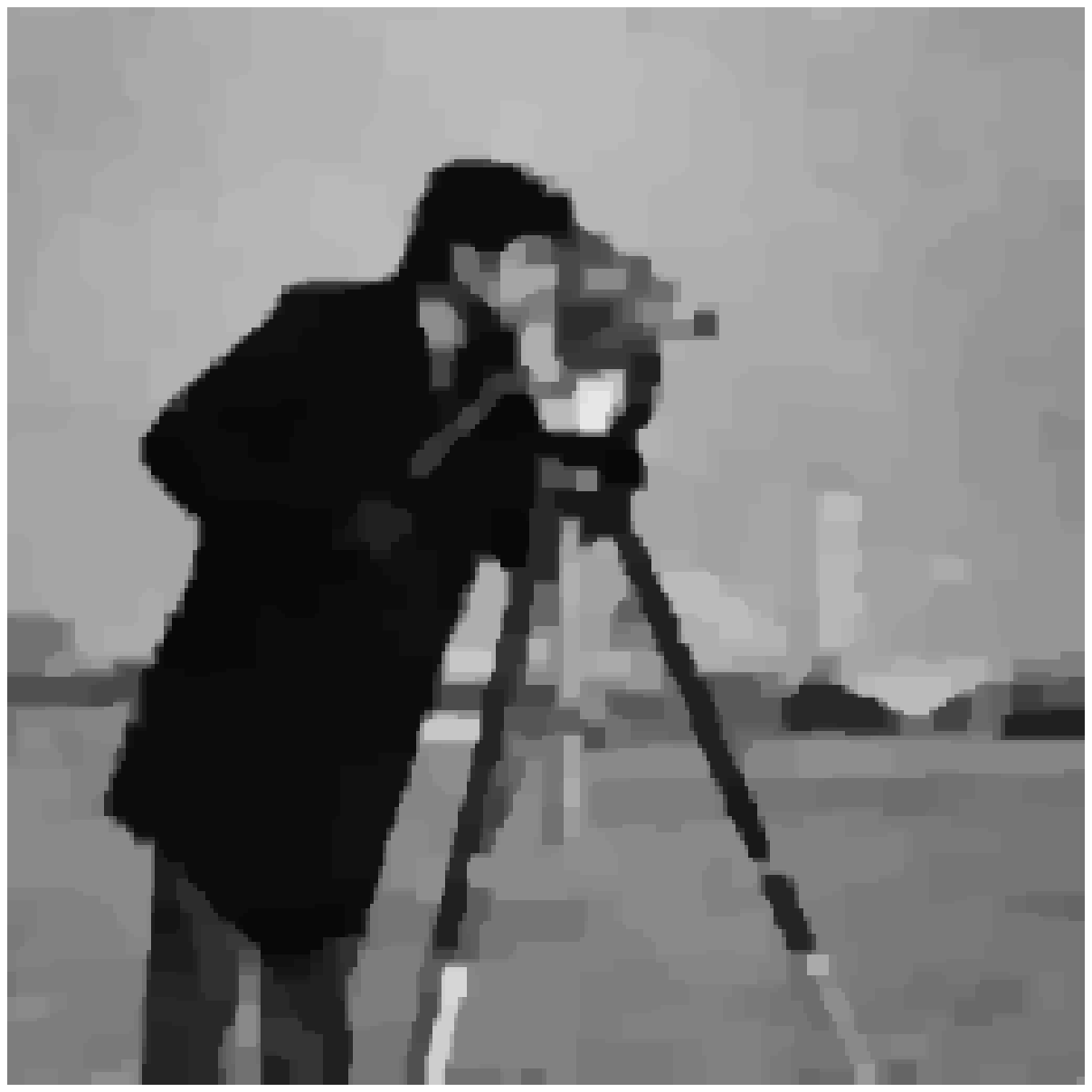}}
    \subfigure[SA-TV (PSNR: $22.64$; MSSIM: $0.6957$)]{\includegraphics[height=4.1cm]{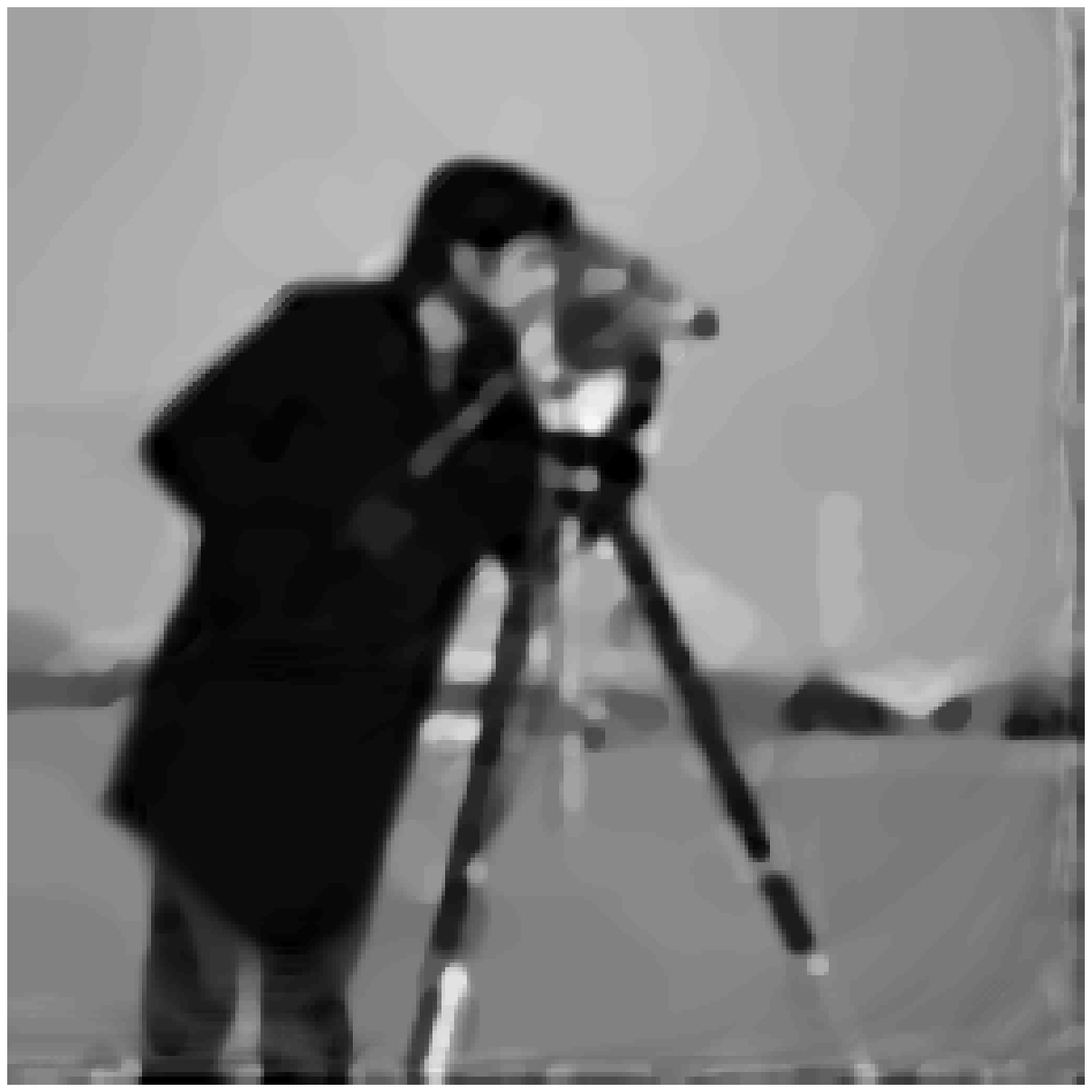}}
    \subfigure[ LATV (PSNR: $23.17$; MSSIM: $0.7160$)]{\includegraphics[height=4.1cm]{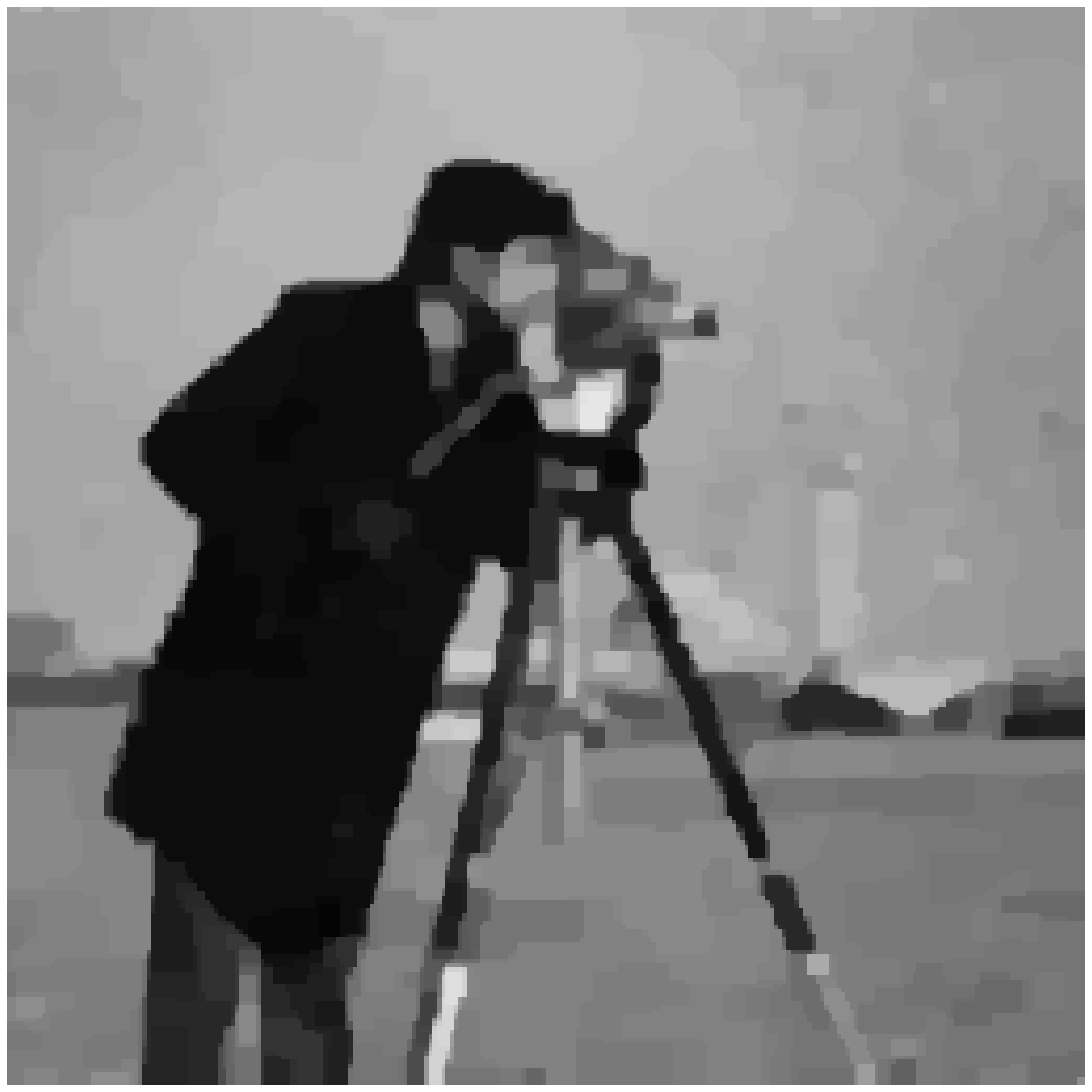}}
    \subfigure[pLATV (PSNR: $23.15$; MSSIM: $0.7160$)]{\includegraphics[height=4.1cm]{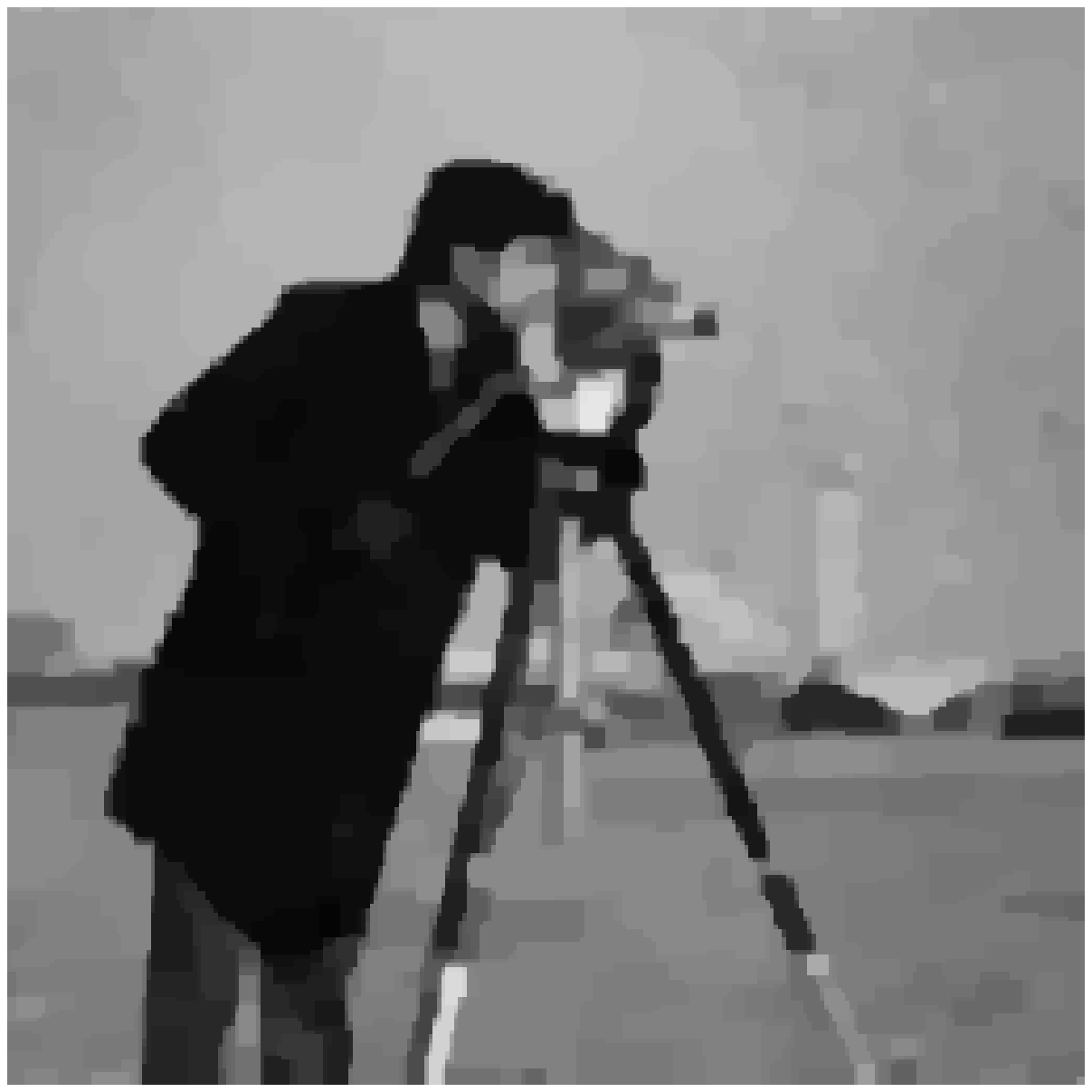}}
\end{center}    
\caption{\small \it Reconstruction of the cameraman-image corrupted by Gaussian white noise with $\sigma=0.1$ and Gaussian blur.} 
\label{fig_cam_Gb01}
\end{figure}

\subsection{Impulse noise removal} 

Since it turns out that the LATV- and pLATV-algorithm produce nearly the same output, here, we compare only our pAPS- and pLATV-algorithm for $L^1$-TV minimization, with the SA-TV method introduced in \cite{HinRin}, where a semi-smooth Newton method is used to generate an estimate of the minimizer of \eqref{LtauTVmultimodel2}. For the sake of a fair comparison an approximate solution of the minimization problem in the pLATV-algorithm is solved by the semi-smooth Newton method described in Appendix \ref{Appendix:Semismooth}. For the SA-TV method we use the parameters suggested in \cite{HinRin} and hence $\mathcal{I}_{i,j}=\Omega_{i,j}^\omega$. Moreover, we set $\lambda_0=0.2$ in our experiments which seems sufficiently small. In Table \ref{table_comparison_sap1} and Table \ref{table_comparison_rv1} we report on the results obtained by the pAPS-, SA-TV-, and pLATV-algorithm for restoring images corrupted by salt-and-pepper noise or random-valued impulse noise, respectively. While the pAPS- and pLATV-algorithm produce quite similar restorations for both type of noises, the SA-TV algorithm seems to be outperformed in most examples. For example, in Fig. \ref{fig_bar_SAP03} we observe that the pAPS- and pLATV-algorithm remove the noise considerable while the solution of the SA-TV method still contains noise. On the contrary for the removal of random-valued impulse noise in Fig. \ref{fig_bar_RV03} we see that all three methods produce similar restorations.
\begin{table*}
\scriptsize
\caption{\small \it PSNR- and MSSIM-values of the reconstruction of different images corrupted by salt-and-pepper noise with $r_1=r_2$ via pAPS-, pLATV-algorithm with $\alpha_0=10^{-2}$ and SA-TV-algorithm with $\lambda_0=0.2$ and window-size $21 \times 21$. In the pLATV-algorithm we use $\mathcal{I}_{i,j}=\tilde \Omega_{i,j}$ with window-size $11 \times 11$ pixels in the interior and we set $p_0=\frac{1}{2}$. }
\label{table_comparison_sap1}
\begin{center}
\begin{tabular}{c|c|c|c|c|c|c|c|c|c|c}
\toprule
\multicolumn{2}{c|}{}  &  \multicolumn{3}{c|}{pAPS (scalar $\alpha$)} & \multicolumn{3}{c|}{SA-TV}& \multicolumn{3}{c}{pLATV} \\
Image          & $r_1=r_2$ & PSNR    & MSSIM    & MAE  & PSNR    & MSSIM   & MAE &  PSNR    & MSSIM & MAE \\ \hline
phantom      & $0.3$ & 14.48 & 0.7040 &{\bf 0.0519} & {\bf 15.28} & 0.6540 & 0.0605  &  {\bf 14.50} & {\bf 0.7053} & {\bf 0.0519} \\
                    &   $0.1$     & 18.39 & 0.8412 & 0.0214 & {\bf 19.57} & {\bf 0.8703} & {\bf 0.0196}  &  18.63 &  { 0.8610} &  { 0.0202 }\\
					&   $0.05$   &  {21.61} &0.9257 &  {\bf 0.0103}  & {\bf 22.81} & {\bf 0.9362} & {\bf 0.0103} & 21.55 &  {\bf 0.9327} &  0.0104 \\ \hline
cameraman & $0.3$ &  {\bf 21.60} & 0.7269 & {\bf 0.0343} & 21.34 & 0.6871 & 0.0390 & 21.59 &  {\bf 0.7271} & 0.0344  \\
                    & $0.1$ & 25.49 & 0.8822 & 0.0155 & {\bf 25.80} & 0.8774 & 0.0157 & {25.57} & {\bf 0.8844} & {\bf 0.0154}  \\
					& $0.05$ & {\bf 28.80} & {\bf 0.9389} & {\bf 0.0087} & 28.50 & 0.9251 & 0.0095 & 27.99 & 0.9263 & 0.0095 \\ \hline
barbara		 & $0.3$     & {\bf 21.56} &  0.6242  &  {\bf 0.0486}& 20.54 & 0.5889 & 0.0537 &  21.51 & {\bf 0.6253}& 0.0488  \\
                    &   $0.1$     &25.49 & 0.8729 & 0.0211& 25.27 & 0.8650 & {\bf 0.0202}&  {\bf 25.81} & {\bf 0.8759} &  0.0208 \\
					&   $0.05$ & {\bf 28.46} & {\bf 0.9338} & {0.0118} & 27.90 & 0.9313 & {\bf 0.0110}    & 28.34 & 0.9325 & 0.0121 \\ \hline
lena     	   & $0.3$     &23.29 & 0.6807 & 0.0360 & 22.61 & 0.6397 & 0.0404 & {\bf 23.32} &  {\bf 0.6811} &  {\bf 0.0359} \\
                   &   $0.1$      & 27.60 & 0.8508 & 0.0151 & 27.78 & 0.8459 & 0.0152 & {\bf 27.99} &  {\bf 0.8530} &  {\bf 0.0148}  \\
					&   $0.05$   & 29.45 &  {\bf 0.8946} &  {\bf 0.0096} &{\bf 29.74} & 0.8863 & 0.0100 &  {29.53} & 0.8931  & 0.0097 \\ \hline
\end{tabular}

\end{center}
\end{table*}

\begin{table*}
\scriptsize
\caption{\small \it PSNR- and MSSIM-values of the reconstruction of different images corrupted by random-valued impulse noise via pAPS-, pLATV-algorithm with $\alpha_0=10^{-2}$ and SA-TV-algorithm with $\lambda_0=0.2$ and window-size $21 \times 21$. In the pLATV-algorithm we use $\mathcal{I}_{i,j}=\tilde \Omega_{i,j}$ with window-size $11 \times 11$ pixels in the interior and we set $p_0=\frac{1}{2}$ in the pLATV-algorithm. }
\label{table_comparison_rv1}
\begin{center}
\begin{tabular}{c|c|c|c|c|c|c|c|c|c|c}
\toprule
\multicolumn{2}{c|}{}  &  \multicolumn{3}{c|}{pAPS (scalar $\alpha$)} & \multicolumn{3}{c|}{SA-TV}& \multicolumn{3}{c}{pLATV} \\
Image          & $r$ & PSNR    & MSSIM    & MAE  & PSNR    & MSSIM    & MAE   &  PSNR    & MSSIM & MAE \\ \hline
phantom      & $0.3$ & 17.83 & 0.8120 &{0.0317} &  {\bf 18.68} & 0.8012 & 0.0319 &  { 18.19} & {\bf 0.8303} & {\bf 0.0305} \\
                    &   $0.1$     & 22.46 & 0.9273 & 0.0113 & {\bf 23.83} & 0.9278 & {\bf 0.0100} &  {22.58} &  {\bf 0.9328} &  {0.0112 }\\
					&   $0.05$   &  {25.55} & 0.9636 & {0.0058} & {\bf 26.56} & 0.9642 & {\bf  0.0054}  & 25.45 &  {\bf 0.9665} &  {0.0057} \\ \hline
cameraman & $0.3$ &  {\bf 24.87} & {\bf 0.8337} & {\bf 0.0213} & 23.48 & 0.7583 & 0.0237 & 24.19 &  {0.7887} & 0.0234  \\
                    & $0.1$ & {\bf 29.33} &{\bf 0.9359} &{\bf 0.0087} & 27.72 & 0.9087 & 0.0089 & { 28.60} & { 0.9204} & { 0.0093}  \\
					& $0.05$ & {\bf 31.46} & {\bf 0.9603} & 0.0053 & 30.53 & 0.9478 &  {\bf 0.0052}& 30.84 & 0.9442 & 0.0058 \\ \hline			
barbara		& $0.3$     & {\bf 24.24} &  {\bf 0.8040} &0.0301 & 23.96 & 0.7977  &  {\bf 0.0280} &  {\bf 24.24}  &0.7992 &  { 0.0302} \\
                    &   $0.1$     & {\bf 29.20} &  {\bf 0.9355} &  {0.0118} & 28.60 & 0.9327 &  {\bf 0.0101}  & 28.91 & 0.9305 &  0.0120 \\
					&   $0.05$   & {\bf 31.95} & {\bf 0.9650} & {0.0065} & 30.65 & 0.9578 &  {\bf 0.0059}  & { 31.85} & { 0.9640} & {0.0066} \\ \hline
lena     		& $0.3$     &{\bf 26.80} & {\bf 0.8124} & {\bf 0.0205} & 24.74 & 0.7560 & 0.0236 &  {26.63} &  {0.8082} &  { 0.0208} \\
                    &   $0.1$      &  {\bf 30.34} & {\bf 0.8965} & {\bf 0.0092} & 28.96 & 0.8833& 0.0095 &  30.07 &  0.8918 &  {0.0095}  \\
					&   $0.05$   & {\bf 31.36} &  {\bf 0.9189} &  0.0062& 30.42 & 0.9180 & {\bf 0.0059} &  {31.08} & 0.9159  & 0.0063 \\ \hline
\end{tabular}

\end{center}
\end{table*}

\graphicspath{{./graphics/}}
\begin{figure}[htbp!]
\begin{center}
\hspace{0cm}
\subfigure[noisy image]{\includegraphics[height=4.1cm]{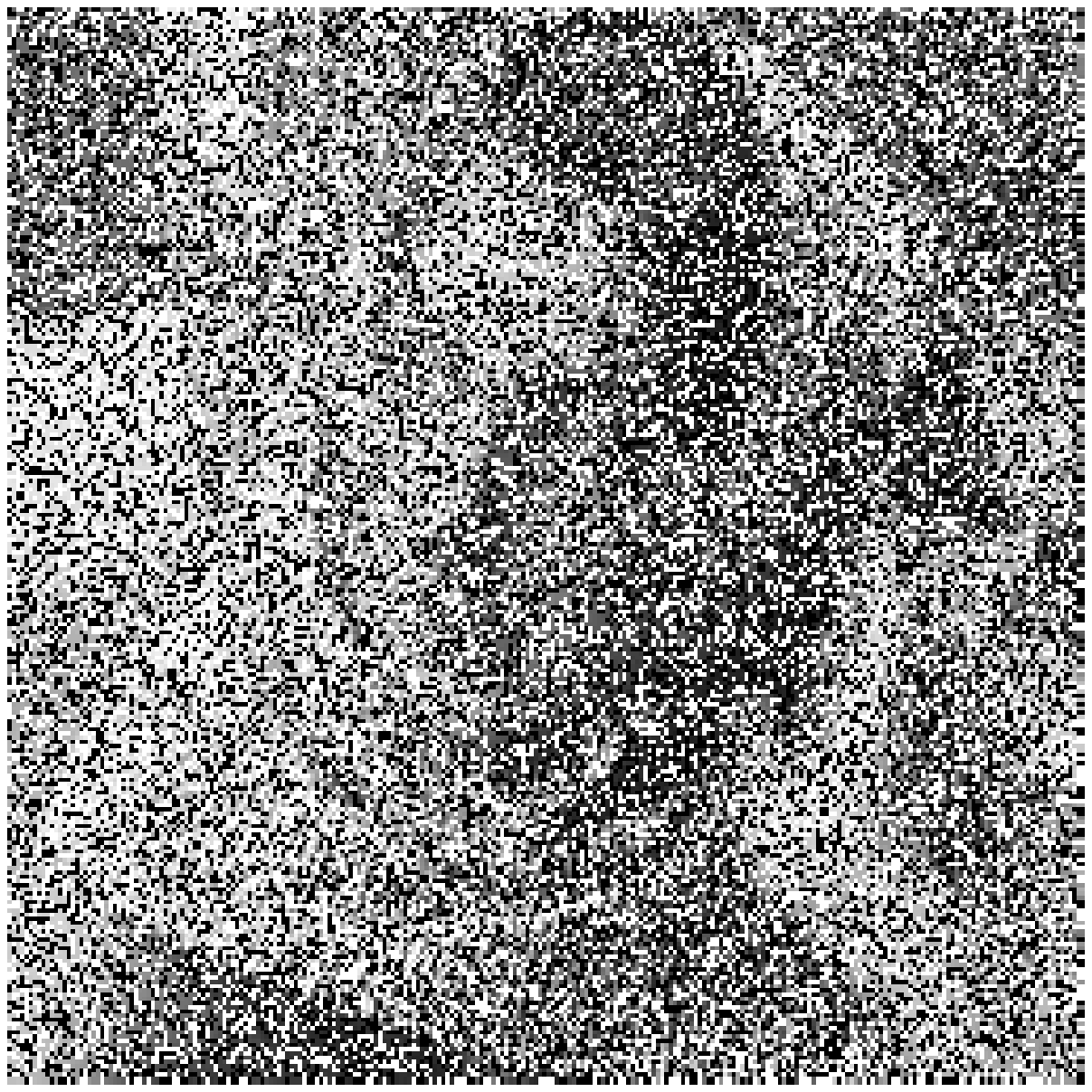}}
    \subfigure[pAPS (PSNR: $21.56$; MSSIM: $0.6242$)]{\includegraphics[height=4.1cm]{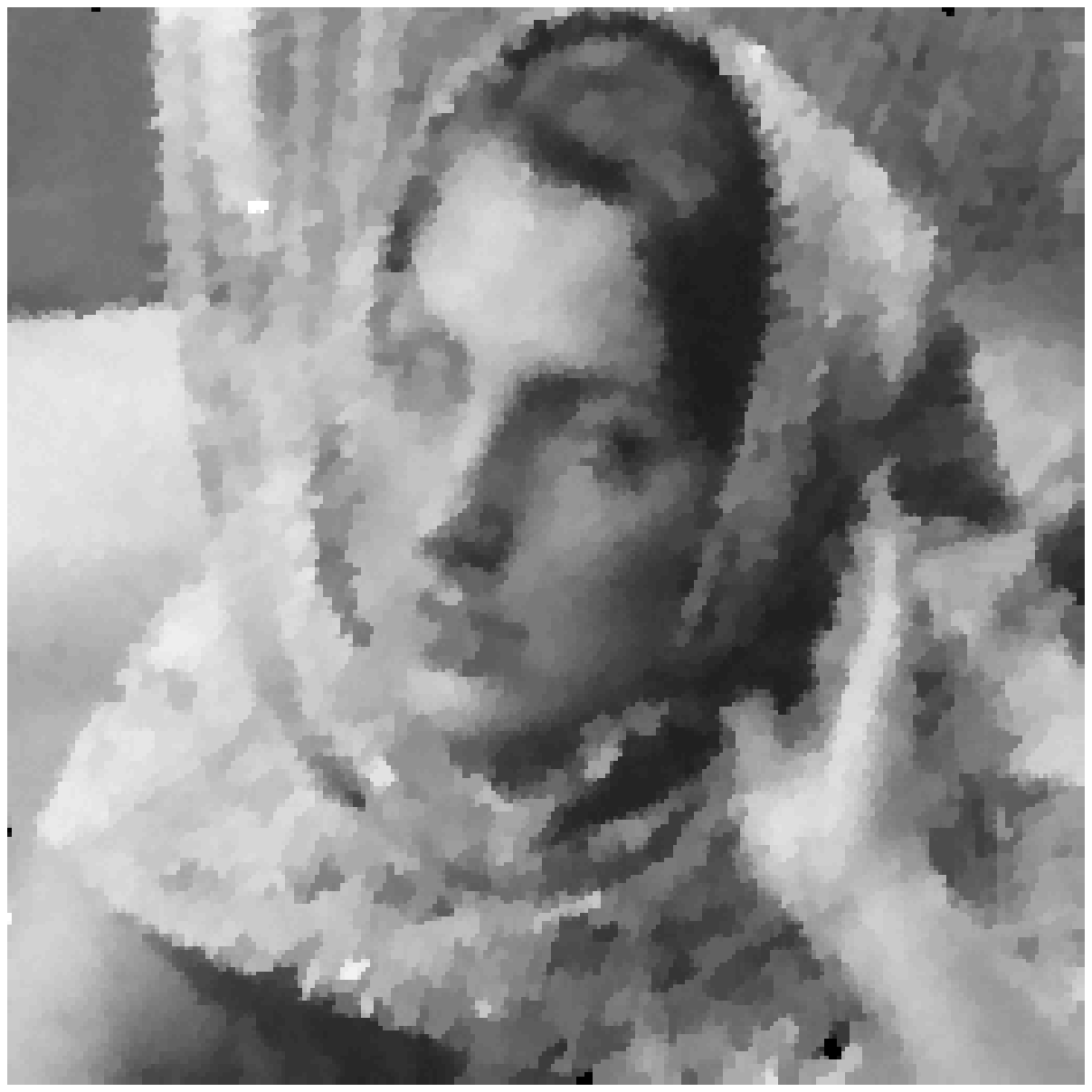}}
    \subfigure[SA-TV (PSNR: $20.54$; MSSIM: $0.5889$)]{\includegraphics[height=4.1cm]{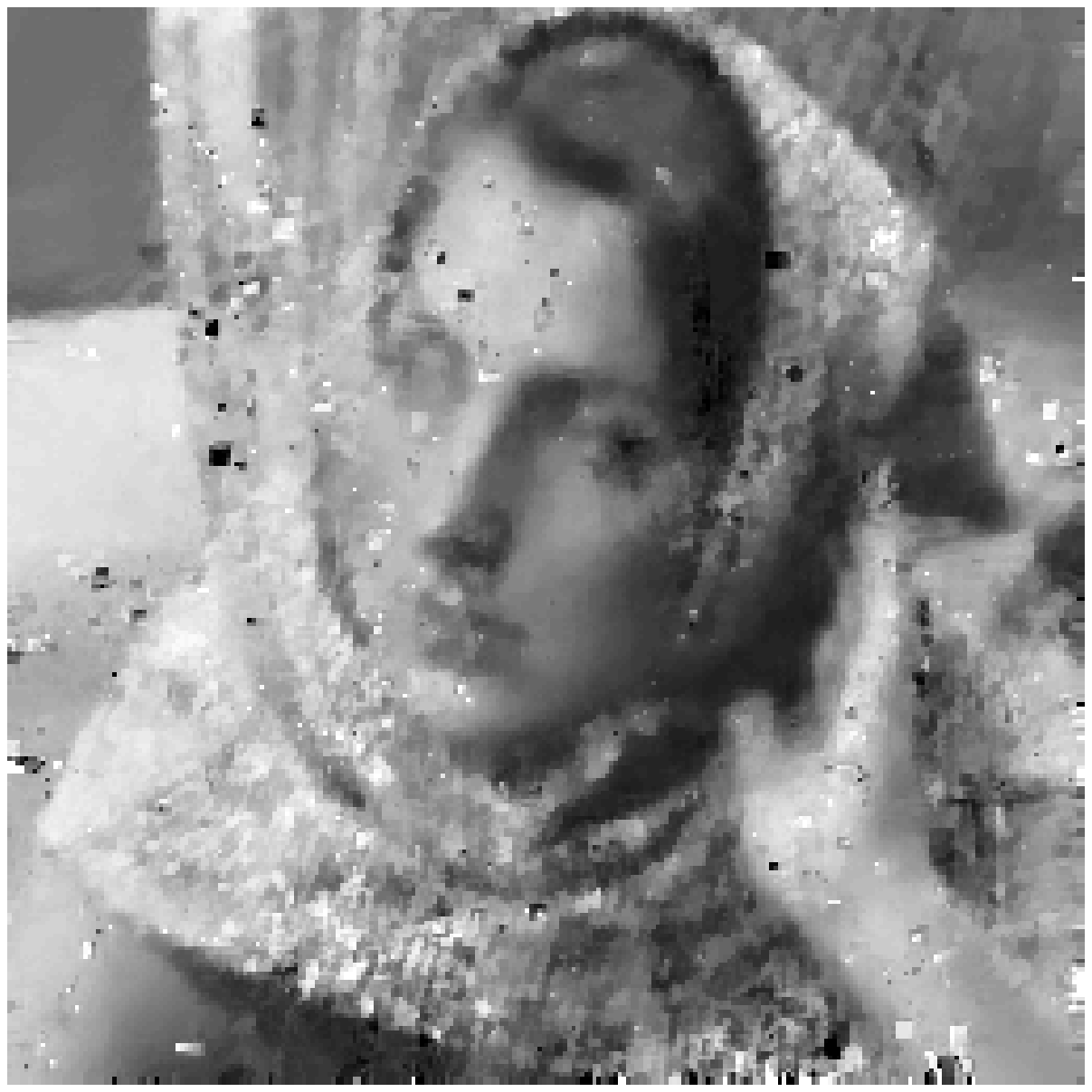}}
    \subfigure[pLATV (PSNR: $21.51$; MSSIM: $0.6253$)]{\includegraphics[height=4.1cm]{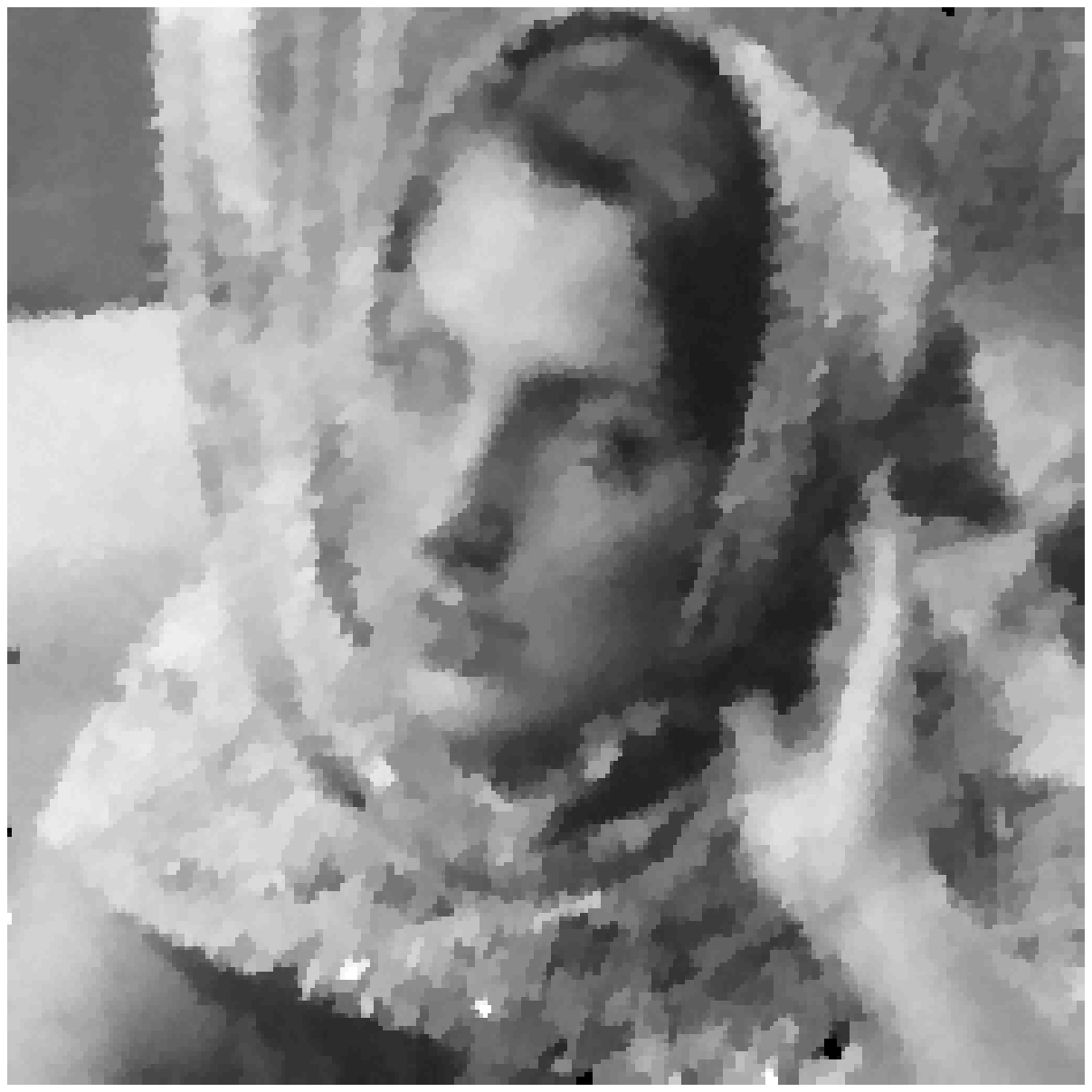}}
\end{center}    
\caption{\small \it Reconstruction of the barbara-image corrupted by salt-and-pepper noise with $r_1=r_2=0.3$.} 
\label{fig_bar_SAP03}
\end{figure}

\graphicspath{{./graphics/}}
\begin{figure}[htbp!]
\begin{center}
\hspace{0cm}
\subfigure[noisy image]{\includegraphics[height=4.1cm]{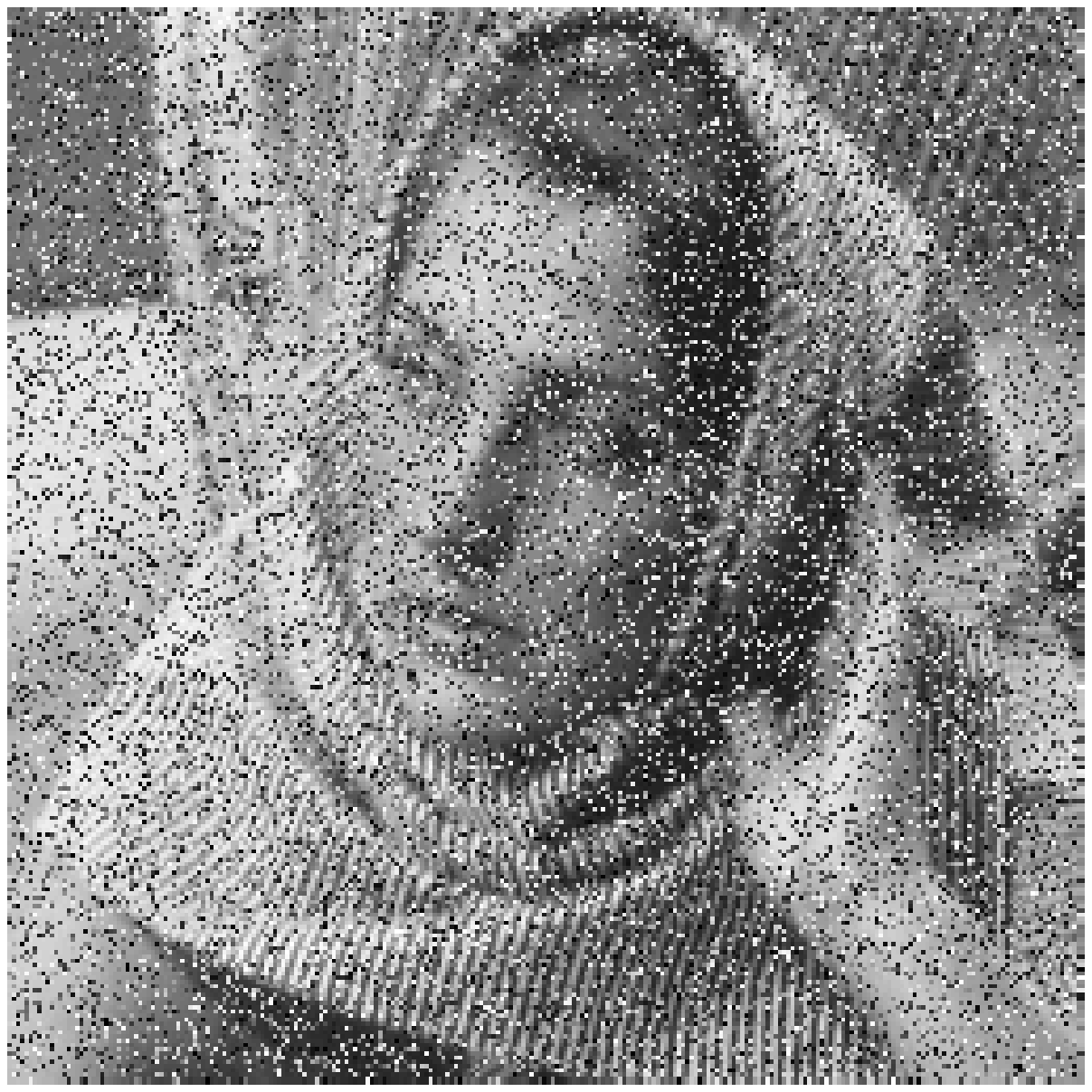}}
    \subfigure[pAPS (PSNR: $24.24$; MSSIM: $0.8040$)]{\includegraphics[height=4.1cm]{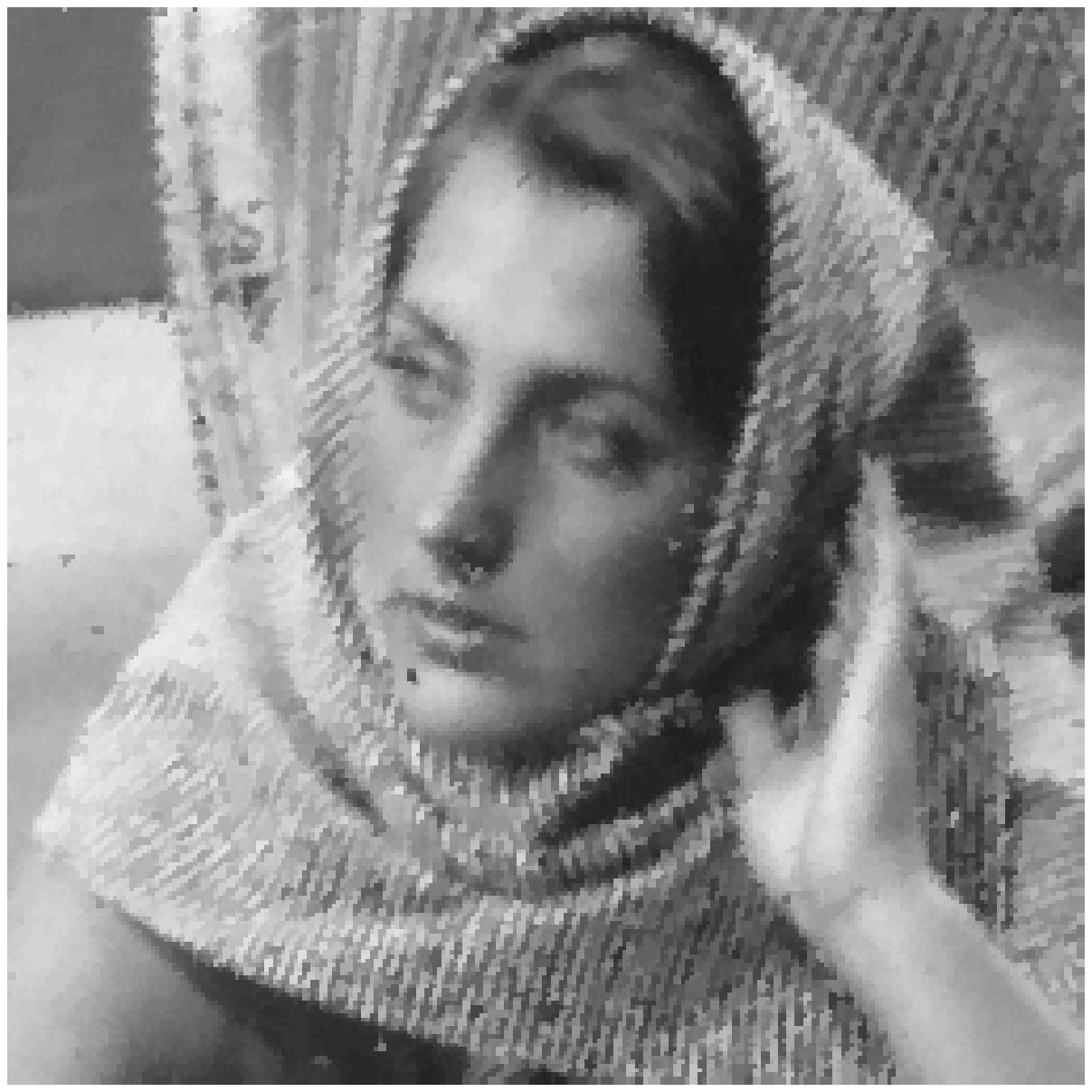}}
    \subfigure[SA-TV (PSNR: $23.96$; MSSIM: $0.7977$)]{\includegraphics[height=4.1cm]{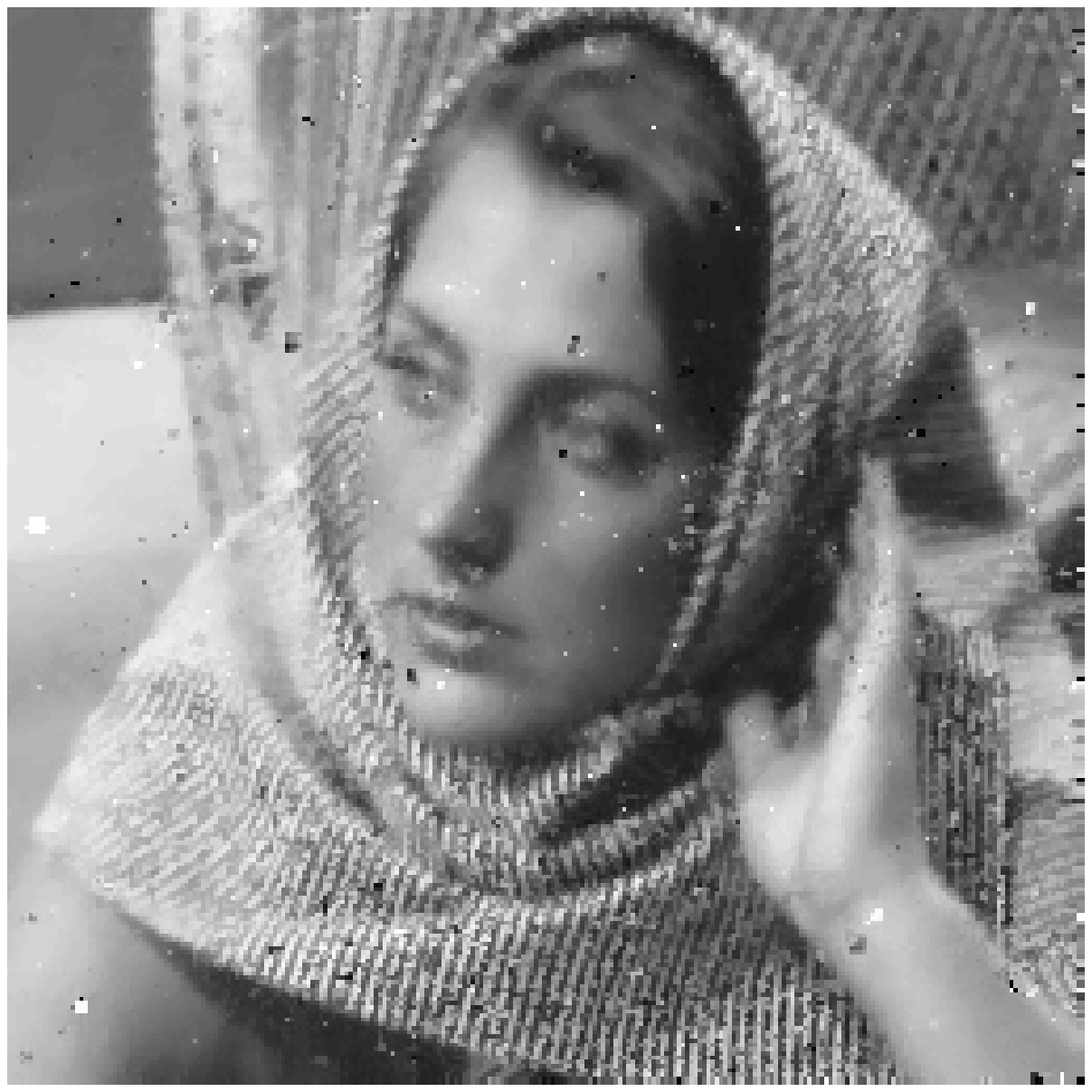}}
    \subfigure[pLATV (PSNR: $24.24$; MSSIM: $0.7992$)]{\includegraphics[height=4.1cm]{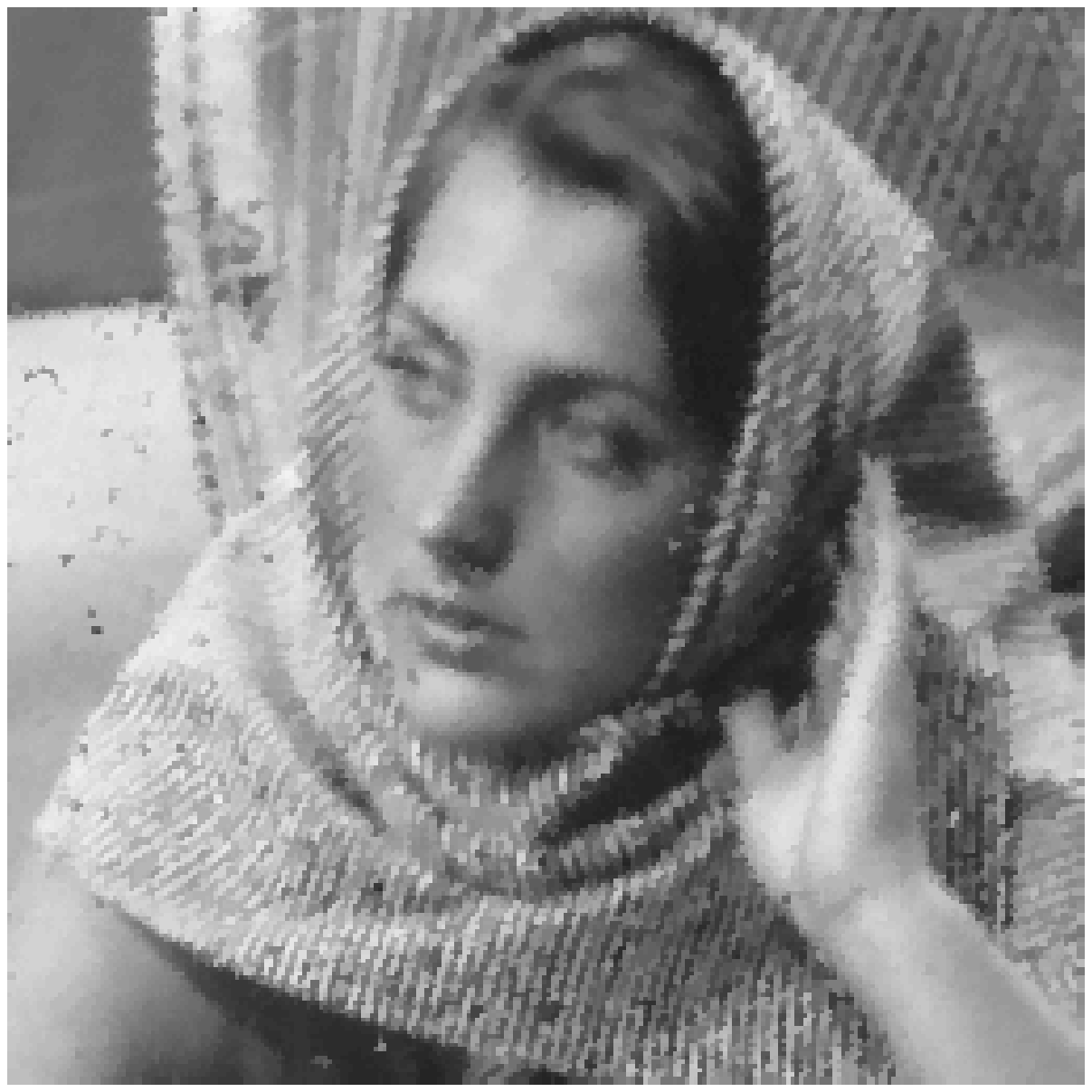}}
\end{center}    
\caption{\small \it Reconstruction of the barbara-image corrupted by random-valued impulse noise with $r=0.3$.} 
\label{fig_bar_RV03}
\end{figure} 

\section{Conclusion and extensions}\label{Sec:Conclusion}

For $L^1$-TV and $L^2$-TV minimization including convolution type of problems automatic parameter selection algorithms for scalar and locally dependent weights $\alpha$ are presented. In particular, we introduce the APS- and pAPS-algorithm for automatically determining a suitable scalar regularization parameter. While for the APS-algorithm its convergence only under some assumptions is shown, the pAPS-algorithm is guaranteed to converge always. Besides the general applicability of these two algorithms they also possess a fast numerical convergence in practice.

In order to treat homogeneous regions differently than fine features in images, which promises a better reconstruction, cf. Proposition \ref{Prop:TVlvsTVs} and Remark \ref{Remark:TVlvsTVs}, algorithms for automatically computing locally adapted weights $\alpha$ are proposed. These methods are much more stable with respect to the initial $\alpha_0$ than the state-of-the-art SA-TV method. Moreover, while in the SA-TV-algorithm the initial $\lambda_0>0$ has to be chosen sufficiently small, in our proposed methods any arbitrary $\alpha_0>0$ is allowed. Hence the LATV- and pLATV-algorithm are much more flexible with respect to the initialization. By numerical experiments it is shown that the reconstructions obtained by the newly introduced algorithms are similar with respect to image quality measure to the restorations obtained by the SA-TV algorithm. In the case of Gaussian noise removal (including deblurring) for sufficiently small noise-levels reconstructions obtained by locally varying weights seem to be qualitatively better than results with scalar parameters. On the contrary, for removing impulse noise a spatially varying $\alpha$ or $\lambda$ is in general not always improving the restoration quality.

For computing a minimizer of the respective multi-scale total variation model we present first and second order methods and show their convergence to a respective minimizer.

Although the proposed parameter selection algorithms are constructed to estimate the parameter $\alpha$ in \eqref{LtauTVmodel} and \eqref{LtauTVmultimodel}, they can be easily adjusted to find a good candidate $\lambda$ in \eqref{LtauTVmodel2} and \eqref{LtauTVmultimodel2}, respectively, as well. 



Note, that the proposed parameter selection algorithms are not restricted to total variation minimization, but may be extended to other type of regularizers as well by imposing respective assumptions that guarantee a minimizer of the considered optimization problem. In order to obtain similar (convergence) results as presented in Section \ref{Sec:APS} and Section \ref{Sec:locallyTV} the considered regularizer should be convex, lower semi-continuous and one-homogeneous. In particular for proving convergence results as in Section \ref{Sec:APS} (cf. Theorem \ref{Theorem:conv2} and Theorem \ref{Theorem:conv}) an equivalence of the penalized and corresponding constrained minimization problem, as in Theorem \ref{Thm:equi}, is needed. 
 An example of such a regularizer for which the presented algorithms are extendable is the total generalized variation \cite{BreKunPoc}.

\begin{acknowledgements}
The author would like to thank M. Monserrat Rincon-Camacho for providing the spatially adaptive parameter selection code for the $L^1$-TV model of \cite{HinRin}.
\end{acknowledgements}

\begin{appendix}

\section{Semi-smooth Newton method for solving \eqref{Eq:Op:cond:pd}}\label{Appendix:Semismooth}

A semi-smooth Newton algorithm for solving \eqref{Eq:Op:cond:pd} can be derived similar as in \cite{HinRin} by means of vector-valued variables. Therefore let $u^h\in \R^N$, $q^h\in \R^{2N}$, $\alpha^h\in \R^N$, $g^h\in \R^N$ where $N=N_1 N_2$, denote the discrete image intensity, the dual variable, the spatially dependent regularization parameter, and the observed data vector, respectively. Correspondingly we define $\nabla^h \in \R^{2N \times N}$ as the discrete gradient operator, $\Delta^h \in \R^{N\times N}$ as the discrete Laplace operator, $T^h\in \R^{N\times N}$ as the discrete operator, and $(T^h)^t$ as the transpose of $T^h$. Here $|\cdot|$, $\max\{\cdot, \cdot\}$, and $\sign(\cdot)$ are understood for vectors in a componentwise sense. Moreover, we use the function $[|\cdot|] : \R^{2N} \to \R^{2N}$ with $[|v^h|]_i =[|v^h|]_{i+N} = \sqrt{(v^h_i)^2 + (v^h_{i+N})^2}$ for $1\leq i \leq N$.

For solving \eqref{Eq:Op:cond:pd} in every step of our Newton method we need to solve 
\begin{equation}\label{Newton:system}
\begin{split}
\left(\begin{matrix}
A_k^h & -D^h(m_{\beta_k}) & 0 \\
-\frac{1}{\beta	+ \mu} (T^h)^t T^h + \kappa \Delta^h  & -\frac{\mu}{\mu + \beta} (T^h)^t & -(\nabla^h)^t \\
B_k^h & 0 & D^h(m_{\gamma_k})
\end{matrix}
\right)
\left(\begin{matrix}
\delta_u \\
\delta_v \\
\delta_q
\end{matrix}
\right) \\
=
\left(\begin{matrix}
-\mathfrak F_1^k \\
-\mathfrak F_2^k \\
-\mathfrak F_3^k 
\end{matrix}
\right)
\end{split}
\end{equation}
where 
\begin{equation*}
\begin{split}
&A_k^h = [D^h(e_N) - D^h(v_k^h) \chi_{\mathcal{A}_{\beta_k}} D^h(\sign(T^h u_k^h - g^h))] T^h, \\
&B_k^h = [ D^h(q_k^h) \chi_{\mathcal{A}_{\gamma_k}} D^h(m_{\gamma_k})^{-1} M^h(\nabla^h u_k^h) \\
&\phantom{\mathfrak{F}_2^k = -(\nabla^h)^t q_k^h +}-D^h((\alpha^h,\alpha^h)^t)] \nabla^h , \\
&\mathfrak{F}_1^k = Tu_k^h - g^h - D^h(m_{\beta_k})v_k^h, \\
&\mathfrak{F}_2^k = -(\nabla^h)^t q_k^h + \kappa \Delta^h u_k^h -\frac{1}{\beta	+ \mu} (T^h)^t (T^h u_k^h -g^h) \\
& \phantom{\mathfrak{F}_2^k = -(\nabla^h)^t q_k^h +}-\frac{\mu}{\mu + \beta} (T^h)^t v_k^h,\\
&\mathfrak{F}_3^k = -D^h((\alpha^h,\alpha^h)^t) \nabla^h u_k^h + D^h(m_{\gamma_k})q_k^h,
\end{split}
\end{equation*}
$e_N \in \R^{N}$ is the identity vector, $D^h(v)$ is a diagonal matrix with the vector $v$ in its diagonal, $m_{\beta_k} = \max\{\beta, |T^h u_k^h - g^h| \}$, $m_{\gamma_k} = \max\{\gamma \alpha^h,[ |\nabla^h u_k^h|]\}$,
\begin{equation*}
\begin{split}
& \chi_{\mathcal{A}_{\beta_k}}  = D^h(t_{\beta_k}) \quad \text{with} \ \ (t_{\beta_k})_i = \begin{cases}
 0 & \text{if } (m_{\beta_k})_i = \beta, \\
 1 & \text{else};
 \end{cases}
 \\
&\chi_{\mathcal{A}_{\gamma_k}}  = D^h(t_{\gamma_k}) \quad \text{with} \ \ (t_{\gamma_k})_i = \begin{cases}
 0 & \text{if } (m_{\gamma_k})_i = \gamma (\alpha^h)_i, \\
 1 & \text{else};
 \end{cases} 
 \\
 &M^h(v)=
 \left(
 \begin{matrix}
 D^h(v_x) & D^h(v_y) \\
 D^h(v_x) & D^h(v_y) 
 \end{matrix}
 \right) \quad \text{with} \ \ v=(v_x,v_y)^t \in \R^{2N}.
\end{split}
\end{equation*}
Since the diagonal matrices $D^h(m_{\beta_k}) $ and $D^h(m_{\gamma_k}) $ are invertible, we eliminate $\delta_v$ and $\delta_q$ from \eqref{Newton:system}, which leads to the following resulting system
\begin{equation*}
H_k \delta_u = f_k
\end{equation*}
where
\begin{equation*}
\begin{split}
&H_k:= \frac{1}{\beta	+ \mu} (T^h)^t T^h - \kappa \Delta^h + \frac{\mu}{\mu + \beta} (T^h)^t D^h(m_{\beta_k})^{-1} A_k^h \\
&\phantom{H_k:= \frac{1}{\beta	+ \mu} (T^h)^t T^h -}+ (\nabla^h)^t D^h(m_{\gamma_k})^{-1} (-B_k^h) ,\\
&f_k:= \mathfrak{F}_2^k - \frac{\mu}{\mu + \beta} (T^h)^t D^h(m_{\beta_k})^{-1} \mathfrak{F}_1^k  \\
&\phantom{H_k:= \frac{1}{\beta	+ \mu} (T^h)^t T^h -}+ (\nabla^h)^*D^h(m_{\gamma_k})^{-1} \mathfrak{F}_2^k .
\end{split}
\end{equation*}
In general $B_k^h$ and hence $H_k$ is not symmetric. In \cite{HinSta} it is shown that the matrix $H_k^h$ at the solution $(u_k^h,v_k^h,q_k^h) = (\bar{u},\bar{v},\bar{q})$ is positive definite whenever
\begin{equation}\label{Eq:conditions}
[|q_k^h|]_i \leq (\alpha^h)_i \quad \text{and} \quad (|v_k^h|)_i \leq 1
\end{equation}
for $i=1,\ldots, N$.

In case these two inequalities are not satisfied we project $q_k^h$ and $v_k^h$ onto their feasible set, i.e., $((q_k^h)_i,(q_k^h)_{i+N})$ is set to $(\alpha^h)_i \max\{(\alpha^h)_i, [|q_k^h|]_i\}^{-1} ( (q_k^h)_i, (q_k^h)_{i+N})$ and $(v_k^h)_i$ is replaced by $\max\{1, (|v_k^h|)_i\} (v_k^h)_i$. Then the modified system matrix, denoted by $H_k^+$ is positive definite; see \cite{DonHinNer}. As pointed out in \cite{HinRin} we may use $H_k^+ + \varepsilon_k D^h(e_N)$ with $\kappa=0$, $\varepsilon_k>0$ and $\varepsilon_k \downarrow 0$ as $k\to \infty$ instead of $\kappa>0$ to obtain a positive definite matrix.  Then our semi-smooth Newton solver may be written as in \cite{HinRin}:

\noindent
\fbox{
\begin{minipage}{7.9cm}
\textbf{Semi-smooth Newton method:} Initialize $(u_0^h,q_0^h) \in \R^{N} \times \R^{2N}$ and set $k:=0$.
\begin{enumerate}
\item Determine the active sets $\chi_{\mathcal{A}_{\beta_k}} \in \R^{N\times N}$ and $\chi_{\mathcal{A}_{\gamma_k}}\in \R^{2N\times 2N}$
\item If \eqref{Eq:conditions} is not satisfied, then compute $H_k^+$; otherwise set  $H_k^+:=H_k$.
\item Solve $H_k^+ \delta_u = f_k$ for $\delta_u$.
\item Compute $\delta_q$ by using $\delta_u$.
\item Update $u_{k+1}^h:=u_{k}^h + \delta_u$ and $q_{k+1}^h:=q_{k}^h + \delta_q$.
\item Stop or set $k:=k+1$ and continue with step 1).
\end{enumerate}
\end{minipage}
}\\

This algorithm converges at a superlinear rate, which follows from standard theory; see \cite{HinKun,HinSta}.

In our experiments we always choose $\kappa=0$, $\beta=10^{-3}$, $\gamma=10^{-2}$, and $\mu=10^{6}$.
\end{appendix}


\bibliographystyle{spmpsci}      
\bibliography{Ref}{}


\end{document}